\journalname{arXiv}
\begin{document}

    \title{Proof of the Voronoi conjecture for 3-irreducible parallelotopes}
    \author{Andrei Ordine}
    %\dept{Department of Mathematics and Statistics}
    %\degree{Doctor of Philosophy}
    %\authorrunning{Andrei Ordine} % if too long for running head

    \institute{Andrei Ordine \at
              Canada Pension Plan Investment Board \\
              Tel.: +1 416 721 6270\\
              One Queen Street East, Toronto, ON, M5C 2W5
              \email{aordine@cppib.com}           %  \\
%             \emph{Present address:} of F. Author  %  if needed
}

    \date{2017 January 5}

    \maketitle

\begin{abstract}
    This article proves the Voronoi conjecture 
    on parallelotopes in the special case of 3-irreducible tilings. 

    Parallelotopes are convex polytopes
    which tile the Euclidean space by their translated copies,
    like in the honeycomb arrangement of hexagons in the plane.
    An important example of parallelotope is
    the Dirichlet-Voronoi domain for a translation lattice. 
    For each point $\lambda$ in a translation lattice, we define its 
    Dirichlet-Voronoi (DV) domain to be the set of points 
    in the space which are at least as close to $\lambda$ as to 
    any other lattice point.

    The Voronoi conjecture, formulated by the great Ukrainian 
    mathematician George Voronoi in 1908, states that any
    parallelotope is affinely equivalent
    to the DV-domain for some lattice. 
    
    Our work proves the Voronoi conjecture for $3$-irreducible
    parallelotope tilings of arbitrary dimension: we define the $3$-irreducibility 
    as the property of having only irreducible dual $3$-cells.
    This result generalizes a theorem of Zhitomirskii (1927), and
    suggests a way to solve the conjecture in the general case. 

\end{abstract}

 %   \section{Acknowledgments}
%      
%    I thank my supervisor, Prof. R.M.Erdahl, for all the
%    discussions and support, and Nikolai Dolbilin, my teacher
%    and supervisor at Moscow State University, who introduced
%    me to discrete geometry and gave me the taste for mathematics.    

%    The paper would have been impossible without the generous
%    help of David Gregory, who listened to my presentations
%   and made a lot of suggestions. Kostya Rybnikov provided 
%    great help by discussing the problem during his visit to Queen's in 2003.
%    Many thanks to Claude Tardif, Resa Naserasr and other 
%    participants of the Discrete Math Seminar at Queen's University
%    for the discussions and support.
    
%    The research was financially supported by Graduate
%    Awards (2001-2004) and Bauman Fellowship (2001-2002) 
%    of Queen's University, and by Ontario Graduate Scholarship    
%    (2004-2005). I am grateful to the Department
%    of Mathematics and Statistics for providing employment
%    during my graduate studies. 
% \end{section}
 
    \bibliographystyle{alpha}
    \pagestyle{plain}   

\newtheorem{calculation}{Calculation}
%\newtheorem{example}{Example}

% For marginal comments
\newcommand{\NB}[1]{\marginpar{\raggedright \tiny #1}}

%Common math expressions
\def\ker{\mathop{\rm ker}}
\def\lin{\mathop{\rm lin}}
\def\aff{\mathop{\rm aff}}
\def\conv{\mathop{\rm conv}}
\def\rank{\mathop{\rm rank}}
\def\combdim{\mathop{\rm combdim}}
\def\int{\mathop{\rm int}}
\def\relint{\mathop{\rm relint}}
\def\Vert{\mathop{\rm Vert}}
\def\St{\mathop{\rm St}}
\def\Cl{\mathop{\rm Cl}}
\def\Sk{\mathop{\rm Sk}}
\def\Sim{\mathop{\rm Sim}}
\def\sign{\mathop{\rm sgn}}
\def\sgn{\mathop{\rm sgn}}
\def\Im{\mathop{\rm Im}}
\def\cone{\mathop{\rm cone}}

%Note: \dim, \max, \min,  \sin  are part of LaTeX and needn't be defined.

%Multiplication sign with less space surrounding it
\def\tx{\! \times \! }

%Commands for \begin{proof} and \end{proof}
\def\proof{\par{\noindent \it Proof}. \ignorespaces}
\def\endproof{\hfill\vbox{\hrule height0.6pt\hbox{%
    \vrule height1.3ex width0.6pt\hskip0.8ex
    \vrule width0.6pt}\hrule height0.6pt}\medskip}

% Use \pmod for modulo

%\begin{abstract} We prove the Voronoi conjecture on parallelohedra for the case of
%tilings where the dual 3-cells are simplices, octahedra  or pyramids.
%\end{abstract}

\begin{section}{Introduction}
\label{introduction-section}

\begin{subsection}{Voronoi's conjecture on parallelotopes}

This paper investigates parallelotopes, 
convex polytopes which tile Euclidean 
space by parallel copies. For instance, 
a parallelotope in the plane is either
a parallelogram or a centrally symmetric hexagon.
\index{parallelotope}
\index{parallelohedron}
\index{Dirichlet-Voronoi domain}
\index{DV-domain}

An important example of a parallelotope is given 
by a Dirichlet-Voronoi domain. For each point $\lambda$
in a translation lattice, we define its 
Dirichlet-Voronoi (DV) domain to be the set of points 
in the space which are at least as close to $\lambda$ as to 
any other lattice point. A DV-domain is a centrally symmetric 
polytope. Since DV-domains obviously tile the space, 
and are all equal up to translation, they are parallelotopes.  
Figure \ref{dv-domain-figure} shows an example 
of a Dirichlet-Voronoi domain in the plane.
\index{Dirichlet-Voronoi domain}
\index{DV-domain}

\begin{figure}
\begin{center}
\resizebox{120pt}{!}{\includegraphics[clip=true,keepaspectratio=true]{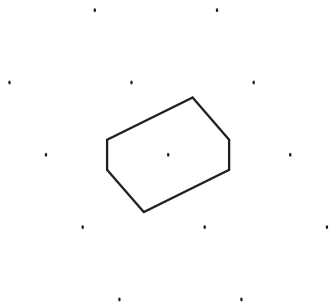}}
\end{center}
\label{dv-domain-figure}
\caption{A Dirichlet-Voronoi domain}
\end{figure}
George Voronoi, a Ukrainian mathematician, conjectured in 1908 (\cite{bib_voronoi_rech_par})
that any parallelotope is a DV-domain for some lattice, 
or is affinely equivalent to one.
This is a strong and fascinating conjecture since it gives 
an analytical description to a combinatorial object. 
Voronoi proved it for primitive tilings of $d$-dimensional 
space, that is tilings where exactly $d+1$ parallelotopes 
meet at each vertex.
\index{primitive tiling}

Since then, the conjecture has been established 
in a number of special cases.
Zhitomirskii 
generalized Voronoi's argument to tilings 
of $d$-dimensional space where $3$ parallelotopes 
meet at each $(d-2)$-face (1927, \cite{bib_zhitomirskii}),
Delaunay proved the conjecture for tilings
of $4$-dimensional space (1929, \cite{bib_delaunay_4d}).
Robert Erdahl proved it for space  filling zonotopes 
(1999, \cite{bib_erdahl_zonotopes}). Delaunay 
challenged Soviet mathematicians to resolve 
the conjecture in his afterword to the Russian translation of Voronoi's paper 
\cite{bib_voronoi_rech_par}. We tried to answer the 
challenge and obtained a proof of the conjecture
in a new special case which naturally generalizes 
Voronoi's and Zhitomirskii's work. The next section states 
our results. The results were first published online in our PhD
thesis in 2005, and this paper is the first presentation of these results
in an article format. Paper \cite{bib-ordine} 
by A.Ordine and A.Magazinov includes additional results
from the PhD thesis. Since then, A.Gavriluk generalized 
the Voronoi results to parallelohedra with simply connected
$\delta$-surface (\cite{bib-garber}), and A.Magazinov
proved that if the Voronoi conjecture holds for a parallelohedron $P$,
then it holds for the Minkowski sum $P+I$, where $I$ is a segment,
so long as $P+I$ is a parallelotope (\cite{bib-magazinov}).

Theoretical and practical applications of parallelotopes are numerous. 
Parallelotopes in dimension 3 are of great importance in crystallography. 
They were first classified by Evgraf Fedorov, the famous geologist
and crystallographer, in 1885 (\cite{bib_fedorov}).
Parallelotopes are also used in vector quantization (digitalization 
of vector data). For example, the  
parallelotope with 14 facets shown in figure 
\ref{3d-parallelotopes-figure} (rightmost) is used 
to encode the three-component signal vector 
in video monitors. In geometry of numbers, parallelotopes 
are applied to the study of arithmetic properties
of positive definite quadratic forms.

\begin{figure}
\begin{center}
\resizebox{240pt}{!}{\includegraphics[clip=true,keepaspectratio=true]{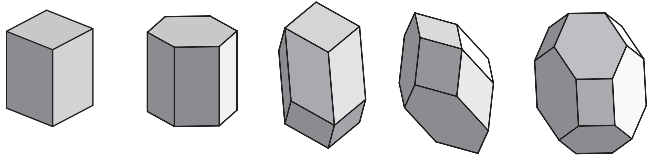}}
\end{center}
\caption{$3$-dimensional parallelotopes}
\label{3d-parallelotopes-figure}
\end{figure}
\end{subsection}

\begin{subsection}{Our results}
The idea behind our work is to investigate reducibility 
properties of parallelotopes.

A parallelotope is called {\em reducible} if it can be 
represented as a Minkowski sum of two parallelotopes
of smaller dimensions which belong to complementary affine 
spaces (a direct Minkowski sum). In dimension $3$, 
the reducible parallelotopes are the hexagonal
prism and the parallelepiped (see figure 
\ref{3d-parallelotopes-figure}). Note that the 
set ${\cal N}$ of normal vectors to facets of
a reducible parallelotope can be broken down
into nonempty subsets ${\cal N}_1$ and ${\cal N}_2$ 
so that the linear spaces $\lin({\cal N}_1)$
and $\lin({\cal N}_2)$ are complementary:
\begin{equation}
{\mathbb R}^d = \lin({\cal N}_1) \oplus \lin({\cal N}_2)
\end{equation}
\index{reducible parallelotope}

Consequently, if the Voronoi conjecture is true for irreducible 
parallelotopes, then it is true in all cases. But 
reducibility is a very restrictive property. 
Can we relax it?

We define the {\em $k$-reducibility} of a normal 
\footnote{In a {\em normal}, or {\em face-to-face} 
\index{normal tiling}
\index{face-to-face tiling}
tiling two parallelotopes intersect over a common
face or do not intersect at all. This definition
excludes tilings such as the brick wall pattern, where
rectangles may share only a part of an edge.}
parallelotope tiling of the Euclidean 
$d$-dimensional space as follows.
\index{k-reducible tiling}
\index{local reducibility}

Let $F^{d-k}$ be a face of the tiling, $k>1$. Consider
the set ${\cal N}_{F^{d-k}}$ of normal vectors
to facets of the tiling which contain $F^{d-k}$
in their boundaries. If 
${\cal N}_{F^{d-k}}$ can be broken down into
nonempty subsets ${\cal N}_{F^{d-k}}^1$ and 
${\cal N}_{F^{d-k}}^2$ so that 
\begin{equation}
\lin({\cal N}_{F^{d-k}})
 = \lin({\cal N}_{F^{d-k}}^1) \oplus 
\lin({\cal N}_{F^{d-k}}^2),
\end{equation}
then the tiling is called {\em locally reducible}
at face $F^{d-k}$. If the tiling is locally reducible 
at one or more faces $F^{d-k}$, then it is called 
{\em $k$-reducible}.

A normal tiling which 
is not $k$-reducible is called $k$-irreducible.
If a tiling is $k$-reducible, 
then it is $m$-reducible for all $1 < m < k$.

The Voronoi conjecture for $2$-irreducible tilings was 
proved by Zhitomirskii in 1927 (\cite{bib_zhitomirskii}).
Our main result is the following theorem:

\begin{theorem} \label{main_theorem}
The Voronoi conjecture is true for $3$-irreducible 
parallelotope tilings.
\end{theorem}

This is a natural generalization of Zhitomirskii's 
result. Before this work started, Sergei Ryshkov 
and Konstantin Rybnikov considered the Voronoi 
conjecture for $3$-irreducible tilings. They tried
to apply the methods of algebraic topology
\cite{bib-rybnikov-private}.

We can consider $k$-irreducible tilings,
$k=4,5,\dots,d$ and try to prove the Voronoi
conjecture for them. We speculate it can be a way
to attack the Voronoi conjecture in the general case. 
The conclusion of the paper has more on this topic.
\end{subsection}

\begin{subsection}{Plan of the paper}
In the next section, we present 
the existing results in the parallelotope
theory. 
The rest of the paper is devoted to proving 
the result of this work, theorem \ref{main_theorem}.
Knowledge of polytope theory is expected 
on the part of the reader; a good reference is McMullen 
and Shephard's book \cite{bib_mcmullen_convex_polytopes}. 
We also use some results and terminology 
of the theory of complexes; paper 
\cite{bib-alexander-proof-and-extension}
by Alexander gives enough information.

Section \ref{candesc-section}
introduces the canonical scaling and gives 
a tool for building it. The existence 
of canonical scaling is equivalent to 
the Voronoi conjecture. section 
\ref{dual-cell-combinatorics-section} defines
dual cells, objects which we use to describe
stars of faces in the tiling. In section 
\ref{topological-results-section} the results
of the two previous sections are applied
to identify and study 
incoherent parallelogram dual cells,
objects which shouldn't exist for the Voronoi
conjecture to hold.
section \ref{affine-results-section} investigates
polytope-theoretic properties of dual cells. It 
is independent from section \ref{topological-results-section}.
Finally, the main result (theorem \ref{main_theorem}) 
is proved in section \ref{proof-section}, where
all results obtained are applied to showing
that no incoherent parallelogram cells
can exist in a $3$-irreducible tiling.

We finish with a discussion which summarizes our ideas and suggests ways to solve
the Voronoi conjecture in the general case. 
\end{subsection}
\end{section}

\begin{section}{Overview of classical results}
\label{classical-results-section}

\begin{subsection}{The Minkowski and Venkov theorems}
We now present the classical results in parallelotope theory. In 1897,
Minkowski published his famous theorem on polytopes (\cite{bib_minkowski_polytopes}).

\begin{theorem} 
\label{minkowski_polytope_theorem}
Let $d\ge 2$. Suppose that ${\bf n}_1, {\bf n}_2, \dots, {\bf n}_k 
\in {\mathbb R}^d$ are unit vectors that span ${\mathbb R}^d$, 
and suppose that  $\alpha_1, \dots, \alpha_k > 0$. 
Then there exists a $d$-dimensional polytope $P$ 
having external facet normals ${\bf n}_1,{\bf n}_2,\dots,{\bf n}_k$ 
and corresponding facet $d-1$-volumes $\alpha_1, \dots, \alpha_k$ 
if and only if
\begin{equation}
{\bf n}_1 \alpha_1 + \dots + {\bf n}_k \alpha_k = 0.
\end{equation}
Moreover, such a polytope is unique up to a translation.
\end{theorem}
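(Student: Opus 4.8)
The plan is to handle the two directions of the equivalence and then the uniqueness clause by the classical variational method, treating the easy necessity first.

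\emph{Necessity} of $\sum_i \mathbf{n}_i\alpha_i = 0$ is a projection count. Fix a unit vector $u$ and project $P$ orthogonally onto the hyperplane $u^\perp$. The facets with $\mathbf{n}_i\cdot u>0$ form the ``upper'' part of $\partial P$ and their projections tile the shadow $\pi_u(P)$; the facets with $\mathbf{n}_i\cdot u<0$ do the same from below. Since the projected $(d-1)$-volume of the $i$-th facet is $\alpha_i|\mathbf{n}_i\cdot u|$, comparing the two tilings gives
$$\sum_{\mathbf{n}_i\cdot u>0}\alpha_i(\mathbf{n}_i\cdot u) = \sum_{\mathbf{n}_i\cdot u<0}\alpha_i(-\mathbf{n}_i\cdot u) = V_{d-1}(\pi_u(P)),$$
hence $\sum_i\alpha_i(\mathbf{n}_i\cdot u)=0$. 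As $u$ is arbitrary, $\sum_i\alpha_i\mathbf{n}_i=0$.

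For \emph{existence}, assume $\sum_i\alpha_i\mathbf{n}_i=0$ and optimize over support parameters. For $h=(h_1,\dots,h_k)\in\mathbb{R}^k$ set
$$P(h)=\{x\in\mathbb{R}^d:\ \mathbf{n}_i\cdot x\le h_i,\ i=1,\dots,k\};$$
since the $\mathbf{n}_i$ span $\mathbb{R}^d$ each $P(h)$ is bounded. I would minimize the linear functional $F(h)=\sum_i\alpha_i h_i$ subject to $V_d(P(h))\ge 1$. The hypothesis is exactly what makes $F$ invariant under the reparametrizations $h_i\mapsto h_i+\mathbf{n}_i\cdot t$ induced by translating $P(h)$ by $t$, and this translation-invariance is what stops a minimizing sequence from running off to infinity, so a minimizer $h^\ast$ exists modulo translation. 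Writing $a_i(h)$ for the $(d-1)$-volume of $P(h)\cap\{\mathbf{n}_i\cdot x=h_i\}$, the first variation of volume is $\partial V_d/\partial h_i=a_i$; the Lagrange condition at $h^\ast$ therefore reads $a_i(h^\ast)=\lambda\alpha_i$ for one multiplier $\lambda>0$, and rescaling $h^\ast$ (which scales every $a_i$) normalizes $\lambda=1$, giving $a_i=\alpha_i$.

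The step I expect to be the real obstacle is nondegeneracy: I must guarantee that at the minimizer every hyperplane $\mathbf{n}_i\cdot x=h_i^\ast$ meets $P^\ast:=P(h^\ast)$ in a genuine $(d-1)$-dimensional facet, so that no $a_i$ collapses and the Lagrange identity is meaningful for all $i$. Positivity of the $\alpha_i$ is what saves this: were the $i$-th inequality redundant, or were it to touch $P^\ast$ only along a face of dimension $\le d-2$, I could lower $h_i^\ast$ a little without decreasing the volume and thereby strictly decrease $F$, contradicting minimality. Turning this into a clean argument means controlling the non-smoothness of $h\mapsto V_d(P(h))$ across changes of combinatorial type and checking that $P^\ast$ is genuinely $d$-dimensional; this is where the care goes.

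For \emph{uniqueness}, suppose $P$ and $Q$ both realize the data. I would compare them through mixed volumes. From the pyramid decomposition $V_d(K)=\tfrac1d\sum_i h_i(K)a_i(K)$ and the representation $V(P,\dots,P,Q)=\tfrac1d\sum_i h_i(Q)a_i(P)$, the equality of facet areas $a_i(P)=a_i(Q)=\alpha_i$ yields $V(P,\dots,P,Q)=V_d(Q)$ and symmetrically $V(Q,\dots,Q,P)=V_d(P)$. Minkowski's first inequality $V(P,\dots,P,Q)^d\ge V_d(P)^{d-1}V_d(Q)$ then forces $V_d(P)=V_d(Q)$ and places me in its equality case, so $P$ and $Q$ are homothetic; identical (not merely proportional) facet areas pin the ratio to $1$, so they differ by a translation. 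The only substantial external input is the equality case of the Brunn--Minkowski inequality, which I would invoke rather than reprove.
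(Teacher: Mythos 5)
The paper itself does not prove this theorem: it is quoted as Minkowski's classical result of 1897 (with a citation), and only its corollaries for parallelotopes are used later. So the comparison can only be against the classical argument, and your proposal is essentially Minkowski's own proof: the projection count for necessity, the constrained variational problem (minimize $\sum_i \alpha_i h_i$ subject to $V_d(P(h))\ge 1$) with the first-variation identity $\partial V_d/\partial h_i = a_i$ and a Lagrange multiplier for existence, and the mixed-volume/Brunn--Minkowski equality-case argument for uniqueness. All three parts are the standard and correct route, and you flag the right technical care points (compactness of a normalized minimizing sequence, nondegeneracy of every facet at the minimizer). One justification is wrong as stated, though the conclusion you need is true: you claim $P(h)$ is bounded ``since the $\mathbf{n}_i$ span $\mathbb{R}^d$.'' Spanning alone does not give boundedness --- in the plane the normals $(1,0)$, $(0,1)$, $\tfrac{1}{\sqrt{2}}(1,1)$ span $\mathbb{R}^2$ but cut out an unbounded region. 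What you need is that the $\mathbf{n}_i$ \emph{positively} span $\mathbb{R}^d$, and in the existence direction this does follow from your standing hypothesis: if $u$ were a recession direction of $P(h)$, then $\mathbf{n}_i\cdot u\le 0$ for all $i$, and pairing $u$ with $\sum_i \alpha_i\mathbf{n}_i=0$ (all $\alpha_i>0$) forces $\mathbf{n}_i\cdot u=0$ for every $i$, whence $u=0$ by the spanning assumption. With that repair, and noting that $d$-dimensionality of the minimizer is automatic from the constraint $V_d\ge 1>0$, your outline is a complete and correct plan; it is simply independent of (and more detailed than) anything in the paper, which treats the statement as a black box.
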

\index{Minkowski theorem on polytopes}

The theorem has an important implication for parallelotopes.

\begin{corollary}(Minkowski, \cite{bib_minkowski_1905}, 1905)
\begin{enumerate}
\item A parallelotope is centrally symmetric
\footnote{A set $A$ is called {\em centrally symmetric}
if there is a point $a$ such that the mapping
$*: x\to 2a - x$ maps $A$ onto itself. Point $a$
is called the {\em center of symmetry}.}
\index{central symmetry}
\item Facets of a parallelotope are centrally symmetric
\item The projection of a parallelotope along a face of $(d-2)$ 
dimensions onto a complementary 2-space is either a parallelogram, 
or a centrally symmetric hexagon.
\footnote{Properties
(1) and (2) are not independent: if $d\ge 3$, then the central 
symmetry of a polytope follows from the central symmetry of its 
facets (ie. $(d-1)$-dimensional faces), see 
\cite{bib_venkov_conditions_dependency}.} 
\end{enumerate}
\end{corollary}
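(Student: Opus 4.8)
The plan is to recognise these three properties as the classical ``Minkowski conditions'' and to extract each of them from the uniqueness clause of Theorem~\ref{minkowski_polytope_theorem} together with the single structural fact that the tiling consists of translates $\{P+\lambda:\lambda\in\Lambda\}$ of one parallelotope $P$ meeting face-to-face. Throughout I write $F_1,\dots,F_k$ for the facets of $P$, $\mathbf{n}_i$ for the outward unit normal of $F_i$, $\alpha_i$ for its $(d-1)$-volume, and I let $P+v_i$ denote the unique tile sharing the facet $F_i$ with $P$.

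First I would prove (1). Since the tiling is face-to-face, $F_i=P\cap(P+v_i)$ is simultaneously a facet of $P$ and a facet of its neighbour $P+v_i$; viewed from $P+v_i$ this facet carries the outward normal $-\mathbf{n}_i$. Translating the neighbour back by $-v_i$ shows that $P$ itself has a facet with outward normal $-\mathbf{n}_i$ and the same volume $\alpha_i$. Hence the facet data $\{(\mathbf{n}_i,\alpha_i)\}$ is invariant under $\mathbf{n}\mapsto-\mathbf{n}$, which is precisely the facet data of the reflected polytope $-P$. By the uniqueness part of Theorem~\ref{minkowski_polytope_theorem}, $-P=P+c$ for some $c$, and this is equivalent to $P$ being centrally symmetric.

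Next I would handle (3), which I would place before (2). Fix a $(d-2)$-face $G$ and project along $\aff(G)$ onto a complementary $2$-plane $\Pi$, writing $\pi$ for the projection. Every tile is a translate of $P$, so every projected tile is a translate of the single convex polygon $\pi(P)$, which is centrally symmetric by~(1). The tiles of the corona around $G$ project to copies of $\pi(P)$ that cover a full neighbourhood of the vertex $\pi(G)$ without overlapping, so $\pi(P)$ admits a tiling of the plane by translation. A convex polygon with this property must be a parallelogram or a centrally symmetric hexagon --- the classical planar fact already invoked in the introduction --- which gives~(3). If one prefers a self-contained argument, this can be re-derived by counting the angles of the translates meeting at $\pi(G)$ and using that $\pi(P)$ is centrally symmetric, which forces at most three pairs of parallel edges.

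Finally, (2) is the step I expect to be the main obstacle, because the naive attempt is circular: central symmetry of $P$ only yields that the facet $F=F_i$ and its opposite facet are related by $F^{-}=2c-F$, i.e. $F^{-}$ is a translate of $-F$, which is equivalent to what we want to prove rather than a consequence of it. The honest route is to use the tiling one dimension down. Let $H$ be the hyperplane spanned by $F$. The plan is to show that the facets with normal $\mathbf{n}_i$ of those tiles that abut $H$ form a face-to-face tiling of $H$ by translates of $F$; granting this, $F$ is itself a $(d-1)$-dimensional parallelotope, and part~(1) applied in dimension $d-1$ makes $F$ centrally symmetric. The delicate point --- and the one requiring real care --- is to prove that these facets actually cover all of $H$, i.e. that the layer of tiles immediately below $H$ meets $H$ in full facets rather than in lower-dimensional faces; this uses face-to-face-ness together with the translation symmetry of the tiling along the sublattice $\Lambda\cap H$, and it is exactly the content one must supply to make the induction on dimension go through.
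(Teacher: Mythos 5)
Your step (2) is where the proposal genuinely fails, and the failure is not the ``delicate point'' you flag but an outright false claim. You assert that the facets (with the given normal) of the tiles abutting the hyperplane $H$ spanned by a facet $F$ form a face-to-face tiling of $H$ by translates of $F$, so that $F$ becomes a $(d-1)$-dimensional parallelotope and induction applies. Already in the face-to-face tiling of ${\mathbb R}^3$ by hexagonal prisms this is false: take $F$ to be a vertical rectangular facet. The plane $H$ containing $F$ extends the corresponding edge of the hexagonal cross-section to a full line, and in the honeycomb that line passes through the interiors of adjacent hexagons; hence $H$ cuts through the interiors of neighbouring prisms, large portions of $H$ meet no facet at all, and the facets lying in $H$ cannot possibly tile $H$. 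No argument about ``the layer immediately below $H$'' can repair this, because points of $H$ interior to a tile lie on no facet whatsoever. The classical proof of (2) goes differently: by part (1) the facet of any tile with normal $-\mathbf{n}$ is a translate of $-F$, so $F$ is covered, with disjoint relative interiors, by pieces of the form $F\cap(-F+w_j)$; one then needs a separate lemma that a convex polytope partitioned by intersections with translates of its own reflection must coincide with a single such translate. That lemma is the actual content of (2), and it is absent from your proposal.

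Two further problems. First, the paper does not prove this corollary at all --- it cites Minkowski's 1905 paper \cite{bib_minkowski_1905} --- but within the paper's logic the corollary must apply to a parallelotope whose given tiling is \emph{not} assumed normal: the statement that every parallelotope admits a face-to-face tiling is deduced \emph{afterwards} from this corollary combined with Venkov's theorem. All three of your arguments assume face-to-face-ness from the outset (e.g.\ that each facet of $P$ is shared with exactly one neighbour), which makes the overall structure circular. Part (1) can be patched without that assumption by an area-density count: in a ball of radius $R$, facets with normal $\mathbf{n}$ must be matched, up to $O(R^{d-1})$ boundary terms, by facets with normal $-\mathbf{n}$ covering them from the other side, which gives $A(\mathbf{n})=A(-\mathbf{n})$ and then the uniqueness clause of Theorem~\ref{minkowski_polytope_theorem} exactly as you intend. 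Second, in (3) your assertion that the projected tiles around $\pi(G)$ ``cover a full neighbourhood of the vertex $\pi(G)$ without overlapping'' is not automatic --- projections of non-overlapping tiles can overlap --- and is precisely Delaunay's layer (``couche'') lemma recalled in the paper's discussion of stars of $(d-2)$-faces; it requires proof, not assertion.
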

\medskip
\begin{figure}
\begin{center}
\resizebox{100pt}{!}{\includegraphics[clip=true,keepaspectratio=true]
{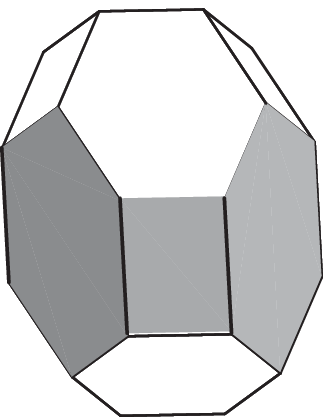}}
\resizebox{200pt}{!}{\includegraphics[clip=true,keepaspectratio=true]
{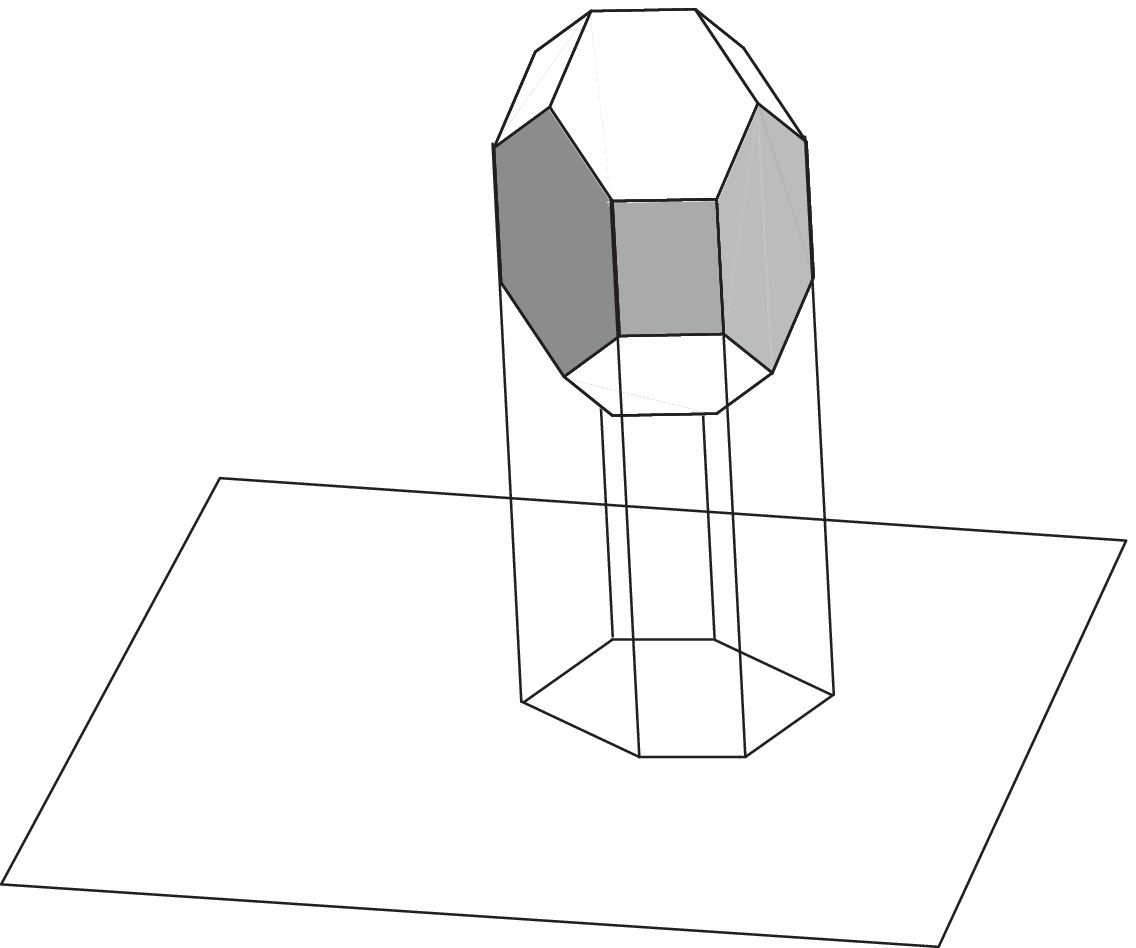}}
\end{center}
\caption{A belt and the projection along the corresponding $(d-2)$-face, 
$d = 3$}
\label{belt-figure}
\end{figure}

\begin{theorem}(Venkov, \cite{bib_venkov_criterion} 1954)
\label{venkov_criterion_theorem}
Conditions 1-3 are sufficient for a polytope to be 
a parallelotope admitting a normal (face-to-face) tiling.
\end{theorem}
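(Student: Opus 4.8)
The plan is to build an explicit candidate tiling out of the three conditions and then certify it is a genuine face-to-face tiling by a local-to-global argument. First I would use condition (1) to fix the centre of symmetry $O$ of $P$, and for each facet $F$ with centre $c_F$ set the translation vector $t_F = 2(c_F - O)$, generating the abelian group $\Lambda = \langle t_F : F \text{ a facet of } P \rangle$. Condition (2) is exactly what makes the neighbour $P + t_F$ meet $P$ along the whole of $F$: since $F$ is centrally symmetric, its image under the point reflection in $O$ is the parallel opposite facet $F'$, and $F' + t_F = F$, so $P$ and $P + t_F$ abut along $F$ facet-to-facet. This produces the first shell of neighbours and shows that if $\{P+t : t \in \Lambda\}$ tiles at all, it tiles face-to-face across facets.

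Next I would establish local consistency around each $(d-2)$-face, which is where condition (3) does the work. Projecting $P$ along the affine hull of a $(d-2)$-face $G$ onto a complementary $2$-plane sends the facets of the belt of $G$ to the edges of a plane polygon $Q$, which by condition (3) is a parallelogram or a centrally symmetric hexagon. Both are plane parallelotopes, so translates of $Q$ tile the plane with exactly $4$ copies (parallelogram) or $3$ copies (hexagon) meeting at the vertex $v$ that is the image of $G$; correspondingly the translation vectors between consecutive cells close up around $v$ (opposite edges cancel in the parallelogram case, and the three distinct edge directions satisfy a vanishing sum in the hexagon case). Lifting this closure through the projection, the facet-translations read off around the belt compose to the identity, so exactly $3$ or $4$ translates of $P$ surround $G$ and tile a neighbourhood of its relative interior with no gap and no overlap. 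This is the key local lemma: the belt condition forces the cells around every codimension-$2$ face to close up precisely.

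Finally I would pass from local to global by a monodromy argument. Let $\Sigma$ be the union of the prospective faces of codimension $\ge 3$; then $\mathbb{R}^d \setminus \Sigma$ is simply connected, and inside it the only branching of the neighbour-relation occurs along codimension-$2$ faces. Any loop in $\mathbb{R}^d \setminus \Sigma$ is homotopic to a product of small meridian loops encircling codimension-$2$ faces, and the local lemma shows that the cell-assignment returns to its starting value around each such meridian. Hence the monodromy of the developing cell-assignment is trivial, the rule $t \mapsto P+t$ extends to a well-defined, locally finite, face-to-face tiling of all of $\mathbb{R}^d$, and consequently $\Lambda$ is discrete of full rank with $P$ as fundamental domain; that is, $P$ is a parallelotope admitting a normal tiling. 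Along the way the necessity direction recorded in the Minkowski corollary, and the uniqueness clause of Theorem \ref{minkowski_polytope_theorem}, serve as consistency checks on the belt combinatorics.

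The main obstacle is precisely this global assembly step. The delicate point is not the existence of a consistent tiling near each individual face, which conditions (1)--(3) supply directly, but the verification that these locally consistent neighbour-relations glue into a single tiling with pairwise disjoint interiors and no omitted region. Condition (3) is tailored to trivialize the codimension-$2$ monodromy, and simple connectivity of the higher-codimension links (each a sphere $S^{k-1}$ with $k \ge 3$) removes any further obstruction; the real content of the proof is to make this topological bookkeeping rigorous — carefully tracking that circling any cycle of facet crossings returns to the starting cell — and to confirm discreteness and full rank of $\Lambda$ as the tiling is assembled.
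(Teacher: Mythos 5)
The paper does not prove Theorem \ref{venkov_criterion_theorem} at all: it quotes it as a classical result and remarks only that Dolbilin's extension theorem \cite{bib_dolbilin_extension_theorem} ``gives an easy way to prove this theorem.'' So your proposal cannot be compared against an in-paper argument; what it can be compared with is the classical proof strategy the paper delegates to its references, and your outline is indeed that strategy. Your local analysis is sound: condition (1) gives the point reflection of $P$ carrying a facet $F$ to its opposite facet $F'$, condition (2) gives $F' + t_F = 2c_F - F = F$, so $P$ and $P + t_F$ abut facet-to-facet; and condition (3), via the belt projection, forces exactly four (parallelogram) or three (hexagon) translates to close up around every $(d-2)$-face. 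This is exactly the input that Venkov's and Dolbilin's arguments consume.

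The genuine gap is in your third step, and it is worth naming precisely because you only half-acknowledge it. Your monodromy argument removes ``the union $\Sigma$ of the prospective faces of codimension $\ge 3$'' and works in its simply connected complement --- but $\Sigma$ is defined in terms of the very tiling whose existence you are trying to establish, so the argument as stated is circular. The rigorous versions avoid this by developing the cell-assignment along combinatorial paths of facet crossings (a developing map on an abstract complex glued from copies of $P$), proving path-independence from the codimension-$2$ closure condition, and then separately proving that the resulting immersion of the glued complex into ${\mathbb R}^d$ is a bijection onto the space --- i.e., that the developed tiles neither overlap nor leave gaps. That last step is not ``topological bookkeeping''; it is the entire content of Dolbilin's extension theorem (and of Venkov's original argument), and nothing in your sketch substitutes for it. Likewise, discreteness and full rank of $\Lambda$ come out of that construction rather than feeding into it. In short: right skeleton, correct use of conditions (1)--(3) locally, but the local-to-global step you defer is precisely the theorem you would need to invoke or reprove.
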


A result by Nikolai Dolbilin (\cite{bib_dolbilin_extension_theorem}) gives 
an easy way to prove this theorem. 

\begin{corollary}An arbitrary parallelotope admits a normal tiling of 
the space.
\end{corollary}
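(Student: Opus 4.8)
The plan is to derive the corollary directly by chaining the two preceding results, with essentially no new work. Recall that a parallelotope is, by definition, a convex polytope that tiles ${\mathbb R}^d$ by translated copies, but this tiling is \emph{not} assumed to be normal; the brick-wall pattern mentioned in the footnote shows that a tiling by translates can fail to be face-to-face. The content of the corollary is therefore that, even when the tiling witnessing $P$ as a parallelotope is not normal, one can produce a genuinely normal tiling by the same polytope.

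First I would invoke Minkowski's corollary (1905) to record that any parallelotope $P$ satisfies conditions 1--3: it is centrally symmetric, its facets are centrally symmetric, and each projection of $P$ along a $(d-2)$-face onto a complementary $2$-plane is a parallelogram or a centrally symmetric hexagon. The key point to verify is that these are \emph{intrinsic} properties of the polytope $P$ and of its own face lattice; they are established for an arbitrary parallelotope without reference to the normality of the tiling by which $P$ is known to be a parallelotope.

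Next I would apply Venkov's theorem (1954), which asserts that conditions 1--3 are \emph{sufficient} for a polytope to be a parallelotope admitting a normal tiling. Since $P$ satisfies 1--3 by the previous step, Venkov's theorem immediately yields a normal (face-to-face) tiling of ${\mathbb R}^d$ by translates of $P$, which is exactly the claim.

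The heart of the matter is thus the exact dovetailing of the two cited results: Minkowski's corollary supplies conditions 1--3 as \emph{necessary} properties of any parallelotope, while Venkov's theorem consumes precisely those same conditions as \emph{sufficient} data for constructing a normal tiling. There is no genuine obstacle once this alignment is recognized; the only point demanding care is confirming that Minkowski's properties are derived for an arbitrary parallelotope independently of the normality of its defining tiling, so that the hypothesis of Venkov's theorem is met unconditionally. If one wishes to make the sufficiency direction self-contained, Dolbilin's extension theorem, cited just above, furnishes a clean proof of Venkov's result.
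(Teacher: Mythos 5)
Your proposal is correct and follows exactly the paper's own argument: cite Minkowski's corollary to get conditions 1--3 for an arbitrary parallelotope, then apply Venkov's theorem to obtain a normal tiling. The extra care you take in noting that Minkowski's conditions are intrinsic to the polytope (independent of the normality of any witnessing tiling) is a sound observation but does not change the route.
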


Indeed, a parallelotope satisfies conditions 1-3 
by Minkowski's corollary, so by Venkov's theorem 
it admits a normal tiling of the space by parallel copies.
In the rest of the paper, we will only consider normal tilings.

\label{belt-definition-page}
\index{Venkov conditions}
\index{belt}
Conditions 1-3 are known as {\em Venkov conditions}. 
The second Venkov condition implies that facets 
of the parallelotopes are organized into {\em belts}: 
to define a belt, take a facet and a $(d-2)$ face 
on its boundary. Since the facet is centrally symmetric,
there is a second $(d-2)$-face parallel to the first one, 
its symmetric copy. Now,
the second $(d-2)$-face belongs to another facet 
of the parallelotope which is centrally symmetric as well. 
Its central symmetry produces a third $(d-2)$-face.
Proceeding this way, we will come back to the facet 
we have started from. The facets that we have visited 
form a {\em belt} (see figure \ref{belt-figure}). 
When we project the parallelotope along
the $(d-2)$-face, we get a parallelogram 
or a hexagon, by Venkov condition 3. 
The edges of this figure are the images of the facets 
in the belt; therefore, the belt contains $4$ or $6$ facets. 

The belt and the $(d-2)$-face are called {\em quadruple} if the projection
is a parallelogram, or {\em hexagonal} if the projection is a hexagon.
\end{subsection}

\begin{subsection}{Stars of $(d-2)$-faces}
\label{d-2-stars-section}
We now discuss one important corollary of the 
Venkov conditions. The {\em star} $\St(F)$
of a face $F$ of the tiling is the collection
of all faces of the tiling which contain $F$.

\begin{lemma} (\cite{bib_delaunay_4d})
The star of a hexagonal $(d-2)$-face 
contains $3$ facets, where no $2$ facets are parallel.
The star of a quadruple $(d-2)$-face contains $4$ 
facets. There are $2$ pairs of parallel facets,
but facets from different pairs are not parallel. 
\end{lemma}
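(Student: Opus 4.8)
The plan is to analyze the local structure of the tiling around a $(d-2)$-face $F$ by using the projection map onto a complementary $2$-plane, which by Venkov condition 3 sends the star of $F$ either to a parallelogram or to a centrally symmetric hexagon. First I would fix $F$ and let $\pi$ denote the orthogonal projection along $\lin(F)$ onto the $2$-dimensional space $L$ complementary to $\aff(F)$. Under $\pi$, each parallelotope of the tiling that contains $F$ maps to a convex polygon in $L$, and the facets of the tiling containing $F$ map to the edges of that polygon; meanwhile the $(d-2)$-face $F$ itself collapses to a single point $p = \pi(F)$. The key observation is that the parallelotopes of the tiling that meet along $F$ arrange themselves around $p$ exactly as the $2$-dimensional tiles of a planar tiling arrange themselves around a vertex, so counting facets in $\St(F)$ reduces to counting edges emanating from $p$ in the planar picture.

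Next I would treat the two cases separately according to Venkov condition 3. In the quadruple case the projection of a single parallelotope is a parallelogram, so $p$ is a vertex at which four edge-directions meet and the tiles fit together so that exactly four two-dimensional faces surround $p$; pulling back through $\pi$, these four edges are the images of four distinct facets of the tiling, and since $\pi$ preserves parallelism of facet normals, opposite edges of each parallelogram pull back to a pair of parallel facets, giving two pairs of parallel facets with facets from different pairs non-parallel. In the hexagonal case the projection is a centrally symmetric hexagon, so three tiles meet around $p$ and their images tile a neighbourhood of $p$ by three angular sectors; the three edge-directions through $p$ are the images of three facets, and because a centrally symmetric hexagon has three pairs of parallel edges pointing in three distinct directions, no two of the three facets are parallel. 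The count ($3$ facets hexagonal, $4$ facets quadruple) then follows directly from the combinatorics of how convex polygons tile a neighbourhood of an interior vertex in the plane.

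The step I expect to be the main obstacle is justifying rigorously that the local picture around $F$ really is a genuine face-to-face planar tiling of a neighbourhood of $p$, rather than merely a collection of polygons sharing the point $p$. Concretely, I must verify that $\pi$ maps the star $\St(F)$ bijectively onto the combinatorial fan of a planar tiling around $p$: that distinct facets through $F$ project to distinct edges, that no two tiles overlap after projection, and that the angular sectors exactly fill a full neighbourhood of $p$ without gaps or overlaps. This uses that the tiling is normal (face-to-face) so that the projected tiles meet edge-to-edge, and that the original tiling fills space so the projections fill $L$ near $p$. Once this correspondence is established, the fact that the belt is either a parallelogram-belt (four facets) or a hexagonal-belt (six facets, but only three distinct up to the opposite facet of each parallelotope appearing in $\St(F)$) translates immediately into the stated counts, and the parallelism assertions follow from the central symmetry of the projected polygon together with the preservation of facet-normal directions under $\pi$.
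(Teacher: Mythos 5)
Your proposal is correct and takes essentially the same route as the paper: both project along the $(d-2)$-face onto a complementary $2$-plane, invoke Venkov condition 3 to identify the projected parallelotopes as parallelograms or centrally symmetric hexagons, and read off the facet count and parallelism relations from the combinatorics of the planar tiles surrounding the projected vertex $p$. The only real difference is organizational: the paper (following Delaunay) projects an entire layer (``couche'') of parallelotopes, obtaining a global face-to-face planar tiling in which your ``main obstacle'' --- verifying that the projected star genuinely tiles a neighbourhood of $p$ --- is absorbed into the layer construction rather than checked locally as you propose.
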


The proof can be found in Delaunay's paper
\cite{bib_delaunay_4d}. We will present
the idea briefly. 

Fix a parallelotope $P_0$ in the tiling and 
consider the collection ${\cal L}$ of parallelotopes 
which can be reached from $P_0$ by crossing facets
parallel to $F^{d-2}$. Delaunay 
called ${\cal L}$ a ``couche'', or layer.

Let $h$ be the projection along $F^{d-2}$ 
onto a complementary $2$-space $L^2$. The images
$h(P)$ of parallelotopes $P\in {\cal L}$ form
a hexagonal or parallelogram tiling of $L^2$,
depending on whether  $F^{d-2}$ is hexagonal 
or quadruple. The projection gives a 1-1
correspondence between parallelotopes in ${\cal L}$
and the tiles in $L^2$. This allows for
the description of the star of $F^{d-2}$,
on the basis of the star of a vertex $h(F^{d-2})$
in the tiling of $L^2$.
\end{subsection}

\begin{subsection}{Voronoi's work}
\label{voronoi_work_section}
Voronoi in paper \cite{bib_voronoi_rech_par} 
proved that a primitive parallelotope 
is affinely equivalent to a DV-domain 
for some lattice (a primitive $d$-dimensional 
parallelotope produces a tiling where each 
vertex belongs to exactly $d+1$ parallelotopes). 
Since our work is based on Voronoi's ideas, 
we explain them now for a simple yet illustrative 
example, a centrally symmetric hexagon.
\begin{figure}
\begin{center}
\resizebox{120pt}{!}{\includegraphics[clip=true,keepaspectratio=true]{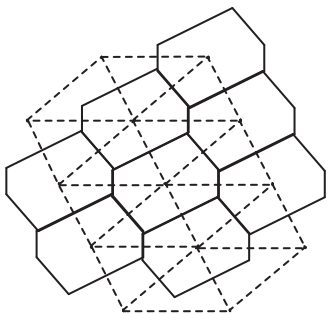}}
\end{center}
\caption{A planar DV-tiling with the reciprocal}
\label{dv-tiling-figure}
\end{figure}

(A) Suppose that we have a DV-tiling
of the plane by equal hexagons. The tiling 
has the characteristic 
property that the line segments connecting 
centers of adjacent parallelotopes are orthogonal
to their common edge, like in figure \ref{dv-tiling-figure}.
The dashed lines form a {\em reciprocal}, a rectilinear
dual graph to the tiling.
\index{reciprocal}

With each edge $F$ of the tiling, we can associate 
a positive number $s(F)$ equal to the length 
of the corresponding reciprocal edge. The numbers
have the property that for each triplet of edges $F$, $G$ and $H$
incidental to the same vertex, we have
\begin{equation}
\label{hexagonal-canonical-condition}
s(F) {\bf n}_F + s(G) {\bf n}_G + s(H) {\bf n}_H = 0
\end{equation}
where ${\bf n}_F$, ${\bf n}_G$ and ${\bf n}_H$ are the 
unit normal vectors to the facets, with directions
chosen appropriately (clockwise or counterclockwise). 
This equation simply 
represents the fact that the sum of edge vectors
of the reciprocal triangle is equal to $0$.

\medskip

(B) Now, consider a tiling by an arbitrary centrally
symmetric hexagon. We would like to find an 
affine transformation which maps the tiling
onto a tiling by DV-domains, as in figure
\ref{affine-transform-figure}.
\begin{figure}
\begin{center}
\resizebox{200pt}{!}{\includegraphics[clip=true,keepaspectratio=true]{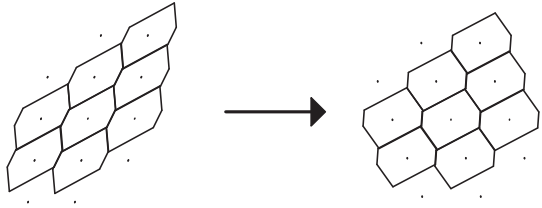}}
\end{center}
\caption{Mapping a hexagonal tiling onto a DV-tiling}
\label{affine-transform-figure}
\end{figure}
\label{hexagonal-canonical-scaling}
We can still assign a positive number $s(F)$ to each edge $F$
of the tiling, so that condition \ref{hexagonal-canonical-condition} 
holds for each triplet of edges incident to the same
vertex. 

Take an arbitrary vertex of the tiling
and let $e_F$, $e_G$ and $e_H$ be the edge
vectors of the three edges (pointing away from the vertex).
We let ${\bf n}_F = R_{\pi/2} \frac{e_F}{|e_F|}$,
${\bf n}_G = R_{\pi/2} \frac{e_G}{|e_G|}$,
${\bf n}_H = R_{\pi/2} \frac{e_H}{|e_H|}$
where $R_{\pi/2}$ is the rotation by angle $\pi/2$.
Vectors ${\bf n}_F$, ${\bf n}_G$ and ${\bf n}_H$
are normal to facets $F$, $G$ and $H$. It it easy to see that 
there are unique (up to a common multiplier) 
positive numbers $s(F)$, $s(G)$ and $s(H)$
so that equation \ref{hexagonal-canonical-condition} holds.
Then assign $s(F)$ to all edges parallel to $F$, 
$s(G)$ to all edges parallel to $G$, $s(H)$ to all edges 
parallel to $H$. This construction will be
called a {\em canonical scaling} of the tiling.
The numbers $s(F)$ are called {\em scale factors}.
The choice of terminology is due to the fact
that we are scaling normal vectors to edges
of the tiling.
\index{canonical scaling}
\index{scale factor}

A canonical scaling allows us to find the desired
affine transformation, using a wonderful method
invented by Voronoi.  His method involves
the construction of a convex surface in $3$-space, 
made of planar hexagons, which projects to 
a hexagonal tiling of the $(x_1,x_2)$-plane. 
This surface can be considered as a piecewise-planar 
function $z = G(x_1,x_2)$ (see figure
\ref{generatrissa-illustration}).

We will define $G$ in terms of its gradient vector, 
which will be constant on each of the hexagons. 
The rule of gradient assignment is as follows. 
We set the gradient to $0$ on some fixed hexagon
and let a point $x$ travel on the plane, avoiding
vertices of the tiling.
{\em When $x$ crosses the edge $F$ between 
two adjacent hexagons, the gradient of the function 
$G$ changes by $s(F) {\bf n}_F$, where ${\bf n}_F$ 
is the unit normal vector to the edge directed from the first
hexagon to the second one, and $s(F)$ is the scale factor.}

This definition of the gradient is consistent, that is,
different paths to the same hexagon result 
in the same gradient. To prove the consistency,
we need to check that the 
sum of increments $s(F) {\bf n}_F$ along a closed
circuit of hexagons is zero. This follows
from equation (\ref{hexagonal-canonical-condition}).

\index{generatrissa}
The function $G$ was called the {\em generatrissa} 
by Voronoi. A drawing of its graph 
is shown in figure \ref{generatrissa-illustration}.
\begin{figure}
\begin{center}
\resizebox{180pt}{!}{\includegraphics[clip=true,keepaspectratio=true]
{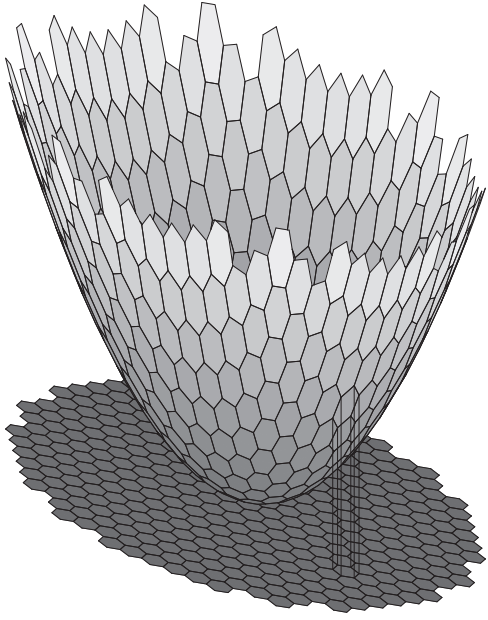}}
\caption{The generatrissa}
\label{generatrissa-illustration}
\end{center}
\end{figure} 
It looks like a paraboloid, a graph of a positive 
definite quadratic form. This is not a coincidence. 
We can actually calculate the quadratic form 
whose graph is inscribed into the graph of $G$. 

\index{facet vector}
Let $\lambda_1$, $\lambda_2$ be two non-collinear 
{\em facet vectors}, translation vectors 
which shift the hexagon onto its neighbors. 
Each center of a hexagon in the tiling can be 
represented as a sum of vectors $\lambda_1$,
$\lambda_2$ with integer coefficients. What is 
the value $G(x)$ of the generatrissa
at the point $x = k_1 \lambda_1 + k_2 \lambda_2$? 
To calculate $G(x)$, we traverse
the path between $0$ and $x$ in the following way:
\medskip
\begin{center}
\resizebox{180pt}{!}{\includegraphics[clip=true,keepaspectratio=true]
{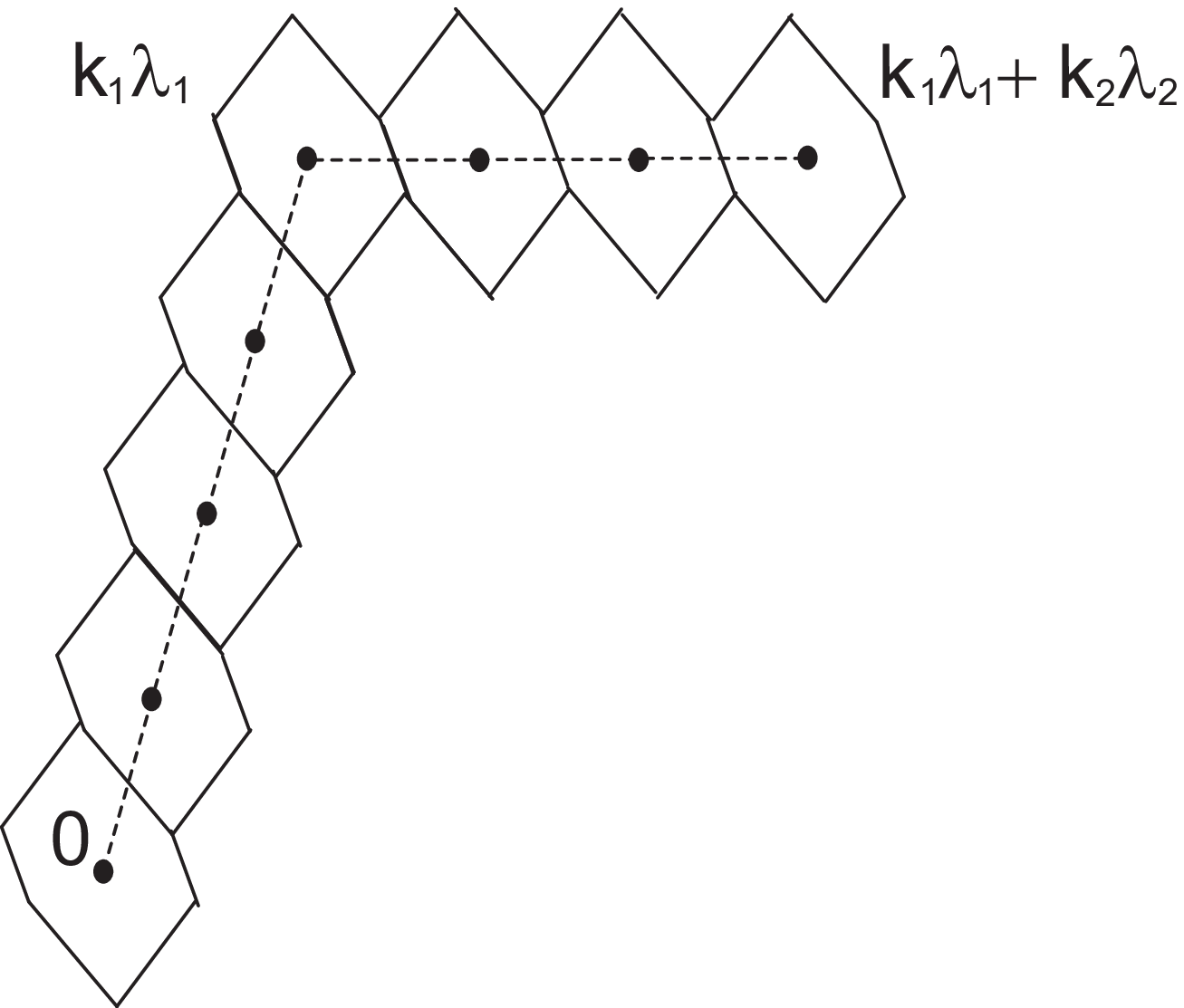}}
\end{center}

We have $G(0) = 0$. To calculate the difference 
$G(k_1 \lambda_1) - G(0)$, we integrate the gradient 
of the generatrissa $G$ along the line segment 
$[0,k_1 \lambda_1]$. Whenever we switch to the 
adjacent hexagon, the gradient increases 
by ${\bf n}_1$. The result is
\begin{equation}
G(k_1 \lambda_1) - G(0) = 
(\lambda_1 \cdot {\bf n}_1)(1 + 2 + \dots + (k_1 - 1) + \frac{k_1}{2}) = 
(\lambda_1 \cdot {\bf n}_1)\frac{k_1^2}{2}.
\end{equation}
The gradient of the generatrissa at the point 
$k_1 \lambda_1$ is equal to $k_1 {\bf n}_1$. 
Next, we calculate the difference between 
$G(k_1 \lambda_1 + k_2 \lambda_2)$ and $G(k_1 \lambda_1)$. 
Again, we integrate the gradient of $G$ along 
the line segment. We start with gradient
$k_1 {\bf n}_1$ at point $k_1 \lambda_1$ and note 
that the gradient increases by ${\bf n}_2$ 
each time we cross the boundary between 
hexagons. Therefore
\begin{equation}
\begin{split}
G(k_1 \lambda_1 + k_2 \lambda_2) - G(k_1 \lambda_1) &= \\
(\lambda_2 \cdot {\bf n}_2)(1 + 2 + \dots + (k_2 - 1) + \frac{k_2}{2}) + 
(k_2 \lambda_2) \cdot (k_1 {\bf n}_1) &= \\
(\lambda_2 \cdot {\bf n}_2)\frac{k_2^2}{2} + (\lambda_2 \cdot {\bf n}_1) k_1 k_2.
\end{split}
\end{equation}

Summing the two differences, we have
\begin{equation}
G(k_1 \lambda_1 + k_2 \lambda_2) =
G(x) = (\lambda_1 \cdot {\bf n}_1)\frac{k_1^2}{2} + 
(\lambda_2 \cdot {\bf n}_2)\frac{k_2^2}{2} + (\lambda_2 \cdot {\bf n}_1) k_1 k_2.
\end{equation}

We use the symbol $Q(x)$ for the quadratic form 
$(\lambda_1 \cdot {\bf n}_1)\frac{y_1^2}{2} + 
(\lambda_2 \cdot {\bf n}_2)\frac{y_2^2}{2} + 
(\lambda_2 \cdot {\bf n}_1) y_1 y_2$, where
$x = y_1 \lambda_1 + y_2 \lambda_2$. 

The two functions $G(x)$ and $Q(x)$ coincide at the centers of hexagons. 
Moreover, one can check that, at these points, the gradients of the functions
are equal, which shows that each lifted hexagon is tangent to the graph 
of the function $Q(x)$. This explains why the generatrissa looks like 
the graph of a quadratic form. 

Next, we prove that the generatrissa is a convex function. Take any two 
points in the plane of the tiling and connect them by a line segment. 
By a small shift of the points, we can assure that the line segment 
does not pass through the vertices of the tiling. The function $G$ limited 
to the line segment is piecewise linear. Its slope is equal
to the scalar product of the direction of the line segment to the gradient 
of the generatrissa. Each time the line segment crosses the edge, the 
gradient of the generatrissa changes by a normal vector to the edge being 
crossed, which points into the next hexagon. Therefore
the slope of $G$ increases along the line segment. This proves that 
$G$ is convex.
\begin{figure}
\begin{center}
\resizebox{120pt}{!}{\includegraphics[clip=true,keepaspectratio=true]
{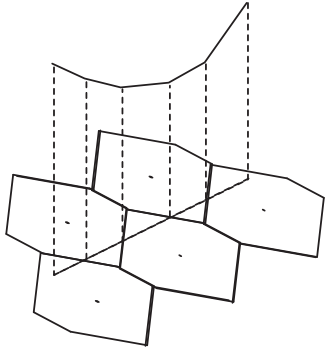}}
\end{center}
\caption{To the proof of the convexity of generatrissa}
\end{figure}
The quadratic form $Q$ is positive definite since it is bounded by the 
function $G$ from below and $G$ is positive everywhere outsize the 
zero hexagon (since the six hexagons surrounding the zero hexagon have 
been lifted upward).

Making an affine transformation if necessary, we can assume that 
$Q(x) = x_1^2 + x_2^2$. The generatrissa is then formed by tangent planes 
at points $(\lambda,Q(\lambda))$ where $\lambda$ is the center of some
hexagon of the tiling.

Let $\lambda_1$, $\lambda_2$ be the centers of two adjacent hexagons. 
It is a fact from elementary geometry that the intersection of the 
planes tangent to the graph of the positive definite quadratic form $Q$
at the points $(\lambda_1,Q(\lambda_1))$ and $(\lambda_2,Q(\lambda_2))$ 
projects to the bisector line of $\lambda_1$ and $\lambda_2$. Therefore 
each hexagon in the tiling is bounded by bisectors to the line segments
which connect its center to the centers of adjacent parallelotopes.
This means that the hexagon is the DV-domain for the lattice of centers.

Of course, we could find a much simpler way to map an arbitrary
hexagonal tiling onto a DV-tiling. However, the method just
described  works almost verbatim 
for a parallelotope tiling of arbitrary 
dimension, so long as the tiling can be equipped 
with a canonical scaling. In the next section, 
we describe a tool for obtaining it.
\end{subsection}

\begin{subsection}{Space filling zonotopes and zone contraction}
Reciprocal construction is one direction of research in the Voronoi conjecture.
There is another direction which considers {\em zonotopes} 
and properties of parallelotopes described by {\em zones}.
A zonotope (zonohedron) is a Minkowski sum of a finite collection 
of line segments. Below we review some results on zonotopes that fill 
the space by parallel copies.
\index{zonotope}
\index{zone}

All parallelotopes of dimensions $2$ and $3$ are zonotopes, but this 
is not true for dimension $4$ and higher. However, zonotopes continue to play 
a prominent role in the $4$-dimensional parallelotope family; Delaunay in 
1929 proved that up to affine equivalence, a parallelotope 
of dimension $4$ is either the $24$-cell, a Minkowski sum of the $24$-cell 
and a zonotope, or a space filling zonotope (\cite{bib_delaunay_4d}).
The $24$-cell is the DV-domain for the lattice $\lbrace z \in {\mathbb Z}^4 :
z_1+\dots+z_4\equiv 0 \pmod 2 \rbrace$ in $4$-dimensional space.
Erdahl in 1999 proved the Voronoi conjecture for space filling 
zonotopes (\cite{bib_erdahl_zonotopes}).

Given a parallelotope, a {\em zone} is the set of edges parallel 
to a given vector. A zone is called {\em closed} if each $2$-dimensional 
face of the parallelotope either contains two edges from the zone, or none. 
If a parallelotope has a closed zone, it can
be transformed so that the lengths of edges in the zone decrease by the same
amount. This operation is called {\em zone contraction}; the reverse operation
is called {\em zone extension}. 
\index{zone contraction}
\index{zone extension}

Engel in \cite{bib_eng_00} discovered that there are at least $103769$ 
combinatorial types of
$5$-dimensional parallelotopes. This is an enormous jump from just $52$
combinatorial types\footnote{Two polytopes are of the same combinatorial type if and only
if their face lattices are isomorphic. See book 
\cite{bib_mcmullen_convex_polytopes} for details.} of parallelohedra 
of dimension $4$. He uses ``contraction 
types'' of parallelotopes to refine the combinatorial type classification. 
Contraction type is a complex attribute of a parallelotope, 
defined using zone extension and zone contraction operations. 

Finally we note that a direct, ``brute force'' computer enumeration of
parallelotopes in a given dimension is hard since the combinatorial complexity
of parallelotopes grows fast: the maximum number of facets of a parallelotope in 
dimension $d$ is $2(2^d - 1)$. 
\end{subsection}
\end{section}

\begin{section}{Canonical scaling and the quality translation theorem}
\label{dual-cells-and-complexes-section}
\label{reciprocals-and-complexes-section}
\label{candesc-section}
We begin to prove our main result, the Voronoi 
conjecture for the case of $3$-irreducible tilings.
In this section we define the canonical scaling
of a tiling, whose existence is equivalent to the 
statement of the Voronoi conjecture, and present the
quality translation theorem, a tool for building the canonical scaling.
As an illustration, we prove the Voronoi 
conjecture for the case of primitive
tilings.

\begin{subsection}{Canonical scaling}
We have encountered the canonical scaling 
of a hexagonal tiling of the plane in section 
\ref{voronoi_work_section}. It allowed us
to construct an affine transformation which mapped 
the hexagon onto a DV-domain, thus proving the Voronoi 
conjecture. It turns out that the same idea 
works for a general normal parallelotope tiling. 
The definition below depends on the discussion 
of stars of $(d-2)$-faces on page
\pageref{d-2-stars-section}.

\begin{definition}
\label{candesc-definition}
Let $S$ be an arbitrary collection of facets in the tiling.
A {\em canonical scaling} of $S$ is an assignment
of a positive number $s(F)$ (called a {\em scale
factor}) to each facet $F\in S$ such that 
\begin{enumerate}
\item If $F_1, F_2, F_3\in S$ are the facets in the star of
a hexagonal $(d-2)$-face, then for some choice of
unit normals ${\bf n}_{F_i}$ to the facets
\begin{equation}
\label{hexagonal-candesc-condition}
s(F_1) {\bf n}_{F_1} + s(F_2) {\bf n}_{F_2} 
+ s(F_3) {\bf n}_{F_3}  = 0
\end{equation}
\item If $F_1,F_2,F_3,F_4\in S$ are the facets in the star
of a quadruple $(d-2)$-face, then for some choice of
unit normals ${\bf n}_{F_i}$ to the facets
\begin{equation}
\label{quadruple-candesc-condition}
s(F_1) {\bf n}_{F_1} + s(F_2) {\bf n}_{F_2} 
+ s(F_3) {\bf n}_{F_3} + s(F_4) {\bf n}_{F_4} = 0
\end{equation}
\end{enumerate}
\index{canonical scaling}
If we have a collection $S$ of faces of the tiling of arbitrary
dimensions, for example, the whole tiling or the star of a face, 
by a canonical scaling of the collection $S$ we mean a canonical 
scaling of the subset of $S$ consisting of facets,
ie. $(d-1)$-faces. 
\end{definition}

This definition was originally developed
by Voronoi in \cite{bib_voronoi_rech_par}.
He used the term ``canonically defined parallelotope''
for what we call a ``canonical scaling of the tiling''.
Our choice to use the word ``scaling'' is due
to the fact that we are scaling (assigning 
lengths to) normal vectors to facets in the tiling.

We can interpret the scale factors 
in the canonical scaling of a tiling
as the edge lengths of a {\em reciprocal graph},
a rectilinear graph in space
whose vertices correspond to parallelotopes,
and whose edges are orthogonal to their
corresponding facets. See figure 
\ref{dv-tiling-figure} on page 
\pageref{dv-tiling-figure} for 
an example of a reciprocal graph.
\index{reciprocal graph}

Two canonical scalings of face collections 
$S$ and $S'$ {\em agree} (on $S\cap S'$) if their restrictions to 
$S\cap S'$ are the same up to a common multiplier.

Consider a special case when $S = \St(F^{d-2})$.

(a) If $F^{d-2}$ is hexagonal, then each two
normal vectors to facets in its star are noncollinear,
therefore a canonical scaling
of $S$ is unique up to a common multiplier.

This implies that if $\St(F^{d-2}) \subset S'$
and $s'$ is a canonical scaling of $S'$,
then 
\begin{equation}
\label{hexagonal-scale-equation}
\frac{s'(F_1)}{s'(F_2)} = \frac{s(F_1)}{s(F_2)}
\end{equation}

(b) If $F^{d-2}$ is quadruple, then there
are two pairs of parallel facets in the star
of $F^{d-2}$, say, $F_1$ and $F_3$, $F_2$
and $F_4$. Facets $F_1$ and $F_2$ are not parallel.
For equation \ref{quadruple-candesc-condition}
to hold, we must ${\bf n}_{F_1} = - {\bf n}_{F_3}$,
${\bf n}_{F_2} = -{\bf n}_{F_4}$, and 
\begin{equation}
\label{quadruple-scale-equation}
\begin{split}
s(F_1) = s(F_3) = a \\
s(F_2) = s(F_4) = b 
\end{split}
\end{equation}
However, $a$ and $b$ can be any positive numbers. 
Therefore a canonical scaling of 
$\St(F^{d-2})$ is not unique up to a common
multiplier.

Canonical scalings are important for our 
research because of the following theorem:

\begin{theorem}
\label{equivalence-proposition}
\label{equivalence-theorem}
Consider a normal parallelotope tiling of ${\mathbb R}^d$.
The following statements are equivalent:
\begin{itemize}
\item The tiling is affinely equivalent to the 
DV-tiling for some lattice.
\item The tiling has a canonical scaling.
\end{itemize}
\end{theorem}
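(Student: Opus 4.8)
The statement is an equivalence, so I would prove the two implications separately, and I would first record a simplifying observation: the property of admitting a canonical scaling is itself an \emph{affine invariant}. Indeed, if $M$ is the linear part of an affine map carrying a tiling $T$ to a tiling $T'$, then a facet $F$ with unit normal ${\bf n}_F$ is sent to a facet whose normal direction is $(M^{-1})^T {\bf n}_F$; so setting $s'(MF) = s(F)\,|(M^{-1})^T {\bf n}_F|$ turns any relation $\sum_i s(F_i){\bf n}_{F_i}=0$ (hexagonal or quadruple) into the corresponding relation for $T'$ after factoring out $(M^{-1})^T$. The multiplier $|(M^{-1})^T {\bf n}_F|$ depends only on the normal direction of $F$, hence only on the facet, so $s'$ is a well-defined positive assignment. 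Thus a canonical scaling of $T$ transports to one of $T'$ and back, which reduces the first implication to the single claim that the DV-tiling for a lattice carries a canonical scaling.

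For the DV-tiling I would use the reciprocal directly. For a lattice $\Lambda$, the segment joining the centers $\lambda_1,\lambda_2$ of two DV-cells sharing a facet $F$ is orthogonal to $F$, so $\lambda_2-\lambda_1 = s(F){\bf n}_F$ with $s(F)$ its length. Around any hexagonal (resp. quadruple) $(d-2)$-face, the cells of Delaunay's layer project to the three (resp. four) tiles meeting at a vertex of a planar hexagonal (resp. parallelogram) tiling, so their centers form a closed triangle (resp. quadrilateral) in the complementary $2$-plane; saying that the edge vectors of this closed polygon sum to zero is exactly conditions (\ref{hexagonal-candesc-condition}) and (\ref{quadruple-candesc-condition}). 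Together with the affine-invariance remark this yields ``affinely equivalent to a DV-tiling $\Rightarrow$ has a canonical scaling.''

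The substantial direction is the converse, and here I would run Voronoi's generatrissa construction in arbitrary dimension, just as in the planar illustration of Section \ref{voronoi_work_section}. Starting from a canonical scaling $s$, define a piecewise-linear function $G:{\mathbb R}^d\to{\mathbb R}$ whose gradient is constant on each tile and jumps by $s(F){\bf n}_F$ across each facet $F$ (normal oriented into the tile being entered). The first task is consistency: the increments must sum to zero along every closed loop of tiles. I would reduce an arbitrary loop in the dual graph to a combination of elementary loops each encircling one $(d-2)$-face, and on such an elementary loop the increments cancel by precisely the hexagonal/quadruple conditions (\ref{hexagonal-candesc-condition})--(\ref{quadruple-candesc-condition}); this reduction is where simple connectivity of the relevant $2$-dimensional dual complex enters. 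Once $G$ is well defined, convexity follows because along any generic line the directional slope of $G$ increases at each facet crossing, the jump $s(F){\bf n}_F$ always pointing across the line in the direction of travel.

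Finally I would identify the quadratic form. With $\Lambda$ the lattice of tile centers and a basis $\lambda_1,\dots,\lambda_d$, I would compute exactly as in the planar case that $G$ and its gradient agree at the centers with those of a quadratic form $Q$; hence every lifted tile is tangent to the graph of $Q$, and the convexity and growth of $G$ force $Q$ to be positive definite. After an affine change of coordinates making $Q(x)=|x|^2$, the lifted tiles become tangent planes to the standard paraboloid, and the elementary fact that two such planes meet over the perpendicular bisector of their tangency points shows each tile is cut out by the bisectors of its center against its neighbours, i.e. it is the DV-cell of $\Lambda$. I expect the main obstacle to be the consistency step: making rigorous the topological reduction of arbitrary dual loops to the elementary $(d-2)$-face loops, and then establishing global convexity and positive-definiteness in dimension $d$, where the planar picture is no longer available.
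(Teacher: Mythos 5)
Your proposal is correct in outline, but note that the paper does not actually prove this theorem at all: it proves the planar hexagonal case in section \ref{voronoi_work_section}, observes that Voronoi settled primitive tilings, and refers the general case to Deza's paper \cite{bib_deza_equiv_statements}. What you wrote is essentially the $d$-dimensional generalization of that planar argument --- the route the paper itself says ``works almost verbatim'' but never carries out --- so you are supplying an argument where the paper gives a citation. Two comparisons on substance. First, the step you flag as the main obstacle (reducing an arbitrary closed circuit of tiles to elementary circuits around $(d-2)$-faces) needs no ad hoc topology: the tiling is a QRR-complex, and the paper's quality translation theorem (theorem \ref{quality-translation-theorem}) applies verbatim to ${\mathbb R}^d$-valued gain functions, as the paper explicitly remarks; taking the gain of a facet crossing to be the gradient increment $s(F)\,{\bf n}_F$, the $(d-2)$-primitive circuits have zero gain precisely by conditions (\ref{hexagonal-candesc-condition}) and (\ref{quadruple-candesc-condition}), and well-definedness of the generatrissa follows. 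Second, your two preliminary reductions are sound and are a nice way to organize the easy direction: the covector transformation law $s'(MF)=s(F)\,|(M^{-1})^{T}{\bf n}_F|$ does transport canonical scalings across affine maps, and for a DV-tiling the center differences $\lambda_{i+1}-\lambda_i=s(F_i)\,{\bf n}_{F_i}$ around any $(d-2)$-face sum to zero because they close up a polygon, which is exactly the reciprocal argument sketched in figure \ref{dv-tiling-figure}. The remaining analytic steps (convexity of $G$, tangency of the lifted tiles to the quadratic form $Q$, positive definiteness, and the bisector identification after normalizing $Q(x)=|x|^2$) generalize from the planar computation exactly as you claim, so your sketch is a legitimate self-contained proof strategy rather than a restatement of the paper's.
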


Therefore, to prove the Voronoi conjecture
for a tiling, it is sufficient to obtain
a canonical scaling for the tiling.

We gave the proof of this result for the hexagonal 
tiling in section \ref{voronoi_work_section}.
Voronoi proved it for the case of primitive tilings.
See paper \cite{bib_deza_equiv_statements} by Deza 
for a proof in the general case.
\end{subsection}

\begin{subsection}{The quality translation theorem}
\label{quality-translation-section}

To obtain canonical scalings, we use powerful 
topological ideas. Originally developed by Voronoi, 
they were generalized by Ryshkov and Rybnikov 
(\cite{bib_ryshkov_rybnikov}) into the Quality 
Translation Theorem.

The theorem is applicable to a wide class of 
polyhedral complexes. We need to review the 
complex-theoretic terminology to be able 
to present the theorem.

\begin{definition}(\cite{lefschetz-algebraic-topology})
A polyhedral complex in ${\mathbb R}^n$ is 
a countable complex $K$ with the following 
properties:
\begin{enumerate}
\item a $p$-dimensional element, or $p$-cell, $c^p$ 
is a (bounded, convex) relatively open polytope 
in a $p$-dimensional affine space in ${\mathbb R}^n$;
\item the cells are disjoint;
\item the union of the cells $c' \prec c^p$ is 
the topological closure of $c^p$ in ${\mathbb R}^d$;
\item If $\phi(c)$ is the union of the cells 
$c'\notin \St_{K}(c)$, then the topological closure 
of $\phi(c)$ does not intersect $c$.
\end{enumerate}
We denote by $|K|$ the {\em support} of $K$, 
the union of all cells $c\in K$.
\end{definition}
\index{polyhedral complex}
\index{$\prec$}

We assume that the set $|K|$ has the topology 
induced from the Euclidean space. The {\em closure} $\Cl(c^p)$
of a cell $c^p$ is the collection of cells $c'$ with $c'\prec c^p$.
The star $\St_{K}(c^p)$ of 
$c^p$ is the collection of cells whose closures 
contain $c^p$. We write $\St(c^p)$ if it is clear
which complex we are considering. The boundary 
of cell $c^p$ is the closure of $c^p$ without $c^p$.
\index{closure}
\index{star}

The dimension $n$ of a complex, $\dim(K)$, is the 
maximal dimension of an element. An upper index is used 
to indicate the dimension of the complex and its cells.
A polyhedral complex $K^n$ is called {\em dimensionally 
homogeneous} if each cell  of dimension less than $n$ 
is on the boundary of a cell of dimension $n$. 
A {\em strongly connected complex} is one 
where each two distinct $n$-cells can be connected 
by a sequence of $n$-cells where two consecutive 
cells intersect over a $(n-1)$-cell. The $m$-dimensional
{\em skeleton} $\Sk_m(K^n)$ of the complex $K^n$ 
is the complex consisting of cells of $K^n$ 
of dimension $m$ or less.
\index{dimensionally homogeneous}
\index{strongly connected complex}
\index{skeleton}

Examples of polyhedral complexes are polytopes, 
normal tilings of the Euclidean space, and their 
skeletons. In all of these examples, cells 
are the relative interiors of faces.

\begin{definition} (\cite{bib_ryshkov_rybnikov})
An $n$-dimensional simply and strongly connected, 
dimensionally homogeneous polyhedral complex 
$K^n\subset{\mathbb R}^N$ is a QRR-complex\footnote{Authors of 
paper \cite{bib_ryshkov_rybnikov} use a notion 
of QRR-complex which is slightly more general.} 
if all of the stars $\St(c^p)$, $p < n-1$ satisfy 
one of the following two local conditions.

\noindent {\bf Local condition 1.} For $0\le p \le n-3$, 
$\St(c^p)$ is combinatorially equivalent to 
the product of $c^p$ and a relatively open cone 
with a simply connected and strongly connected 
$(n-p-1)$-dimensional finite polyhedron as its base.

\noindent {\bf Local condition 2.} For $p=n-2$, 
$\St(c^p)$ is combinatorially equivalent to the 
product of $c^p$ and a relatively open cone with a connected 
$1$-dimensional finite polyhedron as its base.
\footnote{The term {\em cone} in the local conditions 
means the union of line segments joining a point 
outside the $(n-p-1)$-space spanned by the base 
polyhedron with each point in the base polyhedron, 
minus the base polyhedron itself.}
\end{definition}
\index{QRR-complex}

\begin{proposition}(\cite{bib_ryshkov_rybnikov})
The following complexes are QRR:
\begin{enumerate}
\item Polytopes of arbitrary dimension.
\item Normal polytopal tilings of the Euclidean space ${\mathbb R}^d$.
\item Skeletons of dimension $2$ or more of QRR-complexes.
\end{enumerate}
\end{proposition}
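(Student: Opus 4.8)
The plan is to verify, for each of the three families, the four defining attributes of a QRR-complex in turn: dimensional homogeneity, strong connectivity, simple connectivity of the support, and the two local conditions on the stars $\St(c^p)$ for $p<n-1$. The unifying observation is that the two local conditions are assertions about the combinatorial type of the \emph{link} (face figure) of a cell, so in every case the work reduces to identifying that link and checking that it is a simply and strongly connected polyhedron of dimension $n-p-1$, respectively a connected $1$-polyhedron when $p=n-2$.

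For the first family, let $K$ be the face complex of a polytope $P^n$, whose top cell is $\relint P$. Dimensional homogeneity and strong connectivity are immediate, since there is a single $n$-cell on whose boundary every proper face lies, and simple connectivity of $|K|=P$ follows from convexity. For a $p$-face $F$ with $p<n-1$, the faces of $P$ containing $F$ are in inclusion-preserving bijection with the faces of the face figure $P/F$, a convex polytope of dimension $n-p-1$; hence $\St(F)$ is combinatorially $F\times\cone(P/F)$. When $0\le p\le n-3$ the base $P/F$ has dimension $\ge 2$ and, being a convex polytope, is simply and strongly connected, giving Local condition~1; when $p=n-2$ the base $P/F$ is a segment, a connected $1$-polyhedron, giving Local condition~2.

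The second family is handled identically, except that the base of the cone is now the link of $F^p$ in the tiling rather than a face figure. Near a $p$-face the tiling looks like $F\times\cone(\text{lk}(F))$, where $\text{lk}(F)$ is a polyhedral tiling of the sphere $S^{d-p-1}$. For $p\le d-3$ this sphere has dimension $\ge 2$, hence is simply connected, and its tiling is strongly connected, yielding Local condition~1; for $p=d-2$ the link is a tiling of the circle, i.e.\ a connected $1$-polyhedron (this is exactly the hexagonal/quadruple cycle of $3$ or $4$ facets obtained via Delaunay's layer construction in Section~\ref{d-2-stars-section}), yielding Local condition~2. Dimensional homogeneity is clear, strong connectivity of the tiles follows from connectedness of ${\mathbb R}^d$, and simple connectivity of $|K|={\mathbb R}^d$ is automatic.

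The third family is the crux, and I expect it to be the main obstacle. Let $L=\Sk_m(K^n)$ with $2\le m\le n$ and $K^n$ a QRR-complex (the case $m=n$ is trivial, so take $m<n$). Writing the local product structure of $K$ as $\St_K(c^p)\cong c^p\times\cone(B)$ with $B$ the $(n-p-1)$-dimensional base, truncation to dimension $m$ gives $\St_L(c^p)\cong c^p\times\cone\bigl(\Sk_{m-p-1}(B)\bigr)$, so the local conditions for $L$ become the same conditions for the truncated bases $\Sk_{m-p-1}(B)$. The key tool throughout is the cellular approximation theorem: the inclusion $\Sk_k(X)\hookrightarrow X$ of a polyhedral complex induces an isomorphism on $\pi_1$ as soon as $k\ge 2$. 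Applied to $X=|K|$ this gives $\pi_1(L)\cong\pi_1(K)=1$; applied to each relevant base (which, since then $p\le m-2<n-2$, satisfies Local condition~1 in $K$ and is therefore simply connected) it gives simple connectivity of $\Sk_{m-p-1}(B)$ whenever $m-p-1\ge 2$. The hypothesis $m\ge 2$ is precisely the threshold at which this argument controls $\pi_1$, which is why skeletons of dimension $0$ or $1$ are excluded. What remains, and what I expect to be the delicate part, is to re-establish strong connectivity and dimensional homogeneity after truncation --- that every low-dimensional cell of $L$ still lies on an $m$-cell and that the $m$-cells are linked through $(m-1)$-cells --- together with strong connectivity of the truncated bases; these do not follow from simple connectivity alone and must be extracted from the cone/product local structure of $K$, using an induction on $n-m$ so that the strong-connectivity hypothesis on the full bases $B$ is inherited by their skeletons.
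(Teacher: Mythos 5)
You should first know that the paper never proves this proposition: it is stated with a citation to \cite{bib_ryshkov_rybnikov} and used as imported background, so there is no internal proof to compare your argument against. Judged on its own merits, your treatment of the first two families is sound: identifying $\St(F)$ with $F\times\cone(P/F)$ (face figure) for polytopes, and with $F\times\cone(\mathrm{lk}(F))$ for normal tilings, where the base is respectively an $(n-p-1)$-polytope or a polyhedral subdivision of $S^{d-p-1}$, is exactly the right reduction, and your $p=n-2$ case correctly produces the connected $1$-dimensional base required by Local condition 2.

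The gap is where you say it is, in the third family, and as written it is a genuine one: you defer dimensional homogeneity of $\Sk_m(K)$, strong connectivity of $\Sk_m(K)$, and strong connectivity of the truncated bases $\Sk_{m-p-1}(B)$ to an unspecified induction on $n-m$. No induction is needed; both claims close with two standard facts. Dimensional homogeneity: every cell of $K$ of dimension less than $m$ lies on the boundary of some $n$-cell (homogeneity of $K$), and every proper face of an $n$-polytope is contained in faces of every intermediate dimension, in particular dimension $m$. Strong connectivity: for a polytope $P^n$ and $1\le m\le n-1$, any two $m$-faces of $P$ are joined by a chain of $m$-faces in which consecutive members share an $(m-1)$-face (a classical incidence-graph fact about polytopes); hence the $m$-skeleton of each $n$-cell's boundary is strongly connected, and since two $n$-cells of $K$ sharing an $(n-1)$-cell both contain the $m$-faces of that common cell, any strong chain of $n$-cells in $K$ induces a strong chain of $m$-cells in $\Sk_m(K)$. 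The same two observations applied inside each base $B$ (whose cells are again polytopes, with the mild caveat that one must also record dimensional homogeneity of $B$ from the local structure) give strong connectivity of $\Sk_{m-p-1}(B)$. With these insertions your outline closes; without them, your part 3 is a plan rather than a proof.
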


A {\em combinatorial path} on a QRR-complex $K^n$ is a sequence of 
$n$-cells $[c_1^n,\dots,c_k^n]$ where two consecutive cells either share 
a common $(n-1)$-cell, called a {\em joint}, or coincide. A {\em combinatorial circuit}
is a path with at least two different $n$-cells, where the first and last 
cells coincide. A circuit is {\em $k$-primitive} if all its cells belong to 
the star of the same $k$-dimensional cell.

Suppose that we want to assign a positive number $s(c^n)$ 
to each $n$-cell of $K^n$. For example, we will need 
to calculate the scale factors in canonical scalings. 
Suppose that the numbers assigned to adjacent $n$-cells 
$c_1^n,c_2^n$ must be related as follows:
\begin{equation}
\frac{s(c_2^n)}{s(c_1^n)} = T[c_1^n,c_2^n],
\end{equation}
where values of $T[c_1^n,c_2^n]$ are given a priori.
Compare this with equation \ref{hexagonal-scale-equation}
on page \pageref{hexagonal-scale-equation}.

Assume $T[c_1^n,c_2^n]T[c_2^n,c_1^n] = 1$.  A positive number 
can be assigned to combinatorial paths $[c_1^n,\dots,c_k^n]$ by
\begin{equation}
T[c_1^n,\dots,c_k^n] = T[c_1^n,c_2^n]T[c_2^n,c_3^n]\dots T[c_{k-1}^n,c_k^n].
\end{equation}
The number $T[c_1^n,\dots,c_k^n]$ is called the {\em gain}
along the path $[c_1^n,\dots,c_k^n]$,
and the function $T$ is called the {\em gain function}. 
\index{gain}
\index{gain function}

We can  assign a number $s(c_0^n)=s_0$  
to a fixed cell $c_0^n$ and then try to use the gain  
function to assign a number to any other cell $c^n$. 
We connect $c^n$  with $c_0^n$ by a combinatorial path 
$[c_0^n,c_1^n,\dots,c_k^n]$ where $c_k^n = c^n$ 
and then let
\begin{equation}
s(c^n) = s(c_0^n) T[c_0^n,c_1^n,\dots,c_k^n].
\end{equation}
The question of consistency is whether {\em different} paths 
will lead to the same  value of $s(c^n)$. Clearly that will 
be the case if and only if the gain along every combinatorial 
circuit is $1$.
\begin{theorem}(Ryshkov, Rybnikov, \cite{bib_ryshkov_rybnikov})
\label{quality_translation_theorem}
\label{quality-translation-theorem}
(The quality translation theorem)
The gain $T$ along all combinatorial circuits is $1$ 
if and only if the gain along all $(n-2)$-primitive 
combinatorial circuits is $1$. 
\end{theorem}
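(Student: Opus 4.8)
The plan is to prove the nontrivial implication, since the forward direction (all circuits have gain $1$ implies all $(n-2)$-primitive circuits have gain $1$) is immediate from the definition. For the converse I would first pass to logarithms, setting $t[c_1^n,c_2^n]=\log T[c_1^n,c_2^n]$, so that $t$ becomes an additive, antisymmetric quantity on ordered pairs of adjacent $n$-cells and the gain of a circuit becomes a \emph{sum}; the assumption $T[c_1^n,c_2^n]T[c_2^n,c_1^n]=1$ says exactly that $t$ is antisymmetric. The claim to prove is then that this sum vanishes on every combinatorial circuit whenever it vanishes on every $(n-2)$-primitive one.

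The core idea is homotopy-theoretic: realize combinatorial circuits as genuine loops in $|K^n|$ and exploit simple connectivity. Concretely, I would choose an interior point in each $n$-cell and a point in each joint; a combinatorial path $[c_1^n,\ldots,c_k^n]$ then determines a PL path in $|K^n|$ through these points, and a combinatorial circuit becomes a PL loop $\gamma$. Because $K^n$ is simply connected, $\gamma$ bounds a disk, and by PL general position I would take $f\colon D^2\to|K^n|$ with $f|_{\partial D^2}=\gamma$ transverse to the skeleta. A dimension count inside the $n$-complex gives $f^{-1}(\Sk_{n-3}(K^n))=\emptyset$, while $f^{-1}(\Sk_{n-2}(K^n))$ is a finite set of interior points and $f^{-1}(\Sk_{n-1}(K^n))$ is a PL graph $\Gamma$ cutting $D^2$ into regions, each mapped into a single $n$-cell. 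Here Local Condition $1$ is what permits pushing the disk off the codimension-$\ge 3$ skeleton: the simple connectivity and strong connectivity of the links of cells of dimension $\le n-3$ remove any obstruction, so that the only cells the disk must meet transversally are the $(n-2)$-cells.

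Next I would run a sweep-out. Deforming a front across $D^2$ from $\partial D^2$ inward moves $\gamma$ through a family of combinatorial loops, and I would track the change in $t$ at each elementary event. When the front crosses an edge of $\Gamma$ away from its vertices, the loop merely gains or loses a backtrack $[\ldots,c^n,c'^n,c^n,\ldots]$, contributing $t[c^n,c'^n]+t[c'^n,c^n]=0$. When the front passes one of the finitely many points of $f^{-1}(\Sk_{n-2})$, lying over some $(n-2)$-cell $c^{n-2}$, the loop is modified by trading one way around $c^{n-2}$ for the other; by Local Condition $2$ the star $\St(c^{n-2})$ is a fan of $n$-cells around $c^{n-2}$, so this modification inserts exactly an $(n-2)$-primitive circuit, of gain $0$ by hypothesis. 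Since the front eventually collapses $\gamma$ to a constant loop (gain $0$), summing the zero contributions of all events shows the gain of the original circuit is $0$, i.e. $T$ along it is $1$.

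The main obstacle is the bookkeeping in this last step: verifying rigorously that, as the front passes a preimage of an $(n-2)$-cell, the induced change in the combinatorial loop is precisely one full turn around that cell (a single $(n-2)$-primitive circuit) and nothing more, and that the orientations align so the inserted circuit is the expected one. This is exactly where the explicit local model of Local Condition $2$ is needed — that $\St(c^{n-2})$ is combinatorially a product of $c^{n-2}$ with a cone over a connected $1$-polyhedron, which identifies the local wheel of $n$-cells and makes ``going around'' well defined — together with Local Condition $1$ to guarantee that near the crossing no lower-codimension cells interfere and the transverse disk behaves as the product structure predicts.
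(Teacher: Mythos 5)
There is nothing in the paper to compare your proof against: the quality translation theorem is imported with a citation to Ryshkov and Rybnikov \cite{bib_ryshkov_rybnikov}, and the paper only states and applies it, never proves it. So your proposal can only be judged on its own merits, and on those merits it is a sound reconstruction of the kind of topological argument this theorem rests on. The forward direction is indeed trivial; for the converse, your chain --- pass to logarithms, realize a combinatorial circuit as a PL loop through chosen interior points, bound it by a disk using simple connectivity of $|K^n|$, put the disk in general position relative to the skeleta, and sweep --- is a van Kampen--diagram (monodromy) argument, and your use of the local conditions is the correct one. The key point, which you identify, is that $\St(c^p)\setminus c^p$ deformation retracts onto the base polyhedron of the cone in Local Condition 1, so simple connectivity of that $(n-p-1)$-dimensional base is exactly what lets you remove the (generically isolated, by the codimension count) intersections of the disk with cells of dimension $p\le n-3$; after that, $f^{-1}(\Sk_{n-1})$ is a graph $\Gamma$ whose vertices lie over $(n-2)$-cells, arcs crossed away from vertices give backtracks of gain $1$, and vertices give circuits of $n$-cells lying in the star of a single $(n-2)$-cell.

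One remark that actually removes the ``main obstacle'' you flag: you do not need to verify that the loop modification at a preimage of an $(n-2)$-cell is ``precisely one full turn'' with matching orientation. An $(n-2)$-primitive circuit is, by definition, \emph{any} circuit all of whose cells lie in the star of one $(n-2)$-cell; the hypothesis therefore annihilates the gain of every closed walk in the link graph of $c^{n-2}$, no matter how many times or how irregularly it winds. So at a vertex of $\Gamma$ you only need that the cyclic sequence of regions around the vertex maps into $\St(c^{n-2})$, which is immediate from the product structure in Local Condition 2 (connectedness of the $1$-dimensional base is what makes this local wheel a genuine circuit of adjacent $n$-cells). The only genuine technical debt remaining in your sketch is the general-position claim itself, which must be argued cell by cell since $|K^n|$ is not a manifold --- but that is handled by the same link/retraction argument you already invoke, so the gap is one of exposition rather than of substance.
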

\index{quality translation theorem}
In other words, if the process of assigning 
numbers to cells works consistently on the star 
of each $(n-2)$-face, then it works consistently 
on the whole complex.

The quality translation theorem 
can be used verbatim for assigning
elements of a linear space ${\mathbb R}^d$
to cells in a complex. The gain function
in this case is ${\mathbb R}^d$-valued.
Authors of paper  \cite{bib_ryshkov_rybnikov}
formulated the theorem in an even 
more general setting which allows,
for example, to assign colors to polytopes
in a tiling.
\end{subsection}

\begin{subsection}{Voronoi's result}
To illustrate the use of theorem \ref{quality_translation_theorem}, 
we will prove the following theorem of Voronoi 
\cite{bib_voronoi_rech_par}:
\begin{theorem}
\label{voronoi-theorem}
A primitive parallelotope tiling of $d$-dimensional
space is affinely equivalent to a DV-tiling.
\end{theorem}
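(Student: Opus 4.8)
The plan is to construct a canonical scaling for the primitive tiling and then invoke Theorem \ref{equivalence-theorem}, which reduces the Voronoi conjecture to the existence of such a scaling. The key observation is that primitivity drastically simplifies the combinatorics: since exactly $d+1$ parallelotopes meet at each vertex, every $(d-2)$-face is \emph{hexagonal} rather than quadruple. Indeed, at a vertex of a primitive tiling the local structure is a simplicial cone, so the star of a $(d-2)$-face contains exactly $3$ facets, no two parallel --- precisely the hexagonal case of Definition \ref{candesc-definition}. This means we never have to deal with the ambiguity of the quadruple condition \ref{quadruple-candesc-condition}, and on the star of each $(d-2)$-face the scale factors are determined uniquely up to a common multiplier by the hexagonal condition \ref{hexagonal-candesc-condition}, as noted in observation (a) following that definition.

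First I would set up the assignment of scale factors as a gain-function problem, exactly in the framework of Section \ref{quality-translation-section}. I take the complex $K^n$ to be the parallelotope tiling itself (which is a QRR-complex of dimension $d$ by the Proposition following the QRR definition), so that $n=d$ and the $n$-cells are the parallelotopes. For each pair of adjacent parallelotopes $P_1,P_2$ sharing a facet $F$, the hexagonal relation forces a definite ratio of scale factors $s(F)/s(F')$ among the three facets around any $(d-2)$-face on $\partial F$; propagating this, I obtain a multiplicative gain $T[P_1,P_2]$ prescribing how the scale factor must change when crossing from $P_1$ to $P_2$. The content of a canonical scaling is then exactly the \emph{consistency} of this assignment: that the gain around every combinatorial circuit equals $1$.

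Next I would apply the Quality Translation Theorem \ref{quality_translation_theorem}. It reduces global consistency to checking that the gain is $1$ on every $(n-2)$-primitive circuit, i.e.\ on circuits lying in the star of a single $(d-2)$-face. So the whole problem collapses to a purely local verification on the star of one hexagonal $(d-2)$-face. There, by Delaunay's layer argument (the Lemma on stars of $(d-2)$-faces in Section \ref{d-2-stars-section}), the star projects to the neighborhood of a vertex in a planar hexagonal tiling, where exactly three facets meet and the three vectors $s(F_i){\bf n}_{F_i}$ sum to zero; traversing the circuit around this hexagon multiplies the three local ratios and returns the identity by construction. This is essentially the planar computation already carried out in Section \ref{voronoi_work_section}, now legitimized in arbitrary dimension by the reduction.

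\textbf{The main obstacle} I expect is the careful definition of the local gain $T[P_1,P_2]$ so that it is well-defined independent of \emph{which} $(d-2)$-face on the shared facet one uses to compute the ratio, together with verifying that the hexagonal relation \ref{hexagonal-candesc-condition} indeed closes up consistently around the single hexagonal belt --- that is, confirming that the $(d-2)$-primitive gain genuinely equals $1$ rather than merely being forced to some constant. Once this local closure is established and the scaling is obtained, the remaining step is routine: one reconstructs Voronoi's generatrissa as the piecewise-linear convex function whose gradient jumps by $s(F){\bf n}_F$ across each facet $F$, shows it is the graph of a convex surface tangent to a positive-definite quadratic form, and concludes via Theorem \ref{equivalence-theorem} that the tiling is affinely a DV-tiling.
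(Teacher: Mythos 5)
Your overall strategy (build a canonical scaling, then invoke Theorem \ref{equivalence-theorem}, with the Quality Translation Theorem doing the global work) is indeed the paper's strategy, and your observation that primitivity makes every $(d-2)$-face hexagonal is also the paper's first step. But your reduction is set up on the wrong complex, and the error hides exactly the step that constitutes the heart of the proof. A canonical scaling assigns numbers to \emph{facets}, so the complex to which Theorem \ref{quality_translation_theorem} must be applied is the $(d-1)$-skeleton of the tiling: its top cells are the facets, two facets are adjacent when they share a $(d-2)$-face, and the gain $T[F,G]=\alpha(G)/\alpha(F)$ is well defined precisely because the hexagonal relation makes the canonical scaling of $\St(F^{d-2})$ unique up to a common multiplier. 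In your setup ($K^n$ equal to the tiling itself, $n$-cells the parallelotopes) there is no quantity attached to a parallelotope whose ratio $T[P_1,P_2]$ could prescribe: ``how the scale factor changes when crossing from $P_1$ to $P_2$'' conflates the scale factors (which live on facets and are the unknowns) with the gradient of the generatrissa (which does live on parallelotopes, but whose increments $s(F){\bf n}_F$ presuppose the very scale factors you are trying to construct).

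Consequently you place the required verification at the wrong dimension. With the correct complex, of dimension $n=d-1$, the $(n-2)$-primitive circuits are circuits of facets in stars of $(d-3)$-faces, not $(d-2)$-faces. At a $(d-2)$-face there is nothing to check --- that is where the gain is \emph{defined}; the genuine consistency condition sits one dimension lower, where in a primitive tiling four parallelotopes and six facets meet, and it is not ``the identity by construction.'' The paper verifies it by explicitly constructing a canonical scaling on the star of each vertex, via the simplex $\Delta=\lbrace x : x\cdot e_i\le 1,\ i=1,\dots,d+1\rbrace$ whose edge vectors $v_k-v_l$ serve as scaled normals to the facets $F_{kl}$, then restricting this scaling to $\St(F^{d-3})$ and showing that the hexagonal ratios telescope to $1$ around any $(d-3)$-primitive circuit. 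Your proposal contains no substitute for this construction, so the gap is genuine: without it, the locally determined hexagonal ratios need not be globally consistent, and that consistency is the entire content of the theorem. (Your final step of rebuilding the generatrissa is also redundant: once a canonical scaling exists, Theorem \ref{equivalence-theorem} already yields the affine equivalence to a DV-tiling.)
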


\begin{proof} 
For $d=1$, the result is trivial. For $d=2$,
the only primitive parallelotope tiling is 
the hexagonal one. We've worked it out
in the introduction to the paper. 
We assume below that $d\ge 3$.

By theorem \ref{equivalence-proposition}
on page \pageref{equivalence-proposition}, it is sufficient 
to obtain a canonical scaling of the tiling. This 
will be the strategy of the proof.

First we describe the structure of the star
of a vertex $v$. By the assumption
of the theorem, there are exactly $d+1$ parallelotopes
$P_1,\dots,P_{d+1}$ in the star of $v$. Each edge
incident with $v$ has at least $d$ parallelotopes in its star,
however it cannot have all $d+1$ parallelotopes, 
because their intersection is $v$. Therefore each 
edge has exactly $d$ parallelotopes in its star. 

Since the collection of parallelotopes in the star of 
an edge uniquely defines the edge,
there are at most $d+1$ edges incident with $v$
(which is the number of ways to choose
$d$ parallelotopes out of $d+1$). On the other hand,
there cannot be $d$ or less edges because in this case
all edges must be in the same parallelotope.

We have proved that there are exactly $d+1$ edges
of the tiling which are incident with $v$.
Each of the $d+1$ parallelotopes contains exactly $d$
of these edges. Therefore $v$ is a simple vertex
of each parallelotope, and {\em each $k$ edges
incident with $v$ are the edges of a face
of the tiling of dimension $k$, for $k=1,\dots,d$.}
Since the vertex $v$ was chosen arbitrarily,
it follows that each $(d-k)$-dimensional face
of the tiling is on the boundary of exactly $k+1$
parallelotopes, $k=1,\dots,d$. In particular,
each $(d-2)$-face is hexagonal, since it has
$3$ parallelotopes in its star.

We denote the edge vectors of the $d+1$ edges
by $e_1$,\dots,$e_{d+1}$ so that the edges
can be written as $[v,v+e_1]$, $[v,v+e_2]$,\dots,
$[v,v+e_{d+1}]$. Choosing the numbering
appropriately, we have the vertex cone
of parallelotope $P_j$ at $v$ equal to
\begin{equation}
C_j = v + \cone\lbrace e_i  :  i\ne j \rbrace
\end{equation}
By $\cone(A)$ we mean the set of all linear 
\index{cone}
combinations of vectors from $A$ with nonnegative
coefficients. Cones $C_1,\dots,C_{d+1}$ form
a tiling of the space (because in a small
neighborhood of $v$ they coincide with
the parallelotopes).

We now prove that there is a canonical scaling
of $\St(v)$. Consider the following polyhedron:
\begin{equation}
\Delta = \lbrace x : x\cdot e_i \le 1, i=1,\dots,d+1 \rbrace.
\end{equation}
We have $\int(\Delta)\ne\emptyset$, for $0\in\int(\Delta)$.
The polyhedron $\Delta$ is bounded. For, assuming it 
is not bounded, there is a half-line $\lbrace ut: t\ge 0\rbrace \subset \Delta$,
$u\in{\mathbb R}^d$, $u\ne 0$. We then have $e_i\cdot u \le 0$
for all $i=1,\dots,d+1$ which is a contradiction of the
fact that the cones $C_j$ cover the space. 

Therefore $\Delta$ is a simplex with vertices $v_i$
given by $v_i\cdot e_j = 1$ for $i\ne j$, 
$v_i\cdot e_i < 1$. 

We construct the canonical scaling to $\St(v)$
using simplex $\Delta$. The affine hull of each
facet $F_{kl} = P_k \cap P_l$ in the star can be written as 
$v + \lin\lbrace e_i: i\ne k,l \rbrace$.
The edge vector $v_k - v_l$ of the simplex $\Delta$ 
is therefore orthogonal to $F_{kl}$. We
let 
\begin{equation}
s(F_{kl}) = |v_k - v_l|.
\end{equation}
Let ${\bf n}_{kl} = \frac{v_k - v_l}{|v_k - v_l|}$. 
Then ${\bf n}_{kl}$ is a unit normal vector to $F_{kl}$.
To prove that the scale factors we have assigned 
indeed form a canonical scaling, we need to
check conformance to definition
\ref{candesc-definition} on page \pageref{candesc-definition}.
That is, we need to establish the following equation:
\begin{equation}
s(F_{kl}){\bf n}_{kl} + s(F_{lm}){\bf n}_{lm} + s(F_{mk}){\bf n}_{mk} = 0
\end{equation}
for all distinct $k,l,m \in 1\dots d+1$. 
This equation obviously holds; we just plug 
in the definitions of the scale factors
and unit normals to check it. 

We now construct a canonical scaling 
of the tiling. We have proved that each $(d-2)$-face
$F^{d-2}$ of the tiling is hexagonal.
Let the facets in its star be $F$, $G$ and $H$,
and let $\alpha = \alpha_{F^{d-2}}$ be its canonical scaling. 
We apply the quality 
translation theorem (theorem \ref{quality-translation-theorem} 
on page \pageref{quality-translation-theorem}) 
to the $(d-1)$-skeleton of the tiling with 
the following gain function:
\begin{equation}
T[F,G] = \frac{\alpha(G)}{\alpha(F)}.
\end{equation}
To apply the theorem, we need to check 
that the gain along an arbitrary $(d-3)$-primitive
circuit $[F_1,\dots,F_n,F_{n+1}]$, where $F_{n+1} = F_1$,
is equal to $1$. Suppose that the circuit is in the star
of face $F^{d-3}$ of the tiling. We have 
\begin{equation}
\label{gain-equation-primitive-proof}
T[F_1,\dots,F_n,F_1] = 
\frac{\alpha_1(F_2)}{\alpha_1(F_1)} 
\frac{\alpha_2(F_3)}{\alpha_2(F_2)}\cdot\dots\cdot
\frac{\alpha_n(F_1)}{\alpha_n(F_n)}
\end{equation}
where $\alpha_i$ is the canonical scaling
of the star of $(d-2)$-face $F^{d-2}_i$ joining $F_i$ and $F_{i+1}$.
We have $\St(F^{d-3}) \subset \St(v)$, where $v$ is any
vertex of $F^{d-3}$. Therefore $\St(F^{d-3})$  
has a canonical scaling $s$, which can be obtained
by restricting the canonical scaling to $\St(v)$.

Since $\St(F_i^{d-2}) \subset \St(F^{d-3})$,
we have by equation \ref{hexagonal-scale-equation} on 
page \pageref{hexagonal-scale-equation}
\begin{equation}
\frac{\alpha_i(F_{i+1}) } {\alpha_i(F_i) } = \frac{s(F_{i+1})} {s(F_i)}, 
i = 1,\dots,n.
\end{equation}
Factors in the numerator
and denominator in equation \ref{gain-equation-primitive-proof}
cancel each other and we have $T[F_1,\dots,F_n,F_1] = 1$.
By the quality translation theorem, the tiling
has a canonical scaling, and therefore
it is affinely equivalent to a DV-tiling, 
by theorem \ref{equivalence-proposition}
on page \pageref{equivalence-proposition}.
This proves the Voronoi conjecture for primitive tilings.
\end{proof}

\end{subsection}
\end{section}

\begin{section}{Dual cells}
\label{dual-cell-combinatorics-section}
\index{dual cell}

In the previous section, we introduced canonical scalings
of a tiling, whose existence is equivalent to the 
Voronoi conjecture. Our tool for constructing a
canonical scaling is the quality translation theorem
(theorem \ref{quality-translation-theorem}
on page \pageref{quality-translation-theorem}).
Applying the theorem to the $(d-1)$-skeleton
of the tiling requires an understanding of $(d-3)$-primitive
combinatorial circuits, or circuits of facets in the stars 
of $(d-3)$-faces. 

In this section, we introduce {\em dual cells},
objects that are very convenient
for describing stars of faces in the tiling.
We give the classification of stars of faces
of dimension $(d-2)$ and $(d-3)$.

\begin{subsection}{Definition of dual cells}
A DV-tiling can be equipped with a metrically dual tiling, known 
as a {\em Delaunay tiling}, or {\em L-tiling}. Vertices of the dual 
tiling are the centers of parallelotopes; the faces are orthogonal 
to the corresponding faces of the tiling\footnote{See Delaunay's 
paper \cite{bib-delaunay-empty-sphere} for details on the  
L-tiling.}.

However, we do not know whether an arbitrary parallelotope 
is a DV-domain or is affinely equivalent to one. In fact, proving
that for 3-irreducible parallelotope tilings is the objective 
of our work. The following definition works whether or not
we have a DV-tiling.

\begin{definition}
Let $0\le k \le d$.  Given a face $F^{d-k}$ of the tiling, the 
{\em dual cell} $D^k$ corresponding to $F^{d-k}$ is the convex 
hull of centers of parallelotopes in the star of $F^{d-k}$. 
\end{definition}
\index{dual cell}

It is so far an open problem to show that $\dim(D^k) + \dim(F^{d-k}) = d$ 
(or, equivalently, that $\dim(D^k) = k$). Therefore the number $k$ 
will be called the {\em combinatorial dimension} of $D^k$.
A {\em dual $k$-cell} is a dual cell of 
combinatorial dimension $k$. Dual $1$-cells are called {\em dual 
edges}. 
\index{combinatorial dimension}
\index{dual edge}

Suppose that $D_1$ and $D_2$ are dual cells. Then $D_1$ is called a 
{\em subcell} of $D_2$ if $\Vert(D_1)\subset \Vert(D_2)$. We
denote this relationship by the symbol ``$\prec$''.
It is important to
distinguish between subcells and faces of $D_2$ in the 
polytope-theoretic sense. We do not know so far if they are the 
same thing. 
\index{subcell}
\index{$\prec$}

A dual cell is called {\em asymmetric} if it does not have a center 
of symmetry.
\index{asymmetric dual cell}

We now prove some essential properties of dual cells.

\medskip

\noindent {\em 1.
The set of centers of parallelotopes in the star of face $F^{d-k}$ is in a 
convex position.}
We use  the notation $P(x)$ for the translate of the parallelotope 
of the tiling whose center is $x$, so that $2x - P(x) = P(x)$. Let $v$
be an arbitrary vertex of $F^{d-k}$. Consider the polytope
$Q = P(v)$ (note that $Q$ does not belong to the tiling).
The centers $c$ of parallelotopes $P \in\St(F^{d-k})$ 
are vertices of $Q$. Indeed, $v$ is a vertex of $P(c)$, therefore 
$c$ is a vertex of $P(v)$. The claim follows from the fact 
that the set of vertices of $Q$ is in a convex position.

\medskip

\noindent {\em 2. The correspondence between faces of the tiling 
and dual cells is $1-1$.}
Indeed, if $D$ is the dual cell corresponding to a face $F$ of the tiling, 
then
\begin{equation}
F = \bigcap_{v\in\Vert(D)} P(v).
\end{equation}
This result follows from the fact that a proper face of a polytope is 
the intersection of facets of the polytope which contain it, which in 
turn is a partial case of theorem 9 on page 54 of 
\cite{bib_mcmullen_convex_polytopes}.

\medskip

\noindent {\em 3. If $D_1$, $D_2$ are dual cells corresponding 
to faces $F_1$, $F_2$, then $\Vert(D_1)\subset \Vert(D_2)$ 
is equivalent to $F_2\prec F_1$.}
This result follows from the definition of the dual cell
and the previous formula.

\medskip

Results 2 and 3 can be reformulated as follows:

\medskip

\noindent {\em 4. Dual cells form a dual abstract complex to 
the tiling.} 

\medskip

One corollary is that if $D_1$ and $D_2$ are dual cells 
and their vertex sets intersect, then $\conv(\Vert(D_1)\cap\Vert(D_2))$
is a dual cell.
\end{subsection}

\begin{subsection}{Dual $2$, $3$-cells}
\label{dual-3-cells-section}

In this section, we classify all dual dual $2$- and $3$-cells.
First we quote the classification of fans of $(d-2)$- and 
$(d-3)$-dimensional faces. 

The fan of a face is defined as follows.
For each parallelotope $P$ in the star of a face $F^{d-k}$, take the cone 
${\cal C}_P$ bounded by the facet supporting inequalities for facets of $P$ which 
contain $F^{d-k}$. The cones ${\cal C}_P$ form a face-to-face partitioning 
of the $d$-dimensional space ${\mathbb R}^d$. All facets in the partitioning 
are parallel to the face $F^{d-k}$. By taking a section of the partitioning by 
a $k$-dimensional plane orthogonal to $F^{d-k}$, we get the {\em fan} 
of the tiling at face $F^{d-k}$. 
\index{fan}

Delaunay in \cite{bib_delaunay_4d} proves the following lemma.

\begin{lemma} 
\label{fans-proposition}
The fans of parallelotopes of any dimension $d$ at a face of dimension
$(d-2)$ can only be of the following two combinatorial types:
\medskip
\begin{center}
\resizebox{120pt}{!}{\includegraphics[clip=true,keepaspectratio=true]
{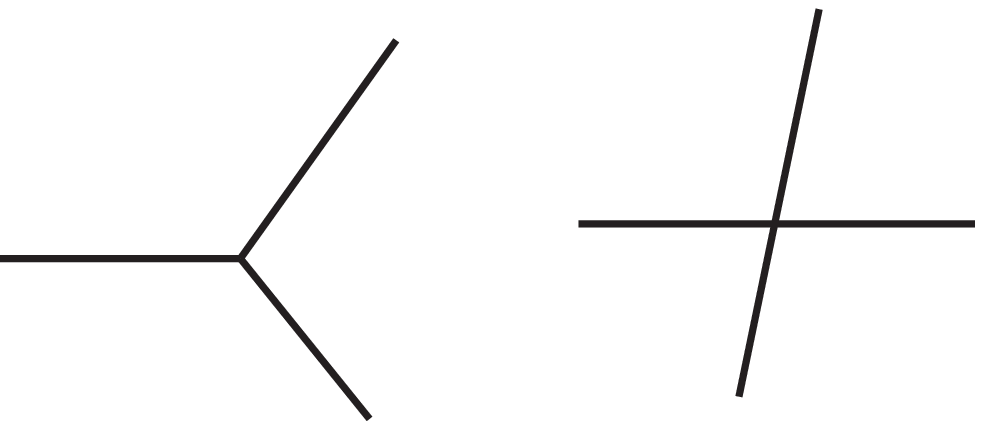}}
\end{center}

The fans of parallelotopes of any dimension $d$ at a face of dimension
$(d-3)$ can only be of the following 5 combinatorial types:
\medskip
\begin{center}
\resizebox{300pt}{!}{\includegraphics[clip=true,keepaspectratio=true]
{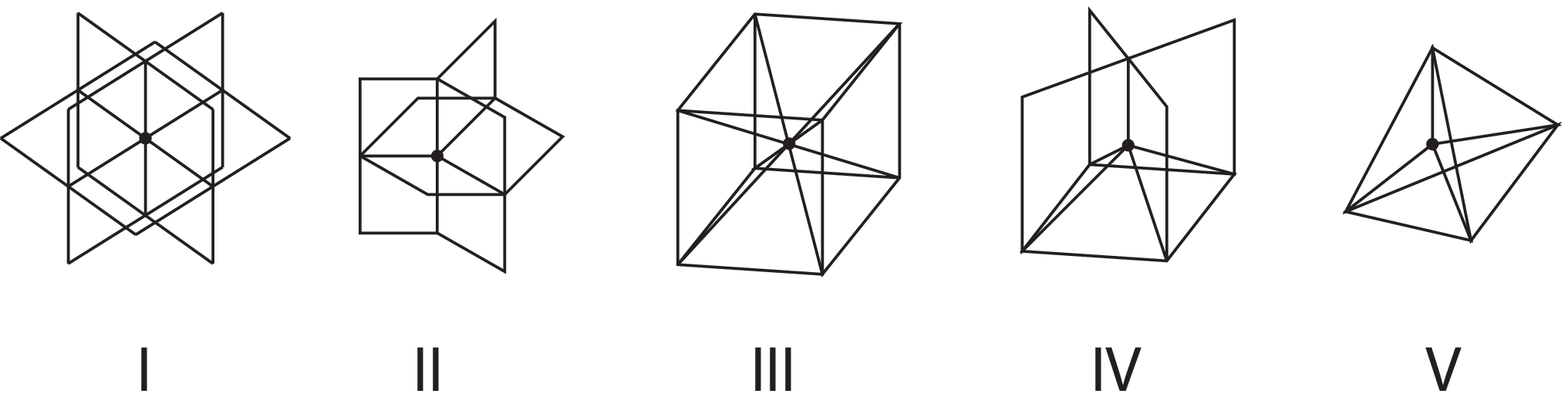}}
\end{center}
\end{lemma}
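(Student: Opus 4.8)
The plan is to analyze the fan as a complete face-to-face partition of $\mathbb{R}^k$ (with $k=2$ or $k=3$) by polyhedral cones, and to classify it combinatorially using the belt structure of the tiling together with the already-established description of stars of $(d-2)$-faces. First I would record the basic dictionary. By construction each cone ${\cal C}_P$ of the fan corresponds to a parallelotope $P\in\St(F^{d-k})$, each facet of the fan (a $(k-1)$-dimensional cone) corresponds to a facet of the tiling containing $F^{d-k}$, and more generally each $j$-dimensional cone corresponds to a $(d-k+j)$-face containing $F^{d-k}$, with incidences preserved. In particular the $2$-dimensional subfans of a $k$-fan are exactly the fans of the $(d-2)$-faces that contain $F^{d-k}$. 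Since every parallelotope is centrally symmetric (Minkowski's corollary), the facet of $P$ opposite the one meeting $F^{d-k}$ produces the antipodal bounding hyperplane, so the whole fan is invariant under $x\mapsto -x$.

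For $k=2$ the fan is a decomposition of the plane into angular sectors by finitely many rays from the apex: the number of sectors equals the number of parallelotopes in $\St(F^{d-2})$ and the number of rays equals the number of facets in $\St(F^{d-2})$. By the lemma on stars of $(d-2)$-faces (page \pageref{d-2-stars-section}), a hexagonal face has $3$ facets in its star with no two parallel, and a quadruple face has $4$ facets forming two parallel pairs. In the first case this forces three rays and three sectors (the tripod); in the second the two parallel pairs force the opposite rays to be collinear, giving a cross of four rays and four sectors. These are the only two combinatorial types, which proves the first assertion.

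For $k=3$ I would intersect the fan with a small sphere $S^2$ about the apex, obtaining a cell decomposition of $S^2$ whose $2$-cells correspond to the parallelotopes in $\St(F^{d-3})$, whose edges correspond to the facets, and whose vertices correspond to the $(d-2)$-faces containing $F^{d-3}$. Applying the $k=2$ classification to each $2$-subfan, the link of every vertex is a tripod or a cross, so each vertex has valence $3$ or $4$. The central symmetry established above makes this spherical complex antipodally symmetric, while Venkov's belt condition (Minkowski's corollary, part 3) organizes the edges into belts of length $4$ or $6$. These are precisely the combinatorial constraints met by the normal fan of a three-dimensional parallelotope, so by the Minkowski--Venkov theory (Theorem \ref{venkov_criterion_theorem}) and Fedorov's classification of the $3$-dimensional parallelohedra the fan must be one of exactly five combinatorial types, corresponding to the parallelepiped, the hexagonal prism, the rhombic dodecahedron, the elongated dodecahedron, and the truncated octahedron.

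The main obstacle is the enumeration in the $k=3$ case: one must show that the antipodally symmetric spherical complexes with vertex valences in $\{3,4\}$ and belts of length $4$ or $6$ are exhausted by these five types, i.e. that no exotic conical partition of $\mathbb{R}^3$ meets all the local constraints. I would prefer to carry this out directly, by a case analysis on the number of distinct facet directions and on the distribution of hexagonal versus quadruple vertices around each $2$-cell, using the belt condition to close up each face, rather than invoking the $3$-dimensional classification as a black box. The delicate points are ruling out configurations that satisfy every vertex condition locally yet fail to close into a centrally symmetric spherical tiling, and checking that the two admissible vertex types are compatible along each belt.
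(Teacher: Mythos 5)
Your $k=2$ argument is fine (it is essentially a restatement of the star lemma of section \ref{d-2-stars-section}, which the paper quotes from Delaunay), but the $k=3$ part contains two genuine errors. First, the claimed antipodal invariance of the fan is false. The central symmetry of a parallelotope $P$ maps a facet containing $F^{d-k}$ to the \emph{opposite} facet of $P$, which in general does not contain $F^{d-k}$ and hence contributes no cone to the fan at $F^{d-k}$. The tripod fan at a hexagonal $(d-2)$-face is already a counterexample: three rays at $120^\circ$ are not invariant under $x\mapsto -x$. Among the $(d-3)$-fans, types (II), (IV) and (V) are likewise not centrally symmetric (type (V) has only $4$ cones), so the ``antipodally symmetric spherical complex'' that your classification step takes as input does not exist in general.

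Second, and more seriously, the appeal to Fedorov's classification is a category error. The fan at a $(d-3)$-face is not the normal fan of a $3$-dimensional parallelotope: the five types in the lemma have $8$, $6$, $6$, $5$ and $4$ maximal cones, and are combinatorially dual to the five $3$-dimensional lattice Delaunay cells (parallelepiped, triangular prism, octahedron, pyramid over a parallelogram, simplex; compare the table in lemma \ref{dual-2-3-cells-lemma}), whereas the normal fans of the five Fedorov parallelohedra have $8$, $12$, $14$, $18$ and $24$ maximal cones, one per vertex, so only the parallelepiped case matches. Venkov's theorem \ref{venkov_criterion_theorem} is similarly inapplicable: it characterizes polytopes that tile by \emph{translations}, and says nothing about face-to-face partitions of ${\mathbb R}^3$ into mutually non-congruent cones. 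Consequently the actual enumeration --- showing that a face-to-face conical partition of ${\mathbb R}^3$ arising from a parallelotope tiling, in which every ray is surrounded by $3$ or $4$ cones, must be one of the five listed types --- is precisely the hard step, and it is absent; your closing remark that you would ``prefer to carry this out directly'' acknowledges this but does not supply the case analysis. For context, the paper itself does not reprove this statement: it cites Delaunay's paper \cite{bib_delaunay_4d}, where that case analysis is carried out, so a correct self-contained proof would have to reproduce Delaunay's enumeration rather than substitute Fedorov's list for it.
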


A fan of type (I) is simply the partitioning of $3$-space by $3$ linearly 
independent planes. A type (II) fan has $3$ half-planes meeting at a line, 
together with a plane which intersects the line. It is the fan at a vertex in the tiling 
of space by hexagonal prisms. A fan of type (III) has 6 cones with the vertex 
in the center of a parallelepiped; the cones are generated by the $6$ facets of the 
parallelepiped. A fan of type (IV) has $5$ cones. A fan of type (V) 
has the minimal number of $4$ cones.

\begin{lemma}
\label{dual-2-3-cells-lemma}
\label{dual-2-cells-lemma}
\label{dual_3_cells_theorem}
\label{3-reciprocal-existence-lemma}
The stars of $(d-2)$- and $(d-3)$-faces have canonical 
scalings. Dual cells $D^k$ of combinatorial 
dimension $k=2$ and $k=3$, corresponding 
to faces $F^{d-k}$ of the tiling, are given 
by the following table. Nonempty faces 
of $D^k$ are dual cells corresponding to faces 
in the star of face $F^{d-k}$.
\begin{table}
{\em
\begin{equation}
\begin{array}{|c|cccc|}
\hline
k & \text{Fan of $F^{d-k}$}   & D^k        
& \text{Canonical} & \text{Tiling locally} \\
&&& \text{scaling of $\St(F^{d-k})$} & \text{irreducible at $F^{d-k}$} \\
&&& \text{unique up to} & \\
&&& \text{a common multiplier} & \\
\hline
2           & \text{(a)}    & \text{triangle}         & \text{Yes}             & \text{Yes}   \\
            & \text{(b)}    & \text{parallelogram}    & \text{No}              & \text{No}  \\
\hline
3           & \text{I}         & \text{parallelepiped}              & \text{No}        & \text{No}  \\  
            & \text{II}        & \text{triangular prism}            & \text{No}        & \text{No}  \\
            & \text{III}       & \text{octahedron}                  & \text{Yes}       & \text{Yes}   \\
            & \text{IV}        & \text{pyramid over parallelogram}  & \text{Yes}       & \text{Yes} \\
            & \text{V}         & \text{simplex}                     & \text{Yes}       & \text{Yes} \\
\hline
\end{array}
\end{equation}
}
\caption{Dual 2- and 3-cells}
\end{table}
\end{lemma}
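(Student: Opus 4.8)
The plan is to reduce everything to the finite case analysis supplied by Delaunay's classification of fans (Lemma \ref{fans-proposition}): there are exactly two combinatorial types of fan at a $(d-2)$-face and five at a $(d-3)$-face, so each row of the table corresponds to a single fan type to be analysed in turn. Throughout I would work in the $k$-dimensional space orthogonal to $F^{d-k}$, in which the star projects to the fan and each parallelotope of $\St(F^{d-k})$ projects to one full-dimensional cone.

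First I would identify the dual cell $D^k$. Its vertices are the centers of the parallelotopes of the star, which are in convex position by the first of the preliminary properties proved above and which are in bijection with the full-dimensional cones of the fan; counting cones gives $3,4$ vertices in the $(d-2)$ case and $8,6,6,5,4$ in the $(d-3)$ case, matching the vertex counts of the listed polytopes. To pin down the full combinatorial type I would use the dual-complex correspondence (properties 2--4): the proper nonempty subcells of $D^k$ are exactly the dual cells of the faces of the tiling strictly containing $F^{d-k}$, so the edges of $D^k$ correspond to the facets in the star (the ridges of the fan) and the $2$-faces correspond to the $(d-2)$-faces in the star (the rays of the fan). The type of each such $2$-face --- triangle for a hexagonal $(d-2)$-face, parallelogram for a quadruple one --- is read off from the already-settled $k=2$ rows. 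Assembling this face lattice for each fan type and comparing with the five named $3$-polytopes yields the $D^k$ column; en route this also establishes $\dim(D^k)=k$ in these cases, sidestepping the general open problem.

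For the canonical scaling I would reformulate the conditions of Definition \ref{candesc-definition} as a \emph{reciprocal} diagram of $D^k$: assigning $s(F)$ to a facet $F$ is the same as assigning to the corresponding edge of $D^k$ a reciprocal vector of length $s(F)$ along its normal, and conditions \ref{hexagonal-candesc-condition}--\ref{quadruple-candesc-condition} say exactly that around each $2$-face of $D^k$ these reciprocal vectors close up into a triangle or a parallelogram. For each fan type this is a small homogeneous linear system in the $s(F)$, one vector equation per ray of the fan, whose cone of positive solutions I would describe explicitly. In the reducible types I (parallelepiped) and II (triangular prism) the facet normals fall into two families spanning complementary subspaces, so each family may be rescaled by an independent positive constant: a scaling exists but the solution cone has dimension $>1$, hence it is not unique up to a common multiplier. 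In the irreducible types III (octahedron), IV (pyramid) and V (simplex) the hexagonal closure conditions on the triangular $2$-faces propagate a single ratio around the whole star, so the solution cone is one-dimensional; existence is then confirmed by exhibiting one consistent solution, type V being literally the local simplex of the primitive case treated in Theorem \ref{voronoi-theorem}.

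Finally, the local-reducibility column follows directly from the definition of $k$-reducibility: I would test whether ${\cal N}_{F^{d-k}}$ splits into nonempty subsets whose linear hulls are complementary. Such a split exists precisely for the quadruple $(d-2)$-face and for fans I and II (the three independent facet-pairs of the parallelepiped, and the prism direction separating off from the hexagonal triple), and it fails for the triangle, octahedron, pyramid and simplex, whose normals positively span their space with no direct-sum decomposition; this reproduces the last column and, comparing the two right-hand columns, explains their agreement, since a complementary splitting of the normals is exactly what frees an extra scaling parameter. The step I expect to be the main obstacle is the middle one: proving that the convex hull of the centers genuinely realizes the asserted combinatorial polytope (an octahedron, say, rather than some other $6$-vertex polytope) together with $\dim(D^k)=k$, and, in the over-determined irreducible cases III and IV, showing that the several hexagonal closure conditions are mutually \emph{consistent} and cut out exactly a one-parameter family of positive scalings rather than only the trivial solution.
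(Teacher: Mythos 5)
The paper offers no argument to compare against: its entire proof of this lemma is the sentence ``Results of this lemma were established in Delaunay's paper \cite{bib_delaunay_4d}.'' Your proposal is therefore necessarily a different route --- a self-contained reconstruction --- but it is the natural one, and it is essentially what the cited 1929 source does: fix the fan type from lemma \ref{fans-proposition}, identify the parallelotopes of $\St(F^{d-k})$ with the maximal cones of the fan, read off the dual cell, solve the closure conditions of definition \ref{candesc-definition} type by type, and test the normal-vector splitting for local reducibility. Your cone counts ($3,4$ and $8,6,6,5,4$), your solution-cone dimensions (three and two free parameters for types I and II, a single ray for III, IV, V), and your observation that a complementary splitting of the normals is exactly what frees an extra scaling parameter are all correct and consistent with the table. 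What your version buys is a proof tied to the paper's own machinery and checkable within it; what the paper's citation buys is brevity and deference to a classical reference.

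Two points in your plan are where the real work lies, and you should not underestimate the first one. The obstacle you flag --- that the convex hull of the centers realizes the asserted combinatorial polytope, that its polytope-theoretic faces coincide with subcells, and that $\dim(D^k)=k$ --- is genuinely nontrivial: it is precisely the issue the paper concedes is open in general (the remark following the definition of subcell, and items C1--C2 of the Conclusion), so it cannot be settled by the abstract dual-complex properties 2--4 alone. You need geometric input; for instance, for fans I and III the opposite cones correspond to parallelotopes whose intersection is exactly $F^{d-k}$, so lemma \ref{central-symmetry-criterion-lemma} makes $D^k$ centrally symmetric with its long diagonals concurrent at the center of $F^{d-k}$, which (given that the centers span a complementary $k$-space) pins down the parallelepiped and the octahedron; the convex-position property alone would not exclude other $8$- or $6$-vertex polytopes. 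Second, for types III and IV the ``propagation of a single ratio'' argument only proves uniqueness; existence of a canonical scaling requires exhibiting a consistent positive solution, e.g.\ an explicit reciprocal such as the crosspolytope built on the three facet-normal directions of the fan's parallelepiped in case III, in the spirit of the simplex $\Delta$ constructed in the proof of theorem \ref{voronoi-theorem} for case V. With those two steps discharged as indicated, your case analysis goes through and fills in the proof the paper delegates to Delaunay.
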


\begin{proof} Results of this lemma were established in
Delaunay's paper \cite{bib_delaunay_4d}.
\end{proof}

From this lemma, it follows that a tiling 
is $3$-irreducible if and only if all dual 
$3$-cells are simplices, octahedra 
or pyramids.
\end{subsection}

\end{section}

\begin{section}{Coherent parallelogram cells}
\label{topological-results-section}

We now expose the obstacle on the way to
constructing a canonical scaling for the tiling:
the {\em incoherent parallelogram dual cells}.
\index{incoherent parallelogram dual cells}

The most important result is theorem 
\ref{exposed-vertex-theorem} on page 
\pageref{exposed-vertex-theorem}, which implies
that incoherent parallelogram cells come
in groups of at least 5 (if they come at all). This helps 
to prove their nonexistence.

\begin{subsection}{Coherent parallelogram cells}
\label{combinatorial-results-section}

\begin{definition}
Consider a parallelotope tiling of $d$-dimensional
space, where $d\ge 4$. Let $\Pi$ be a parallelogram dual 
cell, $D^4$ a dual $4$-cell, $D_1^3$, $D_2^3$ pyramid dual 
cells with $\Pi\prec D_1^3 \prec D^4$,
$\Pi\prec D_2^3 \prec D^4$. Let $F^{d-2}$, $F^{d-4}$
$F_1^{d-3}$, $F_2^{d-3}$ be the corresponding faces 
of the tiling.

The parallelogram dual cell $\Pi$ is called {\em coherent}
with respect to $D^4$ if the canonical scalings
of $\St(F_1^{d-3})$, $\St(F_2^{d-3})$ agree on
$\St(F^{d-2})$. The face $F^{d-2}$ is called coherent
with respect to $F^{d-4}$ if $\Pi$ is coherent
with respect to $D^4$.
\end{definition}
Note that canonical scalings of $\St(F_1^{d-3})$ and $\St(F_2^{d-3})$
in the definition are unique because the corresponding
dual cells $D_1^3$, $D_2^3$ are pyramids, see lemma \ref{dual-2-3-cells-lemma}
on page \pageref{dual-2-3-cells-lemma}.

Information on canonical scalings can be found on page
\pageref{candesc-definition}.

The ``rhombus'' diagram of the faces below shows the
relationships between faces and dual cells in the 
definition (arrows indicate ``$\prec$'' relationships).

Note that faces $F_1^{d-3}$ 
and $F_2^{d-3}$ are uniquely defined by $F^{d-4}$ and 
$F^{d-2}$, since $F^{d-4}$ is a codimension $2$ face
of $F^{d-2}$. Therefore dual cells $D_1^3$, $D_2^3$
are uniquely defined by $D^4$ and $\Pi$.

%$3$-irreducibility of the tiling means that all 
%dual $3$-cells are simplices, octahedra and pyramids
%over parallelograms, therefore 

\begin{equation}
\label{rhombus_diagram}
\begin{array}{cccccccccc}
           &          & F^{d-4}     &          &           &           &          &     D^4     &          &                 \\
           & \swarrow &             & \searrow &           &           & \nearrow &             & \nwarrow &                 \\
 F_1^{d-3} &          &             &          & F_2^{d-3} &   D_1^3   &          &             &          &  D_2^3          \\
           & \searrow &             & \swarrow &           &           & \nwarrow &             & \nearrow &                 \\  
           &          & F^{d-2}     &          &           &           &          &     \Pi     &          &                  
\end{array}
\end{equation}

\begin{theorem}
\label{coherent-face-proposition}
\label{reciprocal-and-coherence-lemma}
Consider a $3$-irreducible tiling by parallelotopes.
The tiling has a canonical scaling if and only if 
all parallelogram dual cells $\Pi$ are coherent with respect
to all $D^4$ with $\Pi\subset D^4$.
\end{theorem}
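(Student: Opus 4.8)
The plan is to prove the two implications separately, using Theorem~\ref{equivalence-theorem} (so that producing a canonical scaling of the whole tiling is all that is required) and the quality translation theorem (Theorem~\ref{quality-translation-theorem}) as the engine for the harder direction. The crucial structural input, supplied by $3$-irreducibility together with Lemma~\ref{dual-2-3-cells-lemma}, is that the star of \emph{every} $(d-3)$-face carries a canonical scaling that is unique up to a common multiplier: in a $3$-irreducible tiling every dual $3$-cell is a simplex, an octahedron, or a pyramid, and the table lists all three of these as having unique scalings. I will write $s_{F^{d-3}}$ for this unique scaling of $\St(F^{d-3})$.

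For the forward implication, suppose the tiling carries a canonical scaling $s$. Fix a parallelogram cell $\Pi$ and a dual $4$-cell $D^4 \supset \Pi$, with the associated pyramids $D_1^3, D_2^3$ and faces $F^{d-2}, F_1^{d-3}, F_2^{d-3}, F^{d-4}$ as in diagram~(\ref{rhombus_diagram}). Restricting $s$ to $\St(F_i^{d-3})$ gives a canonical scaling of that star; since $D_i^3$ is a pyramid, this restriction equals $s_{F_i^{d-3}}$ up to a multiplier. Both restrictions agree with $s$ on the smaller star $\St(F^{d-2})\subset\St(F_i^{d-3})$, hence they agree with each other there, which is precisely coherence of $\Pi$ with respect to $D^4$. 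This direction is routine.

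For the reverse implication I would build a canonical scaling of the whole tiling by the device used in the primitive case: define a gain function $T$ on the $(d-1)$-skeleton and invoke the quality translation theorem. For two facets $F,G$ sharing a $(d-2)$-face $F^{d-2}$, pick any $(d-3)$-face $F^{d-3}\prec F^{d-2}$ and set $T[F,G]=s_{F^{d-3}}(G)/s_{F^{d-3}}(F)$. When $F^{d-2}$ is hexagonal its own star already has a unique scaling, so $T[F,G]$ is unambiguous; the delicate case is a quadruple $F^{d-2}$, where distinct choices of $F^{d-3}\prec F^{d-2}$ could a priori yield different ratios. I would first check that every $(d-3)$-subface of a quadruple $F^{d-2}$ carries a pyramid dual cell (a simplex has no parallelogram subcell, and an octahedron's only genuine dual $2$-subcells are its triangular faces, so neither can lie below a parallelogram $\Pi$), and that two such subfaces meeting along a ridge $F^{d-4}$ of $F^{d-2}$ are exactly the pair $F_1^{d-3},F_2^{d-3}$ attached to some $D^4\supset\Pi$. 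The coherence hypothesis then says precisely that these neighbours produce the same ratio on $\St(F^{d-2})$; propagating this across the connected ridge graph of the polytope $F^{d-2}$ shows that all subfaces give one common ratio, so $T$ is well defined. This well-definedness step is the heart of the argument and the place where coherence enters.

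Finally I would verify the hypothesis of the quality translation theorem, that the gain along every $(d-3)$-primitive combinatorial circuit is $1$. Such a circuit lies in the star of a single $(d-3)$-face $F^{d-3}$, and each consecutive pair of its facets is separated by a $(d-2)$-face lying above $F^{d-3}$, so by construction the corresponding factor of $T$ is a ratio of values of the \emph{single} scaling $s_{F^{d-3}}$ and the product telescopes to $1$, exactly as in the primitive-tiling proof (cf. equation~(\ref{gain-equation-primitive-proof})). The theorem then yields a consistent positive value on every facet; the hexagonal and quadruple conditions of Definition~\ref{candesc-definition} hold because they hold within each $(d-3)$-star, so this is a canonical scaling, and Theorem~\ref{equivalence-theorem} converts it into the desired affine equivalence with a DV-tiling. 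I expect the well-definedness of $T$ at quadruple faces --- ruling out non-pyramid cofaces and invoking connectivity of the ridge graph of $F^{d-2}$ --- to be the main obstacle, with the circuit computation and the forward direction being comparatively mechanical.
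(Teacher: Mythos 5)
Your proposal is correct and follows essentially the same route as the paper: necessity by restricting the global canonical scaling, and sufficiency by defining the gain function $T[F,G]=s_{F^{d-3}}(G)/s_{F^{d-3}}(F)$ on the $(d-1)$-skeleton, establishing well-definedness at quadruple $(d-2)$-faces by propagating the coherence hypothesis along a chain of $(d-3)$-faces sharing $(d-4)$-ridges, and then invoking the quality translation theorem with the telescoping computation on $(d-3)$-primitive circuits. The paper's argument is identical in all essentials, including your observation that $3$-irreducibility forces the dual cells of the $(d-3)$-faces below a parallelogram cell to be pyramids, hence to carry unique scalings.
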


\begin{proof} 
Note that saying: parallelogram dual cell
$\Pi \prec D^4$ is coherent  with respect 
to $D^4$ is equivalent to saying that 
$F^{d-2}$, a face of the tiling corresponding
to $\Pi$, is coherent to $F^{d-4}$, a face
of the tiling corresponding to $D^4$.

The necessity part is easy. Consider
faces of the tiling in diagram \ref{rhombus_diagram}.
If the tiling has a canonical scaling $s$, then 
canonical scalings of $\St(F_1^{d-3})$ 
and $\St(F_2^{d-3})$ can be obtained by restricting $s$:
\begin{equation}
\label{two-restrictions-equation}
\begin{split}
{s|}_{\St(F_1^{d-3})} \\
{s|}_{\St(F_2^{d-3})}
\end{split}
\end{equation}
If we in turn restrict these to 
$\St(F^{d-2}) = \St(F_1^{d-3}) \cap \St(F_2^{d-3})$, 
we get
the same canonical scaling of $\St(F^{d-2})$, namely,
\begin{equation}
{s|}_{\St(F^{d-2})}
\end{equation} 
therefore the canonical scalings in
(\ref{two-restrictions-equation}) agree on
$\St(F^{d-2})$ and $F^{d-2}$ is coherent 
with respect to $F^{d-4}$. Since the faces 
$F^{d-2}$ and $F^{d-4}$ can be chosen
arbitrarily, the necessity part of the theorem is proved.

Now we prove the sufficiency. We begin with a result
for quadruple face that we will need in the proof.
Take a quadruple face
$F^{d-2}$ of the tiling and consider
two arbitrary $(d-3)$-faces $F^{d-3},G^{d-3} \prec F^{d-2}$.
We observe that the canonical scalings
to $\St(F^{d-3})$ and $\St(G^{d-3})$ agree
on $\St(F^{d-2})$. Indeed, we can 
connect two $(d-3)$-faces $F^{d-3}$, $G^{d-3}$ 
on the boundary of $F^{d-2}$ by a combinatorial path on 
the same boundary:
\begin{equation}
F^{d-3} = F_1^{d-3},\dots,F_n^{d-3} = G^{d-3}
\end{equation}
where two consecutive $(d-3)$-faces $F_i^{d-3}$
and $F_{i+1}^{d-3}$ share a $(d-4)$-face $F_i^{d-4}$.
From the conditions of the theorem, it follows that
all dual cells corresponding to $F_i^{d-3}$, $i=1,\dots,n$
are pyramids and therefore the 
canonical scaling to $\St(F_i^{d-3})$
is unique up to a common multiplier.

From the condition of the theorem, $F^{d-2}$
is coherent with respect to $F_i^{d-4}$
for all $i=1,\dots,n-1$, which means that 
the canonical scaling of $\St(F_i^{d-3})$
agrees with the canonical scaling of
$\St(F_{i+1}^{d-3})$. By the chain argument,
we see that the canonical scalings
of $\St(F^{d-3}) = \St(F_1^{d-3})$ and 
$\St(G^{d-3}) = \St(F_n^{d-3})$ agree
on $\St(F^{d-2})$.

Now we use the quality translation theorem
to obtain the canonical scaling 
of the tiling. Let $F^{d-2}$ be a $(d-2)$-face.
It can be either quadruple or hexagonal.
We define the gain
function as follows: suppose that facets $F_1$, $F_2$
of the tiling are in the star of 
the $(d-2)$-face $F^{d-2}$. We set
\begin{equation}
\label{criterion-gain-equation}
T[F_1,F_2] = \frac{\alpha(F_2)}{\alpha(F_1)},
\end{equation}
where $\alpha = \alpha_{F^{d-3}}$ is the canonical 
scaling of $\St(F^{d-3})$ for an arbitrary
face $F^{d-3}$ with $F^{d-3}\prec F^{d-2}$,
so that $\St(F^{d-2}) \subset \St(F^{d-3})$.

The definition of $T[F_1,F_2]$ is consistent,
that is, the value of $\frac{\alpha(F_2)}{\alpha(F_1)}$ 
does not depend
on the choice of $F^{d-3}$ if $F^{d-2}$ is hexagonal. 
Indeed, if
face $F^{d-2}$ is hexagonal, then the canonical
scaling to $\St(F^{d-2})$ is unique up 
to a common multiplier. If  $\beta=\beta_{F^{d-2}}$
is some fixed canonical scaling to $\St(F^{d-2})$,
then the restriction of $\alpha$ to $\St(F^{d-2})$
must agree with it:
\begin{equation}
\frac{\alpha(F_2)}{\alpha(F_1)} = \frac{\beta(F_2)}{\beta(F_1)},
\end{equation}
therefore the definition 
of $T[F_1,F_2]$ does not depend on the choice of $F^{d-3}$.

Suppose now that face $F^{d-2}$ is quadruple. As we
argued above, if we have another face $G^{d-3}\prec F^{d-2}$,
then the canonical scalings $\alpha=\alpha_{F^{d-3}}$, 
$\alpha' = \alpha_{G^{d-3}}$ to $\St(F^{d-3})$ 
and $\St(G^{d-3})$ agree on $\St(F^{d-2})$, therefore
\begin{equation}
\frac{\alpha(F_2)}{\alpha(F_1)} = \frac{\alpha'(F_2)}{\alpha'(F_1)}
\end{equation}
We have proved that the value of $T[F_1,F_2]$
is well-defined.

By construction, the gain function is equal to $1$
on $(d-3)$-primitive circuits (the reader may find
a detailed argument in the proof of theorem
\ref{voronoi-theorem} on page \pageref{voronoi-theorem}).
Application of the quality translation theorem gives
an assignment $s$ of positive numbers to 
facets of the tiling. 

Equation (\ref{criterion-gain-equation}) guarantees
that the restrictions of $s$ to stars of 
$(d-2)$-faces are canonical scalings. Therefore 
the conditions of definition \ref{candesc-definition}
on page \pageref{candesc-definition} are satisfied
by $s$. We have proved that the tiling has a canonical
scaling.
\end{proof}
\end{subsection}

\begin{subsection}{Sufficient conditions for coherency
of a parallelogram dual cell}

\begin{theorem}\label{exposed-vertex-theorem}
Consider a $3$-irreducible normal parallelotope tiling. 
 Let $D^4$ be a dual cell, $\Pi\subset D^4$ a parallelogram subcell. 
Suppose $\Pi$ has a vertex $v$ such that all parallelogram subcells 
$\Pi'\subset D^4$ with $v\in \Pi'$ are coherent except, perhaps, $\Pi$. 
Then $\Pi$ is coherent. In particular, if $\Pi$ is the only
parallelogram in $D^4$ that contains $v$, then $\Pi$ is coherent.
\end{theorem}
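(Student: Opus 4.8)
The plan is to reduce coherence to the triviality of a multiplicative \emph{gain} around certain loops in the local combinatorial structure at $v$, and then to exploit the fact that this structure lives on a $2$-sphere. First I would fix the parallelotope $P(v)$ with centre $v$ and the $(d-4)$-face $F^{d-4}$ corresponding to $D^4$; since $F^{d-4}$ is a genuine $(d-4)$-face of $P(v)$, the interval $[F^{d-4},P(v)]$ in the face lattice of $P(v)$ is the face lattice of a $3$-polytope $R$, whose boundary is a $2$-sphere. Translating through the dual-complex correspondence of Section~\ref{dual-cell-combinatorics-section}, the vertices of $R$ are exactly the dual $3$-cells $C\prec D^4$ with $v\in C$, its edges are the dual $2$-cells (the parallelograms $\Pi'$ and the triangles) at $v$, and its $2$-faces are the dual edges at $v$, each of which carries a scale factor. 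Because the tiling is $3$-irreducible, by Lemma~\ref{dual-2-3-cells-lemma} every such $C$ is a simplex, an octahedron, or a pyramid, so each has a canonical scaling unique up to a common multiplier; moreover a parallelogram $2$-cell can only bound pyramids, so every parallelogram edge of $R$ falls under the coherence set-up of the preceding definition.

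Next I would put a gain on the $1$-skeleton of $R$. For an edge coming from a $2$-cell $\Phi$ shared by $3$-cells $C,C'$, the restrictions of the rigid scalings of $C$ and $C'$ to $\Phi$ are proportional whenever $\Phi$ is a triangle (its scaling being unique) or a coherent parallelogram, and the proportionality constant is the gain $g(C\to C')$. For the distinguished edge $\bar\Pi$ I would instead record the two matching multipliers $g_1,g_2$ obtained by forcing agreement only on the two edges $e_1,e_2$ of $\Pi$ meeting at $v$. Since $e_1,e_2$ are non-parallel edges of the parallelogram and each pyramid scaling is pinned down by the ratio of the two parallel classes, a short check shows that $\Pi$ is coherent if and only if $g_1=g_2$. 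The key lemma is that the gain is trivial around the boundary of any $2$-face $f$ of $R$ all of whose bounding $2$-cells are coherent: writing $e_f$ for the dual edge indexing $f$, the cell $e_f$ is a common subcell of every $3$-cell and $2$-cell in the boundary cycle of $f$, so fixing the scaling on one cell and matching successively around the cycle preserves the positive value on $e_f$ at each step; after a full loop the total multiplier must fix this positive number and hence equals $1$. This is exactly the circuit-closing mechanism of the quality translation theorem, carried out locally.

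Under the hypothesis the only possibly incoherent edge of $R$ at $v$ is $\bar\Pi$, so every $2$-face of $R$ except the two faces $f_1,f_2$ incident to $\bar\Pi$ (corresponding to the dual edges $e_1,e_2$) has trivial gain. I would then close the argument on the sphere. Orienting the $2$-faces as the boundary of the solid $R$ gives $\sum_f \partial f=0$, hence $\partial f_1+\partial f_2=-\sum_{f\neq f_1,f_2}\partial f$ is a loop $\gamma$ that avoids $\bar\Pi$, and its gain is the product of the trivial gains around the remaining faces, so $\mathrm{gain}(\gamma)=1$. Applying the key lemma to $f_1$ while tracking $s(e_1)$, and to $f_2$ while tracking $s(e_2)$, expresses $g_1$ and $g_2$ through the path gains along $\partial f_1\setminus\bar\Pi$ and $\partial f_2\setminus\bar\Pi$; combining these with $\mathrm{gain}(\gamma)=1$ forces $g_1=g_2$, i.e.\ $\Pi$ is coherent. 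The ``in particular'' clause is the special case where no other parallelogram meets $v$, so the hypothesis holds vacuously.

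I expect the main obstacle to be the bookkeeping in the key lemma together with the equivalence $g_1=g_2\Leftrightarrow\text{coherence}$: one must verify that $e_1,e_2$ are precisely the two dual edges indexing the faces $f_1,f_2$ incident to $\bar\Pi$, that the matching multiplier seen by $f_i$ is exactly $g_i$, and that all orientations are consistent so the cancellation of $\bar\Pi$ in $\partial f_1+\partial f_2$ is genuine. The geometric input — that every $3$-cell at $v$ has a rigid scaling and that $\partial R$ is a $2$-sphere — is exactly what $3$-irreducibility and Lemma~\ref{dual-2-3-cells-lemma} supply, so the remaining work is essentially combinatorial.
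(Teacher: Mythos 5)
Your proposal is correct, and it reaches the conclusion by a route whose mechanics genuinely differ from the paper's, even though the skeleton is the same. Both arguments identify the local structure at $v$ with the boundary $2$-sphere of the $3$-polytope given by the lattice interval between $F^{d-4}$ and $P(v)$ (the paper obtains this from theorem 16 on page 71 of McMullen--Shephard), both use the hypothesis only to make gains well defined across every dual $2$-cell at $v$ except $\Pi$, and both reduce coherence of $\Pi$ to triviality of a gain around a circuit avoiding $\Pi$. The differences are two. First, the paper's gain lives on the facets of the tiling (the $2$-faces of the interval polytope), $T[G_1,G_2]=s(G_2)/s(G_1)$, with triviality checked on $(d-3)$-primitive circuits; your gain lives on the dual $3$-cells (the vertices of $R$), as matching multipliers between pyramid scalings --- which is exactly the bookkeeping the paper deploys in the proof of theorem \ref{bipyramid-coherence-theorem}, not here --- and your key telescoping lemma (tracking the scale factor of the common facet around a $2$-face of $R$) is the dual counterpart of the paper's primitive-circuit check. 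Second, where the paper invokes the quality translation theorem (theorem \ref{quality-translation-theorem}) on the sphere minus the open edge $\delta(F^{d-2})$ and needs a footnote to stay inside the class of QRR-complexes, you close with the elementary cycle-space identity $\sum_f\partial f=0$ on the $2$-sphere, so that $\partial f_1+\partial f_2$ is a sum of boundaries of faces with trivial gain. Your route is more self-contained, but note it works only because the gain group ${\mathbb R}_{>0}$ is abelian (this is what makes the gain a homomorphism on $1$-chains, letting homology substitute for the simple-connectivity hypothesis of the quality translation theorem); the paper's route does not need commutativity. Finally, your unproved assertion that a parallelogram $2$-cell can only bound pyramid $3$-cells is at the same level of rigor as the paper's own ``by our assumption about the tiling, they are pyramids'': both follow from lemma \ref{dual-2-3-cells-lemma}, since the $2$-dimensional subcells of simplices and octahedra are all triangles. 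The orientation bookkeeping you flag as the main obstacle is real but routine.
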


Speaking informally, incoherent parallelograms ``come in droves'':
each vertex of an incoherent parallelogram must also be a vertex
of another incoherent parallelogram.

\medskip

\begin{proof} Let $F^{d-4}$ be the face 
of the tiling corresponding to $D^4$, 
$P$ the parallelotope of the tiling with center $v$, 
$F^{d-2}$ the face corresponding to $\Pi$,  
$F_1^{d-3}$, $F_2^{d-3}$ faces with the property 
that $F^{d-4} \prec F_1^{d-3},F_2^{d-3} \prec 
F^{d-2}$. Let $D_1^3$, $D_2^3$ be the dual 
cells corresponding to faces $F_1^{d-3}$,
$F_2^{d-3}$. By our assumption about the tiling, 
they are pyramids. The relationship of the introduced 
polytopes can be seen in diagram 
\ref{two-pyramid-diagram} on page 
\pageref{two-pyramid-diagram}, arrows indicate inclusion.

Let $I_1$, $I_3$ be the edges of $\Pi$ containing 
$v$, and let $I_2$,$I_4$ be the edges of pyramids 
$D_1^3$, $D_2^3$ containing $v$ and different 
from $I_1$,$I_3$. Let $F_1,\dots,F_4$ be the 
facets of the tiling corresponding to dual 
edges $I_1,\dots,I_4$. Take canonical scalings 
$s_1$ and $s_2$ to $\St(F_1^{d-3})$ 
and  $\St(F_2^{d-3})$. 
To show that they agree on the star of $F^{d-2}$, 
it is sufficient to check that they assign 
proportional scale factors
to facets $F_1$ and $F_3$, ie.,
\begin{equation}
\label{pi-coherent-equation}
\frac{s_1(F_1)}{s_1(F_3)} = \frac{s_2(F_1)}{s_2(F_3)}
\end{equation}

To prove this result, we use the quality translation 
theorem (theorem \ref{quality-translation-theorem}
on page \pageref{quality-translation-theorem}). Consider
face collections $K'$ and $K$:
\begin{equation}
\begin{split}
K' &= \lbrace F: F^{d-4} \prec F \prec P \rbrace, \\
K &= K' \setminus\lbrace 
F^{d-4}, F_1^{d-3}, F_2^{d-3}, F^{d-2}, P\rbrace.
\end{split}
\end{equation}

We define the gain function on combinatorial paths in $K$.
Consider a path $[G_1,G_2]$ on $K$, where facets 
$G_1$ and $G_2$ share a face $G^{d-2} \ne F^{d-2}$.
Note that $G^{d-2}$ is coherent with respect to $F^{d-4}$:
since $\Pi$ is the only parallelogram cell with 
$v \prec \Pi \prec D^4$ which may a priori be incoherent 
with respect to $D^4$,
$F^{d-2}$ is the only potentially incoherent $(d-2)$-face 
with $F^{d-4} \prec F^{d-2} \prec P$.

There are exactly two $(d-3)$ faces 
$G_1^{d-3}$, $G_2^{d-3}$
 with $F^{d-4} \prec G_i^{d-3} \prec G^{d-2}$. 
We define $T[G_1,G_2]$ by
\begin{equation}
\label{2nd-gain-equation}
T[G_1,G_2] = \frac{s(G_2)}{s(G_1)} = 
\frac{s'(G_2)}{s'(G_1)}.
\end{equation}
where $s$, $s'$ are 
the canonical scalings of $\St(G_1^{d-3})$ and $\St(G_2^{d-3})$. 
The two last fractions are equal because the 
canonical scalings $s$, $s'$ agree on the star 
of $G^{d-2}$. If $G^{d-2}$ is hexagonal, then
this result follows from the uniqueness (up to a common multiplier)
of a canonical scaling to $\St(G^{d-2})$. If $G^{d-2}$ 
is quadruple, then it is coherent with respect to $F^{d-4}$, 
therefore $s$ and $s'$ agree 
on $\St(F^{d-2})$.

Note that equation (\ref{2nd-gain-equation})
implies that the gain on $(d-3)$-primitive circuits
is equal to $1$ (see a detailed argument in 
the proof of theorem \ref{voronoi-theorem} on page 
\pageref{voronoi-theorem}).

The statement that the parallelogram $\Pi$ is 
coherent (equation (\ref{pi-coherent-equation})) 
can be rewritten as:
\begin{equation}
T[ F_1,F_2,F_3,F_4,F_1 ] = 1.
\end{equation}
Indeed, $F_1,F_2,F_3\in\St(F_1^{d-3})$ and
$F_1,F_4,F_3\in\St(F_2^{d-3})$, hence
$$\frac{s_1(F_3)}{s_1(F_1)} = \frac{s_1(F_3)}{s_1(F_2)}\frac{s_1(F_2)}{s_1(F_1)} = T[ F_1,F_2,F_3 ]$$ and 
$$\frac{s_2(F_1)}{s_2(F_3)} = \frac{s_2(F_1)}{s_1(F_4)}\frac{s_1(F_4)}{s_2(F_3)} = T[ F_3,F_4,F_1 ].$$

By theorem 16 on page 71 of \cite{bib_mcmullen_convex_polytopes},
$K'=\lbrace F: F^{d-4} \prec F \prec P \rbrace$ 
is combinatorially isomorphic to a $3$-dimensional 
polytope $Q$ (ie., the lattices of the two complexes 
are isomorphic). Denote the isomorphism by $\delta$. 
$I = \delta(F^{d-2})$ is an edge of $Q$. All other 
edges of $Q$ are the images of coherent $(d-2)$-faces. 
Facets of $Q$ are the images of facets in $K'$. 
Under the isomorphism, complex $K'$  is mapped 
onto the boundary of $Q$ minus the edge $I$ 
and its two endpoints. 

Removing the edge from the boundary of $Q$ 
leaves it simply and strongly connected,
therefore the quality translation theorem 
(theorem \ref{quality-translation-theorem}
on page \pageref{quality-translation-theorem})
can be applied to $K'$. It follows that the gain on the 
circuit $[ F_1,F_2,F_3,F_4,F_1 ]$ is equal to $1$, 
and the theorem's result follows\footnote{Strictly
speaking, the boundary of $Q$ with an edge and its
endpoints removed is not a QRR-complex, but
we can cut away the edge together with a small
neighborhood, producing a $2$-dimensional
polyhedral QRR-complex.}.
\end{proof}

\begin{spacing}{1}
\begin{equation}
\label{two-pyramid-diagram}
\begin{array}{ccccc|ccccc|ccccc}
       &          & D^4         &          &        &          &           & F^{d-4}    &          &            &             &           & \emptyset  &          &            \\ 
       & \nearrow &             & \nwarrow &        &          & \swarrow  &            & \searrow &            &             & \swarrow  &            & \searrow &            \\
 D_1^3 &          &             &          & D_2^3  & F_1^{d-3}&           &            &          & F_2^{d-3}  & v_1         &           &            &          & v_2        \\
       & \nwarrow &             & \nearrow &        &          & \searrow  &            & \swarrow &            &             & \searrow  &            & \swarrow &            \\  
       &          & \Pi         &          &        &          &           & F^{d-2}    &          &            &             &           & I          &          &            \\ 
       & \nearrow &             & \nwarrow &        &          & \swarrow  &            & \searrow &            &             & \swarrow  &            & \searrow &            \\
 I_1   &          &             &          &  I_3   & F_1      &           &            &          & F_3        & \delta(F_1) &           &            &          & \delta(F_3)\\
       & \nwarrow &             & \nearrow &        &          & \searrow  &            & \swarrow &            &             & \searrow  &            & \swarrow &            \\  
       &          & v           &          &        &          &           & P          &          &            &             &           & Q          &          &            \\
%      &          &             &          &        &          &           &            &          &            &             &           &            &          &            \\
\end{array}
\end{equation}
\end{spacing}

\begin{theorem} 
\label{bipyramid-coherence-theorem}
Let $D^4$ be a dual $4$-cell in a parallelotope tiling. Suppose 
that $D^4$ is a centrally  symmetric bipyramid over a
parallelepiped $C$, and that subcells of $D^4$ coincide with its faces. Then
all parallelogram dual subcells of $D^4$ are coherent with respect 
to $D^4$.
\end{theorem}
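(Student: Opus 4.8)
The plan is to construct a single canonical scaling of the whole star $\St(F^{d-4})$; once such a scaling exists, restricting it to the two pyramids that meet along any parallelogram subcell immediately exhibits that parallelogram as coherent, so it suffices to produce one global scaling. Write $a,b$ for the two apices of the bipyramid and $C$ for the equatorial parallelepiped. The hypothesis that subcells coincide with faces lets me identify the dual $3$-subcells with the twelve facets of $D^4$, namely the pyramids $\conv(G\cup\{a\})$ and $\conv(G\cup\{b\})$ over the six parallelogram faces $G$ of $C$; by Lemma \ref{dual-2-3-cells-lemma} each is a pyramid over a parallelogram (type IV), so the canonical scaling of its star is unique up to a multiplier. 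The $2$-subcells split into the twenty-four triangles $\conv(e\cup\{a\})$, $\conv(e\cup\{b\})$ over the edges $e$ of $C$ (hexagonal $(d-2)$-faces, hence with unique scaling) and the six parallelograms $G$ themselves (quadruple $(d-2)$-faces). The six $a$-pyramids, glued across their shared lateral triangles, form a topological $3$-ball, the ``$a$-cap'', whose boundary is $\partial C$; the $b$-pyramids form the symmetric ``$b$-cap'', and the two caps overlap exactly on the edges of $C$.

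First I would build a canonical scaling $s^a$ on the $a$-cap. Exactly as in the proof of Theorem \ref{voronoi-theorem}, I define a gain function on the facet-skeleton of the cap using the unique canonical scalings of the type-IV pyramids, so that crossing a shared lateral triangle prescribes the ratio of the two pyramids' scale factors. The $(d-3)$-primitive circuits run inside single pyramids, where the scaling is unique, so the gain is $1$ on them; since the cap is simply connected, the quality translation theorem (Theorem \ref{quality-translation-theorem}) yields $s^a$, unique up to a common multiplier, and its restriction to each pyramid is that pyramid's own scaling. The symmetric construction gives $s^b$ on the $b$-cap.

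The heart of the argument is to glue $s^a$ and $s^b$ along the edges of $C$. Within each $a$-pyramid the base $G$ is a quadruple $(d-2)$-face, so the canonical scaling condition there forces opposite edges of $G$ to carry equal scale factors; running over the six faces $G$ propagates this to show that $s^a$ is constant on each of the three parallel classes of edges of $C$, giving a triple $(p^a,q^a,r^a)$, and likewise $(p^b,q^b,r^b)$ for $s^b$. Because the set of parallelotope centres comprising $D^4$ is centrally symmetric, the point reflection $\tau\colon x\mapsto 2o-x$ through the centre $o$ of $D^4$ maps the local tiling onto itself: one checks $\tau(P(c))=P(\tau(c))$ using the central symmetry of each parallelotope, so $\tau$ permutes $\St(F^{d-4})$, carries the $a$-cap onto the $b$-cap, and sends facet normals to their negatives, hence transports the canonical scaling $s^a$ to a canonical scaling of the $b$-cap. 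By uniqueness this image is a multiple of $s^b$, and since $\tau$ preserves the three parallel edge-classes of $C$, the triples $(p^a,q^a,r^a)$ and $(p^b,q^b,r^b)$ are proportional. A common rescaling then makes $s^a$ and $s^b$ agree on every edge of $C$, and the glued assignment is a canonical scaling of $\St(F^{d-4})$: the quadruple condition at each $G$ holds because opposite edges lie in a common parallel class and so receive equal factors. Restricting this scaling to the two pyramids adjacent to any $G$ shows their scalings agree on the star of the corresponding $(d-2)$-face, i.e. every parallelogram subcell of $D^4$ is coherent.

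The main obstacle is the gluing step, and it has two delicate points. One is verifying that the central symmetry of the dual cell is realised by a genuine isometry of the local tiling that respects the defining equations \eqref{hexagonal-candesc-condition}--\eqref{quadruple-candesc-condition}, so that $\tau$ may legitimately transport $s^a$ to a scaling of the $b$-cap. The other is confirming that the quadruple conditions at the six bases force the direction-class constancy that reduces each cap scaling, on $\partial C$, to a single triple, since it is precisely the match of these two triples by the one free multiplier coming from $\tau$ that closes the argument. Establishing existence and uniqueness of the cap scalings is more routine, being a direct transcription of the quality-translation argument already used for primitive tilings.
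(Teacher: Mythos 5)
Your proposal is correct and follows essentially the same route as the paper: the paper likewise fixes the six unique pyramid scalings on one cap, patches them together with the quality translation theorem (gain ratios across shared triangular cells), proves that the resulting scaling is constant on each parallel class of edges of $C$, and then invokes the central symmetry of $D^4$. The only cosmetic difference is that the paper extends this single cap scaling to the opposite cap directly by the symmetry, with consistency on the equatorial facets guaranteed by exactly your parallel-class constancy, rather than constructing both cap scalings independently and matching their free multipliers.
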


\begin{proof} 
Polytope $D^4$ is the convex hull of a parallelepiped $C$
and two vertices $v,v'$ outside the space
spanned by $C$, whose midpoint is 
the center of $C$. The facets of $D^4$
are 12 pyramids over parallelograms.

All edges of $D^4$ are dual $1$-cells.
$D^4$ has $28$ edges: $8$ edges incident
with $v$, $8$ edges incident with $v'$,
and $12$ edges of $C$. Let ${\cal F}$, ${\cal F}'$,
and ${\cal M}$ be the collections of facets 
of the tiling, corresponding to these $3$ groups
of dual edges.

Consider the complex $K$, the union of
(closed) pyramid facets of $D^4$ meeting at $v$.
Complex $K$ contains 6 pyramids over
parallelograms. Each pyramid $D$ is a dual cell.
Let $F^{d-3}$ be the corresponding face
of the tiling. By lemma \ref{dual-2-3-cells-lemma}
(page \pageref{dual-2-3-cells-lemma}),
there is a canonical scaling of
$\St(F^{d-3})$, which we will denote
$s_{D}$. We will fix $s_{D}$ for each
of the pyramids in $K$.

We now find a positive multiplier $m(D)$ for
each pyramid $D$ so that the 6 canonical scalings
\begin{equation}
m(D) s_{D}
\end{equation}
together define a canonical scaling 
on the collection
\begin{equation}
{\cal F} \cup {\cal M}.
\end{equation}

In other words, we require that whenever
two pyramids $D_1$, $D_2$ share a 
triangular dual cell $T$, then
for each facet $F$ corresponding
to one of the edges of $T$,
we have
\begin{equation}
\begin{split}
m(D_1) s_{D_1}(F) = m(D_2) s_{D_2}(F), \\
\text{or} \\
\frac{s_{D_1}(F)}{s_{D_2}(F)} = \frac{m(D_2)}{m(D_1)}.
\end{split}
\end{equation}
We define the gain function on $K$ as follows:
\begin{equation}
T[D_1,D_2] = \frac{s_{D_1}(F)}{s_{D_2}(F)}.
\end{equation}
The definition does not depend on the choice
of $F$. This is easy to see. Let $F_1^{d-3}$, 
$F_2^{d-3}$, $F^{d-2}$ be the faces of 
the tiling corresponding to dual
cells $D_1$, $D_2$, $T$. Since $T$ is a
triangle, face $F^{d-2}$ is hexagonal
and the canonical scaling to $\St(F^{d-2})$
 is unique up to a factor. Since $s_{D_1}$
and $s_{D_2}$ are canonical scalings
to $\St(F_1^{d-3})$ and $\St(F_2^{d-3})$,
and $\St(F^{d-2}) = \St(F_1^{d-3}) \cap \St(F_2^{d-3})$,
for each two facets $F,F'\in \St(F^{d-2})$ we
have (see equation (\ref{hexagonal-scale-equation})
on page \pageref{hexagonal-scale-equation}):
\begin{equation}
\frac{s_{D_1}(F)}{s_{D_2}(F)} = \frac{s_{D_1}(F')}{s_{D_2}(F')}.
\end{equation}
We have proved that the definition 
of $T[D_1,D_2]$ does not depend on the
choice of $F\in \St(F^{d-2})$.

The dimension of complex $K$ is $3$. 
To apply the quality translation theorem
(theorem \ref{quality_translation_theorem}
on page \pageref{quality_translation_theorem}),
we need to check that a $1$-primitive circuit
$[D_1,\dots,D_n,D_1]$ has gain 1. Indeed, since a $1$-primitive
circuit in $K$ is in the star of an edge, the facet $F$
corresponding to that edge belongs to the stars
of all faces $F_i^{d-3}$ corresponding to
$D_i$, and the gain function can be calculated
using the value of $s_{D_i}(F)$ alone
as
\begin{equation}
T[D_1,\dots,D_n,D_1] =
\frac{s_{D_1}(F)}{s_{D_2}(F)} \frac{s_{D_2}(F)}{s_{D_3}(F)}\cdot\dots\cdot
\frac{s_{D_n}(F)}{s_{D_1}(F)} = 1.
\end{equation}

Clearly $K$ is a QRR-complex. Therefore, by 
the quality translation theorem, we obtain
scale factors $m(D)$ with the required
property.

We now have canonical scalings $m(D) s_{D}$
which together define a common canonical 
scaling $s$ on 
${\cal F} \cup {\cal M}$.

Each 4 facets of the tiling
corresponding to a group of parallel
edges of $C$ receive the same scale
factor $s$. To prove this, it is sufficient
to check that parallel facets $F_1$, $F_2$
corresponding to parallel edges 
of a parallelogram cell get the same scale 
factor. Let $D\in K$ be a pyramid which
contains the parallelogram, and let $F^{d-3}$
be the corresponding face of the tiling.
We have
\begin{equation}
\frac{s(F_1)}{s(F_2)} = \frac{s_D(F_1)}{s_D(F_2)} = 1,
\end{equation}
because $s_D$ is a canonical scaling
to $\St(F^{d-3})$.

Note that $D^4$ is centrally symmetric, 
as is the star $\St(F^{d-4})$. The image
of facet collection ${\cal F} \cup {\cal M}$
under the central symmetry is 
${\cal F}' \cup {\cal M}$.

We have proved in the last paragraph
that parallel facets in ${\cal M}$
have the same scale factors $s$. This
means that $s$ can be continued to the whole
set 
\begin{equation}
{\cal F} \cup {\cal F}' \cup {\cal M}
\end{equation}
by the central symmetry $*$ of $D^4$, by assigning
$s(F) = s(*(F))$ for 
$F \in {\cal F}' \cup {\cal M}$. 
It is easy to see that the continuation 
of $s$ is a canonical scaling
of $\St(F^{d-4})$.

It follows that all parallelogram subcells
of $D^4$ are coherent. This completes the
proof of the theorem.
\end{proof}
\end{subsection}
\end{section}

\begin{section}{Geometric results on dual cells}
\label{affine-results-section}
We defined dual cells in section 
\ref{dual-cell-combinatorics-section}
and used the dual cell terminology 
in the previous section to state
some combinatorial properties 
of incoherent parallelogram 
cells.

It turns out that dual cells carry not only combinatorial, 
but also geometric information about 
the parallelotope tiling. In this section, we prove 
some geometric, or polytope-theoretic 
properties of dual cells. The results
are applicable to any normal tiling by
parallelotopes and we believe they
may be useful for further research
in parallelotope theory.

The main result of this section
deals with dual cells that are
affinely equivalent to cubes of arbitrary
dimension. In particular, we prove
that a pair of parallelogram subcells
in an arbitrary
dual $4$-cell is either 
\begin{itemize}
\item complementary
(parallelograms share a vertex and
span a $4$-space), 
\item adjacent (parallelograms
share an edge and span a $3$-space),
\item translate (parallelograms are translated
copies of each other and span
a $3$-space), or 
\item skew (parallelograms
have exactly one common edge
direction and span a $4$-space).
\end{itemize}
This result (corollary
\ref{two_parallelograms_corollary}
on page \pageref{two_parallelograms_corollary})
follows from the more general 
theorem \ref{two_cubes_theorem}
on page \pageref{two_cubes_theorem}.

Throughout the section, we consider a normal 
tiling of the space ${\mathbb R}^d$ by parallelotopes, 
and denote by $\Lambda$ the lattice formed by the
centers of parallelotopes (known as the {\em lattice 
of the tiling}). We assume that one of the parallelotopes 
of the tiling is centered at $0$.

\begin{subsection}{A series of lemmas on dual cells}

\begin{lemma} \label{translate_complex_main_lemma} 
Let $D^k$ be the dual cell corresponding to a face $F^{d-k}$ 
of the tiling, $0 < k \le d$, $d \ge 1$. Suppose that $t\in\Lambda$, 
$t\ne 0$, and that $h$ is the projection along $F^{d-k}$ 
onto a complementary affine space $L$. Then: 
\begin{enumerate} 
\item $|h(\Vert(D^k))| = |\Vert(D^k)|$.  
\item If $D^k \cap (D^k + t) \ne\emptyset$, then 
$D = D^k \cap (D^k + t)$ is a dual cell and a face 
of both $D^k$ and $D^k + t$. There is a hyperplane 
$N$ in ${\mathbb R}^d$ which separates\footnote{Hyperplane $N$
separates two convex sets $A$ and $B$
if $\relint(A)$ and $\relint(B)$ belong
to different open halfspaces bounded by
$H$.} $D^k$ and $D^k + t$, with 
\begin{equation}
\label{separating-hyperplane-equation}
\begin{split}
D = N \cap D^k = N \cap (D^k + t), \\
\lin(F^{d-k} - F^{d-k}) \subset N - N
\end{split}
\end{equation}
\item The same hyperplane $N$ also separates $h(D^k)$
and $h(D^k + t)$.
\item $\Vert(h(D^k)) = h(\Vert(D^k))$.
\end{enumerate}
\end{lemma}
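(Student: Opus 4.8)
The plan is to anchor everything on two facts already established: every vertex of $D^k$ is the center $c$ of a parallelotope $P(c)$ having $F^{d-k}$ as a genuine face, and (essential property~1) all these centers are vertices of the single parallelotope $Q=P(v)$ for a fixed vertex $v$ of $F^{d-k}$. Throughout I write $K=\ker h=\lin(F^{d-k}-F^{d-k})$ for the $(d-k)$-dimensional fibre direction of $h$, and $W=K^{\perp}$ for the $k$-dimensional space that $h$ carries isomorphically onto $L$.

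First I would dispose of part~1. Suppose two distinct centers $c_1,c_2\in\Vert(D^k)$ satisfy $h(c_1)=h(c_2)$, i.e. $c_1-c_2\in K$. Both $P(c_1)$ and $P(c_2)=P(c_1)+(c_2-c_1)$ carry $F^{d-k}$ as a face, so $P(c_1)$ contains both $F^{d-k}$ and its translate $F^{d-k}+(c_1-c_2)$, and the latter still lies in $\aff(F^{d-k})$ because $c_1-c_2$ is parallel to that flat. Since a face equals the intersection of its polytope with its own affine hull, $P(c_1)\cap\aff(F^{d-k})=F^{d-k}$; hence a nontrivial translate of the bounded set $F^{d-k}$ would be contained in $F^{d-k}$, which is impossible. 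Thus $h$ is injective on $\Vert(D^k)$ and part~1 follows. The same argument shows that once $D^k$ and $D^k+t$ are known to share a vertex $c$ (so that $c,c-t\in\Vert(D^k)$) one must have $h(t)\neq0$; I use this below.

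The heart of the lemma is part~2, which I expect to be the main obstacle. Since $t\in\Lambda$ maps the tiling onto itself, $D^k+t$ is exactly the dual cell of $F^{d-k}+t$; by the corollary to the essential properties, $\conv(\Vert(D^k)\cap\Vert(D^k+t))$ is already a dual cell $D$, a common subcell of both. What must be upgraded is the geometry: I must show $\relint(D^k)\cap\relint(D^k+t)=\emptyset$, so that $D$ is a genuine face of each and $D^k\cap(D^k+t)=D$. The route I would take is the local fan of $F^{d-k}$ (lemma~\ref{fans-proposition} and its higher-dimensional analogue): the cones $\mathcal C_{P}$ section to a complete fan $\Sigma$ in $W$ whose full-dimensional cones correspond bijectively to the centers in $\St(F^{d-k})$. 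Reading off $\Sigma$ gives disjointness of the projected cells and exhibits, through the contact face $h(D)$, a separating hyperplane $N_0\subset L$; its preimage $N=h^{-1}(N_0)$ is a hyperplane of $\mathbb R^d$ that by construction contains $K$, so $K\subset N-N$, and one checks $N\cap D^k=D=N\cap(D^k+t)$. The delicate point is precisely the compatibility of $\Sigma$ with the convex polytope $h(D^k)$ — that $\Sigma$ is the normal fan of $h(D^k)$ — and establishing it without recourse to a DV-structure or a canonical scaling is where the real work lies.

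Granting part~2, parts~3 and~4 are short. For part~3, because $K\subset N-N$ the hyperplane $N$ is constant along the fibres of $h$, so $N=h^{-1}(h(N))$ with $h(N)$ a hyperplane of $L$, and the two open half-spaces bounded by $N$ map onto the two open half-spaces bounded by $h(N)$; hence $h(N)$ separates $h(D^k)$ and $h(D^k+t)$, as required. For part~4, I would show that each center $c\in\Vert(D^k)$ is exposed in $D^k$ by a functional lying in $W$: a direction in the relative interior of the cone $\sigma_c\in\Sigma$ corresponding to $c$ yields $n\in W$ with $\langle n,c\rangle>\langle n,c'\rangle$ for every other center $c'$ (this is again the statement that $\Sigma$ is the normal fan of $h(D^k)$). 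Since a functional in $W$ depends only on $h$, it follows that $h(c)$ is a vertex of $h(D^k)$, and together with the injectivity of part~1 this gives $\Vert(h(D^k))=h(\Vert(D^k))$. Thus the only genuinely new ingredient is the fan-to-polytope duality underlying part~2; the remaining parts are bookkeeping around the fibre direction $K$.
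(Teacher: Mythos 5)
Your part~1 is essentially the paper's own argument, and it is fine. The genuine gap is in part~2, and you have named it yourself: everything rests on the claim that the section fan $\Sigma$ at $F^{d-k}$ is the normal fan of $h(D^k)$, and you leave that claim unproven (``where the real work lies''). This is not a routine missing detail. The maximal cones of $\Sigma$ are pointed and full-dimensional in the $k$-dimensional section space, so if $\Sigma$ were the normal fan of $h(D^k)$, then $h(D^k)$, and hence $D^k$, would have dimension exactly $k$ --- precisely the statement the paper declares to be open (immediately after the definition of dual cells, and again as problem C1 in the conclusion, where it says there is neither a lower nor an upper bound). So your route to part~2 requires resolving something strictly harder than the lemma itself; parts~3 and~4 then inherit the dependence, since part~4 invokes the normal-fan claim verbatim and part~3 presupposes the hyperplane produced in part~2. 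A smaller but real defect: your opening move in part~2 applies the corollary that $\conv(\Vert(D_1)\cap\Vert(D_2))$ is a dual cell, but that corollary requires the \emph{vertex sets} to intersect, while the hypothesis only gives that the \emph{polytopes} $D^k$ and $D^k+t$ intersect; producing a common vertex is part of what has to be proven.

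The paper's proof sidesteps fans entirely, and the mechanism is worth internalizing. It inscribes an inverted copy of the dual cell into a single parallelotope: fixing $P\in\St(F^{d-k})$ and a marked point $x\in\relint(F^{d-k})$, the polytope $S=\conv\lbrace x(F):F\sim F^{d-k},\,F\prec P\rbrace$ satisfies $*(S)=D^k$ for the point symmetry $*$ centered at $\frac{x+c}{2}$, where $c$ is the center of $P$. Then $D^k\cap(D^k+t)\neq\emptyset$ translates into $P\cap(P-t)\neq\emptyset$; the face-to-face property of the tiling makes $B=P\cap(P-t)$ a common face, a supporting hyperplane $H$ with $H\cap P=B$ is invariant under the central symmetry at the midpoint of the two centers (because parallelotopes are centrally symmetric), so $H$ supports both $P$ and $P-t$ and separates them, and sorting the marked points of $S$ and $S-t$ by the sides of $H$ exhibits $S\cap(S-t)$ as a common face whose vertices lie on $H$. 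Applying $*$ transports all of this to $D^k$: one gets $N=*(H)$, the separation, the dual-cell property, and $\lin(F^{d-k}-F^{d-k})\subset N-N$ at once. Part~4 likewise needs no normal fan: each vertex of $S$ lies in the relative interior of a face of $P$ that is a translate of $F^{d-k}$, hence commensurate with $h$, so it projects onto a vertex of $h(P)$, and a point of $h(S)\subset h(P)$ that is a vertex of $h(P)$ is a vertex of $h(S)$. Replacing your fan argument by this inscription argument would close the gap; as it stands, part~2 is a strategy, not a proof.
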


\begin{proof}
For $x\in{\mathbb R}^d$, $P(x)$ will denote 
the translated copy of the parallelotope $P$
so that $x$ is the center of $P(x)$.

\bigskip

As a preparation for the proof, we find a way to
{\em inscribe an inverted (centrally symmetric) 
copy of the dual cell into the parallelotope}.  
\index{inscribing a dual cell}
Fix a parallelotope $P\in \St(F^{d-k})$. 
We introduce an equivalence relation
$\sim$ on faces of the tiling. Two faces $F_1$, $F_2$ are equivalent 
if there is a vector $v\in\Lambda$ so that $F_2 = F_1 + v$. 
Consider the equivalence class which contains face $F^{d-k}$. Fix a point 
$x\in \relint(F^{d-k})$ and for every equivalent face $F = F^{d-k} + v$ 
assign the point $x(F) = x + v$ to $F$. If $F^{d-k}$
is centrally symmetric, then we choose $x$ to
be the center of symmetry of $F^{d-k}$;
we will need this further in the paper.

Let $c$ be the center of symmetry of $P$. The polytope 
$$S = \conv\lbrace x(F) : F \sim F^{d-k}, F\prec P\rbrace$$ is the image 
of $D^k$ under the central symmetry $*$ with center $\frac{x + c}{2}$. 
The transformation $*$ is defined as follows:
\begin{equation}
*(y) = x + c - y.
\end{equation}
Indeed, for each parallelotope $Q\in \St(F^{d-k})$ 
with center $c_1$, $F^{d-k} + (c - c_1)$ is a face of $P$, and
$x(F^{d-k} + (c - c_1)) = x + (c - c_1) = *(c_1)$. Conversely, 
if $F = F^{d-k} + v$ is a face of $P$, then the parallelotope 
with center $c - v$ belongs to $\St(F^{d-k})$ so 
$c - v \in \Vert(D^k)$. Note that $c - v = *(x + v) = *(x(F))$. 
Therefore sets $\lbrace x(F): F\sim F^{d-k}, F\prec P \rbrace$ and 
$\lbrace y: P(y)\in\St(F^{d-k}) \rbrace$ are mapped onto 
each other by $*$. Since $S$ and $D^k$ are their convex hulls,
we have
\begin{equation}
*(S)=D^k
\end{equation}
Since all vertices of $S$ are contained in $P$,
\begin{equation}
S \subset P
\end{equation}

1. First we prove that $|h(\Vert(D^k))| = |\Vert(D^k)|$. 
If this is not true, then there are two 
distinct parallelotopes $P$, $P'$ in the star 
of $F^{d-k}$ such that $P' = P + \lambda$ and  
$\lambda \in \lin(F^{d-k}-F^{d-k})$. Since $F^{d-k}$ is a face 
of $P + \lambda$, $F^{d-k} - \lambda$ is a face of $P$. This is 
only possible if $\lambda=0$, so $P = P'$.

\bigskip

2. Now we prove the second statement of the lemma.
Suppose that $D^k \cap (D^k + t) \ne\emptyset$, 
$t\in\Lambda$, $t\ne 0$. 

We have $S\cap (S - t) \ne\emptyset$, 
so $B = P\cap (P - t)\ne\emptyset$. 
Take a hyperplane $H$ supporting the polytope $P$ 
so that $H\cap P = B$. 

Let $m$ be the midpoint between the centers 
of $P$ and $P - t$. Remember that 
the parallelotopes of the tiling are centrally 
symmetric. It follows that the central symmetry
at $m$ maps $P$ to $P - t$, and vice versa,
so $m$ is the center of symmetry of $B$. 
Therefore $m$ belongs to $H$. 
Applying the central symmetry at $m$, we see
that  $H$ supports  $P - t$ as well, 
and $H\cap (P - t) = B$. The hyperplane $H$ 
separates polytopes $P$ and $P - t$
(that is, they are contained in different 
half-spaces bounded by $H$).

We prove that $S\cap (S-t)$ is a face
of both of the polytopes $S$ and $S - t$.
Let $H^+$, $H^-$ be the open half-spaces bounded
by $H$, whose closures contain $P$ and $P - t$ respectively.
We decompose the set of vertices of polytopes
$S$ and $S - t$ as follows:
\begin{equation}
\begin{split}
\Vert(S)\cup\Vert(S - t) = 
\lbrace x(F) : F \sim F^{d-k}, F \prec P \text{ or } F \prec (P - t) \rbrace \\ = \\
\lbrace x(F) : F \sim F^{d-k}, F \prec P, F\not\prec (P - t) \rbrace \cup \\
\lbrace x(F) : F \sim F^{d-k}, F \not\prec P, F\prec (P - t) \rbrace \cup \\
\lbrace x(F) : F \sim F^{d-k}, F \prec P \text{ and } F\prec (P - t) \rbrace \\ = \\
\lbrace x(F) : F \sim F^{d-k}, F \prec P, F\not\prec B \rbrace \cup \\
\lbrace x(F) : F \sim F^{d-k}, F\prec (P - t), F\not\prec B \rbrace \cup \\
\lbrace x(F) : F \sim F^{d-k}, F \prec B \rbrace \\=\\ A^+ \cup A^- \cup M.
\end{split}
\end{equation}
Note that for faces $F\sim F^{d-k}$ of polytopes $P$
and $P - t$, the inclusion $x(F)\in H$ is 
equivalent to $F \prec B$. Therefore 
set $A^+$ is contained in $H^+$, 
set $A^-$ is contained in $H^-$, set $M$ is contained
in $H$. Neither of the sets $A^+$, $A^-$, $M$ 
is empty. Indeed, if $M$ is empty, then
$S\cap (S-t) = \emptyset$; if, say, $A^+$
is empty, then $A^-$ is also empty and
all faces of $P$ equivalent
to $F^{d-k}$ are contained in $B$, which 
is impossible (if $F\prec B$, you can always 
take a facet containing $B$ and take $F - u \prec P$,
where $u$ is the facet vector).

We have $\Vert(S) = A^+ \cup M$
and $\Vert(S-t) = A^- \cup M$. It follows that 
$\conv(M)$ is a face of both of the polytopes $S$ and $S-t$,
$\conv(M) = S \cap (S-t)$. Also $H$ supports 
$S$ and $S - t$ and separates them.

The condition
\begin{equation}
\lin(F^{d-k} - F^{d-k}) \subset H - H
\end{equation}
holds because the hyperplane $H$ contains a face
$F\sim F^{d-k}$.

Let $N = *(H)$.
Applying the central
symmetry $*$ to polytopes $S$ and $S-t$, 
we get $D^k$ and $D^k + t$. Statement 2
of the lemma follows from the results
we have proved for $S$ and $S - t$.

\bigskip

3. To prove statement 3, note that hyperplane $*(H)$ 
is parallel to $F^{d-k}$, therefore projection 
$h$ maps the halfspaces $*(H^+)$ and $*(H^-)$ into
themselves. Since $\relint(D^k)$ and 
$\relint(D^k + t)$ belong to different open
halfspaces $*(H^+)$ and $*(H^-)$, 
sets $\relint(h(D^k)) = h(\relint(D^k))$
and $\relint(h(D^k+t)) = h(\relint(D^k+t))$ 
are contained in the same two open halfspaces. 

\bigskip

4. Finally, we prove that 
$\Vert(h(D^k)) = h(\Vert(D^k))$.
Each vertex $v$ of $S$ belongs 
to the relative interior of a face 
of $P$ which is a translate of $F^{d-k}$,
therefore $h(v)$ is a vertex of $h(P)$. 
This means that for each
vertex $v$ of $D^k$, $h(v)$ is 
a vertex of $h(*(P))$. It follows 
that $h(v)$ is a vertex of $h(D^k)$. Therefore 
$h(\Vert(D^k)) \subset \Vert(h(D^k))$.
This, together with statement 1 of the lemma, 
establishes statement 4 and completes 
the proof of the lemma.
\end{proof}

\begin{corollary} 
\label{no-vertex-inside-corollary}
Let $D$ be a dual cell. Then $\Lambda \cap D = \Vert(D)$.
\end{corollary}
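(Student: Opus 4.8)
The plan is to prove the two inclusions of $\Lambda \cap D = \Vert(D)$ separately. One direction is immediate: if $D = D^k$ corresponds to a face $F^{d-k}$, then by definition the vertices of $D$ are the centers of the parallelotopes in $\St(F^{d-k})$, and these centers are lattice points, so $\Vert(D) \subset \Lambda \cap D$. All the content lies in the reverse inclusion $\Lambda \cap D \subset \Vert(D)$, i.e. that no lattice point of $D$ fails to be a vertex.

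The main tool I would use is the \emph{inscribing construction} already carried out in the proof of Lemma \ref{translate_complex_main_lemma}. Fix a parallelotope $P \in \St(F^{d-k})$ with center $c$, pick $x \in \relint(F^{d-k})$, and let $*$ be the central symmetry $*(y) = x + c - y$. That proof establishes $*(S) = D$ together with $S \subset P$, where $S$ is the inscribed copy. Applying $*$ to $S \subset P$ yields $D \subset *(P)$. Since $P$ is centrally symmetric about $c$, its image $*(P)$ is precisely the translate of $P$ whose center is $x$, that is $P(x)$ in the notation of this section. Hence the entire dual cell sits inside a single translated parallelotope: $D \subset P(x)$.

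Next I would record a characterization of the vertices of $D$ in terms of the distinguished point $x$. Because the tiling is face-to-face and $x$ lies in the relative interior of $F^{d-k}$, a parallelotope of the tiling contains $x$ if and only if it contains all of $F^{d-k}$; consequently, for $\mu \in \Lambda$ one has $x \in P(\mu)$ exactly when $P(\mu) \in \St(F^{d-k})$, i.e. exactly when $\mu \in \Vert(D)$. With this in hand the reverse inclusion follows by a short symmetry argument. Let $P_0$ denote the parallelotope shape centered at the origin, so $P_0 = -P_0$ and $P(y) = P_0 + y$ for every $y$. Take $\lambda \in \Lambda \cap D$. From $D \subset P(x) = P_0 + x$ we get $\lambda - x \in P_0$, and the symmetry $P_0 = -P_0$ then gives $x - \lambda \in P_0$, that is $x \in P_0 + \lambda = P(\lambda)$. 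By the characterization above, $\lambda \in \Vert(D)$, as required.

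The step most in need of care is the face-to-face identification of the tiles passing through the interior point $x$ with the parallelotopes of $\St(F^{d-k})$; this is where normality of the tiling is used, and it underlies both the characterization of $\Vert(D)$ and the conclusion $x \in P(\lambda) \Rightarrow \lambda \in \Vert(D)$. Everything else is the central symmetry of $P_0$ and the containment $D \subset P(x)$ already furnished by the inscribing construction, so I do not anticipate a serious obstacle beyond stating that identification cleanly.
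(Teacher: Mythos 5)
Your proposal is correct, but it takes a genuinely different route from the paper. The paper's own proof is a two-line application of statement 2 of Lemma \ref{translate_complex_main_lemma}: given $v \in \Lambda \cap D$ and any vertex $y$ of $D$, the lattice translate $D + (v-y)$ meets $D$ and has $v$ as a vertex; since $D \cap (D + (v-y))$ is a common face of both polytopes, $v$ is a vertex of that face and hence of $D$. You instead bypass the separation statement entirely and use only the \emph{inscribing construction} from the lemma's proof: the containment $D \subset P(x)$ for $x \in \relint(F^{d-k})$, the central symmetry $P_0 = -P_0$ to flip $\lambda \in P(x)$ into $x \in P(\lambda)$, and normality of the tiling to identify $\{\mu \in \Lambda : x \in P(\mu)\}$ with $\Vert(D)$. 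All three steps check out: $*(S)=D$ and $S \subset P$ do give $D \subset *(P) = P(x)$; and your face-to-face identification is sound, since $x \in P(\mu)$ forces the common face $P(\mu) \cap P(\nu)$ (for any $P(\nu) \in \St(F^{d-k})$) to contain a relative-interior point of $F^{d-k}$ and therefore all of $F^{d-k}$. What the paper's route buys is brevity, since the heavy lifting is already packaged in the lemma's statement; what yours buys is independence from that statement (only the elementary containment and symmetry facts are needed) plus the pleasant explicit characterization $\Vert(D) = \lbrace \mu \in \Lambda : x \in P(\mu) \rbrace$, i.e.\ the vertices of a dual cell are exactly the lattice points whose tiles cover the chosen interior point of the face.
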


\begin{proof} The inclusion $\Vert(D) \subset \Lambda\cap D$
is trivial. We need to prove $ \Lambda\cap D \subset \Vert(D)$.

Take $v\in \Lambda\cap D$. Let $y$ be any vertex of $D$.
Dual cells $D$ and $D + (v - y)$ intersect, 
$v$ is a vertex of $D + (v - y)$, and it belongs to the intersection 
$D \cap D + (v - y)$, which is a face of polytopes $D$ and $D + (v - y)$ 
by statement 2 of lemma \ref{translate_complex_main_lemma}. Therefore $v$ 
is a vertex of $D$.
\end{proof}

\begin{corollary} Let $D_1$, $D_2$ be dual cells. Then $D_1$ 
is a subcell of $D_2$ if and only if $D_1 \subset D_2$.
\end{corollary}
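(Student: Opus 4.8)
The plan is to prove the two implications separately, with essentially all the substance resting on Corollary~\ref{no-vertex-inside-corollary}, which asserts $\Lambda \cap D = \Vert(D)$ for any dual cell $D$. The preliminary observation I would record first is that, by the convex-position property of dual cells (property~1 of the preceding subsection), the defining centers of a dual cell $D_i$ are exactly its polytope vertices; hence $\Vert(D_i)$ is genuinely the vertex set of the polytope and $D_i = \conv(\Vert(D_i))$. I also note that every element of $\Vert(D_i)$ is a center of a parallelotope, so it lies in $\Lambda$.

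For the ``only if'' direction I would argue purely formally. Assuming $D_1$ is a subcell of $D_2$, that is $\Vert(D_1) \subset \Vert(D_2)$, taking convex hulls and using the identity $D_i = \conv(\Vert(D_i))$ gives at once
\begin{equation}
D_1 = \conv(\Vert(D_1)) \subset \conv(\Vert(D_2)) = D_2.
\end{equation}
This implication needs nothing beyond the identification of the abstract vertex set with the geometric one.

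For the converse, suppose $D_1 \subset D_2$. Each vertex $v \in \Vert(D_1)$ satisfies $v \in \Lambda$ and, since $v \in D_1 \subset D_2$, also $v \in \Lambda \cap D_2$. I would then invoke Corollary~\ref{no-vertex-inside-corollary} with $D = D_2$ to conclude $v \in \Lambda \cap D_2 = \Vert(D_2)$. As $v$ ranged over an arbitrary vertex of $D_1$, this yields $\Vert(D_1) \subset \Vert(D_2)$, which is exactly the assertion that $D_1$ is a subcell of $D_2$.

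The expected main obstacle is negligible once Corollary~\ref{no-vertex-inside-corollary} is available: that corollary does all the real work, ruling out the possibility that a vertex of $D_1$ could sit in the interior of $D_2$ or on a proper, non-vertex face of $D_2$. The only point that demands care is the bookkeeping that the combinatorial vertex set $\Vert(D_i)$ used in the \emph{definition} of subcell coincides with the geometric vertex set of the polytope $D_i$ and consists of lattice points; both facts follow immediately from the convex-position property and from the definition of a dual cell as a convex hull of lattice centers.
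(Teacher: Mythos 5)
Your proposal is correct and follows essentially the same route as the paper: the forward direction by taking convex hulls of the vertex-set inclusion, and the converse by observing $\Vert(D_1)\subset\Lambda\cap D_2$ and invoking the corollary $\Lambda\cap D_2 = \Vert(D_2)$. The extra bookkeeping you add about the convex-position property is implicit in the paper's argument and harmless.
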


\begin{proof} By definition $D_1$ is a subcell of $D_2$ if and only 
if $\Vert(D_1) \subset \Vert(D_2)$. So, if that holds, then 
$D_1 \subset D_2$. 

Conversely, suppose that $D_1 \subset D_2$. We need to prove 
that $\Vert(D_1) \subset \Vert(D_2)$. We have 
$\Vert(D_1) \subset \Lambda \cap D_2$. Since by the previous
corollary $\Lambda \cap D_2 = \Vert(D_2)$, each 
vertex of $D_1$ is also a vertex of $D_2$.
\end{proof}

\begin{corollary} 
\label{parity-class-representation-corollary}
Let $D$ be a dual cell. Each parity class in the lattice $\Lambda$ 
of the tiling (a class modulo $2 \Lambda$) is represented at most 
once among vertices of $D$.
\end{corollary}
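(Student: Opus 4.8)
The plan is to show that any two vertices of $D$ lying in the same parity class modulo $2\Lambda$ must in fact coincide. The key observation is that the midpoint of two such vertices is again a lattice point contained in $D$, after which I can invoke corollary \ref{no-vertex-inside-corollary}, which asserts that the only lattice points contained in a dual cell are its vertices.

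First I would take two vertices $v_1,v_2\in\Vert(D)$ in the same parity class, so that $v_1\equiv v_2\pmod{2\Lambda}$, i.e.\ $v_1-v_2\in 2\Lambda$ and hence $\tfrac{1}{2}(v_1-v_2)\in\Lambda$. The midpoint $m=\tfrac{1}{2}(v_1+v_2)=v_2+\tfrac{1}{2}(v_1-v_2)$ is then a lattice point, being a sum of two elements of $\Lambda$. Since $D$ is convex and $v_1,v_2\in D$, the point $m$ also lies in $D$, so $m\in\Lambda\cap D$.

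Next I would apply corollary \ref{no-vertex-inside-corollary}, which gives $\Lambda\cap D=\Vert(D)$; therefore $m$ is a vertex of $D$. But $m$ is the midpoint of the segment $[v_1,v_2]$, and a vertex of a convex polytope is an extreme point, so it cannot lie in the relative interior of a segment joining two distinct points of the polytope. If $v_1\ne v_2$ this is a contradiction, so $v_1=v_2$, and each parity class is represented at most once among the vertices of $D$.

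The argument is short, and its only real content is the reduction of the parity condition to the fact that the midpoint of two equivalent vertices is a lattice point; once that is in place, the previous corollary together with the extreme-point characterization of vertices does all the work. The one point to handle carefully is the implication $v_1\equiv v_2\pmod{2\Lambda}\Rightarrow\tfrac{1}{2}(v_1-v_2)\in\Lambda$, which rests only on the definition of a parity class as a coset of $2\Lambda$. I do not expect a genuine obstacle here beyond chaining corollary \ref{no-vertex-inside-corollary} correctly with convexity.
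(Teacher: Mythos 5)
Your proposal is correct and follows essentially the same argument as the paper: take two vertices in the same parity class, observe their midpoint is a lattice point lying in $D$ by convexity, invoke corollary \ref{no-vertex-inside-corollary} to conclude it is a vertex, and derive a contradiction with the extreme-point property of vertices. The only difference is that you spell out the details (the midpoint computation and the extreme-point contradiction) that the paper leaves implicit.
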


\begin{proof} Suppose that $\lambda_1$ and $\lambda_2$ are 
distinct vertices of $D$, and $\lambda_1 - \lambda_2 \in 
2 \Lambda$. Then $\lambda = \frac{\lambda_1 + \lambda_2}{2}$ 
is a lattice point, and it belongs to $D$ so by corollary 
\ref{no-vertex-inside-corollary} it is a vertex of $D$ 
which it cannot be.
\end{proof}

The next two lemmas exploit the central symmetries of a normal 
parallelotope tiling.

\begin{lemma} 
\label{central-symmetry-criterion-lemma}
A dual cell of combinatorial dimension $k$, $0 < k \le d$, is 
centrally symmetric if and only if the corresponding face 
of the tiling is the intersection of two parallelotopes. The dual cell
and the corresponding face of the tiling have the same center of symmetry. 
\end{lemma}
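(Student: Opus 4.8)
The plan is to read a centre of symmetry as a point reflection that is forced to be a symmetry of the whole tiling, and then to play this reflection against the local fan of the face. Throughout I write $P(c)$ for the tile centred at $c\in\Lambda$, and I will use two facts repeatedly: each tile satisfies $2c-P(c)=P(c)$, and (by property 2 of dual cells) a face equals the intersection of the tiles in its star, so that $\Vert(D^k)=\{c\in\Lambda:F^{d-k}\prec P(c)\}$ and a face is determined by its star.

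For the implication from faces to dual cells, suppose $F^{d-k}=P(a)\cap P(b)$ and set $m=\tfrac{a+b}{2}$. The point reflection $\sigma_m\colon x\mapsto 2m-x$ sends $P(c)$ to $P(2m-c)$, and since $2m=a+b\in\Lambda$ these are genuine tiles; as $\sigma_m$ interchanges $P(a)$ and $P(b)$ it fixes $F^{d-k}$. Hence for each $P(c)\in\St(F^{d-k})$ we have $F^{d-k}=\sigma_m(F^{d-k})\subset P(2m-c)$, so $2m-c\in\Vert(D^k)$; thus $\sigma_m$ permutes $\Vert(D^k)$ and $D^k$ is centrally symmetric about $m$, which is simultaneously the centre of $F^{d-k}$.

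Conversely, assume $D^k$ is centrally symmetric about $o$. Since the combinatorial dimension $k\ge 1$ forces $\dim D^k\ge 1$, the centre $o$ lies in $\relint(D^k)$, and any vertex $v_1\ne o$ has an antipodal vertex $2o-v_1\in\Lambda$; therefore $2o\in\Lambda$, and the reflection $\sigma_o$ maps $\Lambda$ onto itself and tiles to tiles, i.e. it is a symmetry of the tiling. Because $\sigma_o$ permutes $\Vert(D^k)$ it permutes $\St(F^{d-k})$, and since a face equals the intersection of its star, $\sigma_o(F^{d-k})=\sigma_o\bigl(\bigcap\St(F^{d-k})\bigr)=\bigcap\St(F^{d-k})=F^{d-k}$; in particular $o\in\relint(F^{d-k})$ and $\St(F^{d-k})$ decomposes into antipodal pairs $\{a,\,2o-a\}$.

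It then remains to show that any such pair satisfies $P(a)\cap P(2o-a)=F^{d-k}$, and this is the \emph{crux}. I would argue it through the fan of $F^{d-k}$ from Section \ref{dual-3-cells-section}: sectioning the tangent cones $\mathcal C_P$ of the tiles $P\in\St(F^{d-k})$ by the $k$-plane $\Pi$ through $o$ orthogonal to $F^{d-k}$ gives cones $\bar{\mathcal C}_P$ that partition $\Pi$ and are pointed, the pointedness being precisely the statement that $F^{d-k}$ is an honest $(d-k)$-dimensional face of the polytope $P$. Because $\sigma_o$ restricts to $x\mapsto -x$ on $\Pi$, we get $\bar{\mathcal C}_{P(2o-a)}=-\bar{\mathcal C}_{P(a)}$, and a pointed cone meets its negative only at the origin; hence $\mathcal C_{P(a)}\cap\mathcal C_{P(2o-a)}=\lin(F^{d-k}-F^{d-k})$, so $P(a)\cap P(2o-a)$ is a face containing $F^{d-k}$ of the same dimension, whence it equals $F^{d-k}$, with centre $o$. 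The hard part is exactly this last step: I must show the two antipodal tiles touch along $F^{d-k}$ and nothing larger \emph{without} assuming that dual subcells are polytope-theoretic faces, since that identification is not yet available here; the pointedness of the sectioned tangent cones is what makes the argument go through, and it is the one place where genuine polytope theory, rather than the abstract dual complex, enters.
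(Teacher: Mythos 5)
Your proof is correct, and most of it coincides with the paper's own argument: the paper also proves sufficiency by taking the point reflection at the midpoint of the two centers, checking (via central symmetry of the tiles and $2m\in\Lambda$) that it is a symmetry of the tiling fixing $F^{d-k}$, and it also begins the converse by transferring the symmetry of $D^k$ to $F^{d-k}$ through $F^{d-k}=\bigcap_{v\in\Vert(D^k)}P(v)$ and reducing to an antipodal pair $P(a)$, $P(2o-a)$. Where you genuinely diverge is exactly the step you flag as the crux. The paper dispatches it in one line: since the tiling is normal, $P(a)\cap P(2o-a)$ is a common face of the two tiles; it and $F^{d-k}$ are both centrally symmetric about the same point $o$, so both contain $o$ in their relative interiors, and two faces of $P(a)$ with intersecting relative interiors coincide. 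Your tangent-cone route --- the cones sectioned at $o$ are pointed, $\sigma_o$ carries one onto the negative of the other, a pointed cone meets its negative only at its apex, hence $P(a)\cap P(2o-a)\subset\aff(F^{d-k})$ and therefore equals $P(a)\cap\aff(F^{d-k})=F^{d-k}$ --- is valid, and it buys you independence from the face-to-face property at this step; but the precaution is unnecessary in context, since normality is a standing assumption of the paper (only normal tilings are considered after Venkov's theorem), and the paper's shortcut uses only that property of the tiling, not the identification of dual subcells with polytope-theoretic faces that you were rightly careful to avoid. One notational slip: $\mathcal{C}_{P(a)}\cap\mathcal{C}_{P(2o-a)}$ equals the affine hull $\aff(F^{d-k})$, i.e.\ the translate of $\lin(F^{d-k}-F^{d-k})$ through $o$, not that linear space itself; the conclusion is unaffected.
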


\begin{proof}
Suppose that $F^{d-k} = P_1\cap P_2$. Let $c_1$, $c_2$ be 
the centers of the parallelotopes $P_1$, $P_2$. The central symmetry $*$ 
at the point $c=\frac{c_1 + c_2}{2}$ is a symmetry of the whole tiling 
and it maps $F^{d-k}$ onto itself, so if a parallelotope $P$ contains 
$F^{d-k}$, then so does $*(P)$. In terms of dual 
cells, if $v\in\Vert(D)$, then $*(v)\in\Vert(D)$. This proves the sufficiency.

To prove the necessity, consider a centrally symmetric dual cell $D^k$. Let $*$ be 
the central symmetry of $D^k$ with center $c$. Note that 
$$F^{d-k} = \bigcap_{ v\in\Vert(D^k) } P(v)$$ so $c$ is the center 
of symmetry of $F^{d-k}$. Let $v\in \Vert(D^k)$. Then the intersection 
of parallelotopes $P_1$ and $P_2$ centered at $v$ and $*(v)$ respectively
is exactly $F^{d-k}$. Indeed, $c$ is the center of symmetry of both 
polytopes $P_1\cap P_2$ and $F^{d-k}$, so their relative 
interiors intersect. Since the tiling is normal (face-to-face), 
$P_1\cap P_2 = F^{d-k}$. This proves the necessity.
\end{proof}

Two vertices $v_1,v_2$ of a centrally symmetric dual cell $D$
are called {\em diametrically opposite} if $y_1 = *(y_2)$,
where $*$ is the central symmetry transformation of $D$.

\begin{lemma} 
\label{centrally_symmetric_subsets_lemma}
Let $D$ be the dual cell corresponding to a face $F$ of the tiling. 
Suppose that $X\subset \Vert(D)$ is a centrally symmetric nonempty 
subset. Then there is a centrally symmetric dual cell $D_1 \subset D$
with the same center of symmetry as $X$ 
such that $X\subset \Vert(D_1)$. 
\end{lemma}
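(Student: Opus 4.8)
The plan is to exploit the central symmetries of the tiling, exactly as in the proof of Lemma \ref{central-symmetry-criterion-lemma}. Let $c$ be the center of symmetry of $X$, and recall that every vertex of $D$ is a lattice point of $\Lambda$. First I would observe that $2c\in\Lambda$: picking any $v\in X$, the reflection $2c-v$ again lies in $X\subset\Vert(D)\subset\Lambda$, so $2c=v+(2c-v)$ is a sum of two lattice points. Consequently the point reflection $*\colon x\mapsto 2c-x$ preserves $\Lambda$, and since each parallelotope is a centrally symmetric translate of $P(0)$ we get $*(P(\lambda))=P(2c-\lambda)$; thus $*$ is a symmetry of the whole tiling. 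This is the crucial structural input.

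Next I would produce the candidate face of the tiling and its dual cell. Set
\begin{equation}
G=\bigcap_{v\in X}P(v).
\end{equation}
Since $X\subset\Vert(D)$, the face $F$ corresponding to $D$ satisfies $F\prec P(v)$ for every $v\in X$, so $F\subset G$ and in particular $G\neq\emptyset$; being an intersection of tiles of a face-to-face tiling, $G$ is itself a face of the tiling. The key computation is that $G$ is $*$-invariant: because $X=*(X)$ as a set,
\begin{equation}
*(G)=\bigcap_{v\in X}P(2c-v)=\bigcap_{w\in X}P(w)=G,
\end{equation}
so $G$ is centrally symmetric about $c$. Let $D_1$ be the dual cell corresponding to $G$.

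Finally I would verify the three required properties of $D_1$. From $F\subset G$ we get $F\prec G$, hence $\Vert(D_1)\subset\Vert(D)$ by the face/dual-cell correspondence (result 3), i.e. $D_1$ is a subcell of $D$. For each $v\in X$ we have $G\subset P(v)$, so $P(v)\in\St(G)$ and therefore $v\in\Vert(D_1)$; thus $X\subset\Vert(D_1)$. Central symmetry of $D_1$ about $c$ follows from the $*$-invariance of $G$: since $*$ is a tiling symmetry fixing $G$, it permutes $\St(G)$, so $\Vert(D_1)$ is invariant under $y\mapsto 2c-y$ and $D_1=\conv(\Vert(D_1))$ is symmetric about $c$. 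I expect the only genuinely delicate point to be the first paragraph --- recognizing that $2c\in\Lambda$ and hence that the point reflection at the center of $X$ is an honest symmetry of the tiling; once that is in hand, the intersection $G$ is forced to be a centrally symmetric face and every remaining claim reduces to the dual-cell dictionary already established.
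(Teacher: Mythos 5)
Your proposal is correct and takes essentially the same route as the paper: the paper's proof also forms the intersection $F_1=\bigcap_{v\in X}P(v)$, notes that the center of symmetry of $X$ is a center of symmetry of $F_1$ and of the whole tiling, and concludes that the dual cell of $F_1$ is centrally symmetric, contains $X$ among its vertices, and lies in $D$ because $F\subset F_1$. Your write-up simply makes explicit the details the paper leaves implicit, in particular the verification that $2c\in\Lambda$ so that the point reflection at $c$ is an honest symmetry of the tiling.
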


\begin{proof}
Consider the intersection $F_1$ of parallelotopes centered 
at points of $X$. The center of symmetry of $X$ is also the center 
of symmetry of $F_1$, and of the tiling. Therefore the dual cell 
$D_1$ corresponding to $F_1$ is centrally symmetric. By construction, 
$X\subset\Vert(D_1)$. We also have $F\subset F_1$, therefore 
$D_1 \subset D$.
\end{proof}

\begin{lemma} 
\label{centrally-symmetric-face-lemma}
A centrally symmetric nonempty face of a dual cell $D$ is a dual cell.
\end{lemma}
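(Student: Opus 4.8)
The plan is to realize the face $G$ as the dual cell produced by Lemma \ref{centrally_symmetric_subsets_lemma}. Write $G$ for the given centrally symmetric nonempty (polytope-theoretic) face of $D$, and set $X = \Vert(G)$. Since a vertex of a face of a polytope is a vertex of the polytope, $X \subset \Vert(D)$; and since the central symmetry of $G$ is an affine bijection carrying $G$ onto itself, it permutes the vertices of $G$, so $X$ is a centrally symmetric nonempty subset of $\Vert(D)$ whose center $c$ is the center of $G$. Applying Lemma \ref{centrally_symmetric_subsets_lemma} to $X$ yields a centrally symmetric dual cell $D_1 \subset D$ with center $c$ such that $X \subset \Vert(D_1)$. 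I claim $G = D_1$, which proves the lemma.

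One inclusion is immediate: $G = \conv(X) \subset \conv(\Vert(D_1)) = D_1$. For the reverse inclusion $D_1 \subset G$ I would exploit that $G$ is a genuine face of $D$. Choose a supporting hyperplane $H$ of $D$ with $D \cap H = G$, so that $D$ lies in one of the two closed halfspaces bounded by $H$. The center $c$ of $G$ lies in $\relint(G) \subset G \subset H$, hence $c \in H$. Now $D_1 \subset D$ lies in that same closed halfspace, and $D_1$ is centrally symmetric about the point $c \in H$: for any $p \in D_1$ the reflected point $2c - p$ again lies in $D_1$, and hence in the halfspace, which together with $p$ being in the halfspace forces $p \in H$. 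Therefore $D_1 \subset H$, and combined with $D_1 \subset D$ this gives $D_1 \subset D \cap H = G$. Together with the first inclusion, $G = D_1$ is a dual cell.

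The substantive step is the reflection argument of the second paragraph; its only real input is the elementary fact that a centrally symmetric convex set contained in a closed halfspace, whose center lies on the bounding hyperplane, must lie entirely in that hyperplane. Everything else is bookkeeping about vertices and the invocation of Lemma \ref{centrally_symmetric_subsets_lemma}. The one point I would be careful to verify is that the center of symmetry of $G$ genuinely lies in $G$ (it is the centroid of the vertices, hence in $\relint(G)$), so that $c$ is available on the supporting hyperplane $H$ for the reflection argument to bite.
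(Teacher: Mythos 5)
Your proof is correct, but it takes a genuinely different route from the paper's. The paper's argument is one line: writing $*$ for the central symmetry of the face $D'$, it uses (implicitly) that $*$ is a symmetry of the whole tiling, since its center is the midpoint of two lattice points, so $*(D)$ is again a dual cell; it then asserts the identity $\Vert(D') = \Vert(D)\cap\Vert(*(D))$ and concludes from the dual-complex property of section \ref{dual-cell-combinatorics-section} that $\conv\bigl(\Vert(D)\cap\Vert(*(D))\bigr)$ is a dual cell. You instead apply lemma \ref{centrally_symmetric_subsets_lemma} to $X=\Vert(G)$ to produce a centrally symmetric dual cell $D_1\subset D$ with the same center $c$ and $X\subset\Vert(D_1)$, and then trap $D_1$ inside the face $G$ by the supporting-hyperplane reflection argument. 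What your route buys is completeness: the nontrivial inclusion $\Vert(D)\cap\Vert(*(D))\subset\Vert(D')$ in the paper's asserted identity is not automatic, and proving it requires exactly the elementary fact you isolate (if $v$ and $*(v)=2c-v$ both lie in the closed halfspace supporting $D$ along $D'$, and $c$ lies on the bounding hyperplane, then $v$ lies on that hyperplane), so your write-up makes explicit what the paper leaves to the reader. What the paper's route buys is brevity and independence from lemma \ref{centrally_symmetric_subsets_lemma}; the price is that it must separately justify that $*$ preserves the tiling, a fact your argument never needs because it is already packaged inside that lemma. One cosmetic point: a supporting hyperplane with $D\cap H = G$ exists only when $G$ is a proper face; the improper case $G=D$ is trivially a dual cell and deserves a half-sentence.
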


\begin{proof}
Let $D'$ be a face of $D$, invariant under a central symmetry $*$. 
Then $\Vert(D') = \Vert(D) \cap \Vert(*(D))$, therefore $D'$ is 
a dual cell, for the reason that the dual cells form a combinatorial
complex dual to the tiling.
\end{proof}

\begin{definition} A point $v$ on the boundary of a (convex) polytope 
$Q$ is said to be illuminated by a direction $u\in \lin(Q - Q)$,
if for some $t > 0$ the point $v + tu$ belongs to the interior of $Q$.
\end{definition}
\index{illumination}

\begin{center}
\resizebox{150pt}{!}{\includegraphics[clip=true,keepaspectratio=true]{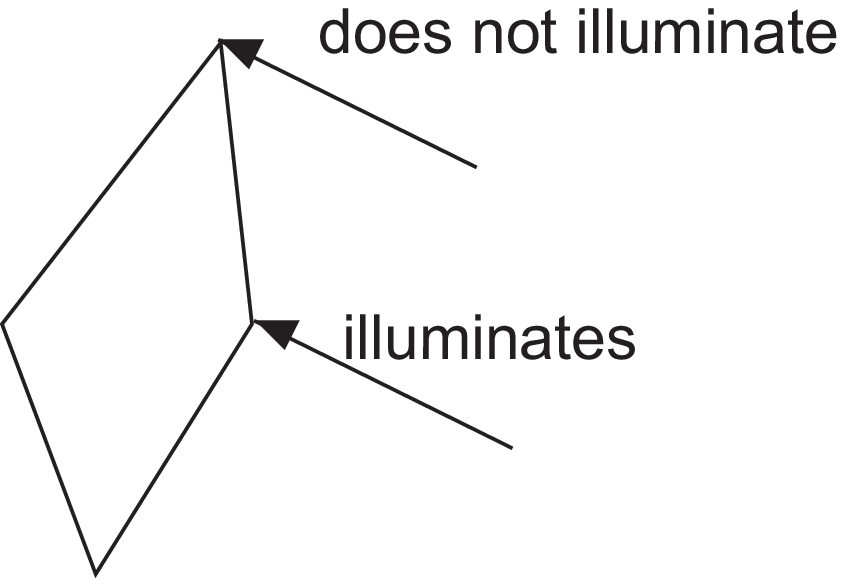}}
\label{illumination-figure}
\end{center}

The minimal number $c(Q)$ of directions needed to illuminate 
the boundary of a polytope $Q$ has been studied by several
authors (see \cite{bib-boltyanski-illumination}). It has been 
conjectured that $2^n$ directions are always sufficient
for an $n$-dimensional polytope $Q$; and that $c(Q) = 2^n$
if and only if $Q$ is affinely equivalent to the $n$-dimensional
cube. 

It is easy to see that if every vertex is illuminated by 
a direction from a given system, then the system 
illuminates all boundary points, so for every polytope $Q$, 
$\dim(Q)\ge 1$ we have $c(Q)\le |\Vert(Q)|$.

\begin{lemma} 
\label{dual-cells-skinny-lemma}
Let $1\le k \le d$. Consider a dual cell $D^k$ and a projection 
$h$ along the corresponding face $F^{d-k}$ of the tiling onto 
a complementary $k$-space. Then $c(D^k) = c(h(D^k)) = |\Vert(D^k)|$.
In other words, dual cells are {\em skinny}, see definition below.
\end{lemma}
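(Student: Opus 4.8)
The plan is to reduce the statement to one combinatorial fact about illumination and then to establish that fact from the lattice structure of dual cells, specifically the separation property in Lemma~\ref{translate_complex_main_lemma}. The reduction is this: since the inequality $c(Q)\le|\Vert(Q)|$ is already in hand, and since a system of directions that illuminates every vertex automatically illuminates the whole boundary, to prove $c(Q)=|\Vert(Q)|$ it suffices to show that \emph{no single direction illuminates two distinct vertices of $Q$}. Indeed, any illuminating system must account for each of the $|\Vert(Q)|$ vertices, so if each direction handles at most one vertex then at least $|\Vert(Q)|$ directions are required, giving $c(Q)\ge|\Vert(Q)|$. Thus the lemma follows once I prove this ``at most one vertex per direction'' property for both $D^k$ and $h(D^k)$, remembering throughout that ``interior'' means relative interior.

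First I would prove it for $D^k$ itself. Suppose for contradiction that a direction $u\in\lin(D^k-D^k)$ illuminates two distinct vertices $v_1,v_2$, so that $v_1+\varepsilon u,\ v_2+\varepsilon u\in\relint(D^k)$ for some $\varepsilon>0$. Because the vertices of a dual cell are centers of parallelotopes, hence lattice points, the vector $s=v_2-v_1$ lies in $\Lambda\setminus\{0\}$. The translate $D^k+s$ shares the vertex $v_2=v_1+s$ with $D^k$, so $D^k\cap(D^k+s)\ne\emptyset$, and Lemma~\ref{translate_complex_main_lemma}(2) supplies a hyperplane $N$ separating $D^k$ from $D^k+s$; by the definition of separation, $\relint(D^k)$ and $\relint(D^k+s)$ lie in opposite open halfspaces and are therefore disjoint. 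But translating the relative-interior point $v_1+\varepsilon u$ by $s$ gives $v_1+\varepsilon u+s=v_2+\varepsilon u\in\relint(D^k)+s=\relint(D^k+s)$, whereas illumination of $v_2$ gives $v_2+\varepsilon u\in\relint(D^k)$. Hence $v_2+\varepsilon u$ lies in both relative interiors, contradicting disjointness. This yields $c(D^k)=|\Vert(D^k)|$.

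Next I would transfer the argument to the projection $h(D^k)$. By Lemma~\ref{translate_complex_main_lemma}(1),(4) the map $h$ sends $\Vert(D^k)$ bijectively onto $\Vert(h(D^k))$, so $|\Vert(h(D^k))|=|\Vert(D^k)|$ and any two distinct vertices $w_1,w_2$ of $h(D^k)$ are $w_i=h(v_i)$ with $v_1,v_2$ distinct vertices of $D^k$. If a direction $\bar u$ illuminated both $w_1$ and $w_2$, I would again set $s=v_2-v_1\in\Lambda$, note $h(s)=w_2-w_1$, and now invoke Lemma~\ref{translate_complex_main_lemma}(3), which states that the \emph{same} hyperplane $N$ separates $h(D^k)$ from $h(D^k+s)=h(D^k)+h(s)$, so their relative interiors are again disjoint. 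Translating $w_1+\varepsilon\bar u\in\relint(h(D^k))$ by $h(s)$ produces $w_2+\varepsilon\bar u\in\relint(h(D^k)+h(s))=\relint(h(D^k+s))$, while illumination of $w_2$ gives $w_2+\varepsilon\bar u\in\relint(h(D^k))$, a contradiction. This gives $c(h(D^k))=|\Vert(h(D^k))|=|\Vert(D^k)|$, completing the lemma.

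I expect the main obstacle to be conceptual rather than computational: spotting that the right translate to use is $s=v_2-v_1$, which converts the hypothesis ``$u$ illuminates both $v_1$ and $v_2$'' into a single point lying simultaneously in $\relint(D^k)$ and in $\relint(D^k+s)$ — a configuration that the separation conclusion of Lemma~\ref{translate_complex_main_lemma} flatly forbids. Once this translate is identified, parts (2) and (3) of that lemma do essentially all the work, with part (3) being exactly the extra ingredient that lets the identical argument run in the projected space; the remaining points (taking relative interiors, the triviality that illuminating all vertices illuminates the boundary) are routine bookkeeping.
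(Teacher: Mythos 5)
Your proposal is correct and follows essentially the same route as the paper: both proofs reduce the claim to showing that no single direction illuminates two distinct vertices, and both obtain the contradiction by translating the cell by the difference of the two vertices and invoking the separation statements (2)--(3) of Lemma~\ref{translate_complex_main_lemma} to conclude the relative interiors of $D^k$ (resp.\ $h(D^k)$) and its translate are disjoint. The only differences are cosmetic: the paper treats $Q\in\{D^k,h(D^k)\}$ uniformly in one paragraph, while you handle the projected case separately via statements (1), (3), (4).
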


\begin{proof} 
This directly follows from lemma \ref{translate_complex_main_lemma}. 
Let $Q$ be any of the polytopes $D^k$, $h(D^k)$. Suppose that 
one direction $u$ illuminates two distinct vertices $v_1$, $v_2$ 
of $Q$. Then $u$ illuminates vertex $v_1$ of polytope $Q + (v_1 - v_2)$,
so for some $\epsilon_1,\epsilon_2 > 0$ we have 
$v_1 + \epsilon_1 u \in \relint(Q)$,  
$v_1 + \epsilon_2 u \in \relint(Q + (v_1 - v_2))$. Let 
$\epsilon = \min\lbrace\epsilon_1,\epsilon_2\rbrace$, then the point 
$v_1 + \epsilon u$ belongs to $\relint(Q)$ and to 
$\relint(Q + (v_1 - v_2))$. By lemma \ref{translate_complex_main_lemma}, 
these relative interiors do not intersect, unless $v_1 = v_2$.
However, vertices $v_1$ and $v_2$ were chosen distinct. 
This contradiction proves the lemma. 
\end{proof}

\begin{lemma}
\label{illumination-conjecture-lemma}
Suppose that $\dim(D) = 3$, where $D$ is a dual cell. 
Then $|\Vert(D)| \le 8$, with equality taking place
if and only if $D$ is affinely equivalent to a $3$-cube.
\end{lemma}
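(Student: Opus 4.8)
The plan is to translate the statement into lattice geometry and read off both the bound and the equality case from structural facts already established about dual cells. Since the vertices of $D$ are centres of parallelotopes, they are points of $\Lambda$ lying in the $3$-flat $\aff(D)$; their pairwise differences lie in the $3$-dimensional space $V = \lin(D - D)$, and $\Lambda_0 := \Lambda \cap V$ is a lattice of rank $3$. The first observation is that, for vertices (whose differences lie in $V$), being congruent modulo $2\Lambda$ is the same as being congruent modulo $2\Lambda_0$, because $2\Lambda \cap V = 2(\Lambda \cap V)$. Hence Corollary \ref{parity-class-representation-corollary} says that no two vertices of $D$ represent the same class of $\Lambda_0 / 2\Lambda_0$, and since this group has exactly $2^3 = 8$ elements we obtain $|\Vert(D)| \le 8$ at once. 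This is the self-contained substitute for the upper-bound half of the illumination conjecture, which, via Lemma \ref{dual-cells-skinny-lemma} (dual cells are skinny, $c(D) = |\Vert(D)|$), is exactly the assertion $c(D) \le 2^3$.

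For the equality case the $\Longleftarrow$ direction is immediate, since an affine cube has $8$ vertices. For $\Longrightarrow$ I would argue that $8$ vertices force $D$ to be a parallelepiped. Normalising by a lattice isomorphism (an affine map) we may take $\Lambda_0 = \mathbb{Z}^3$, so that $D$ is a $3$-dimensional lattice polytope whose only lattice points are its $8$ vertices, one in each parity class; the emptiness here is Corollary \ref{no-vertex-inside-corollary}. Two facts drive the argument. First, each facet $G$ of $D$ is itself an empty convex lattice polygon: any lattice point of $G$ lies in $D \cap \mathbb{Z}^3 = \Vert(D)$ and is therefore a vertex of $G$, so by the classification of empty lattice polygons $G$ is either a unimodular triangle or a parallelogram. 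Second, Lemma \ref{translate_complex_main_lemma}(2) shows that for every $t \in \Lambda_0 \setminus \{0\}$ the translate $D + t$ meets $D$ only in a common face; thus $\lbrace D + t : t \in \Lambda_0 \rbrace$ is a lattice packing and $\mathrm{vol}(D) \le \mathrm{covol}(\Lambda_0) = 1$. The target is to show that \emph{every} facet is in fact a parallelogram, for a $3$-polytope all of whose facets are parallelograms is a parallelepiped, hence affinely a cube, which closes the proof.

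The main obstacle is precisely this last step: ruling out triangular facets, equivalently proving that $D$ is centrally symmetric. Euler's relation alone does not forbid them — $V = 8$ is consistent with, say, two triangular and five quadrilateral facets — so the exclusion must be geometric. The tools I would bring to bear are the skinny property of Lemma \ref{dual-cells-skinny-lemma}, which forces the open tangent cones at the $8$ vertices to be pairwise disjoint so that their normalised solid angles sum to at most $1$, together with the volume bound $\mathrm{vol}(D) \le 1$ and the expectation that $8$ lattice points, one per class of $\mathbb{Z}^3 / 2\mathbb{Z}^3$, cannot sit in a body of volume below that of the unit parallelepiped unless the configuration is itself the vertex set of a parallelepiped. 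Pushing this volume--rigidity comparison to the equality case — so that the packing becomes a tiling and $D$ a parallelotope, necessarily a parallelepiped for $8$ vertices by Fedorov's list of $3$-dimensional parallelotopes — is where the real work lies. It is the dimension-$3$, lattice-theoretic instance of the equality clause of the illumination conjecture recorded just before the lemma.
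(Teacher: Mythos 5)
Your first half is sound and is essentially the paper's own argument: pass to a rank-$3$ lattice containing the vertex differences ($\Lambda_0=\Lambda\cap\lin(D-D)$ for you, $\Lambda'={\mathbb Z}(\Vert(D))$ in the paper), observe that corollary \ref{parity-class-representation-corollary} forbids two vertices from sharing a parity class, and pigeonhole over the $2^3=8$ classes. The backward implication of the equality clause is likewise trivially fine.

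The forward implication, however, is where your proposal stops being a proof. You correctly reduce it to excluding triangular facets of $D$ (equivalently, to upgrading the lattice packing $\lbrace D+t : t\in\Lambda_0 \rbrace$ to a tiling), but then you only list tools --- skinniness, the bound $\mathrm{vol}(D)\le\mathrm{covol}(\Lambda_0)$, a solid-angle count --- and explicitly defer the decisive step as an ``expectation'' where ``the real work lies''. That step \emph{is} the equality case; without it the lemma is not established, since nothing you have written rules out, say, a configuration of $8$ points, one per parity class, whose hull has some triangular facets. Two further cautions. First, your bridging claim that ``a $3$-polytope all of whose facets are parallelograms is a parallelepiped'' is false as stated --- the rhombic dodecahedron is a counterexample; it becomes true only after invoking $|\Vert(D)|=8$, which via Euler's relation forces $F=6$, $E=12$ and the combinatorial type of the cube, so the justification needs to be rerouted through the vertex count. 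Second, for comparison, the paper closes the equality case not by facet analysis but by a direct lattice argument: with $0\in\Vert(D)$ and $\Lambda'={\mathbb Z}(\Vert(D))$ of rank $3$, the cell $D$ is a lattice polytope whose $\Lambda'$-translates pack $\lin(\Lambda')$ (statement 2 of lemma \ref{translate_complex_main_lemma}), and since its $8$ vertices represent every class of $\Lambda'/2\Lambda'$, it is a fundamental parallelepiped of $\Lambda'$. That concluding inference (packing $+$ one vertex per parity class $\Rightarrow$ fundamental parallelepiped) is exactly the rigidity statement your outline is missing; whichever formulation you prefer, it must actually be proved, not anticipated.
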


\begin{proof} 
Assume without limiting the generality that $0$ 
is a vertex of $D$. Consider the lattice 
$\Lambda' = {\mathbb Z}(\Vert(D))$. 
Rank of lattice $\Lambda'$ is 3, because $\dim(D) = 3$.
If $|\Vert(D)| \ge 9$, then $\Vert(D)$ contains two 
vertices from the same parity class in $\Lambda'$,
and hence from the same parity class in $\Lambda$.
This contradicts lemma \ref{parity-class-representation-corollary}
on page \pageref{parity-class-representation-corollary}.
Therefore $|\Vert(D)|\le 8$. 

Now, $D$ is a lattice polytope with respect to
lattice $\Lambda'$ and its images under the action
of $\Lambda'$ pack the space $\lin(\Lambda')$. 
Also, $|\Vert(D)| = 8$. It follows that $D$ is a
fundamental parallelepiped for the lattice $\Lambda$.
\end{proof}

Below we will call a polytope $Q$ with $c(Q) = |\Vert(Q)|$ a 
{\em skinny} polytope.
\index{skinny polytope}

\begin{lemma} Faces of dimension at least $1$ of a skinny polytope 
are skinny.
\end{lemma}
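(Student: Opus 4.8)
The plan is to first translate skinniness into a statement about vertices. Since the paper has already noted that a system of directions illuminates the whole boundary of a polytope exactly when it illuminates every vertex, and since $c(Q)\le|\Vert(Q)|$, a polytope $Q$ is skinny if and only if no single direction illuminates two distinct vertices. Indeed, if some direction $u$ illuminated two distinct vertices $v_1,v_2$, then covering $v_1,v_2$ by $u$ and each of the remaining $|\Vert(Q)|-2$ vertices by one direction apiece would produce an illuminating system of size $|\Vert(Q)|-1$, forcing $c(Q)<|\Vert(Q)|$. I would record this equivalence first, as it reduces the lemma to showing that for a face $F$ with $\dim(F)\ge 1$, no direction illuminates two distinct vertices of $F$.

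Next I would argue by contradiction. Suppose $F$ is a face of the skinny polytope $Q$ with $\dim(F)\ge 1$, and some direction $u\in\lin(F-F)$ illuminates two distinct vertices $v_1,v_2$ of $F$. Vertices of $F$ are vertices of $Q$, so $v_1,v_2\in\Vert(Q)$. By the definition of illumination there is $t_0>0$ with $w_i:=v_i+t_0u\in\relint(F)$ for $i=1,2$; one may take a common $t_0$ because the segment from a vertex of $F$ to a relative interior point of $F$ lies in $\relint(F)$ except at the vertex.

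The crux is to produce a single direction that illuminates both $v_1$ and $v_2$ inside $Q$, contradicting the skinniness of $Q$. For this I would exploit that $w_1,w_2$ lie in the relative interior of the \emph{same} face. Writing the facet inequalities of $Q$ as $\langle a_j,x\rangle\le b_j$ and letting $J$ be the set of facets containing $F$, a point $w\in\relint(F)$ satisfies $\langle a_j,w\rangle=b_j$ exactly for $j\in J$. Choosing any interior point $q\in\int(Q)$, the direction $d=q-w_1$ satisfies $\langle a_j,d\rangle<0$ for every $j\in J$, and this is all that matters: for any $w\in\relint(F)$ and small $\epsilon>0$ one has $w+\epsilon d\in\int(Q)$, since the active constraints ($j\in J$) strictly decrease while the inactive ones ($j\notin J$) remain strict by continuity. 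Hence $d$ points into $\int(Q)$ from every relative-interior point of $F$, in particular from both $w_1$ and $w_2$.

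Finally, setting $u'=t_0u+\epsilon d$ for a suitably small $\epsilon>0$, I would compute
\begin{equation}
v_i+u'=(v_i+t_0u)+\epsilon d=w_i+\epsilon d\in\int(Q),\qquad i=1,2,
\end{equation}
so the single direction $u'$ illuminates both distinct vertices $v_1,v_2$ of $Q$, contradicting the reformulated skinniness of $Q$; therefore $F$ is skinny. I expect the main obstacle to be the middle step: verifying that one interior-pointing direction works simultaneously at both $w_1$ and $w_2$. The clean way around it is the observation that the set of facets active at a point of $\relint(F)$ is the same for all such points (they are exactly the facets containing $F$), so a common $d$ serves both; the remainder is bookkeeping with the facet inequalities and choosing $\epsilon$ small enough to keep the finitely many inactive constraints satisfied.
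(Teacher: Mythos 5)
Your proof is correct, and it takes a somewhat different route from the paper's. The paper proves the statement for facets first: assuming $k<|\Vert(F)|$ directions $u_1,\dots,u_k\in\lin(F-F)$ illuminate the facet $F$, it perturbs each by $\epsilon n$, where $n$ is an inward normal to the facet, uses one extra direction for each vertex in $\Vert(Q)\setminus\Vert(F)$, and concludes $c(Q)<|\Vert(Q)|$; it then propagates the result to faces of lower dimension by induction (a face of a facet of a skinny polytope, and so on). You instead work with the equivalent formulation that skinniness means no single direction illuminates two distinct vertices --- an equivalence the paper itself invokes elsewhere (in the proof of lemma \ref{dual-cells-skinny-lemma} and again inside the proof of theorem \ref{two_cubes_theorem}) --- and you handle an arbitrary face in one step: since the set of active facet inequalities is the same at every point of $\relint(F)$, a single vector $d$ pointing from a point of $\relint(F)$ toward an interior point of $Q$ serves simultaneously at $w_1$ and $w_2$, so $t_0u+\epsilon d$ illuminates both $v_1$ and $v_2$ as vertices of $Q$. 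What each approach buys: yours avoids the induction entirely and treats faces of all dimensions uniformly, making explicit the two-vertex characterization; the paper's is shorter for the facet case because the inward facet normal is available for free there, at the price of the inductive reduction. One small point to clean up: when $Q$ is not full-dimensional, $\int(Q)=\emptyset$, so your facet-inequality argument should be read inside $\aff(Q)$, with $\relint(Q)$ in place of $\int(Q)$; the paper's definition of illumination is likewise to be read relatively (its own usage in lemma \ref{dual-cells-skinny-lemma} does exactly this), and with that reading every step of your argument goes through.
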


\begin{proof}
Enough to prove the lemma for facets and then propagate 
the result by induction. Suppose that $Q$ is a skinny polytope 
and $F\subset Q$ is a facet. Suppose that directions 
$u_1,\dots,u_k\in \lin(F - F)$ illuminate all vertices of 
$F$, where $k < |\Vert(F)|$. Let $n$ be an inward normal 
vector to the facet $F$. Then for small enough $\epsilon > 0$ 
the directions $u_i + \epsilon n$ illuminate all vertices 
$\Vert(F)\subset \Vert(Q)$ on the boundary of $Q$. We can 
use one direction for each of the remaining vertices 
$\Vert(Q)\setminus\Vert(F)$. It follows that $Q$ is not skinny.
\end{proof}

In fact, a more general result holds.

\begin{lemma} \label{subset_ei_lemma}
If $Q$ is a skinny polytope and $X\subset \Vert(Q)$ 
is a subset with $|X| \ge 2$, then $conv(X)$ is skinny.
\end{lemma}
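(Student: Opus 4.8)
The plan is to reduce everything to the reformulation of skinniness as a condition on pairs of vertices, and then to invoke the already-proven fact that positive-dimensional faces of a skinny polytope are skinny. First I would record the key reformulation: a polytope $Q$ with $\dim(Q)\ge 1$ is skinny if and only if no single direction illuminates two distinct vertices of $Q$. One implication is immediate, since if every direction illuminates at most one vertex, then illuminating all of them requires at least $|\Vert(Q)|$ directions, while we always have $c(Q)\le|\Vert(Q)|$. For the converse I would argue contrapositively: if one direction $u$ illuminated two vertices, I could use $u$ for those two and assign to each remaining vertex its own direction (every vertex $v$ is illuminated by the direction from $v$ to an interior point), producing an illuminating system of fewer than $|\Vert(Q)|$ directions, so $Q$ would not be skinny.

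Writing $P=\conv(X)$, I would then argue by contradiction. Suppose some direction $u\in\lin(P-P)$ illuminates two distinct vertices $v_1,v_2$ of $P$; note that $v_1,v_2\in X\subset\Vert(Q)$. Let $G$ be the smallest face of $Q$ containing $P$. I would first establish the standard fact that $\relint(P)\subseteq\relint(G)$: if a face $F$ of $Q$ meets $\relint(P)$, then, since any point of $P$ lies on a segment in $P$ whose relative interior meets $F$, the face $F$ must contain all of $P$; hence the smallest face through any relative interior point of $P$ is exactly $G$. Choosing $t_i>0$ with $v_i+t_i u\in\relint(P)$, I then get $v_i+t_i u\in\relint(G)$, so $u$ illuminates $v_1$ and $v_2$ relative to $G$; and since $v_1,v_2$ are vertices of $Q$ lying in $G$, they are vertices of $G$.

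Now I would split into two cases. If $G=Q$, then $\relint(G)=\int(Q)$, so $u$ illuminates two distinct vertices of $Q$, contradicting skinniness of $Q$ via the reformulation. If $G$ is a proper face, then $\dim(G)\ge\dim(P)\ge 1$ because $|X|\ge 2$ gives $\dim(P)\ge 1$; by the preceding lemma $G$ is itself skinny, yet $u$ illuminates two of its vertices, again a contradiction. In either case the assumption is untenable, so no direction illuminates two vertices of $P$, and by the reformulation $P=\conv(X)$ is skinny.

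The main obstacle is the bookkeeping in the reduction rather than any new geometric idea: one must correctly identify the smallest face $G\supseteq P$ and verify $\relint(P)\subseteq\relint(G)$, so that an illuminating direction for $P$ transports to an illuminating direction for $G$ (or for $Q$ itself when $G=Q$). Once this transport is in place, the result follows from the already-established statement that positive-dimensional faces of skinny polytopes are skinny, and no fresh perturbation argument is needed beyond the inward-normal trick hidden inside that lemma.
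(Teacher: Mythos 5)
Your proof is correct and follows essentially the same route as the paper's: both identify the minimal face $G$ of $Q$ containing $\conv(X)$, establish $\relint(\conv(X))\subseteq\relint(G)$, transport illuminating directions from $\conv(X)$ up to $G$, and invoke the preceding lemma that positive-dimensional faces of a skinny polytope are skinny. The only cosmetic difference is that you package skinniness as the pairwise characterization (no single direction illuminates two vertices) where the paper counts directions directly (fewer than $|X|$ directions for $\conv(X)$ plus one per vertex of $\Vert(G)\setminus X$), and you prove $\conv(X)\subseteq G$ by the extreme-face segment property where the paper uses a supporting hyperplane; these are equivalent micro-arguments for the same steps.
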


\begin{proof} Choose a point $y\in\relint(\conv(X))$ and 
take the face $F$ of $Q$ so that $y\in\relint(F)$. 
Face $F$ is defined uniquely.

We prove that $\conv(X)\subset F$. If $F = Q$, then the
result is immediate. Suppose that $F$ is a proper face 
of $Q$. Let $H$ be a hyperplane supporting face $F$ on $Q$. Then $X\subset H$. 
Indeed, all points of $X$ are vertices of $Q$ so they belong 
to the same closed halfspace $H^+$ of $H$. On the other hand, 
if there is a point $y'\in\conv(X)$ which belongs to the 
interior of halfspace $H^+$, then (since $y\in\relint(\conv(X))$) 
there is a point on the line going through $y$ and $y'$ 
which belongs to the interior of the other halfspace $H^-$,
which is a contradiction. Hence 
$\conv(X)\subset H\cap P = F$. 

Since $\conv(X)$ and $F$ are star sets with respect 
to $y$,\footnote{A set $A$ is called star set with 
respect to $y$ if any point in $A$ can be connected
with $y$ by a line segment contained in $A$.} we have 
$\relint(\conv(X))\subset \relint(F)$. If the number of directions
sufficient to illuminate $\conv(X)$ is less than $|X|$, then we 
can use the same set of directions plus one direction
for each vertex in $\Vert(F)\setminus X$ to illuminate $F$, 
and therefore $F$ is not skinny. Using the previous lemma,
we derive that $Q$ is not skinny which is a contradiction. 
The lemma is proved.
\end{proof}

%Although the illumination conjecture which claims that 
%$c(Q) \le 2^{\dim(Q)}$ is not resolved yet, we can prove
%that cubes are ``maximal'' skinny polytopes, in the following sense.

\begin{lemma} 
\label{cube-maximality-lemma}
Let $Q=[0,1]^n$, $n\ge 1$, $X \subset \aff(Q)$, so that 
$\conv(Q\cup X)$ has vertex set 
$\Vert(Q)\cup X$. Then 
$\conv(Q\cup X)$ is {\em not} skinny.
\end{lemma}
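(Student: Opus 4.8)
The plan is to reduce the statement to the production of a single illumination direction that lights up two distinct vertices of $P:=\conv(Q\cup X)$ at once. Recall from the remark preceding Lemma~\ref{dual-cells-skinny-lemma} that a system of directions illuminates the whole boundary as soon as it illuminates every vertex, and that one always has $c(P)\le|\Vert(P)|$. Hence, if I can exhibit one direction $u$ illuminating two different vertices $v_1\ne v_2$ of $P$, then augmenting $\lbrace u\rbrace$ by one direction toward an interior point from each of the remaining $|\Vert(P)|-2$ vertices produces an illuminating system of size $|\Vert(P)|-1<|\Vert(P)|$, whence $P$ is not skinny. So the whole task becomes: find such a direction $u$. (This is exactly what fails for the bare cube, where every direction illuminates at most one vertex, which is why the extra points $X$ are essential.)

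First I would extract a useful new vertex. Since $Q\subseteq P$, any vertex of $P$ lying outside $\Vert(Q)$ must lie outside $Q$; and as $P\ne Q$ (the case $P=Q$ being the cube itself, which is skinny), there is a vertex $x\in X$ of $P$ with $x\notin Q$. Let $c=(\tfrac12,\dots,\tfrac12)$ be the center of the cube. Because $Q\subseteq P$ are both full-dimensional in $\aff(Q)={\mathbb R}^n$, we have $c\in\int(Q)\subseteq\int(P)$. Setting $u_0:=c-x$, the point $x+u_0=c$ lies in $\int(P)$, so $u_0$ illuminates the vertex $x$; note $u_0\ne0$ because $x\notin Q$.

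Next I would perturb $u_0$ slightly to a direction $u$ all of whose coordinates are nonzero. Illumination is an open condition (the point $x+u$ stays near $c\in\int(P)$), so $u$ still illuminates $x$. Now let $v_{\min}$ be the cube vertex minimizing the linear functional $\langle u,\cdot\rangle$ over $\Vert(Q)$; since every coordinate of $u$ is nonzero, this minimizer is unique and is given coordinatewise by $(v_{\min})_i=0$ when $u_i>0$ and $(v_{\min})_i=1$ when $u_i<0$. A direct check shows that moving from $v_{\min}$ in the direction $u$ strictly increases each coordinate that was $0$ and strictly decreases each coordinate that was $1$, so $v_{\min}+su\in\int(Q)\subseteq\int(P)$ for small $s>0$; hence $u$ illuminates $v_{\min}$ in $P$ as well. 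Here I use the hypothesis $\Vert(Q)\subseteq\Vert(P)$, which guarantees $v_{\min}$ is a vertex of $P$. Since $v_{\min}\in Q$ while $x\notin Q$, the two illuminated vertices are distinct, and the reduction of the first paragraph finishes the proof.

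I expect the only delicate point to be the bookkeeping around the perturbation: I must confirm that the single perturbed direction $u$ simultaneously illuminates $x$ (guaranteed by openness of $\int(P)$) and the cube vertex $v_{\min}=v_{\min}(u)$ (guaranteed once all coordinates of $u$ are nonzero), and that these remain two distinct vertices of $P$. Everything else is routine; in particular I would not need the full illumination conjecture mentioned in the text, only the elementary bound $c(P)\le|\Vert(P)|$ together with the fact that illuminating all vertices illuminates the boundary.
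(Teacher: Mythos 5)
Your proof is correct and takes essentially the same route as the paper's: both exhibit a single direction pointing from an extra vertex $x\in X\setminus Q$ into the interior of the cube, chosen generically (all coordinates nonzero, i.e.\ not parallel to any facet) so that it simultaneously illuminates $x$ and the extremal cube vertex, which contradicts skinniness. Your center-then-perturb construction and the explicit reduction step are simply a more detailed spelling-out of the paper's one-line argument.
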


\begin{proof}
Let $x\in X$, $t$ a vector so that $x + t \in\relint(Q)$, 
chosen not to be parallel to any facet of $Q$. Then light direction
$t$ illuminates one of the vertices of $Q$, and hence one 
of the vertices of $\conv(Q\cup \lbrace x \rbrace)$, which 
means that the polytope $\conv(Q\cup X)$ is not skinny.
\end{proof}

\end{subsection}

\begin{subsection} {A lemma on the projections of polytopes}
\label{projections-of-polytopes-section}
Consider a $d$-dimensional parallelotope $P$ and the projection 
$h$ along a linear space $L^k$ onto a complementary linear 
space $L^{d-k}$. Call faces $F^s$ of $P$ with 
$L^k\subset\lin(F^s - F^s)$ {\em commensurate} 
\index{commensurate faces}
with the projection 
$h$. If $F^s$ is commensurate,
the polytope $h(F^s)$ has dimension $s - k$ and is a face 
of $h(P)$. Different commensurate faces of $P$ have different 
images under $h$. 

\begin{lemma} 
\label{projection-of-relint-lemma}
Let $M$,$N$ be affine spaces, $f:M \to N$ an affine mapping, 
$P\subset M$ a convex set. Then $\relint(f(P)) = f(\relint(P))$.
\end{lemma}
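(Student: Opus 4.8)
The plan is to reduce to the case of a surjective affine map, and then prove the two inclusions separately, using openness for one and a convexity (line‑segment) argument for the other. If $P=\emptyset$ both sides are empty, so I assume $P\ne\emptyset$ throughout.

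The first step is to pass to affine hulls so that the relative interiors become ordinary interiors in matching spaces. I would show $f(\aff(P))=\aff(f(P))$: the inclusion $\aff(f(P))\subseteq f(\aff(P))$ holds because $f(\aff(P))$ is an affine subspace containing $f(P)$, while the reverse inclusion holds because any point of $\aff(P)$ is an affine combination of points of $P$, whose image is the corresponding affine combination of their images and hence lies in $\aff(f(P))$. Restricting $f$ to the surjective affine map $g\colon \aff(P)\to\aff(f(P))$, the relative interiors $\relint(P)$ and $\relint(f(P))$ coincide with the interiors $\int(P)$ and $\int(g(P))$ taken in these restricted spaces, and $\aff(P)$ is now the whole domain. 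Thus it suffices to prove, for a surjective affine map $g$ of finite‑dimensional affine spaces with $P$ convex and $\aff(P)$ the full domain, that $\int(g(P))=g(\int(P))$.

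For the inclusion $g(\int(P))\subseteq\int(g(P))$ I would invoke that a surjective affine map between finite‑dimensional spaces is open: its linear part is a surjective linear map, hence open, and translation is a homeomorphism. So if $x\in\int(P)$ has a neighborhood $U\subset P$, then $g(U)$ is an open subset of $g(P)$ containing $g(x)$, whence $g(x)\in\int(g(P))$. For the reverse inclusion $\int(g(P))\subseteq g(\int(P))$, fix any $x_0\in\int(P)$ (nonempty since $\aff(P)$ is the whole domain) and set $y_0=g(x_0)$, which lies in $\int(g(P))$ by the step just proved. Given $y\in\int(g(P))$, I extend the segment from $y_0$ slightly past $y$: for small $\epsilon>0$ the point $y_1=y+\epsilon(y-y_0)$ still lies in $g(P)$, say $y_1=g(x_1)$ with $x_1\in P$. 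Then $y=(1-\lambda)y_0+\lambda y_1$ with $\lambda=\tfrac{1}{1+\epsilon}\in(0,1)$, and putting $x=(1-\lambda)x_0+\lambda x_1$, affinity gives $g(x)=y$; since $x_0\in\int(P)$, $x_1\in P$ and $\lambda<1$, the standard line‑segment principle for convex sets yields $x\in\int(P)$, so $y\in g(\int(P))$.

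The routine bookkeeping is the affine‑hull reduction, which merely ensures the two relative interiors are computed in corresponding spaces. I expect the crux to be the reverse inclusion $\int(g(P))\subseteq g(\int(P))$, since it requires combining two ingredients — openness of the surjection (to certify that $y_0=g(x_0)$ already sits in $\int(g(P))$) and the convex line‑segment principle (to pull a suitable convex combination back into $\relint(P)$). Neither alone suffices, and the extension of the segment beyond $y$ uses $y\in\int(g(P))$ essentially.
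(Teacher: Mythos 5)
Your proof is correct, and its first half coincides with the paper's: both arguments reduce to the case $M=\aff(P)$, $N=f(M)=\aff(f(P))$, so that relative interiors become ordinary interiors and the map becomes a surjective (hence open) affine map. Where you part ways is in the main step. The paper finishes in one stroke with a density argument: $f(\int(P))$ is open, convex, and dense in $f(P)$, and any open convex dense subset $A$ of a convex set $B$ equals $\int(B)$ — openness gives $A\subset\int(B)$, while density gives $\int(B)\subset\int(\Cl(A))=\int(A)=A$, using the standard identity $\int(\Cl(A))=\int(A)$ for an open convex set. You instead prove the two inclusions separately: openness of the surjection yields $g(\int(P))\subseteq\int(g(P))$, and for the reverse you run the explicit segment-extension argument (push slightly past $y$ inside $\int(g(P))$, pull the endpoint back to some $x_1\in P$, and invoke the line-segment principle with $x_0\in\int(P)$). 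Your route is more elementary and self-contained — it needs only the accessibility lemma for convex sets, and every step is verified — at the cost of length; the paper's version is slicker but asserts without proof both the openness of $f(\int(P))$ and the ``open convex dense implies interior'' implication, which your two-inclusion argument effectively unpacks.
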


\begin{proof} We may assume that $M = \aff(P)$, $N = f(M)$. 
Then $N = \aff(f(P))$. The set $f(\int(P))$ is open, convex
and dense in $f(P)$, which implies that $f(\int(P)) = \int(f(P))$.
\end{proof}

\end{subsection}

\begin{subsection}{Dual cells affinely equivalent to cubes}
\label{affine-cubes-cells-section}
In this section, we prove some
properties of dual cells
which are affinely equivalent to cubes
of arbitrary dimension. Readers familiar
with Delaunay tilings would note that 
some of these properties hold
almost automatically, if you assume 
that the parallelotope tiling is 
a DV-tiling and the dual cells are
the Delaunay polytopes. 

First, a few notes on terminology. {\em Edge vectors} 
of a polytope are the vectors $v_1 - v_2$ for vertices $v_1,v_2$ 
such that $[v_1,v_2]$ is an edge. A $k$-dimensional nonempty 
proper face of a polytope is called primitive (in relation to the polytope) if 
it is contained in exactly $d-k$ facets of the polytope. 
By an $s$-cube we mean a cube of $s$ dimensions.

\begin{lemma} 
\label{preliminary-cube-subcells-lemma}
Let $D$ be a dual cell affinely equivalent to an $s$-cube. 
Then each face of $D$ is also a subcells. All $1$-subcells 
of $D$ are edges of $D$.
\end{lemma}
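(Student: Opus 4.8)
The plan is to treat the two assertions separately, in both cases exploiting the fact that faces of a cube are centrally symmetric together with the characterization of centrally symmetric dual cells.

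For the first assertion I would argue directly. Every nonempty face of an $s$-cube is itself a lower-dimensional cube, hence centrally symmetric. Thus if $G$ is a nonempty face of $D$, it is a centrally symmetric nonempty face of the dual cell $D$, so by Lemma~\ref{centrally-symmetric-face-lemma} it is itself a dual cell. Since $\Vert(G)\subset\Vert(D)$, the cell $G$ is by definition a subcell of $D$ (the $0$-faces, being centres of single parallelotopes, are trivially dual cells). This establishes that every face of $D$ is a subcell, and in particular that every edge of $D$ is a dual $1$-subcell.

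The second assertion is the converse direction, and here I would give a short proof by contradiction. Let $[a,b]$ be a $1$-subcell with $a,b\in\Vert(D)$, and suppose it is \emph{not} an edge of $D$. Fixing an affine identification of $D$ with $[0,1]^s$, the vertices $a$ and $b$ then differ in a set $S$ of at least two coordinates. Let $D_m$ be the smallest face of $D$ containing both: it is obtained by freeing exactly the coordinates in $S$, so it is an $m$-dimensional sub-cube, $m=|S|\ge 2$, in which $a$ and $b$ are diametrically opposite. By the first part, $D_m$ is a subcell; being a cube, it is centrally symmetric, and its central symmetry $*$ has centre $\frac{a+b}{2}$ and satisfies $*(a)=b$. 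I then read off the face of the tiling attached to $D_m$: by Lemma~\ref{central-symmetry-criterion-lemma} the face corresponding to a centrally symmetric dual cell is the intersection of the two parallelotopes centred at any pair of diametrically opposite vertices, so applying this to $D_m$ with the pair $a,b$ shows that its face is exactly $P(a)\cap P(b)$. On the other hand, $[a,b]$ is a dual $1$-cell, hence corresponds to a facet of the tiling, namely the facet $P(a)\cap P(b)$ shared by $P(a)$ and $P(b)$. Thus $D_m$ and $[a,b]$ correspond to one and the same face $P(a)\cap P(b)$, and since the correspondence between faces of the tiling and dual cells is one-to-one (as established in Section~\ref{dual-cell-combinatorics-section}), this forces $D_m=[a,b]$. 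But $D_m$ has $2^m\ge 4$ vertices while $[a,b]$ has only two, a contradiction. Hence $[a,b]$ is an edge of $D$.

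The point requiring care — and the step I regard as the crux — is the identification of the tiling-face of $D_m$ with $P(a)\cap P(b)$: it hinges on $a$ and $b$ being genuinely antipodal in $D_m$ (guaranteed by taking $D_m$ minimal) and on the combinatorial dimension of $[a,b]$ forcing $P(a)\cap P(b)$ to be an honest facet. Once these are in place the contradiction is immediate, and notably no control over the (still unknown in general) affine dimension of an arbitrary dual cell is needed, since $D_m$ is a cube and $[a,b]$ a segment, both of known dimension.
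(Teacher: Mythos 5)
Your proof is correct, and its core---especially for the harder second claim---follows the same line as the paper's own argument: take the minimal face $D_m$ of the cube containing the two endpoints, note that $a$ and $b$ are antipodal in it, and play the resulting central symmetry at $\frac{a+b}{2}$ against the facet $P(a)\cap P(b)$. The differences lie in the supporting lemmas. For the first claim the paper does not use Lemma~\ref{centrally-symmetric-face-lemma}; it instead exhibits, for each face $D'$ of the cube, a lattice vector $\lambda\in\Lambda$ with $D'=D\cap(D+\lambda)$ and applies statement~2 of Lemma~\ref{translate_complex_main_lemma}. Your route through the central symmetry of cube faces is equally valid and slightly more conceptual, though one should note it works because the center of that symmetry is a half-lattice point, which is what makes the cited lemma applicable. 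For the second claim the two proofs differ only in how the final contradiction is packaged: the paper observes that the tiling-face of $D_m$ has dimension less than $d-1$ yet shares the center of symmetry $\frac{a+b}{2}$ with the $(d-1)$-face corresponding to $[a,b]$, and stops there; you instead identify both tiling-faces with $P(a)\cap P(b)$ (using the antipodal-pair fact established inside the proof of Lemma~\ref{central-symmetry-criterion-lemma}) and then contradict the one-to-one face--cell correspondence by counting vertices. Your version has the small merit of spelling out why the coincidence of centers is fatal, a step the paper leaves terse; the paper's version avoids appealing to the content of another lemma's proof rather than its statement.
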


\begin{proof}
For every face $D'\subset D$ one can always pick a vector 
$\lambda\in\Lambda$ so that $D' = D \cap (D + \lambda)$.
This, by statement 2 of lemma \ref{translate_complex_main_lemma}, 
implies that $D'$ is a subcell of $D$. 
This proves the first statement of the lemma.

Next, if $[x,y]$ is a dual 
$1$-cell, $x,y\in \Vert(D)$, let $D'$ be the minimal face of $D$ 
which contains both $x$ and $y$. Suppose that $|\Vert(D')| > 2$. 
Then the face $F'$ of the tiling dual to $D'$ is of dimension less 
than $d-1$, and $\frac{x+y}{2}$ is the center of symmetry of $F'$. 
However $\frac{x+y}{2}$ is also the center of symmetry of a
facet corresponding to the dual $1$-cell $[x,y]$ which is 
a contradiction proving the second statement.
\end{proof}

\begin{definition} Let $F$ be a face of a parallelotope $P$ 
of the tiling. The {\em associated collection} $P_F$
of $F$ on the parallelotope $P$ is the set of all faces $F'\subset P$ such that 
$F + \lambda \subset F'$ for some $\lambda\in\Lambda$.
\end{definition}
\index{associated collection}

For example, if $F^{d-2}$ is a quadruple $(d-2)$-face 
of parallelotope $P$, then the associated collection $P_{F^{d-2}}$
contains $4$ translated copies of $F^{d-2}$, $4$ facets of $P$ 
and $P$ itself.

\begin{lemma} \label{description-of-cube-subcells-lemma}
Let $D$ be a dual cell affinely equivalent to 
an $s$-cube, $s\ge 1$. Let $F$ be the corresponding 
face of the tiling, $P$ a parallelotope of the tiling, $F\subset P$, 
and let $t_1,\dots,t_s$ be linearly independent edge vectors of $D$.
Let $f : {\mathbb R}^d \to M$ be a linear projection 
onto a linear space $M$ such that $\ker(f)\subset \lin(F - F)$. 
\footnote{By $\ker$ we denote the null space of a mapping.
\index{ker}
\index{kernel}
}
Then the following statements hold:
\begin{enumerate}
\item $\dim(F) = d - s$
\item All elements of $P_F$ are primitive faces of $P$
\item $\lin(F - F)$ and $\lin(D - D)$ are complementary 
\item $\aff(D) \cap P \subset \bigcup_{G\in P_F} \relint(G)$
\item $f(\aff(D)) \cap f(P) \subset \bigcup_{G\in P_F} \relint(f(G))$
\item The facet vectors of facets in $P_F$ are 
$\lbrace \pm t_1,\dots,\pm t_s \rbrace$
\item All subcells of $D$ are faces of $D$.
\item If $v$ is the center of $P$ and $D=v+[0,t_1]\oplus\dots\oplus[0,t_s]$, 
then $F - ([0,t_1]\oplus\dots\oplus[0,t_s])\subset P$, where
symbol $\oplus$ stands for the direct
Minkowski sum.
\end{enumerate}
\end{lemma}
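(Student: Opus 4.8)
The plan is to lean on two consequences of the tiling being by \emph{translates} of a single parallelotope. First, $P(w)=P+(w-v)$ for every lattice point $w$, where $v$ is the center of $P$; since $F\subseteq P=P(v)$, the center $v$ is a vertex of $D$, and after relabelling the edge vectors I may assume $\Vert(D)=\{\,v+\sum_{i\in I}t_i:I\subseteq\{1,\dots,s\}\,\}$. Because each vertex $w$ of $D$ is the center of a parallelotope of $\St(F)$, we get $F\subseteq P(w)=P+(w-v)$, i.e. $F-\sum_{i\in I}t_i\subseteq P$ for every $I$; taking the convex hull over all $I$ and noting $\conv\{-\sum_{i\in I}t_i\}=-([0,t_1]\oplus\dots\oplus[0,t_s])$ proves statement~8 immediately, and the same computation gives $F=\bigcap_I\bigl(P+\sum_{i\in I}t_i\bigr)$, hence $F\subseteq F_i:=P\cap(P+t_i)$ for each $i$. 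Second, by Lemma~\ref{central-symmetry-criterion-lemma} the centrally symmetric $D$ corresponds to $F=P\cap(P+T)$ with $T=t_1+\dots+t_s$. For statement~6 I observe that a facet $G=P\cap(P+u)$ lying in $P_F$, say with $F+\lambda\subseteq G$, forces two vertices of $D$ differing by $u$ and spanning a dual edge; since by Lemma~\ref{preliminary-cube-subcells-lemma} every $1$-subcell of $D$ is a cube edge, $u=\pm t_i$, while $F\subseteq F_i$ and $F-t_i\subseteq F_i^-:=P\cap(P-t_i)$ realise each $\pm t_i$.

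Statements~1 and~3 I would get from a zonotope count. The projection $h$ along $F$ sends the box $D=v+\sum[0,t_i]$ to the zonotope $h(D)=h(v)+\sum[0,h(t_i)]$, and parts~1 and~4 of Lemma~\ref{translate_complex_main_lemma} give $\Vert(h(D))=h(\Vert(D))$ with exactly $2^s$ elements. A zonotope on $s$ generators has $2^s$ vertices only when the generators are linearly independent, so $h(t_1),\dots,h(t_s)$ are independent; thus $\dim h(D)=s$, forcing $\dim F\le d-s$, and $\lin(D-D)\cap\ker h=\lin(D-D)\cap\lin(F-F)=\{0\}$. Conversely $F$ lies in exactly the $s$ facets $F_1,\dots,F_s$ (the only dual edges at $v$ are the cube edges $[v,v+t_i]$), so its normal cone is spanned by $s$ vectors and $\dim F\ge d-s$; hence $\dim F=d-s$ (statement~1), and with the trivial intersection the count $(d-s)+s=d$ yields $\lin(F-F)\oplus\lin(D-D)=\mathbb{R}^d$ (statement~3). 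Statement~1 also makes the $s$ normals $n_1,\dots,n_s$ of $F_1,\dots,F_s$ independent. Statement~2 then follows: any $G\in P_F$ is the intersection of the facets through it, each of which is again in $P_F$ and so has facet vector $\pm t_i$ by statement~6; $G$ can meet at most one of each antipodal pair $F_i,F_i^-$, so $G=\bigcap_{i\in S}F_i^{\epsilon_i}$, its normal cone is spanned by the independent $\{n_i:i\in S\}$, and $\dim G=d-|S|$ shows $G$ lies in exactly $|S|$ facets, i.e. $G$ is primitive.

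The remaining statements~4,~5,~7 are where the genuine difficulty lies, and I expect statement~4 to be the main obstacle. The strategy is to prove that a point $x\in\aff(D)\cap P$ can lie only on facets of $P$ with facet vectors among $\pm t_i$: granting this, the minimal face $G$ through $x$ is $\bigcap_{i\in S}F_i^{\epsilon_i}$, the translate $F+\sum_{\epsilon_i=-1}(-t_i)$ sits inside $G$ by statement~8, so $G\in P_F$ and $x\in\relint(G)$, giving statement~4. The delicate part is excluding a stray facet $G'=P\cap(P+u)$ with $u\ne\pm t_i$ through $x$; here I would combine the complementary splitting of statement~3 with the inscribed centrally symmetric copy $S=*(D)\subseteq P$ and its separating hyperplanes from Lemma~\ref{translate_complex_main_lemma} to pin $x$ against the belt facets. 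Statement~5 should then follow from statement~4 and Lemma~\ref{projection-of-relint-lemma}, since $\ker f\subseteq\lin(F-F)$ makes the faces of $P_F$ commensurate with $f$ and keeps relative interiors intact. For statement~7, a subcell $D'\prec D$ is skinny, so by the argument of Lemma~\ref{subset_ei_lemma} its relative interior lies in that of a unique cube-face $\Phi$ with $\dim\Phi=\dim D'$; if $\Vert(D')\subsetneq\Vert(\Phi)$ then $D'$ has the full dimension of $\Phi$ yet omits a lattice vertex of $\Phi$, and Corollary~\ref{no-vertex-inside-corollary} together with the dimension count for $\Phi$ forces a contradiction, so $D'=\Phi$ is a face. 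In every case the crux is turning the skinniness of dual cells into rigid control of which faces of $P$ the flat $\aff(D)$ is allowed to touch.
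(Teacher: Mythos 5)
You handle statements 8, 6, 1, 3, and 2 essentially correctly; in particular your route to statements 1 and 3 via the zonotope vertex count $|\Vert(h(D))|=2^s$ (using statements 1 and 4 of Lemma \ref{translate_complex_main_lemma}) is a legitimate alternative to the paper, which instead derives the bilinear relation $f_i\cdot t_j=2\delta_{ij}$ between the normals $f_1,\dots,f_s$ of the facets through $F$ and the edge vectors $t_1,\dots,t_s$, by locating the vertices of the inscribed inverted copy $E=\frac{1}{2}\sum_i t_i - D$ at the centers of the faces in $P_F$. But the proposal has a genuine gap exactly where you flag difficulty. For statement 4 you give only a strategy and defer the ``delicate part'' of excluding a stray facet through a point of $\aff(D)\cap P$; that deferred step is the heart of the lemma, and the missing engine is precisely the relation $f_i\cdot t_j=2\delta_{ij}$, which you never derive. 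With it, the paper shows $E=L\cap\lbrace x: |f_i(x)|\le 1\rbrace$ for $L=\lin\lbrace t_1,\dots,t_s\rbrace$, hence $\aff(D)\cap P=E$ exactly: one inclusion because $E$ is inscribed in $P$, the other because $P$ lies in the slab $|f_i(x)|\le 1$ and those same inequalities cut $E$ out of $L$. Statement 4 then follows by matching each face of $E$ with the face in $P_F$ sharing its center of symmetry. Nothing in your outline produces this identity, and ``pinning $x$ against the belt facets'' with separating hyperplanes does not by itself rule out that $\aff(D)$ meets a facet whose facet vector lies outside $\lbrace\pm t_1,\dots,\pm t_s\rbrace$.

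Two further steps fail as written. Statement 5 does not follow from statement 4 plus Lemma \ref{projection-of-relint-lemma}: in general $f(\aff(D))\cap f(P)$ can be strictly larger than $f(\aff(D)\cap P)$, since projection can create coincidences $f(a)=f(p)$ with $a\in\aff(D)$, $p\in P$, $a\ne p$; commensurability only lets you push statement 4 forward, it does not control these new intersection points. The paper handles them because $\ker(f)\subset\lin(F-F)\subset\ker(f_i)$ makes each inequality $|f_i(x)|\le 1$ invariant under $f$, so $f(P)$ still lies in the slab and the slice $L\cap f(P)$ is again $E$. For statement 7, your claim that the carrier face $\Phi$ of a subcell $D'$ satisfies $\dim\Phi=\dim D'$ is unjustified, and Corollary \ref{no-vertex-inside-corollary} yields no contradiction: it does not exclude, for instance, a dual cell equal to the diagonal of a square face of $D$, or a triangle on three vertices of a square (the omitted vertex simply fails to lie in $D'$, so no forbidden lattice point appears). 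Excluding such configurations requires the center-of-symmetry argument of Lemma \ref{centrally_symmetric_subsets_lemma}, or, as in the paper, pure counting: since $F$ is primitive by statement 2, exactly $2^s$ faces of $P$ contain $F$, these are in bijection with the subcells of $D$ containing a fixed vertex, and the $2^s$ cube faces at that vertex, all of them subcells by Lemma \ref{preliminary-cube-subcells-lemma}, already exhaust the count.
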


\begin{proof}
Without limiting the generality, assume that $0$ is a vertex 
of $D$, $P$ is the parallelotope of the tiling with center $0$, 
and $D = [0, t_1] \oplus \dots \oplus [0, t_s]$. By lemma 
\ref{preliminary-cube-subcells-lemma}, the edges of $D$ are its subcells. 
Let $F_1,\dots,F_s$ be the facets of $P$ corresponding 
to dual $1$-cells $[0, t_1],\dots,[0, t_s]$, and let $f_1,\dots,f_s$ 
be the outward normals to those facets of $P$, so that facet
$F_j$ is defined by equation $f_j \cdot x = 1$.  Let 
$L = \lin\lbrace t_1,\dots, t_s \rbrace$.

\medskip

\noindent {\em Inscribing a copy of $D$ 
into the parallelotope.}
We can inscribe a copy of $D$ into the parallelotope $P$ 
by taking $E = \frac{1}{2}\sum_{i=1}^{s} t_i - D$. 
Proof of lemma \ref{translate_complex_main_lemma}
(page \pageref{translate_complex_main_lemma}) 
describes this operation in detail: 
polytope $E$ is denoted by $S$ there.

Vertex $\frac{1}{2}\sum_{i=1}^s \epsilon_i t_i$ 
of $E$ is the center of a $(d-s)$-face $F^{\epsilon_1\dots\epsilon_s}$
in the associated collection $P_F$,
where $\epsilon_i\in\lbrace\pm 1\rbrace$. 
Faces $F^{\epsilon_1\dots\epsilon_s}$ are
all the faces of $P$ equivalent to $F$ (in terms of the proof
of lemma \ref{translate_complex_main_lemma}), in other words,
all $(d-s)$-faces in the associated collection $P_F$.

Vertices $\frac{1}{2}\sum_{i\ne j} \epsilon_i t_i + \frac{1}{2} t_j$
and $\frac{1}{2}\sum_{i\ne j} \epsilon_i t_i - \frac{1}{2} t_j$
differ by $t_j$, the facet vector of $F_j$. Therefore the first 
point belongs to $F_j$, the second to $-F_j$. We have proved
that 
\begin{equation}
\frac{1}{2}\sum_{i=1}^s \epsilon_i t_i \in \epsilon_j F_j.
\end{equation}
Suppose that $\epsilon_j = 1$. By varying $\epsilon_i$ 
for all $i\ne j$, we get only points of $F_j$.
Therefore the vectors between these points
are parallel to $F_j$, so we have $f_j \cdot t_i = 0$.
Next, since $t_j$ is a facet vector of $F_j$ and 
equation $f_j \cdot x = 1$ defines the facet, we
have $f_j \cdot t_j = 2$. Summarizing, we have
\begin{equation}
\label{delta-equation}
f_i \cdot t_j = 2\delta_{ij}
\end{equation}
where $\delta_{ij}$ is the Kronecker delta. It follows
that
\begin{equation}
\label{E-equation}
E = L \cap \lbrace x : |f_i(x)|\le 1 \rbrace
\end{equation}

\medskip

\noindent {\em Proof of statements 1 and 3.}
By lemma \ref{preliminary-cube-subcells-lemma}, 
there are $s$ edges of $D$ incident
with vertex $0$, so there are exactly $s$
facets of $P$ containing $F$, namely, $F_1,\dots,F_s$.
The normal vectors to these facets are $f_1,\dots,f_s$.
Therefore $\lin(F - F) = \cap_{i=1}^s \ker(f_i)$.

Since $\dim(F) = d-s$, the vectors $f_1,\dots,f_s$ 
are linearly independent.
Equation \ref{delta-equation} implies that the linear 
space $\lin(F - F) = \cap_{i=1}^s \ker(f_i)$ is complementary 
to $\lin\lbrace t_1,\dots,t_s\rbrace = \lin(D-D)$. 
This proves that $\dim(D) + \dim(F) = 1$.

\medskip

\noindent {\em Proof of statement 2.}
The cube $D^{\epsilon_1\dots\epsilon_s} = [0, \epsilon_1 t_1] 
\oplus \dots \oplus [0, \epsilon_s t_s]$  
is the dual cell corresponding to $F^{\epsilon_1\dots\epsilon_s}$. 
By lemma \ref{preliminary-cube-subcells-lemma}, there are 
exactly $s$ $1$-subcells of $D^{\epsilon_1\dots\epsilon_s}$  
containing $0$, which means that there are exactly $s$ facets
$$\epsilon_1 F_1,\dots, \epsilon_s F_s$$ of $P$ which contain 
the face $F^{\epsilon_1\dots\epsilon_s}$. This means
that $F^{\epsilon_1\dots\epsilon_s}$ is primitive:
its dimension is equal to $d$ minus the number 
of facets it belongs to.
It implies that all faces which contain $F^{\epsilon_1\dots\epsilon_s}$
in their boundaries are primitive faces of $P$.
Therefore all faces in $P_F$ are primitive.

\medskip

\noindent {\em Proof of statement 4.} 
We have $\aff(D) = L$, and
\begin{equation} 
E = L\cap P
\end{equation}
Indeed, 
on one hand, $E\subset L$ and $E\subset P$, on the other hand, 
$P$ is bounded by inequalities $|f_i(x)\le 1|$, $i=1,\dots,s$ 
and the same set of inequalities defines $E$ in the space $L$, 
therefore $L\cap P \subset E$. We need to prove that
$E \subset \bigcup_{G\in P_F} \relint(G)$. We will prove 
that the relative interior of each face of $E$ is contained
in the set on the right hand side.

Take a face $E'$ of $E$. Its center is also the center
of a uniquely defined dual cell $D'$, 
$0\in D' \subset D^{\epsilon_1\dots\epsilon_s}$.
Let $F' \in P_F$ be the face of the tiling
corresponding to $D'$. It has the same
center of symmetry as $D'$.

We have $E' \subset P$. Since $E'$ and $F'$
share a center of symmetry, it follows that
$E' \subset F'$ (draw a supporting hyperplane
to face $F'$ on $P$ to prove it) and 
$\relint(E')\subset \relint(F')$. This proves
statement 4.

\medskip

\noindent {\em Proof of statement 5.} 
Without limiting the generality, we can 
assume that $L \subset M$, where $M$
is the image space of the linear projection $f$.
Then $f(\aff(D))\cap f(P) = f(L)\cap f(P) =
L \cap f(P)$. We have:
\begin{equation}
E = L \cap f(P).
\end{equation}
Indeed, since $E \subset L$, $E\subset P$,
and $f(E) = E$, we have $E\subset L \cap f(P)$.
On the other hand, $P$ is bounded by
inequalities $|f_i(x)| \le 1$, $i=1,\dots,s$,
and $E$ is equal to the set in 
equation \ref{E-equation}, therefore
$L \cap f(P) \subset E$.

We have $f(E) = E$.
Let $E'$ be a face of $E$. Since
by the previous statement 
$\relint(E')\subset\relint(F')$ for some
face $F' \in P_F$, we have
\begin{equation}
\relint(E') = f(\relint(E')) \subset f(\relint(F')),
\end{equation}
which is equivalent to
\begin{equation}
\relint(E')=\relint(f(E')) \subset \relint(f(F')).
\end{equation}
This establishes statement 5.

\medskip

\noindent {\em Proof of statement 6.}
We have enumerated all facets in the associated 
collection $P_F$: they are $\pm F_1,\dots,\pm F_s$.
Their facet vectors are 
$\lbrace \pm t_1,\dots,\pm t_s \rbrace$. 
This proves statement 6. 

\medskip

\noindent {\em Proof of statement 7}. 
Sufficient to prove that all subcells
of $D$ which contain vertex $0$ are
faces of $D$. Since face $F$ is a 
primitive face of parallelotope $P$,
there are exactly $2^s$ faces 
of $P$ in $\St(F)$. There are also
exactly $2^s$ faces of $D$ 
containing $0$, and all of them
are dual cells, by lemma 
\ref{preliminary-cube-subcells-lemma}.
There are no other subcells
of $D$ containing $0$, because
there is a 1-1 correspondence
between faces $F'$ with $F \prec F' \prec P$
and dual cells $D'$ with $0 \prec D' \prec D$.

\medskip

\noindent {\em Proof of statement 8}. We need to prove that 
$F \oplus ( - [0,t_1]\oplus\dots\oplus[0,t_s])\subset P$. 
We have
\begin{equation}
F \oplus ( - [0,t_1]\oplus\dots\oplus[0,t_s]) = 
\conv\left( \bigcup_{\epsilon_i\in\lbrace 0,1 \rbrace} (F - \sum_{i=1}^s \epsilon_i t_i) \right).
\end{equation}
Indeed, the polytopes on the left and the right hand side 
have the same set of vertices. The polytope on the right 
hand side is contained in $P$ since all polytopes 
$F - \sum_{i=1}^s \delta_i t_i$ are
faces of $P$. This proves statement 8.
\end{proof}

\begin{lemma} 
\label{boundary-section-lemma}
Let $Q$ be a polytope, $\dim(Q) = n \ge 0$, $L\subset\aff(Q)$ 
an affine space, $L\cap \relint(Q)\ne\emptyset$.  Then $L$ 
intersects the relative interior of a face $F \subset Q$ 
where $\dim(F)\le n - \dim(L)$.
\end{lemma}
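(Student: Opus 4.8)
The plan is to slice $Q$ by $L$ and examine a vertex of the slice. Write $\ell = \dim(L)$ and introduce the direction spaces $V = \lin(L - L)$ and $U = \lin(Q - Q)$, so that $\dim V = \ell$, $\dim U = n$, and $V \subset U$ because $L \subset \aff(Q)$. The set $P = Q \cap L$ is nonempty, since it meets $\relint(Q)$, and it is bounded because $Q$ is; hence $P$ is a polytope and possesses at least one vertex. I would fix such a vertex $x_0$ and let $F$ be the unique face of $Q$ whose relative interior contains $x_0$. Setting $W = \lin(F - F)$, the goal reduces to proving $\dim W \le n - \ell$, since $x_0 \in L \cap \relint(F)$ already realizes the desired incidence.

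The heart of the argument is the transversality claim $W \cap V = \{0\}$. I would prove it by contradiction, using that $x_0$ is an extreme point of $P$: if some nonzero $v$ lay in $W \cap V$, then, because $x_0 \in \relint(F)$ and $v \in \lin(F - F)$, both points $x_0 \pm t v$ would remain in $\relint(F) \subset Q$ for small $t > 0$; and because $x_0 \in L$ and $v \in \lin(L - L)$, both would remain in $L$. Thus $x_0 \pm t v \in Q \cap L = P$, exhibiting $x_0$ as the midpoint of a nondegenerate segment inside $P$ and contradicting its being a vertex. Once $W \cap V = \{0\}$ is established, the dimension count $\dim W + \dim V = \dim(W + V) \le \dim U = n$ gives $\dim F = \dim W \le n - \ell$, which is exactly the assertion.

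I expect the only genuine obstacle to be this transversality step, and everything hinges on choosing $x_0$ to be a \emph{vertex} (rather than an arbitrary point) of the slice $P$: it is precisely vertexhood that forbids a line direction shared by $F$ and $L$. The remaining ingredients --- that the relatively open faces partition $Q$ so that $F$ is well defined, that a nonempty bounded polytope has a vertex, and the final dimension bound --- are standard and require no computation. One minor point worth checking is the degenerate case $\ell = 0$, where $L = \{x_0\} \subset \relint(Q)$ forces $F = Q$ and the bound reads $\dim Q = n \le n$; the general argument already covers it, since then $V = \{0\}$ and the transversality claim is vacuous.
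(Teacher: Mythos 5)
Your proof is correct, but it takes a genuinely different route from the paper. The paper argues by induction on $\dim(L)$: the base cases $\dim(L)=0,1$ are immediate (for $\dim(L)=1$ the line must cross the boundary of the bounded polytope), and for $\dim(L)>1$ one picks a proper face $F'$ whose relative interior meets $L$, passes to $L'=L\cap\aff(F')$, bounds $\dim(L')\ge\dim(F')+\dim(L)-n$, and applies the inductive hypothesis inside $F'$. You instead avoid induction altogether: you slice $Q$ by $L$, take a vertex $x_0$ of the polytope $Q\cap L$, let $F$ be the carrier face of $x_0$ in $Q$, and prove the transversality $\lin(F-F)\cap\lin(L-L)=\{0\}$ by the extreme-point argument, after which the dimension bound is a one-line count inside $\lin(Q-Q)$. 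Both proofs use boundedness of $Q$ in an essential way (the paper for its base case, you for the existence of a vertex of the slice), and both rely on the standard facts that relative interiors of faces partition a polytope and that faces of faces are faces (the latter only in the paper's version). Your argument is shorter and yields slightly more information: it shows that \emph{every} vertex of $Q\cap L$ lies in the relative interior of a face of $Q$ of dimension at most $n-\dim(L)$, and that $L$ meets that face transversally, whereas the induction only produces existence. The paper's recursion, on the other hand, localizes the search to a chain of proper faces, which is the pattern reused elsewhere in its arguments (e.g. in paragraphs (B) and (D) of the proof of the two-cubes theorem, where the lemma is applied to specific faces of $P$ and $h(P)$); either proof serves those applications equally well.
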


\begin{proof} If $\dim(L) = 0$ or $1$, then the result 
is trivial since $L$ intersects $P$ or the boundary of $P$ respectively. 
This is the base of induction over $\dim(L)$. If $\dim(L) > 1$, 
consider a proper face $F'$ of $P$ such that 
$L\cap \relint(F')\ne\emptyset$. If $\dim(F') + \dim(L) \le n$, 
the claim is proved. If $\dim(F') + \dim(L) > n$, 
the affine space $L' = L \cap \aff(F')$ intersects 
$\relint(F')$ and has dimension $\dim(L') \ge  \dim(F') + \dim(L) - n$. 
By induction this means that $L'$ intersects the relative 
interior of a face $F''$ of $F'$ with  
$\dim(F'')\le \dim(F') - \dim(L')\le \dim(F') - \dim(F') - \dim(L) + n = n - \dim(L)$.
Since $F''$ is a face of $F'$, it is also a face of $P$. 
This completes the proof of the lemma.
\end{proof}

\end{subsection}

\begin{subsection} {Two subcells affinely equivalent to cubes}
The following theorem and its corollary are key to the 
proof of our main result.
\begin{theorem} 
\label{two_cubes_theorem}
Let $D$ be the dual cell corresponding to face $F$ 
of the tiling. Let $D_1,D_2$ be subcells of $D$. 
Suppose that $D_1$, $D_2$ are affinely equivalent to cubes 
of dimensions $\dim(D_1),\dim(D_2)\ge 1$. Let $h$ be the projection
along $F$ onto a complementary linear space $M$.
\begin{enumerate}
\item If $\aff(D_1) \cap \aff(D_2) \ne\emptyset$, then 
$D_1\cap D_2$ is a nonempty face of both 
$D_1$ and $D_2$ and $\aff(D_1\cap D_2) = \aff(D_1) \cap \aff(D_2)$.
\item If $\aff(D_1) \cap \aff(D_2) = \emptyset$ and 
$\dim(D_1) + \dim(D_2) \ge \combdim(D)$,
then $D_1$, $D_2$ have at least 
$\dim(D_1) + \dim(D_2) - \combdim(D) + 1$ linearly independent 
common edge directions.
\item Projection $h$ restricted to $\aff(D_1\cup D_2)$ is 1-1.
\item The number of linearly independent common
edge directions of $D_1$ and $D_2$ is equal to 
$\dim( \lin(D_1 - D_1) \cap \lin(D_2 - D_2) )$.

\end{enumerate}
\end{theorem}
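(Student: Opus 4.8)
The plan is to prove two inequalities, writing $V_i = \lin(D_i - D_i)$ for $i=1,2$, setting $W = V_1 \cap V_2$ and $m = \dim(W)$, so that the claim becomes: the number of common edge directions equals $m$. The easy inequality ($\le m$) comes for free from the cube structure. Since $D_1$ is affinely equivalent to a cube, it has exactly $\dim(D_1)$ distinct edge directions, they are linearly independent, and they span $V_1$. Every common edge direction is one of them, so the common edge directions are automatically a linearly independent set (hence ``number of linearly independent common edge directions'' and ``number of common edge directions'' coincide); each of them also lies in $V_2$, hence in $W$. A linearly independent subset of $W$ has at most $m$ elements.

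The substance is the reverse inequality, which I would split by whether the affine hulls meet. In the case $\aff(D_1) \cap \aff(D_2) \ne \emptyset$, I invoke statement 1 of the present theorem: $G := D_1 \cap D_2$ is a nonempty common face of $D_1$ and $D_2$, with $\aff(G) = \aff(D_1) \cap \aff(D_2)$. The latter, being a nonempty intersection of a coset of $V_1$ with a coset of $V_2$, is a coset of $W$; thus $\lin(G - G) = W$ and $\dim(G) = m$. As a face of the cube $D_1$, the cell $G$ is itself an $m$-cube, so it has $m$ linearly independent edge directions, and these span $\lin(G - G) = W$. Since $G$ is simultaneously a face of $D_1$ and of $D_2$, each edge direction of $G$ is an edge direction of both $D_1$ and $D_2$. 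This exhibits $m$ common edge directions and settles this case.

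For the remaining case $\aff(D_1) \cap \aff(D_2) = \emptyset$, the idea is to translate into the previous case without disturbing edge directions. I pick vertices $p_1 \in \Vert(D_1)$ and $p_2 \in \Vert(D_2)$; these are lattice points, so $\lambda = p_2 - p_1 \in \Lambda$, and since the tiling is $\Lambda$-periodic, $D_1 + \lambda$ is again a dual cell, affinely equivalent to the same cube as $D_1$ and carrying the same edge directions. Now $p_2 \in \Vert(D_1 + \lambda) \cap \Vert(D_2)$, so $\aff(D_1 + \lambda) \cap \aff(D_2) \ne \emptyset$ and the pair $(D_1 + \lambda, D_2)$ falls under the first case. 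The $m$ common edge directions produced there are edge directions of $D_1 + \lambda$, hence of $D_1$ (translation preserves directions), and of $D_2$; therefore $D_1$ and $D_2$ themselves have $m$ common edge directions, completing the proof.

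The hard part is the very step where I apply the first-case argument to $(D_1 + \lambda, D_2)$: statement 1 is stated for two subcells of one common dual cell, whereas $D_1 + \lambda$ need not be a subcell of $D$. What I actually require is that $(D_1 + \lambda) \cap D_2$ be a common face of the two cells. I expect to obtain this from the fact that $D_1 + \lambda$ and $D_2$ share the vertex $p_2$: by the results of Section~\ref{dual-cell-combinatorics-section} (the dual cells form a complex dual to the tiling, and $\conv(\Vert(A) \cap \Vert(B))$ is a dual cell whenever dual cells $A,B$ share a vertex), together with corollary~\ref{no-vertex-inside-corollary} (so that the intersection harbours no lattice points beyond the shared vertices) and the cube structure of $D_1 + \lambda$, the geometric intersection can be identified with a common face. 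Making this identification watertight, that is, upgrading statement 1 to an arbitrary pair of dual cells with intersecting affine hulls, is the delicate point; the counterexample in which a diagonal of one cube would have to be an edge of the other shows that this is exactly where the dual-cell hypothesis is doing the work.
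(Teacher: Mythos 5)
Your proposal addresses only statement 4 of theorem \ref{two_cubes_theorem}: statements 1--3 are never proved, and statement 1 is in fact consumed as an input to your argument. Granting statement 1, your easy inequality ($\le m$) and your treatment of the case $\aff(D_1)\cap\aff(D_2)\ne\emptyset$ are correct. But note that the paper's logical structure is the reverse of yours: statement 4 is \emph{not} derived from statement 1; it follows directly from the supplementary result (B), namely $\rank(T_1\cap T_2)=\dim(L_1\cap L_2)$, which is proved by intersecting the space $L_1\cap L_2$ with the parallelotope $P$ (lemma \ref{boundary-section-lemma}) and using that faces in the associated collections $P_{F_1}$, $P_{F_2}$ are primitive with prescribed facet vectors (lemma \ref{description-of-cube-subcells-lemma}, statements 2, 4 and 6). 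That argument needs no case split on whether the affine hulls meet, whereas your route forces one, and it is exactly the second case that breaks down.

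The genuine gap is the case $\aff(D_1)\cap\aff(D_2)=\emptyset$. Your translation trick requires applying statement 1 to the pair $(D_1+\lambda,\,D_2)$, which need not be subcells of any common dual cell; you flag this yourself as ``the delicate point,'' but it is not a technicality that can be waved through. The paper's proof of statement 1 works entirely inside the ambient cell $D$: it repeatedly uses that $D$ is skinny (lemma \ref{dual-cells-skinny-lemma}) and that convex hulls of subsets of $\Vert(D)$ are skinny (lemmas \ref{subset_ei_lemma} and \ref{cube-maximality-lemma}), so that a light direction illuminating two vertices yields a contradiction; none of this machinery is available for two dual cells lacking a common ambient cell. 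Part of your hoped-for lemma can indeed be patched: $G'=\conv(\Vert(D_1+\lambda)\cap\Vert(D_2))$ is a dual cell (the shared-vertex corollary of the dual-complex property), it is a subcell of each of the two cubes, and statement 7 of lemma \ref{description-of-cube-subcells-lemma} then makes $G'$ a face of each. What you cannot obtain this way is the dimension claim $\aff(G')=\aff(D_1+\lambda)\cap\aff(D_2)$: a priori the common face could be just the single vertex $p_2$ even though the affine hulls meet in an $m$-dimensional space, and then your count of common edge directions collapses. Lemma \ref{translate_complex_main_lemma} does not help, since it treats a cell against its own translate, not $D_1+\lambda$ against $D_2$; and the general assertion that geometric intersections of arbitrary dual cells are full-dimensional common faces is essentially the open problem (item C2) listed in the paper's conclusion. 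So the non-intersecting case of statement 4 --- and with it the proposal as a proof of the theorem --- remains unestablished.
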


\begin{proof}
Let $\Lambda_n = {\mathbb Z}_{\aff}(\Vert(D_n))$, 
$K_n = \aff(D_n)$, and let $F_n$ be the face of the tiling 
corresponding to the dual cell $D_n$, $n=1,2$. Let $P$ 
be the polytope of the tiling whose center is $0$, $d_n = \dim(D_n)$.
Let $T_n = \lbrace \pm t_1^{n},\dots,\pm t_{d_n}^{n} \rbrace$ 
where $t_1^{n},\dots,t_{d_n}^{n}$ are the edge vectors of 
$D_n$ (we have $|T_n|=2 d_n$), $v_n$ 
a vertex of $D_n$ so that
$D_n = v_n + [0,t_1^n] \oplus \dots \oplus [0,t^n_{d_n}]$.
Let $C_n$ be the convex hull of $0,1$-combinations 
of $t_1^{n},\dots,t_{d_n}^{n}$,
ie. $C_n = [0,t_1^n] \oplus \dots \oplus [0,t^n_{d_n}]$. 
$C_n$ is a dual cell, a translate of $D_n$. 
Let $\Delta_n = {\mathbb Z}(T_n)$, $L_n = \lin(T_n)$. 
The symbol ${\mathbb Z}(A)$ denotes the lattice generated
by $A$, that is, the set of all finite linear
combinations of elements of $A$; ${\mathbb Z}_{\aff}(A)$ 
denotes the affine lattice generated by set $A$.

For $\lambda\in\Lambda$, $P(\lambda)$ will stand 
for the parallelotope of the tiling
whose center is $\lambda$. Let $F_n$ be the face 
of the tiling corresponding to the dual cell
$D_n$, $n=1,2$. Consider sets 
\begin{equation}
\begin{split}
Q_n = F_n + \lin(D_n - D_n), \cr
U_n = \bigcup_{\lambda\in\Lambda_n} P(\lambda), \cr
n=1,2, \cr
\end{split}
\end{equation}

We begin by proving supplementary results (A) - (E).

\bigskip

\noindent (A) {\it The following statements are equivalent:
\begin{enumerate}
\item $\Lambda_1 \cap \Lambda_2 \ne\emptyset$
\item $K_1 \cap K_2 \ne\emptyset$
\item $\int(Q_1) \cap \int(Q_2) \ne\emptyset$
\item $\int(U_1) \cap \int(U_2) \ne\emptyset$
\end{enumerate}}

\begin{proof}
Indeed, we have: 
$\Lambda_n \subset K_n \subset \int(Q_n) \subset \int(U_n)$, $n=1,2$. 
The first inclusion is trivial. We need to prove the statement 
$K_n\subset \int(Q_n)$.  By statement 3 of lemma
\ref{description-of-cube-subcells-lemma}, the linear spaces 
$\lin(F_n - F_n)$ and $\lin(D_n - D_n)$ are complementary.
Let $c_n$ be the center of symmetry of $F_n$.
We have $c_n\in\relint(F_n)$, therefore for every 
$x\in\lin(D_n - D_n)$, $c_n + x \in\relint(F_n) + \lin(D_n - D_n) = 
\int(\lin(D_n - D_n) + F_n) = \int(Q_n)$. We conclude that 
$K_n=\aff(D_n)=\lin(D_n - D_n) + c_n\subset\int(Q_n)$.
To prove the last inclusion, $\int(Q_n)\subset \int(U_n)$, 
note that by statement 8 of lemma \ref{description-of-cube-subcells-lemma},
the following inclusion holds: $F_n - C_n \subset P(v_n)$, so 
\begin{equation}
\begin{split}
Q_n = F_n + \lin(D_n - D_n) = F_n - C_n + {\mathbb Z}(t_1^n,\dots,t_{d_n}^n) \subset \\
P(v_n) + {\mathbb Z}(t_1^n,\dots,t_{d_n}^n) = P + v_n + {\mathbb Z}(t_1^n,\dots,t_{d_n}^n) = \\
P + \Lambda_n = U_n.
\end{split}
\end{equation}
This implies that $\int(Q_n)\subset\int(U_n)$.

The three inclusions prove that 
$1 \Rightarrow 2 \Rightarrow 3 \Rightarrow 4$. Finally if 
$x\in \int(U_1) \cap \int(U_2)$, then there is a number 
$\epsilon > 0$ so that $B_\epsilon(x)\subset U_2$. Since 
$x\in U_1$, there is a point $\lambda\in\Lambda_1$ so that 
$x\in P(\lambda)$. Take $y\in \int(P(\lambda))$, $|y-x| < \epsilon$.
By choice of $\epsilon$, $y\in U_2$. There is a point 
$\lambda'\in\Lambda_2$ so that $y\in P(\lambda')$.
We therefore have $\int(P(\lambda)) \cap P(\lambda') \ne \emptyset$.
From the face-to-face property of the tiling it follows that 
$P(\lambda) = P(\lambda')$, hence $\lambda=\lambda'$
and $\Lambda_1 \cap \Lambda_2 = \emptyset$.
This proves that $4 \Rightarrow 1$. \end{proof}

\bigskip

\noindent (B) {\it The following numbers are equal: 
$\rank(T_1 \cap T_2)$, $\dim(L_1 \cap L_2)$, 
$d - \min\lbrace \dim(G) : G \in P_{F_1} \cap P_{F_2}\rbrace$.}

\begin{proof}
We prove this in a chain of inequalities. 
Obviously $\rank(T_1 \cap T_2) \le \dim(L_1 \cap L_2)$. 
Next, let $L = L_1 \cap L_2$. By lemma \ref{boundary-section-lemma} 
$L$ intersects the relative interior of a face $G$ of $P$ 
with $\dim(G) \le d - \dim(L)$. By statement 4 of lemma 
\ref{description-of-cube-subcells-lemma}, since $L\subset L_1$,  
$G\in P_{F_1}$. Similarly, since $L\subset L_2$, $G\in P_{F_2}$. 
Thus 
$\dim(L_1\cap L_2) = \dim(L) \le 
d - \min\lbrace \dim(G) : G \in P_{F_1} \cap P_{F_2}\rbrace$.

Finally let $G\in P_{F_1} \cap P_{F_2}$ be a face of 
minimal dimension $k$. By the definition of associate collection,
all facets which contain $G$ belong to both $P_{F_1}$ 
and $P_{F_2}$, and by statement 6 of lemma \ref{description-of-cube-subcells-lemma}
the facet vectors corresponding to those facets belong 
to $T_1$ and to $T_2$. Since by statement $2$ of lemma 
\ref{description-of-cube-subcells-lemma} $G$ is a primitive 
face of $P$, $G$ belongs to exactly $d-k$ facets of $P$. Hence
$P_{F_1} \cap P_{F_2}$ contains at least $d - k$ pairs 
of parallel facets of $P$ and $\rank(T_1\cap T_2) = 
\frac{1}{2}|T_1\cap T_2| \ge d - k = d - \dim(G) = 
d - \min\lbrace \dim(G) : G \in P_{F_1} \cap P_{F_2}\rbrace$ 
which closes the chain of inequalities.
\end{proof}

\bigskip

\noindent (C) ${\mathbb Z}(T_1)\cap {\mathbb Z}(T_2) = 
{\mathbb Z}(T_1\cap T_2)$.

\begin{proof}
We have $\dim(\lin(L_1\cup L_2)) = \dim(L_1) + \dim(L_2) - \dim(L_1\cap L_2)$, $\dim(L_1) = \frac{1}{2}|T_1|$,
$\dim(L_2) = \frac{1}{2}|T_2|$, and $\rank(T_1 \cap T_2) = \frac{1}{2}|T_1 \cap T_2| = \dim(L_1\cap L_2)$.
Together with $|T_1\cup T_2| = |T_1| + |T_2| - |T_1\cap T_2|$ this implies that $\dim(\lin(L_1\cup L_2)) =
\frac{1}{2}|T_1\cup T_2|$. This means that an orientation of $T_1\cup T_2$ (a set of vectors $A$ with
$\frac{1}{2}|T_1\cup T_2| = |A|$ and $T_1\cup T_2 = A \cup -A$) is a basis for $\lin(L_1\cup L_2)$.
Let $A_1\ = A\cap T_1$, $A_2 = A\cap T_2$. $A_1$, $A_2$ are bases for ${\mathbb Z}(T_1)$, ${\mathbb Z}(T_2)$
respectively. 

We clearly have ${\mathbb Z}(T_1\cap T_2) \subset {\mathbb Z}(T_1)\cap {\mathbb Z}(T_2)$. On the other hand,
take $z\in {\mathbb Z}(T_1)\cap {\mathbb Z}(T_2)$. Since $z\in{\mathbb Z}(T_1)$, we have 
$z = \sum_{a\in A_1} z_a a$. Similarly $z = \sum_{a\in A_2} z_a' a$. However, $A$ is a basis for $L_1 \cup L_2$,
therefore these two representations are the same, so $z_a = 0$ for $a\in A_1\setminus A_2$, $z_a' = 0$
for $a\in A_2\setminus A_1$, and $z_a = z_a'$ for $a\in A_1\cap A_2$. Hence $z\in{\mathbb Z}(A_1 \cap A_2)=
{\mathbb Z}(T_1 \cap T_2)$.
\end{proof}

\bigskip

\noindent (D) $\dim(h(L_1) \cap h(L_2)) = \dim(L_1 \cap L_2)$. 

\begin{proof}
%Since linear spaces
%$\lin(D_1 - D_1)$ and $\lin(F_1 - F_1)$ are complementary, 
%and since $\lin(F - F) \subset \lin(F_1 - F_1)$, 
%$T_1 \cap T_2 \subset \lin(D_1 - D_1)$, 
%we have $\rank(h(T_1 \cap T_2)) = \rank(T_1 \cap T_2)$, and 
%
%
Let $N = h(L_1) \cap h(L_2)$. By lemma 
\ref{boundary-section-lemma} $N$ intersects 
the relative interior of a face $A$ of $h(P)$ 
with $\dim(A) \le d - \dim(F) - \dim(N)$. By statement 5 
of lemma \ref{description-of-cube-subcells-lemma}, 
since 
$N\subset h(L_1)$,  $A \in h(P_{F_1})$.  
But $N\subset h(L_2)$, therefore $A\in h(P_{F_2})$. 
Thus $\dim(h(L_1)\cap h(L_2)) = \dim(N) \le 
d - \dim(F) - \min\lbrace \dim(X) : X \in h(P_{F_1}) \cap h(P_{F_2})\rbrace$.
From section \ref{projections-of-polytopes-section} 
and paragraph (B) of this proof, we get 
\begin{equation}
\begin{split}
\label{two-collections-equation}
d - \dim(F) - \min\lbrace \dim(X) :  X \in h(P_{F_1}) \cap h(P_{F_2})\rbrace = \\
d - \min\lbrace \dim(G) : G \in P_{F_1} \cap P_{F_2}\rbrace = \dim(L_1 \cap L_2).
\end{split}
\end{equation}
(since all faces in $P_{F_1}$ and $P_{F_2}$ 
are commensurate with the projection $f$).

This produces $\dim(h(L_1)\cap h(L_2)) \le \dim(L_1 \cap L_2)$.
On the other hand, $L_1 \cap L_2 \subset L_1$
and the null space of projection $f$ is contained
in $\lin(F_1 - F_1)$, which is complementary to
$L_1$ by lemma 
\ref{description-of-cube-subcells-lemma}, 
statement 3. Therefore $\dim(h(L_1 \cap L_2)) = 
\dim(L_1 \cap L_2)$. We also have 
$h(L_1 \cap L_2) \subset h(L_1) \cap h(L_2)$, 
so, combining these two results, we have 
$\dim(L_1 \cap L_2) \le \dim(h(L_1) \cap h(L_2))$. 

We have proved that $\dim(h(L_1) \cap h(L_2)) = \dim(L_1 \cap L_2)$.
\end{proof}

\bigskip

\noindent (E) {\it Mapping $f$ acts on the linear space 
$\lin(L_1 \cup L_2)$ bijectively.}

\begin{proof}
From $D$, we have 
\begin{equation*}
\begin{split}
\dim(h(\lin(L_1 \cup L_2))) =& \\
\dim(\lin(h(L_1 \cup L_2))) =& \\
\dim(\lin(h(L_1) \cup h(L_2))) =& \\
\dim(h(L_1)) + \dim(h(L_2)) - \dim(h(L_1) \cap h(L_2)) =& \\
\dim(L_1) + \dim(L_2) - \dim(L_1 \cap L_2) =& \\
\dim(\lin(L_1 \cup L_2)).
\end{split}
\end{equation*}
This proves our statement.
\end{proof}

\bigskip

We are now ready to prove the theorem. First we 
prove statement 1. We are given the condition
that $K_1\cap K_2\ne\emptyset$. Then, by the chain 
of equivalences proved above, $\Lambda_1\cap
\Lambda_2 \ne\emptyset$.  Without limitation of generality 
we may assume that $0\in\Lambda_1 \cap \Lambda_2$,
so $\Lambda_n = {\mathbb Z}(T_n)$, $K_n = L_n$, 
$\dim(K_1\cap K_2) = \rank(\Lambda_1\cap\Lambda_2)$. The lattice
$\Lambda_1\cap\Lambda_2$ is generated by 
$T_1\cap T_2$.

We now prove that the affine cubes $D_1$ and $D_2$ 
have a common face of dimension $\dim(K_1\cap K_2)$. 

Before giving a general proof,
we consider a special case
when $\dim(K_1) = \dim(K_2) = 2$, affine spaces $K_1$ 
and $K_2$ intersect over a point $z$, and parallelograms 
are located as shown in figure \ref{2-parallelograms-illumination-figure}.
We are proving that such a configuration is impossible. 
Choose vertices $v_1\in D_1$ and $v_2\in D_2$ so that 
vectors $v_1 - z$ and $v_2 - z$ illuminate
vertices $v_1$, $v_2$ of parallelograms $D_1$, $D_2$ respectively. 
For some $\epsilon > 0$ we have 
$v_1 + \epsilon (v_1 - z) \in \relint(D_1)$ and 
$v_2 + \epsilon (v_2 - z) \in \relint(D_2)$.
The quadrilateral $Q$ with vertices $v_1$, $v_2$,  
$v_1 + \epsilon (v_1 - z)$, $v_2 + \epsilon (v_2 - z)$
is shown on the right hand side of the picture. 
We then have $\relint(Q)\subset\relint(\conv(D_1 \cup D_2)))$.
This is because affine spaces $K_1$, $K_2$ are complementary. 
From the right hand side of the picture, we can also 
see that $\frac{1}{2}(v_1 - z + v_2 - z)$ illuminates 
both vertices $v_1$, $v_2$ on $Q$ and, consequently, 
on $\conv(D_1 \cup D_2)$ which contradicts the fact 
that polytope $D^4$ is skinny  (ie., no light direction 
illuminates two vertices) and that the convex hull 
of a subset in $\Vert(D^4)$
of dimension at least $1$ is also skinny.

Now we can prove statement 1 in the general case. 
Let $K=K_1\cap K_2 = L_1 \cap L_2$. The (unoriented)
basis in $K$ can be chosen as $T_1 \cap T_2$.
If at least one of $D_1$ and $D_2$ intersects $K$, 
eg. $D_1\cap K\ne\emptyset$,  then $D_1\cap K$
is a face of $D_1$. By lemma \ref{subset_ei_lemma}
on page \pageref{subset_ei_lemma},
polytope $\conv((D_1 \cap K) \cup D_2)$ is skinny,
and by lemma \ref{cube-maximality-lemma} we should 
$D_1\cap K\subset D_2$ (otherwise the polytope
is not skinny).

It follows that $D_2\cap K\ne\emptyset$, therefore
by a similar argument $D_2 \cap K\subset D_1$.
We have $D_1\cap K = D_2\cap K = D_1\cap D_2$.
Since $D_1\cap D_2$ is affinely equivalent to a cube with 
the same edge set $T_1\cap T_2$, we have proved
statement 1.

Suppose now that $D_1\cap K = D_2\cap K = \emptyset$. 
We prove that it is impossible by switching
to certain faces $D_1''$, $D_2''$ of cubes 
$D_1$, $D_2$ to come to a contradiction. 
We will observe that $\conv(D_1'' \cup D_2'')$ 
is not skinny, like in the example we've just
considered.
 
We have $K_1\not\subset K_2$ and $K_2\not\subset K_1$,
therefore $T_2 \setminus T_1 \ne\emptyset$ and 
$T_1 \setminus T_2 \ne\emptyset$.
Let $D_1'$ be any face of $D_1$ whose set of edge vectors 
is $T_1\setminus T_2$, and let $D_2' = D_2$. Since 
$T_1\setminus T_2\ne\emptyset$, we have $\dim(D_1') > 0$. 
Since the set of edge vectors of $D_1'$
is $T_1\setminus T_2$, the set of edge vectors of $D_2'$ 
is $T_2$, we know that the affine lattices ${\mathbb Z}_{\aff}(\Vert(D_1'))$,
${\mathbb Z}_{\aff}(\Vert(D_2'))$ are translates 
of lattices ${\mathbb Z}(T_1\setminus T_2)$ and ${\mathbb Z}(T_2)$
by vectors from  ${\mathbb Z}(T_1 \cup T_2)$.  

In paragraph (C) we proved that 
$\rank(T_1\cup T_2) = \frac{1}{2}|T_1\cup T_2|$, hence 
the lattices ${\mathbb Z}(T_1\setminus T_2)$
and ${\mathbb Z}(T_2)$ are complementary
in ${\mathbb  Z}(T_1 \cup T_2)$.

It follows that
the intersection of ${\mathbb Z}_{\aff}(\Vert(D_1'))$
and ${\mathbb Z}_{\aff}(\Vert(D_2'))$ consists of one point, 
which we assume to be $0$ (without limiting
the generality). Note that $0\in K$, and that linear 
spaces $\lin(D_1' - D_1')$ and $\lin(D_2' - D_2')$ 
have a trivial intersection.

Next, for $n=1,2$ let $D_n''$ be the face of $D_n'$ 
of minimal dimension whose affine hull contains $0$. Since
we have assumed that $D_n\cap K=\emptyset$, $0$ cannot 
be a vertex of $D_n$, therefore $\dim(D_n'') > 0$. 
We can choose a vertex $v_n$ of $D_n''$ so that
light direction $v_n$
illuminates the boundary of polytope $D_n''$ at $v_n$.
It follows that light direction 
$v_1 + v_2$ illuminates both vertices $v_1$, $v_2$ 
on $\conv(D_1'' \cup D_2'')$. 
But the polytope $\conv(D_1'' \cup D_2'')$ is skinny
by lemma \ref{subset_ei_lemma}, which is the
desired contradiction.

\bigskip

Next we prove statement 2 of the theorem.
We are given the condition $K_1\cap K_2 = \emptyset$. 
Then $\int(Q_1) \cap \int(Q_2) = \emptyset$.  Since 
the sets $Q_1$, $Q_2$ are convex, there is a hyperplane 
$H$ separating them. We have  $F\subset F_1 \cap F_2$, 
hence $F\subset H$. The affine spaces $K_1$, $K_2$
are contained in the interiors of $Q_1$ and $Q_2$ 
respectively, so they belong to different open 
half-spaces defined by $H$. 

Hyperplane $h(H)$ in $M$ strictly 
separates $h(K_1)$ and $h(K_2)$ (this means they are 
contained in different open half-spaces of $h(H)$), 
because $F\subset H$ and $h$ is the projection
along $F$ onto the complementary linear
space $M$.

Since $F\subset F_n$ and $F_n$ 
is complementary to $K_n$, $\dim(K_n) = \dim(h(K_n))$. 
Let $s = \dim(h(K_1)) + \dim(h(K_2)) - \dim(M)$.
Since $\dim(M) = \combdim(D)$, by the hypopaper 
of the theorem we have $s\ge 0$. Since $h(K_1) \cap h(K_1) 
= \emptyset$, linear spaces $h(L_1) = h(K_1) - h(K_1)$ and
$h(L_2) = h(K_2) - h(K_2)$ have at least $s+1$-dimensional 
intersection, ie. $\dim(h(L_1) \cap h(L_2)) \ge s+1$.
We have proved in paragraph (D) that 
$\dim(h(L_1) \cap h(L_2)) = \dim(L_1 \cap L_2)$, 
and in paragraph (B)
that $\dim(L_1 \cap L_2) = \rank(T_1 \cap T_2) = 
\frac{1}{2}|T_1 \cap T_2|$. Therefore 
$D_1$ and $D_2$ have at least $s+1$ linearly independent 
edge directions. 

\bigskip

To prove statement 3, note that it is equivalent to the following: 
\begin{equation}
\dim(\aff(K_1 \cup K_2))=\dim(\aff(h(K_1) \cup h(K_2))).
\end{equation}
If $K_1 \cap K_2 \ne \emptyset$, then this statement 
was already proved in paragraph (E). If $K_1 \cap K_2 = \emptyset$,
then $\dim(\aff(K_1 \cup K_2)) = 1 + \dim(\lin(L_1 \cup L_2)) = 
1 + \dim(\lin(h(L_1) \cup h(L_2))) = 
\dim(\aff(h(K_1) \cup h(K_2)))$. The middle equality 
is found in paragraph (E).

\bigskip

Statement 4 directly follows from the result of
paragraph (B), that 
$\dim(L_1\cap L_2) = \rank(T_1 \cap T_2)$.
This completes the proof of the theorem.
\end{proof}

%
% IDEA: the above lemma may possibly be used to prove that $\dim(D) \le \combdim(D)$.
% Suppose that it can be generalized to an arbitrary number of cube dual subcells (somehow).
% Then take all edges of $D$ exiting a vertex. They are dual cells. Linear space that
% is spanned by their edge vectors is mapped 1-1 by the natural projection (analogous to statement
% that $\dim(L_1 \cap L_2) = \dim({\tilde L_1} = {\tilde L_2})$). Therefore we have the upper bound
% for the dimension of $D$!! 
%
% We are using condition that $\dim(lin(L_1\cup L_2)) = \dim(L_1) + \dim(L_2) - \dim(L_1 \cap L_2)$.
% For many linear spaces, ``inclusion-exclusion'' formula must be used.
%
% Maybe this is a bad idea. 
%

\begin{figure}
\begin{center}
\resizebox{150pt}{!}{\includegraphics[clip=true,keepaspectratio=true]{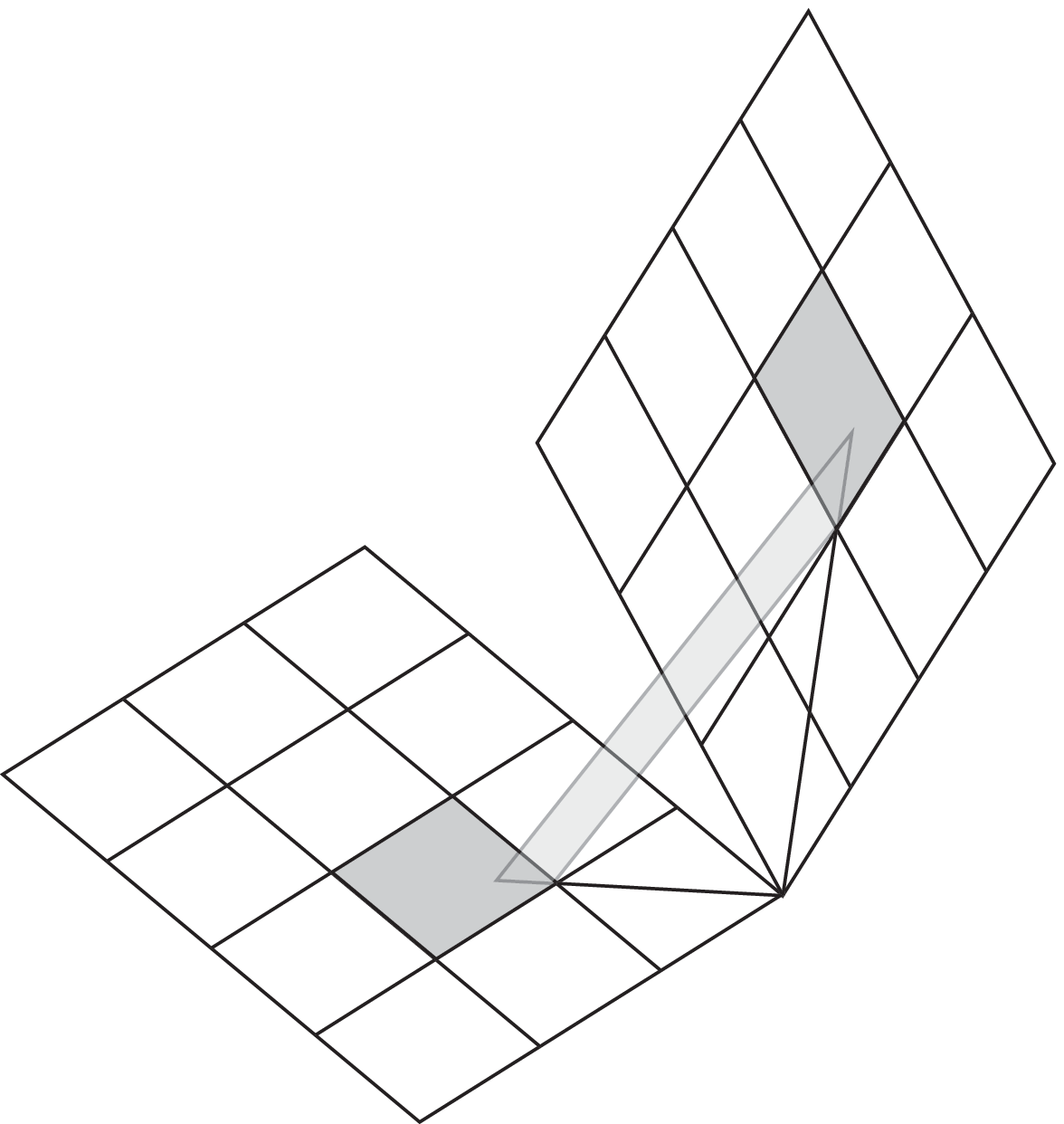}}
\hspace{40pt}
\resizebox{180pt}{!}{\includegraphics[clip=true,keepaspectratio=true]{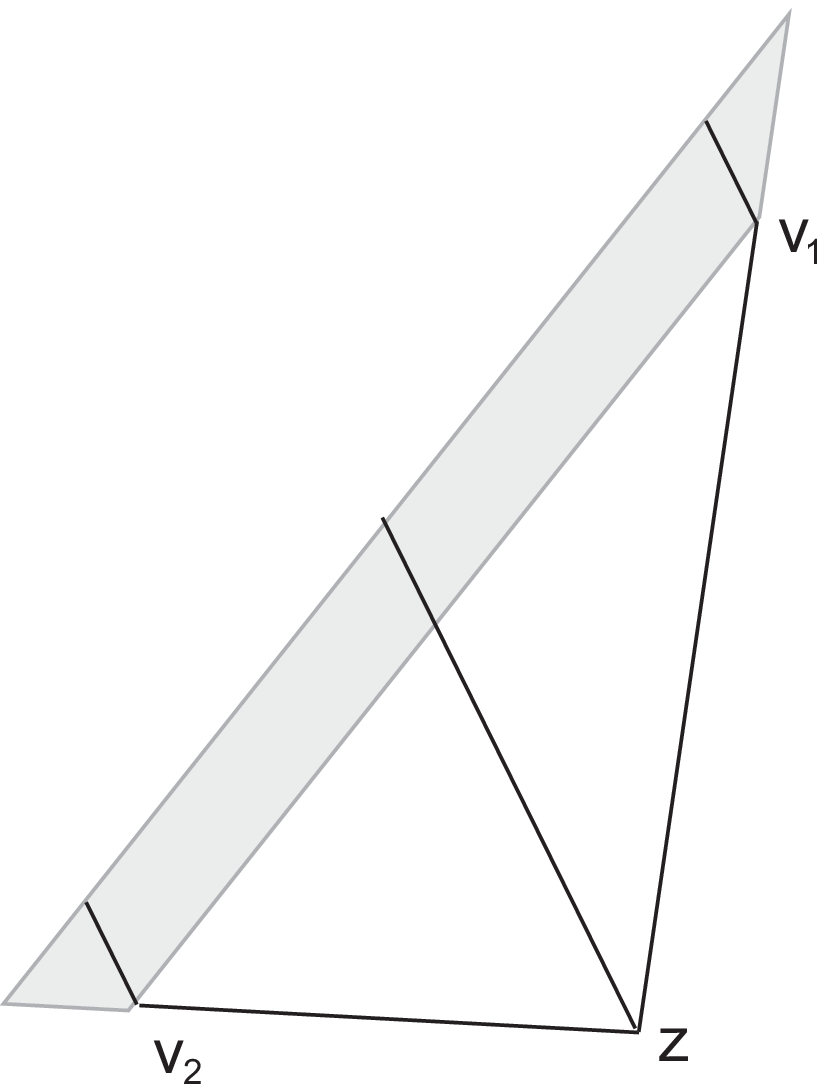}}
\caption{Two parallelograms in complementary 2-spaces}
\label{2-parallelograms-illumination-figure}
\end{center}
\end{figure} 

\begin{corollary} 
\label{two_parallelograms_corollary}
Let $D^4$ be a dual $4$-cell corresponding to a face 
$F$ of the tiling. Then two distinct parallelogram 
subcells $\Pi_1,\Pi_2\subset D^4$ are either
\begin{enumerate}
\item complementary: $\Pi_1 \cap \Pi_2 = \lbrace v \rbrace$ and 
$\dim \aff (\Pi_1\cup \Pi_2) = 4$,
\item adjacent: $\Pi_1 \cap \Pi_2 = I$, where $I$ is an edge 
(a dual combinatorial 1-cell) and $\dim \aff (\Pi_1\cup \Pi_2) = 3$,
\item translate: $\Pi_1 = \Pi_2 + t$ where $t\notin \lin(\Pi_1 - \Pi_1)$ 
and $\dim \aff (\Pi_1\cup \Pi_2) = 3$, or
\item skew: $\Pi_1 \cap \Pi_2 = \emptyset$, there is exactly 
one pair of edges of $\Pi_1$ parallel
to a pair of edges of $\Pi_2$, and $\dim \aff (\Pi_1\cup \Pi_2) = 4$.
\end{enumerate}
Moreover, if $f$ is the projection along $F$ 
onto a complementary $4$-space, then 
$f$ is 1-1 on the affine space $\aff(\Pi_1 \cup \Pi_2)$.
\end{corollary}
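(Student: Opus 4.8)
The plan is to deduce this corollary directly from Theorem \ref{two_cubes_theorem}, applied with $D = D^4$, $D_1 = \Pi_1$ and $D_2 = \Pi_2$, where both parallelograms are affinely equivalent to $2$-cubes, so that $\dim(\Pi_1) = \dim(\Pi_2) = 2$ and $\combdim(D^4) = 4$. The four cases of the corollary will correspond to the dichotomy on whether $\aff(\Pi_1) \cap \aff(\Pi_2)$ is nonempty, each branch being further split by one integer parameter (the dimension of the affine intersection in one branch, the number of common edge directions in the other).

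First I would handle the case $\aff(\Pi_1) \cap \aff(\Pi_2) \ne \emptyset$. Statement 1 of the theorem then gives that $\Pi_1 \cap \Pi_2$ is a nonempty common face of both parallelograms, with $\aff(\Pi_1 \cap \Pi_2) = \aff(\Pi_1) \cap \aff(\Pi_2)$. A face of a parallelogram is a vertex, an edge, or the whole parallelogram; the last is excluded, since it would force $\aff(\Pi_1) = \aff(\Pi_2)$ and hence $\Pi_1 = \Pi_2$, contrary to distinctness. If $\Pi_1 \cap \Pi_2$ is a single vertex $v$, the affine intersection is a point and $\dim \aff(\Pi_1 \cup \Pi_2) = 2 + 2 - 0 = 4$, the complementary case. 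If $\Pi_1 \cap \Pi_2$ is an edge $I$, the affine intersection is a line and $\dim \aff(\Pi_1 \cup \Pi_2) = 2 + 2 - 1 = 3$, the adjacent case.

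Next I would handle $\aff(\Pi_1) \cap \aff(\Pi_2) = \emptyset$. Here $\dim(\Pi_1) + \dim(\Pi_2) = 4 = \combdim(D^4)$, so statement 2 supplies at least $4 - 4 + 1 = 1$ linearly independent common edge direction, and statement 4 equates this count with $\dim(\lin(\Pi_1 - \Pi_1) \cap \lin(\Pi_2 - \Pi_2))$. Since each parallelogram has only two edge directions, this number is $1$ or $2$. If it is $1$, the sum of the direction spaces is $3$-dimensional and, the affine hulls being disjoint, $\dim \aff(\Pi_1 \cup \Pi_2) = 3 + 1 = 4$: the skew case, with exactly one pair of parallel edges. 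If it is $2$, then $\lin(\Pi_1 - \Pi_1) = \lin(\Pi_2 - \Pi_2)$, the direction spaces coincide, the parallelograms are translates $\Pi_1 = \Pi_2 + t$ with $t \notin \lin(\Pi_1 - \Pi_1)$ because the hulls are disjoint, and $\dim \aff(\Pi_1 \cup \Pi_2) = 2 + 1 = 3$: the translate case.

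The injectivity of $f$ on $\aff(\Pi_1 \cup \Pi_2)$ is exactly statement 3 of Theorem \ref{two_cubes_theorem}, so nothing further is needed there. The one point deserving care is in the disjoint-hull case: I must argue that two common edge directions force a genuine translate and not merely two parallel planes. This follows because, through paragraph (B) of the theorem's proof, statement 4 counts shared members of the edge-vector sets $T_1, T_2$; equality of these two-element sets forces $\Pi_1$ and $\Pi_2$ to be translates of one common parallelogram. I expect the affine-dimension bookkeeping to be entirely routine once the case split is in place, so no real obstacle remains after invoking the theorem.
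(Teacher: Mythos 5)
Your proposal is correct and matches the paper's intent exactly: the paper offers no separate proof of the corollary, stating only that it ``follows from the more general theorem \ref{two_cubes_theorem},'' and your case analysis (statement 1 for intersecting affine hulls, statements 2 and 4 for disjoint hulls, statement 3 for injectivity of $f$) is precisely the intended derivation. Your closing observation—that two common edge directions force the four-element edge-vector sets $T_1$ and $T_2$ to coincide, hence the parallelograms are genuine translates—is the right way to settle the one non-routine point.
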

\index{complementary parallelograms}
\index{adjacent parallelograms}
\index{translate parallelograms}
\index{skew parallelograms}
\end{subsection}
\end{section}

\begin{section}{Proof of theorem \ref{main_theorem}}
\label{proof-section}

In this section, we complete the proof of our
main result:

\medskip

\noindent{\bf Theorem \ref{main_theorem}} {\em The Voronoi conjecture 
is true for $3$-irreducible parallelotope tilings.}

\medskip

By theorems \ref{equivalence-proposition}
and \ref{coherent-face-proposition}
(pages \pageref{equivalence-proposition}
and \pageref{coherent-face-proposition}), proving
the theorem is equivalent to checking 
that each parallelogram dual cell is coherent
with respect to each dual $4$-cell that 
contains it.

Let $D^4$ be a dual cell in a 3-irreducible tiling.
We will prove that the collection ${\cal R}$ of incoherent 
parallelogram subcells in $D^4$ is empty. If ${\cal R}$
is not empty, what can we say about it? 
\begin{itemize}
\item Each vertex of a parallelogram in ${\cal R}$ 
belongs to at least one other parallelogram in ${\cal R}$
(theorem \ref{exposed-vertex-theorem} on page
\pageref{exposed-vertex-theorem}).
\item Each pair of parallelograms in ${\cal R}$
is either complementary, adjacent, translate, 
or skew (corollary \ref{two_parallelograms_corollary}
on page \pageref{two_parallelograms_corollary}).
\end{itemize}

We suppose that $|{\cal R}| \ge 1$. In each of 
the following cases, which cover all possibilities, 
we will get a contradiction. 
\begin{enumerate}
\item $|{\cal R}| = 1$
\item $D^4$ is asymmetric, $|{\cal R}| \ge 2$ and at
least one pair of parallelograms in ${\cal R}$ is  
adjacent, translate, or skew.
\item $D^4$ is centrally symmetric and $|{\cal R}| \ge 2$.
\item $D^4$ is asymmetric, $|{\cal R}| \ge 2$ and all
pairs of parallelograms in ${\cal R}$ are complementary.  
\end{enumerate}

Case 1 is the easiest; in fact, 
its impossibility follows
directly from theorem \ref{exposed-vertex-theorem}
on page \pageref{exposed-vertex-theorem}.
Cases 2 and 3 are similar in that there is some
extra geometric information available about $D^4$.
Either we know that $D^4$ is centrally symmetric,
or that the system of parallelograms in $D^4$
has a ``singularity'' (a pair of 
adjacent, translate, or skew parallelograms).

In case 4, we do not have this much
geometric information, however 
we have very homogeneous combinatorial conditions
on the system of parallelograms ${\cal R}$. 
Namely, (1) each two parallelograms intersect over
exactly one vertex, and (2) each vertex of 
a parallelogram belongs to at least one
other parallelogram. It allows for some
graph-theoretic methods to be applied.

The next section proves
some technical lemmas. The following 
four sections deal with cases 1-4.

\begin{subsection}{Three technical lemmas}
\begin{lemma} \label{technical-prism-lemma}
Let $C$ be a triangular prism. Let $x_1,x_2,x_3$ be distinct 
vertices of $C$. Then either $\conv\lbrace x_1,x_2,x_3\rbrace$ 
is a triangular facet of $C$, or among $x_1,x_2,x_3$ there are 
two diagonally opposite vertices of a parallelogram face of $C$.
\end{lemma}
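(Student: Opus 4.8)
The plan is to reduce the statement to a short finite check after fixing a convenient labelling of the six vertices of $C$. First I would write the bottom triangle of the prism as $a_1,a_2,a_3$ and the top triangle as $b_1,b_2,b_3$, with $b_i$ the translate of $a_i$ along the prism direction. With this labelling the two triangular facets are exactly $\{a_1,a_2,a_3\}$ and $\{b_1,b_2,b_3\}$, the three lateral (parallelogram) facets are the quadrilaterals $a_ia_jb_jb_i$, and the three lateral edges are the pairs $\{a_i,b_i\}$.

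The key observation I would record is an explicit description of the diagonally opposite pairs. In the parallelogram facet with cyclic vertex order $a_i,a_j,b_j,b_i$ the two diagonals join $a_i$ to $b_j$ and $a_j$ to $b_i$; running over the three lateral facets, the collection of diagonally opposite pairs is therefore exactly $\{\,\{a_i,b_j\}:i\ne j\,\}$. Equivalently, a pair of vertices is a pair of diagonally opposite vertices of some parallelogram face if and only if one vertex lies in the bottom triangle, the other lies in the top triangle, and they carry \emph{different} indices. This is the only nontrivial content of the lemma, and it is immediate from the labelling.

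Given this, I would split on how many of $x_1,x_2,x_3$ lie in each triangle. If all three lie in the bottom triangle they must be $a_1,a_2,a_3$, so $\conv\{x_1,x_2,x_3\}$ is a triangular facet, and symmetrically if all three lie in the top triangle. In the remaining cases the three chosen vertices are split two-and-one between the triangles; say two of them are $a_i,a_j$ with $i\ne j$ and the third is $b_k$ (the mirror case is identical). Since $i\ne j$, at most one of $i,j$ equals $k$, so some index among $i,j$ differs from $k$; that vertex together with $b_k$ forms a diagonally opposite pair of a parallelogram face, which is the second alternative.

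There is essentially no obstacle here beyond bookkeeping: once the diagonally opposite pairs are identified with the mixed-index pairs $\{a_i,b_j\}$, $i\ne j$, the whole statement is a pigeonhole argument on the index $k$. The one point I would state carefully is that a pair of vertices from opposite triangles with \emph{equal} index is a lateral edge, hence not a diagonal, which is exactly why the indices are required to differ.
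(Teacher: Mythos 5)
Your proof is correct and is essentially the same argument the paper has in mind: the paper disposes of this lemma with ``Proof is obtained by inspection,'' and your case analysis (identifying the diagonally opposite pairs as the mixed-index pairs $\lbrace a_i,b_j\rbrace$, $i\ne j$, then splitting on how the three vertices distribute between the two triangles) is exactly that inspection, written out in full.
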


{\it Proof }is obtained by inspection. $\Box$

\begin{lemma} 
\label{cube-and-prism-corollary}
If $D^3$, $D$ are dual cells, $D^3\subset D$, $C$ a triangular prism 
with $\Vert(D^3) \subset \Vert(C)\subset\Vert(D)$, 
then $D^3 = C$. 
\end{lemma}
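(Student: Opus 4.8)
The plan is to pin down $D^3$ among the five combinatorial types of dual $3$-cells listed in Lemma~\ref{dual-2-3-cells-lemma} by counting triangular facets. First I would observe that $\Vert(D^3)\subseteq\Vert(C)$ forces $D^3=\conv\Vert(D^3)\subseteq\conv\Vert(C)=C$ and $|\Vert(D^3)|\le|\Vert(C)|=6$, so $D^3$ cannot be a parallelepiped ($8$ vertices). Among the remaining types, the parallelepiped and the triangular prism are the \emph{only} dual $3$-cells with at most two triangular facets (the octahedron has eight, while the simplex and the pyramid over a parallelogram each have four). Since the prism $C$ has exactly two triangular facets, the whole problem reduces to a single claim: every triangular facet of $D^3$ is one of the two triangular facets of $C$. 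That claim bounds the number of triangular facets of $D^3$ by two, which (the parallelepiped being already excluded) forces $D^3$ to be a triangular prism whose two triangular facets are precisely those of $C$; those two triangles together contain all six vertices of $C$, so $\Vert(C)\subseteq\Vert(D^3)\subseteq\Vert(C)$, giving $\Vert(D^3)=\Vert(C)$ and $D^3=C$.

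The core step, and the main obstacle, is proving that claim. Let $T=\conv\{x_1,x_2,x_3\}$ be a triangular facet of $D^3$; its vertices are three distinct vertices of the prism $C$, so Lemma~\ref{technical-prism-lemma} applies and offers two alternatives: either $T$ is already a triangular facet of $C$ (the desired outcome), or two of the $x_i$, say $x_1$ and $x_2$, are diagonally opposite vertices of a parallelogram face $\Pi_0=\conv\{x_1,u,x_2,v\}$ of $C$. The entire difficulty lies in excluding this second alternative.

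To rule it out I would exploit central symmetry. The segment $[x_1,x_2]$ is an edge of $T$, hence an edge of $D^3$, and by Lemma~\ref{dual-2-3-cells-lemma} every face of $D^3$ is itself a dual cell; being a segment it is centrally symmetric about its midpoint $m=\tfrac12(x_1+x_2)$, so by Lemma~\ref{central-symmetry-criterion-lemma} the corresponding face $F_0=P(x_1)\cap P(x_2)$ of the tiling is centrally symmetric with the same center $m$. But $m=\tfrac12(u+v)$ as well, so $X=\{x_1,x_2,u,v\}\subseteq\Vert(C)\subseteq\Vert(D)$ is a centrally symmetric subset of $\Vert(D)$; Lemma~\ref{centrally_symmetric_subsets_lemma} then supplies a centrally symmetric dual cell $D_1\subseteq D$ with center $m$ and $X\subseteq\Vert(D_1)$, whose tiling-face $F_1$ is again centrally symmetric about $m$. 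Since $\{x_1,x_2\}\subsetneq\Vert(D_1)$, the inclusion-reversing one-to-one correspondence between dual cells and faces of the tiling makes $F_1$ a \emph{proper} subface of $F_0$. Then $m$ would have to lie in $\relint(F_1)$, which is contained in the relative boundary of $F_0$, while simultaneously being the center of symmetry of $F_0$, i.e. $m\in\relint(F_0)$ — a contradiction. This kills the diagonal alternative; the delicate point to get exactly right is this final incompatibility, namely that a proper face of a centrally symmetric polytope can never contain its center of symmetry in its relative interior. Everything else is the bookkeeping described in the first paragraph.
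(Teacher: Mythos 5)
Your proof is correct and takes essentially the same route as the paper's: both hinge on Lemma \ref{technical-prism-lemma} plus the key fact that a diagonal of a parallelogram face of $C$ cannot be a dual edge, which you establish by exactly the paper's central-symmetry argument (Lemmas \ref{centrally_symmetric_subsets_lemma} and \ref{central-symmetry-criterion-lemma}, with the contradiction that a proper face of the tiling cannot share the center of symmetry of the face containing it), and then finish by counting triangular facets. The only cosmetic difference is that the paper rules out the octahedron by vertex count (six vertices but not a prism) rather than by its eight triangular facets, and concludes $D^3=C$ for the prism case by vertex count rather than by identifying its two triangular facets with those of $C$.
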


\begin{proof}
Note that a diagonal $I$ of 
a parallelogram face $\Pi$ of $C$
cannot be a dual edge, because 
by lemma \ref{centrally_symmetric_subsets_lemma}
there is a dual subcell $D'\subset D$
with $\Vert(\Pi)\subset\Vert(D')$,
with the same center of symmetry as $\Pi$.
Since $\combdim(D') \ge 2$, the corresponding
face of the tiling is of dimension at most $(d-2)$.
However, it has the same center 
of symmetry as the $(d-1)$-face
corresponding to $I$, which is 
a contradiction.

Now we can prove the lemma.
Since $\Vert(D^3)\subset\Vert(C)$, we have
$|\Vert(D^3)|\le 6$, so $D^3$ can only be a triangular prism, 
a simplex, an octahedron, or a pyramid over a parallelogram. 
The octahedron is excluded because it has the same number
of vertices as $C$, but is not a triangular prism.

If $D^3$ is either a simplex or a pyramid, then by 
the previous lemma combined with the argument
at the beginning of this proof,
each triangular face of $D^3$ is a face 
of $C$, but $C$ has only $2$ triangular faces whereas $D^3$ has $4$.
Therefore $D^3$ is a prism, so $D^3 = C$.
\end{proof}

\begin{lemma}
\label{4-cell-classification-lemma-prism}
Consider a $3$-irreducible tiling.
Let $D^4$ be a dual $4$-cell. Suppose that there is a triangular 
prism $C$ with $\Vert(C)\subset \Vert(D^4)$. 
Suppose also that at least one parallelogram 
face of $C$ is a subcell of $D^4$, and each
parallelogram face of $C$ is either a dual cell, or
is inscribed into an octahedral dual cell.

Then the following statements hold:

(I) Not all parallelogram faces of $C$ are dual cells.

(II) Moreover, exactly $2$ parallelogram 
faces of $C$ are subcells,
$|\Vert(D^4)| = 8$ and $D^4$ is affinely equivalent 
to the following polytope (columns are coordinates
of vertices):
\medskip
\begin{spacing}{1}
\begin{equation}
\label{8-vertex-dual-cell-equation}
\conv\left[ 
\begin{array}{cccccccc} 
0 & 1 & 1 & 1 & 0 & 0 & 0 & 1 \\
0 & 0 & 1 & 1 & 0 & 1 & 1 & 1 \\
0 & 0 & 0 & 0 & 1 & 1 & 1 & 1 \\
0 & 0 & 0 & 1 & 0 & 0 & 1 & 1
\end{array} 
\right]
\end{equation}
\end{spacing}
\end{lemma}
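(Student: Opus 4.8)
My plan is to run both parts by contradiction, resting on a single structural fact: in a $3$-irreducible tiling no dual $3$-cell can have its vertex set contained in $\Vert(C)$. Indeed, by Lemma~\ref{dual-2-3-cells-lemma} every dual $3$-cell is a simplex, an octahedron, or a pyramid, so a triangular prism is never a dual cell; and Lemma~\ref{cube-and-prism-corollary} shows that any dual $3$-cell $D^3$ with $\Vert(D^3)\subset\Vert(C)\subset\Vert(D^4)$ would have to equal $C$, which is therefore impossible. I will also repeatedly use the observation opening the proof of Lemma~\ref{cube-and-prism-corollary}, namely that a diagonal of a parallelogram face of $C$ is never a dual edge; consequently the only possible dual edges among the vertices of $C$ are the nine edges of the prism.

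For part (I) I assume all three lateral parallelograms $\Pi_1,\Pi_2,\Pi_3$ of $C$ are dual cells and force $C$ itself to be a dual cell. All nine prism edges are then dual edges. Next I show each triangular face, say $T=\conv\{a,b,c\}$, is a dual cell: let $D_T$ be the minimal dual subcell of $D^4$ with $\{a,b,c\}\subset\Vert(D_T)$. If $\combdim(D_T)=2$ then by Lemma~\ref{dual-2-3-cells-lemma} it is a triangle or a parallelogram, and the parallelogram case is excluded since it would realize one of the dual edges $ab,bc,ca$ as a parallelogram diagonal; hence $D_T=T$. If instead $\combdim(D_T)\ge 3$, then $D_T$ is a dual $3$-cell; its vertex set meets $\Vert(C)$ in the mutually adjacent triple $a,b,c$, and I argue from the defining intersection $P(a)\cap P(b)\cap P(c)$ that $\Vert(D_T)\subset\Vert(C)$, contradicting the key fact. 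With both triangular faces and all three parallelograms now dual cells, the entire boundary of $C$ consists of dual cells, so the minimal dual cell on $\Vert(C)$ is $C$ itself --- a triangular-prism dual $3$-cell, which is impossible.

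For part (II) I start from (I): at least one lateral parallelogram, say $\Pi_3$, is not a dual cell and is therefore inscribed in an octahedral dual cell $O$, whose two apices $w,w'$ are new vertices of $D^4$. I will show exactly two of the three parallelograms are subcells by excluding the case that two of them, say $\Pi_2,\Pi_3$, are both inscribed in octahedra: these octahedra share the prism edge $\Pi_2\cap\Pi_3$, and analysing the resulting pair of central squares with Corollary~\ref{two_parallelograms_corollary} and Theorem~\ref{two_cubes_theorem} produces either a forbidden dual $3$-cell on prism vertices or a second triangular prism. Hence $\Pi_1,\Pi_2$ are genuine subcells and $\Pi_3$ is the central square of $O$, yielding the eight vertices $\Vert(C)\cup\{w,w'\}$; a count via Corollary~\ref{no-vertex-inside-corollary} and Corollary~\ref{parity-class-representation-corollary}, describing exactly which parallelotopes lie in $\St(F^{d-4})$, shows there are no others, so $|\Vert(D^4)|=8$. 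Finally I fix an affine frame in which $C$ is the standard prism and $w,w'$ span the extra coordinate direction transverse to $\Pi_3$, and verify that the eight coordinate vectors so obtained are affinely equivalent to the columns of \eqref{8-vertex-dual-cell-equation}.

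The main obstacle is the step in (I) that upgrades the minimal dual cell $D_T$ on a triangular face to the triangle itself, i.e.\ excluding an extra parallelotope center $u\notin\Vert(C)$ in $\Vert(D_T)$, together with the analogous bookkeeping in (II) that pins the vertex count to exactly eight and rules out two simultaneous octahedra. These are the places where the interplay between the abstract dual complex and the forbidden-prism fact must be pushed carefully; once the configuration is fixed, the concluding identification with \eqref{8-vertex-dual-cell-equation} is a routine coordinate computation.
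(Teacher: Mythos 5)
Your proposal has a genuine gap at the central step of part (I), and the same missing idea undermines the counting in part (II). In part (I), after arguing that both triangles and all three parallelograms of $C$ are dual cells, you conclude that ``the minimal dual cell on $\Vert(C)$ is $C$ itself.'' This is a non sequitur: the minimal dual cell containing $\Vert(C)$ is the one corresponding to the tiling face $\bigcap_{v\in\Vert(C)}P(v)$, and nothing prevents its vertex set from being strictly larger than $\Vert(C)$ --- indeed it could simply be $D^4$ itself, which is a dual $4$-cell and produces no contradiction. Lemma \ref{cube-and-prism-corollary} only bites when you have a dual $3$-cell whose vertices lie \emph{inside} $\Vert(C)$, which is the reverse inclusion from what your argument yields. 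The paper manufactures exactly such a cell by a topological device you have no substitute for: it subdivides the boundary subcell complex of $D^4$ (splitting each octahedron $E_k$ into two pyramids over $\Pi_k$) into a complex $K$ isomorphic to a subdivision of the $3$-sphere, observes that the faces of $C$ form a closed $2$-chain $B$, and invokes Alexander's separation theorem (theorem \ref{separation-theorem}) to split $K$ into two open $3$-chains $K_1$, $K_2$ meeting only along $B$. If all parallelograms were dual cells, one of these chains would contain no vertices outside $\Vert(C)$, and then any $3$-cell in it satisfies the hypothesis of lemma \ref{cube-and-prism-corollary}, giving the contradiction. A related unproven claim in your part (I) is that $\Vert(D_T)\subset\Vert(C)$ for the minimal dual cell $D_T$ over a triangular face: the dual cell of $P(a)\cap P(b)\cap P(c)$ may have vertices outside $C$. (The paper instead proves the triangles are dual cells via lemma \ref{translate_complex_main_lemma} applied to $D^4\cap(D^4+t)$ with $T_1=T_2+t$, which avoids this issue.)

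In part (II), your vertex count is unsupported for the same reason. Corollaries \ref{no-vertex-inside-corollary} and \ref{parity-class-representation-corollary} do not cap $|\Vert(D^4)|$ at $8$ (parity classes modulo $2\Lambda$ allow up to $16$ vertices for a $4$-dimensional cell), and you have no enumeration of $\St(F^{d-4})$ to appeal to. The paper's count comes precisely from the chain decomposition: every vertex of $D^4$ lies in $\Vert(B)=\Vert(C)$ or in $\Vert(K_i\setminus B)$, the pairs in $U=\Vert(K_1\setminus B)\times\Vert(K_2\setminus B)$ (minus symmetric pairs) correspond bijectively to parallelograms inscribed in octahedra, and the inequalities $|U|+|V|=N_1N_2$, $|U|\le 2$ force $(N_1,N_2)\in\{(1,1),(1,2),(2,2)\}$, after which the cases $(1,2)$ and $(2,2)$ are eliminated and $(1,1)$ yields $|\Vert(D^4)|=6+1+1=8$ and the bipyramid of equation \ref{8-vertex-dual-cell-equation}. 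Your alternative exclusion of two simultaneous octahedra via corollary \ref{two_parallelograms_corollary} and theorem \ref{two_cubes_theorem} is only a sketch, and without the separation-theorem bookkeeping there is no mechanism to rule out additional vertices of $D^4$ beyond $\Vert(C)\cup\{w,w'\}$.
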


\begin{proof} 
Let $T_1$, $T_2$ be 
triangular faces of $C$, $\Pi_1,\Pi_2,\Pi_3$ parallelogram 
faces. 

Note that all edges of $C$ are dual cells.
Indeed, each parallelogram face $\Pi_k$ of $C$ can
be inscribed into a centrally symmetric
subcell $D$ of $D^4$ (by lemma 
\ref{centrally_symmetric_subsets_lemma}).
By the condition of the lemma,
either $D = \Pi_k$, or $D$ is an octahedron.
In each case, edges of $\Pi_k$ are dual cells.

We prove that 
$T_1$, $T_2$ are dual cells. Indeed, all edges of $C$ 
are dual cells, in particular the edges of triangles 
$T_1$, $T_2$ are dual $1$-cells. Now let $t$ be a vector 
in the lattice of the tiling with $T_1 = T_2 + t$. We have 
$T_1 \subset D^4 \cap (D^4 + t)$. By lemma 
\ref{translate_complex_main_lemma}, $D=D^4 \cap (D^4 + t)$ 
is a dual $2$-or $3$-cell. If $D$ is a dual $2$-cell, 
then it can only be the triangle $T$. If $D$ is a dual 
$3$-cell, then $D$ is either a simplex, an octahedron, 
a triangular prism, or a pyramid over a parallelogram. 
In each case, a cycle of $3$ edges  on $D$ is the boundary 
of a dual $2$-cell, so $T$ is a subcell of $D$ and 
therefore a dual cell.

\bigskip

(A). {\em Definition of the complex $K$.}
Consider the following abstract complex:
\begin{equation} 
\partial D^4 = \lbrace D \subset D^4 : \text{ D is a subcell }, D \ne D^4 \rbrace
\end{equation}
By definition, this complex is dual to 
$\St(F^{d-4})\setminus \lbrace F^{d-4} \rbrace$,
where $F^{d-4}$ is the face of the tiling
corresponding to $D^4$. Therefore it is 
combinatorially isomorphic to a polyhedral subdivision
of the $3$-sphere. 

We will now define complex $K$ as a subdivision
of $\partial D^4$, in the following way.

Each of the vertex sets $\Vert(\Pi_k)$ is a centrally 
symmetric subset of $\Vert(D^4)$, hence by lemma
\ref{centrally_symmetric_subsets_lemma} there is a centrally 
symmetric dual subcell $E_k \subset D^4$ with
$\Vert(\Pi_k)\subset \Vert(E_k)$. By the assumption
in the statement of the lemma, either 
\begin{enumerate}
\item $E_k = \Pi_k$, or 
\item $E_k$ is an octahedron (we say that $\Pi_k$ is inscribed into the 
octahedron $E_k$).
\end{enumerate}
If $E_k$ is an octahedron, then we subdivide it
into two pyramids with base $\Pi_k$, as shown
in figure \ref{splitting-octahedron-figure}). 

Like $\partial D^4$, the new complex 
$K$ is combinatorially
isomorphic to the subdivision
of the $3$-dimensional sphere. 
Complex $K$ is important to us because
faces of $C$ are present among its
cells.
\begin{figure}
\begin{center}
\resizebox{200pt}{!}{\includegraphics[clip=true,keepaspectratio=true]
{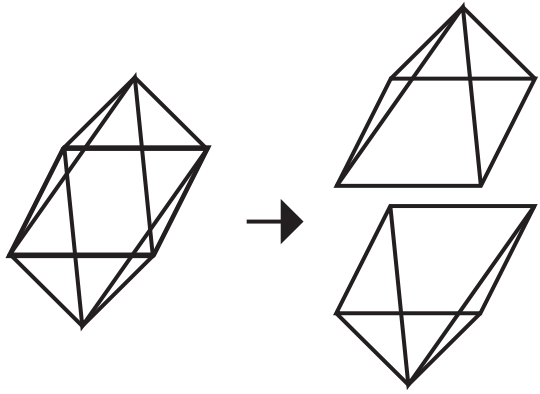}}
\caption{Splitting the octahedral cell}
\label{splitting-octahedron-figure}
\end{center}
\end{figure} 

\bigskip

(B). {\em Application of Alexander's separation theorem.}
We now use the following theorem:
\begin{theorem}(Corollary on page 338 of paper 
\cite{bib-alexander-proof-and-extension})
\label{separation-theorem}
Any closed $2$-chain in $K$ bounds exactly 
two open $3$-chains. Moreover, these two $3$-chains have 
only the elements of the $2$-chain in common.
\end{theorem}
\index{Alexander's separation theorem}
\index{separation theorem}
We define $2$-chain $B$ in the complex $K$ as the sum 
of the cellular $2$-chains defined by faces of $C$,
namely, $T_1$, $T_2$, $\Pi_1$, $\Pi_2$, $\Pi_3$.
Chain $B$ is closed. 
Therefore, by theorem \ref{separation-theorem}
above, $B$ bounds exactly two open $3$-chains in $K$, $K_1$ 
and $K_2$ where $K_1 + K_2 \equiv K \pmod 2$. Moreover, any 
common element of $K_1$ and $K_2$ is contained in $B$.

\bigskip

(C). {\em Counting vertices.}
We define sets $U$, $V$ of vertex pairs. If $D^4$ 
is centrally symmetric, then
\begin{equation}
\begin{split}
U = \left\lbrace (y_1,y_2)\in \Vert(K_1\setminus B) 
\times \Vert(K_2\setminus B): y_1 \ne *(y_2) \right\rbrace, \\
V = \left\lbrace (y_1,y_2)\in \Vert(K_1\setminus B) 
\times \Vert(K_2\setminus B): y_1 = *(y_2) \right\rbrace
\end{split}
\end{equation}
where $*$ is the central symmetry of $D^4$.
If $D^4$ is asymmetric, then we define
\begin{equation}
\begin{split}
U = \Vert(K_1\setminus B) \times \Vert(K_2\setminus B), \\
V = \emptyset
\end{split}
\end{equation}

The following relationship establishes 
a 1-1 correspondence between set $U$ 
and the collection of those parallelograms $\Pi_k$ which 
are inscribed into octahedra $E_k$:
\begin{equation}
\label{matching-equation}
\begin{split}
(y_1,y_2) \to \Pi_k \\
\text{if and only if} \\
E_k = \conv( \Pi_k \cup \lbrace y_1,y_2 \rbrace ).
\end{split}
\end{equation}

First, we prove that for each pair $(y_1, y_2)\in U$,
there is a parallelogram $\Pi_k$ which satisfies
condition \ref{matching-equation}.
Each pair $(y_1, y_2)\in U$ is a diagonal pair of vertices 
of a centrally symmetric dual cell $D \subset D^4$,
$D \ne D^4$.
However, there is no element of complex $K$ which contains 
both $y_1$ and $y_2$, for, if $y_1,y_2 \in D^3$, then, 
since $K \equiv K_1 + K_2 \pmod 2$, either $D^3 \in K_1$ 
or $D^3 \in K_2$. In the first case, point $y_2$ does not 
belong to $(K_2 \setminus B)$, because  $(K_2 \setminus B) 
\cap K_1 = \emptyset$. The second case is treated similarly. 
Therefore $y_1$ and $y_2$ can only be diametrically opposite 
vertices of an octahedron $E_k$. Since $y_1,y_2$ were chosen
not to belong to $B$, we have and $y_1,y_2\notin\Vert(\Pi_k)$,
so $E_k = \conv( \Pi_k \cup \lbrace y_1,y_2 \rbrace )$.

Next, we prove that each parallelogram $\Pi_k$,
inscribed into octahedral dual cell $E_k$, 
corresponds to some pair of vertices $(y_1,y_2) \in U$.
If a parallelogram $\Pi_k$ extends to the 
octahedron $E_k$, then the two points $\lbrace y_1,y_2 \rbrace = 
\Vert(E_k) \setminus \Vert(\Pi_k)$ cannot be vertices of $C$ (this 
is checked directly, using the fact that a diagonal of a parallelogram 
face of $C$ cannot be a dual $1$-cell). 

Pyramids $\conv(\Pi_k \cup\lbrace y_1 \rbrace)$ and
$\conv(\Pi_k \cup\lbrace y_2 \rbrace)$ are elements
of complex $K$. They belong to 
different chains $K_1$, $K_2$, because the cell $\Pi_k$ 
belongs to the common boundary 
$B$ of $K_1$ and $K_2$. Hence, renumbering, if necessary, 
we have $y_1\in K_1 \setminus B$, $y_2 \in K_2 \setminus B$. 
If $*$ is the central symmetry of $D^4$, then
we have $y_1\ne *(y_2)$ because $y_1$, $y_2$ are diametrically opposite 
vertices of a centrally symmetric $3$-subcell $E_k$ of $D^4$.

Therefore $|U|$ is equal to the number of parallelogram 
faces of $C$ which are inscribed into octahedral 
dual cells and $|U|\le 2$.
Let $N_i = |\Vert(K_i \setminus B)|$, $k=1,2$. 
We have 
\begin{equation}
\begin{split}
\label{number-opposite-vertices-equation}
|U| + |V| = N_1 N_2, \\
|V| \le \min(N_1,N_2), \\
N_1 N_1 \ge |U| \ge N_1 N_2 - \min(N_1,N_2), \\
|U| \le 2.
\end{split}
\end{equation}

\bigskip

(I) We now prove claim (I) of the lemma. 
It is sufficient to prove that $|U| > 0$, 
because all faces of $C$ are dual cells 
if and only if $|U| = 0$. 

Firstly, observe that both $N_1$ and $N_2$
are positive. If, say, $N_1 = 0$,
then there are no other vertices in chain $K_1$
except those of chain $B$. However, $K_1$ contains
a cell $D^3$. We have
$\Vert(D^3) \subset \Vert(B) = \Vert(C)$,
so by lemma  \ref{cube-and-prism-corollary} we have
$D^3 = C$, which is a contradiction since $C$
is a prism and the tiling is $3$-irreducible.

Now suppose that $|U| = 0$.
If $D^4$ is asymmetric, then we have 
$|U| = N_1 N_2 = 0$,
so either $N_1 = 0$ or $N_2 = 0$ 
which is impossible by the argument
above. 

If $D^4$ is centrally symmetric, then $|U| = 0$
yields 
\begin{equation}
N_1 N_2 = |V|
\end{equation}
This means that each vertex from $\Vert(K_1 \setminus B)$
is diametrically opposite to each vertex from 
$\Vert(K_1 \setminus B)$. But one vertex cannot
be diametrically opposite to only one vertex.
Therefore: either $N_1 = 0$, or $N_2 = 0$,
or $N_1 = N_2 = 1$. In the latter case,
the two vertices, one in $\Vert(K_1 \setminus B)$
the other in $\Vert(K_2 \setminus B)$,
are diametrically opposite. The central
symmetry of $D^4$ then maps vertices
of $\Vert(B) = \Vert(C)$ onto themselves.
That is a contradiction because $C$ 
is not centrally symmetric. We have proved
that $|U| > 0$, therefore not all parallelogram
faces of $C$ are dual cells.

\bigskip

(II) Now we prove the second claim
of the lemma. By the argument
above, we have $N_1,N_2 > 0$, $|U| > 0$.

From equation \ref{number-opposite-vertices-equation},
it follows that 
the following combinations of $(N_1,N_2)$ are possible: 
$(1,1)$, $(1,2)$, $(2,2)$ (interchanging $N_1$ and $N_2$
if needed).

%
% Change??
%
Suppose that $N_1 = N_2 = 1$. Then $|\Vert(D^4)|=8$. If 
$\dim(D^4)=3$, then  
by lemma \ref{illumination-conjecture-lemma} 
(page \pageref{illumination-conjecture-lemma})
$D^4$ has to be a parallelepiped,
but then its combinatorial dimension is $3$ which 
is a contradiction. Therefore
$\dim(D^4) = 4$. The only possible value of $|U|$
is $|U| = 1$. It follows that $D^4$ is a bipyramid 
over a triangular prism, affinely equivalent to the 
polytope given in equation 
\ref{8-vertex-dual-cell-equation}. 
In the matrix, the middle 6 columns correspond to 
the vertices of the prism $C$: columns 2-4 
and 5-7 give the vertices of triangular faces of $C$.

We have established $D^4$ up 
to affine equivalence. Now we look at the subcell
structure of $D^4$. In complex $K$,
each parallelogram face of $C$ is the base
of two pyramids, with vertices 
$y_1 \in \Vert(K_1 \setminus B)$ 
and $\Vert(K_1 \setminus B)$.
All $3$-cells in $K$ are the 6 pyramids,
therefore $K$ has exactly $3$ parallelogram cells,
faces of $C$. Two of these faces are dual
cells, one is inscribed in an octahedral dual cell.
It follows that $D^4$ contains $2$ parallelogram
dual cells.

Suppose that $N_1 = 1$, $N_2 = 2$.  Then $|\Vert(D^4)|=9$. 
By lemma \ref{illumination-conjecture-lemma}, 
$\dim(D^4) = 4$. Also, $D^4$ cannot 
be centrally symmetric because it has an odd number of vertices,
so there are no diametrically opposite pairs of vertices and $|U|=2$.
This, again, establishes $D^4$ up to affine equivalence: 
\begin{spacing}{1}
\begin{equation}
\label{9-vertex-dual-cell-equation}
D^4 = \conv\left[ 
\begin{array}{ccccccccc} 
0 & 1 & 1 & 1 & 0 & 0 & 0 & 1 & 1 \\
0 & 0 & 1 & 1 & 0 & 1 & 1 & 1 & 1 \\
0 & 0 & 0 & 0 & 1 & 1 & 1 & 1 & 1 \\
0 & 0 & 0 & 1 & 0 & 0 & 1 & 1 & 0
\end{array} 
\right]
\end{equation}
\end{spacing}
\medskip
Columns 2-7 of the matrix give
the vertices of $C$, $\Vert(K_1\setminus B) = \lbrace [0,0,0,0] \rbrace$,
$\Vert(K_2\setminus B) = \lbrace [1,1,1,1],[1,1,1,0] \rbrace$.

Dual cell $D^4$ has a prism face $C'$
defined by hyperplane $x_2 = 1$. By lemma 
\ref{centrally-symmetric-face-lemma}, all parallelogram 
faces of $C'$ are subcells, which is impossible
by claim (I) of the lemma.

Finally, let $N_1 = N_2 = 2$. We have from equation 
\ref{number-opposite-vertices-equation}: $|U| \ge 2$. 
On the other hand, $|U|\le 2$ since there are at most 
two parallelogram faces of $C$ which extend to octahedra. 
Therefore $|U|=|V|=2$. 
Thus $D^4$ is centrally symmetric, and
\begin{equation}
\begin{split}
V = \lbrace (x,*(x)), (y,*(y)) \rbrace, \\ 
U = \lbrace (x,*(y)), (y,*(x)) \rbrace
\end{split}
\end{equation}
Since $*$ is a central symmetry, line segments
$[x,*(y)]$ and $[y,*(x)]$ are translates
of each other, however they are diagonals
of two octahedra which are not translates
of each other. This contradiction
completes the proof of the lemma.
\end{proof}
\end{subsection}

\begin{subsection}{The four cases}
We have an arbitrary dual $4$-cell $D^4$ in 
a $3$-irreducible parallelotope tiling.
To complete the proof of the theorem,
we need to show that the collection ${\cal R}$
of incoherent parallelogram subcells in $D^4$
is empty. 

We suppose that $|{\cal R}| \ge 1$. In each of 
the following cases, which cover all possibilities, 
we will get a contradiction. 
\begin{enumerate}
\item $|{\cal R}| = 1$
\item $D^4$ is asymmetric, $|{\cal R}| \ge 2$ and at
least one pair of parallelograms in ${\cal R}$ is  
adjacent, translate, or skew.
\item $D^4$ is centrally symmetric and $|{\cal R}| \ge 2$.
\item $D^4$ is asymmetric, $|{\cal R}| \ge 2$ and all
pairs of parallelograms in ${\cal R}$ are complementary.  
\end{enumerate}

\end{subsection}

\begin{subsection}{Case 1}
The first case is the easiest to get rid of. We have 
${\cal R} = \lbrace \Pi \rbrace$. By theorem 
\ref{exposed-vertex-theorem} on page
\pageref{exposed-vertex-theorem}, $\Pi$ is coherent:
take any vertex $v\in\Pi$ and observe that all parallelogram
subcells of $D^4$ different from $\Pi$ (if any) 
are coherent, because they do not belong
to ${\cal R}$. We have a contradiction.
\end{subsection}

\begin{subsection}{Case 2}
We now consider the case when the dual cell
$D^4$ is asymmetric and the collection
${\cal R}$ of incoherent parallelogram cells 
contains an adjacent, translate, or skew pair
of parallelograms. There is a prism
$C$ with $\Vert(C)\subset\Vert(D^4)$, and one 
of the parallelogram faces of $C$ is a dual cell.
The following figure 
shows an adjacent, translate, and a skew pair
of parallelogram cells together with the prism.
\bigskip
\begin{center}
\resizebox{250pt}{!}{\includegraphics[clip=true,keepaspectratio=true]
{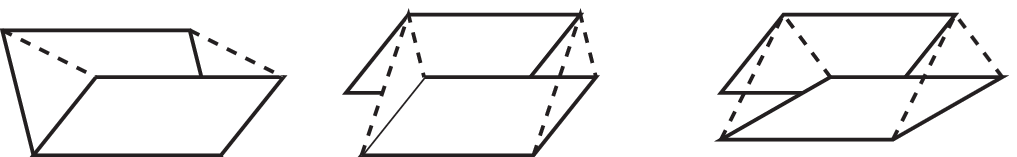}}
\end{center}

Therefore, by lemma \ref{4-cell-classification-lemma-prism}, 
$D^4$ is affinely equivalent to the polytope in 
equation \ref{8-vertex-dual-cell-equation}
on page \pageref{8-vertex-dual-cell-equation}. 
It has exactly $2$ parallelogram subcells, 
therefore each of these subcells is coherent 
in $D^4$, by the principle that incoherent 
parallelograms ``come in droves'',  specified 
in theorem \ref{exposed-vertex-theorem}
(page \pageref{exposed-vertex-theorem}).
Therefore ${\cal R} = \emptyset$ which is a 
contradiction.
\end{subsection}

\begin{subsection}{Case 3}
In this case, we have a centrally symmetric dual cell $D^4$
in a $3$-irreducible tiling. Let $\Pi$ be 
an incoherent parallelogram subcell of $D^4$.

Let $*$ be the central symmetry mapping of $D^4$.
Note that the centers of symmetry of $D^4$ and $\Pi$
are different, because these centers are also 
the centers of symmetry of the corresponding $(d-4)$-
and $(d-2)$-faces of the tiling.
Let $C = \conv( \Pi \cup *(\Pi) )$. We prove 
the following result:

{\em
\label{4-cell-classification-lemma-cube}
$D^4$ is a centrally symmetric bipyramid over $C$
and the faces of $D^4$ coincide with its subcells.}

Once we have established this, we will apply theorem
\ref{bipyramid-coherence-theorem} on page
\pageref{bipyramid-coherence-theorem} to show that
all parallelogram subcells of $D^4$ are coherent.
This will be a contradiction with the assumption that 
$\Pi$ is incoherent.

\bigskip

\begin{proof} 
The argument is very similar to lemma
\ref{4-cell-classification-lemma-prism} 
on page \pageref{4-cell-classification-lemma-prism},
with prism substituted by the parallelepiped $C$.

\bigskip

(A). {\em Definition of complex $K$.}
Let $\Pi_k$, $k=1,\dots,6$ be the facets of $C$.
We define abstract complex $K$ as a subdivision
of $\partial D^4 = \lbrace D: D \text{ is a subcell of } D^4, D \ne D^4 \rbrace$,
as follows.

Each of the vertex sets $\Vert(\Pi_k)$ is a centrally 
symmetric subset of $\Vert(D^4)$, hence by lemma
\ref{centrally_symmetric_subsets_lemma} there is a centrally 
symmetric dual subcell $E_k \subset D^4$ with
$\Vert(\Pi_k)\subset \Vert(E_k)$. By lemma
\ref{translate_complex_main_lemma}, either 
\begin{enumerate}
\item $E_k = \Pi_k$, or 
\item $E_k$ is an octahedron (we say that $\Pi_k$ is inscribed into the 
octahedron $E_k$).
\end{enumerate}
If $E_k$ is an octahedron, then we subdivide it
into two pyramids with base $\Pi_k$, as shown
in figure \ref{splitting-octahedron-figure-2}. 
This finishes the definition of complex $K$.
\begin{figure}
\begin{center}
\resizebox{200pt}{!}{\includegraphics[clip=true,keepaspectratio=true]
{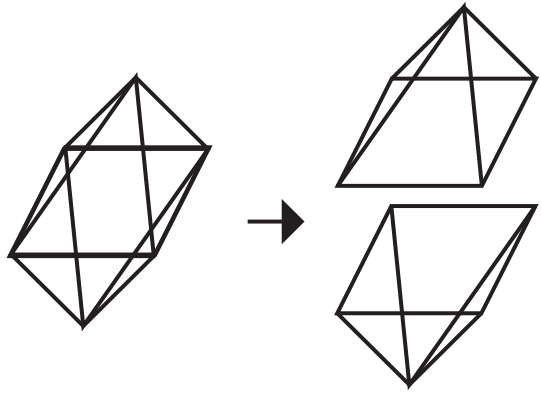}}
\caption{Splitting the octahedral cell}
\label{splitting-octahedron-figure-2}
\end{center}
\end{figure}

This is exactly the construction used in lemma 
\ref{4-cell-classification-lemma-prism}. 

\bigskip

(B). {\em Application of Alexander's separation 
theorem.}
We define $2$-chain $B$ in the complex $K$ 
as the sum of the cellular $2$-chains 
corresponding to the facets of $C$. 
$B$ is closed. Therefore, by 
the separation theorem 
(theorem \ref{separation-theorem} 
on page \pageref{separation-theorem}), 
$B$ bounds exactly two open $3$-chains in $K$, $K_1$ 
and $K_2$ where $K_1 + K_2 \equiv K \pmod 2$. 
Moreover, any common element of $K_1$ and $K_2$ 
is contained in $B$.

Note that the central symmetry $*$ of the dual cell 
$D^4$ maps chain $K_1$ onto $K_2$ and vice versa.
It follows from the fact that mapping $*$ 
changes the orientation of cells in $K_1$, 
but preserves the orientation of $C$.

\bigskip

(C). {\em Counting vertices.}
We define set of vertex pairs $U$ as follows.
\begin{equation}
\begin{split}
U = \left\lbrace (y_1,y_2)\in \Vert(K_1\setminus B) 
\times \Vert(K_2\setminus B): y_1 \ne *(y_2) \right\rbrace
\end{split}
\end{equation}
where $*$ is the central symmetry of $D^4$.

The following relationship establishes 
a 1-1 correspondence between set $U$ 
and the collection of those parallelograms $\Pi_k$ which 
are inscribed into octahedra $E_k$:
\begin{equation}
\label{matching-equation-2}
\begin{split}
(y_1,y_2) \to \Pi_k \\
\text{if and only if} \\
E_k = \conv( \Pi_k \cup \lbrace y_1,y_2 \rbrace ).
\end{split}
\end{equation}
The proof is identical to the one 
in lemma \ref{4-cell-classification-lemma-prism}.
By construction, at least two parallelograms
$\Pi_k$ are dual cells, therefore $|U|\le 4$.

As we noted in paragraph (B), $*$ maps 
the chains $K_1$ and $K_2$ onto each other, therefore each 
vertex in $\Vert(K_1\setminus B)$ has its centrally symmetric
counterpart in $\Vert(K_2\setminus B)$, and vice versa. 
Therefore $|\Vert(K_1 \setminus B)| = |\Vert(K_2 \setminus B)| = N$,
and we have
\begin{equation}
|U| = N^2 - N.
\end{equation}
Since $|U| \le 4$, we can only have 
$N = 0,1,2$. We now consider each of these outcomes separately. 

If $N = 0$, then $D^4 = C$, so by lemma 
\ref{description-of-cube-subcells-lemma}
(page \pageref{description-of-cube-subcells-lemma}), 
the combinatorial 
dimension of $D^4$ is $3$ which is a contradiction.

If $N = 1$ or $2$, then
$\dim(D^4) = 4$ 
(if $\dim(D^4) = 3$, then $D^4$ 
is not skinny
by lemma \ref{cube-maximality-lemma} 
(page \pageref{cube-maximality-lemma})
applied to $C$).

If $N = 1$, then $D^4$ is a bipyramid 
over the parallelepiped $C$. Each 
parallelogram face of $C$ belongs to two pyramids.
There are 12 pyramids, 2 for each of the $6$
parallelograms. All pyramids are facets of $D^4$.
But there are a total of 12 facets.
It follows
that subcells of $D^4$ coincide with its faces.

If $N = 2$, then $|U| = 2$ and there 
are two parallelogram faces of $C$ which
extend to octahedra. 
Since $D^4$ is centrally symmetric, 
the two octahedra are mapped onto
each other by the central symmetry.
Therefore $D^4$ is a direct Minkowski sum 
of an octahedron and a line segment.
It has a prism face $C'$; by lemma  
\ref{centrally-symmetric-face-lemma}
on page \pageref{centrally-symmetric-face-lemma}, all facets 
of $C'$ are dual cells, which is impossible
by lemma \ref{4-cell-classification-lemma-prism}
on page \pageref{4-cell-classification-lemma-prism}.
\end{proof}

We have thus established that $D^4$ is
a centrally symmetric bipyramid 
over a parallelepiped, therefore
all parallelogram subcells of $D^4$
are coherent by theorem
\ref{bipyramid-coherence-theorem} on page
\pageref{bipyramid-coherence-theorem}
and ${\cal R} = \emptyset$. This completes
the analysis of case 3.
\end{subsection}

\begin{subsection}{Case 4}
This is the final and the most complex part
of the proof. We have an asymmetric dual cell 
$D^4$ and the following two conditions 
hold for the collection
${\cal R}$ of incoherent parallelogram 
subcells of $D^4$:
\begin{enumerate}
\item each two parallelograms in ${\cal R}$ intersect 
over a vertex.
\item each vertex of a parallelogram in ${\cal R}$ 
belongs to at least one other parallelogram in ${\cal R}$.
\end{enumerate}
\label{closed-conditions-page}
We prove that ${\cal R} = \emptyset$ by assuming
it is not true, performing a combinatorial analysis
of the collection of parallelograms ${\cal R}$
and obtaining a contradiction.

The argument consists of three parts.
Firstly, we forget about the fact that elements
of ${\cal R}$ are parallelograms. 
We look at ${\cal R}$ as a {\em hypergraph},
\index{hypergraph}
or a system of $4$-point subsets $\Vert(\Pi)$
of the vertex set
${\cal V} = \cup_{\Pi\in{\cal R}} \Vert(\Pi)$.
These $4$-point subsets are called
{\em hyperedges}. 

Hypergraphs can be drawn like usual graphs,
with lines representing hyperedges.

We prove that ${\cal R}$ contains
either a 5-10, or a 6-11 subsystem
of parallelograms (see definition 
in the statement of lemma 
\ref{5-10-6-11-lemma} below).

In the second and the third 
parts, we study
the 5-10 and 6-11 subsystems.
We remember that elements
of ${\cal R}$ are parallelogram dual cells
and use geometric properties
of dual cells, established
in section \ref{affine-results-section}, 
to complete the analysis. 

\bigskip

\noindent{\bf Combinatorial analysis}
A hypergraph with $4$-element hyperedges
will be called {\em closed} if it 
satisfies the following conditions:
\begin{enumerate}
\item each two hyperedges intersect
in exactly one vertex,
\item each vertex belongs to at least
two hyperedges.
\end{enumerate}
Viewed as a hypergraph, the system
of parallelograms ${\cal R}$ is closed.
\footnote{David Gregory 
\cite{bib_david_gregory_private_comm_1} 
suggested that the hypergraph terminology
can be used in this work.

Participants of the Discrete Mathematics Seminar at 
Queen's University and Royal Military College 
let us know that the dual hypergraph 
to a ``closed'' hypergraph is an example of a 
`Pairwise Balanced Design` with parameter $\lambda=1$. 
Special thanks to Reza Naserasr of Royal 
Military College, Kingston, Ontario,
Canada for interest in the work.
\index{partial block design}
}

Let ${\cal R}$ be an arbitrary
closed hypergraph.
Let ${\cal V}$ be the set of vertices
of ${\cal R}$, $V = |{\cal V}|$, 
$R = |{\cal R}|$. For a vertex $v\in {\cal V}$, its 
{\em degree} $m_v$ is the number 
of hyperedges containing it.

\begin{proposition}
For all $v\in {\cal V}$ we have $2 \le m_v \le 4$.
\end{proposition}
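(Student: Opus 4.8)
The plan is to prove the two inequalities separately, with the lower bound being immediate and the upper bound following from a short counting argument that exploits both defining conditions of a closed hypergraph.

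First I would dispose of $m_v \ge 2$: this is nothing more than condition (2) in the definition of a closed hypergraph, which states that every vertex belongs to at least two hyperedges. No argument is needed beyond citing the definition.

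For the upper bound $m_v \le 4$, fix a vertex $v$ and list the hyperedges through it as $E_1,\dots,E_{m_v}$; there are at least two of them by the lower bound just established. The idea is to produce a single hyperedge $E'$ that avoids $v$ yet meets every $E_i$, and then count intersection points inside the $4$-element set $E'$. To find such an $E'$, I would pick any vertex $a\in E_1$ with $a\ne v$ (possible since each hyperedge has exactly four vertices) and apply condition (2) to $a$: it lies in some hyperedge $E'\ne E_1$. If $v$ were also in $E'$, then $E'\cap E_1$ would contain both $v$ and $a$, contradicting condition (1) that any two hyperedges meet in exactly one vertex; hence $v\notin E'$.

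It then remains to count. For each $i$, condition (1) gives $E'\cap E_i=\lbrace w_i\rbrace$ for a single vertex $w_i$, and $w_i\ne v$ because $v\notin E'$. Next I would verify that the $w_i$ are pairwise distinct: if $w_i=w_j$ with $i\ne j$, this common vertex would lie in $E_i\cap E_j$, which equals $\lbrace v\rbrace$ since $E_i$ and $E_j$ both contain $v$ and meet in exactly one vertex; that forces $w_i=v$, contradicting $w_i\ne v$. Thus $E'$ contains the $m_v$ distinct vertices $w_1,\dots,w_{m_v}$, and since $\lvert E'\rvert=4$ we conclude $m_v\le 4$. The only point requiring any care — the main obstacle, such as it is — is guaranteeing the existence of the auxiliary hyperedge $E'$ disjoint from $v$; this is precisely where condition (2) enters, applied to a vertex of $E_1$ other than $v$ rather than to $v$ itself. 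Everything else is routine bookkeeping with the single-intersection axiom (1).
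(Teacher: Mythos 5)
Your proof is correct and follows essentially the same route as the paper: produce a hyperedge avoiding $v$, note that it must meet every hyperedge through $v$ in pairwise distinct vertices, and conclude from $|E'|=4$. In fact you are slightly more careful than the paper, which simply asserts the existence of a hyperedge not containing $v$, whereas you justify it via condition (2) applied to a vertex $a\in E_1\setminus\lbrace v\rbrace$.
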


\begin{proof} The first inequality directly follows from 
the definition of a closed system. To prove the second,
suppose that there are $5$ or more hyperedges containing 
$v$. There is a hyperedge $\Pi\in{\cal R}$ such that 
$v\notin \Pi$, and it intersects each of the hyperedges 
which contain $v$, yet it has only $4$ vertices which 
is a contradiction.
\end{proof}

\begin{calculation} 
The following formulas hold:
\begin{equation}
\label{1_moment_equation}
\sum_{v\in V} m_v = 4R,
\end{equation}
\begin{equation}
\label{2_moment_equation}
\sum_{v\in V} m_v^2 = R(R + 3),
\end{equation}
\begin{equation}
\label{3_moment_equation}
\sum_{v\in V} (m_v - 2) = 4R - 2V,
\end{equation}
\begin{equation}
\label{4_moment_equation}
\sum_{v\in V} {(m_v - 2)}^2 = R(R - 13) + 4V,
\end{equation}
\begin{equation}
\label{num_vertices_degree_4_equation}
|\lbrace v \in {\cal V} : m_v = 4 \rbrace| 
= \frac{R(R - 17)}{2} + 3V.
\end{equation}
\end{calculation}

\begin{proof}
The first equality is obvious. The second one 
follows from it and from the fact that each two hyperedges 
have exactly one common vertex so
\begin{equation}
\frac{R(R - 1)}{2} = \sum_{v\in V} \frac{m_v(m_v - 1)}{2}.
\end{equation}
The third and fourth statements are implied from the first two. 
The fifth follows from the equation
\begin{equation}
|\lbrace v \in {\cal V} : m_v = 4 \rbrace| = 
\frac{\sum_{v\in V} {(m_v - 2)}^2 - \sum_{v\in V} (m_v - 2)}{2}.
\end{equation}
which is true because $2\le m_v \le 4$ and the right hand
side is the sum of the indicator function
$\frac{1}{2}({(m_v - 2)}^2 - (m_v - 2))$ of the subset 
of vertices $v$ with $m_v = 4$.
\end{proof}

For example, it follows from the calculations that 
if for all $v\in V$ $m_v = 2$, then $R = 5$ and $V = 10$. 

\begin{corollary} 
\label{two-inequalities-corollary}
$R \le V \le 2R$.
\end{corollary}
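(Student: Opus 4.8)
The plan is to derive both inequalities directly from the first-moment identity (\ref{1_moment_equation}) together with the degree bounds just established in the Proposition, namely $2 \le m_v \le 4$ for every vertex $v\in{\cal V}$. No heavier machinery (and in particular none of the second-moment formulas) is needed; the two bounds are simply the two halves of this elementary estimate.

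First I would prove the right-hand inequality $V \le 2R$. Summing the lower bound $m_v \ge 2$ over all $V$ vertices gives $\sum_{v\in V} m_v \ge 2V$. Comparing this with equation (\ref{1_moment_equation}), which states $\sum_{v\in V} m_v = 4R$, yields $4R \ge 2V$, and hence $V \le 2R$.

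Symmetrically, for the left-hand inequality $R \le V$, I would sum the upper bound $m_v \le 4$ over all vertices to obtain $\sum_{v\in V} m_v \le 4V$. Using (\ref{1_moment_equation}) once more gives $4R \le 4V$, so that $R \le V$. Combining the two estimates establishes $R \le V \le 2R$.

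There is no genuine obstacle here: the corollary is an immediate consequence of the uniform degree bounds $2\le m_v\le 4$, and the ``main step'' is simply recognizing that the first-moment identity already packages the information needed. The only point worth remarking is that both bounds are tight at the extremes of the degree sequence: equality $V = 2R$ forces $m_v = 2$ for every vertex (the $R=5$, $V=10$ configuration noted just above), while $R = V$ forces $m_v = 4$ for every vertex, a much more rigid situation that the higher-moment identities (\ref{2_moment_equation})--(\ref{num_vertices_degree_4_equation}) further constrain.
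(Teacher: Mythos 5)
Your proof is correct and is essentially the paper's own argument: the paper cites equation (\ref{3_moment_equation}), $\sum_{v}(m_v-2)=4R-2V$, together with $2\le m_v\le 4$, which is just the first-moment identity (\ref{1_moment_equation}) rewritten, so bounding $\sum_v m_v$ between $2V$ and $4V$ as you do is the same computation in equivalent form.
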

\begin{proof}
The result follows from formula \ref{3_moment_equation}
and inequality $2 \le m_v \le 4$.
\end{proof}

\begin{lemma} 
\label{5-10-6-11-lemma}
A nonempty closed hypergraph
contains a subgraph
isomorphic to the 5-10 hypergraph
or the 6-11 hypergraph.
The two hypergraphs are shown 
in figure \ref{hypergraph-diagrams-figure}.
\end{lemma}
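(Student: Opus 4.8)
The plan is to translate the statement into a counting-plus-structure argument about the closed hypergraph ${\cal R}$, built on the double-count that for any hyperedge $\Pi$, closedness gives
\begin{equation}
R - 1 = \sum_{v \in \Vert(\Pi)} (m_v - 1),
\end{equation}
since every other hyperedge meets $\Pi$ in a unique vertex, and a vertex $v \in \Vert(\Pi)$ lies in exactly $m_v - 1$ further hyperedges (no hyperedge can meet $\Pi$ in two vertices). With $2 \le m_v \le 4$ this immediately yields $5 \le R \le 13$, sharpening Corollary \ref{two-inequalities-corollary}. I would first dispose of the two smallest values. If $R = 5$, the identity forces every term $m_v - 1 = 1$, so all vertices have degree $2$; the five hyperedges then meet pairwise in $10$ distinct vertices and ${\cal R}$ itself is the 5-10 hypergraph. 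If $R = 6$, the identity reads $5 = \sum_{v \in \Vert(\Pi)}(m_v - 1)$, which (over four parts in $\{1,2,3\}$) forces each hyperedge to carry exactly one degree-$3$ vertex and no degree-$4$ vertex; counting incidences to degree-$3$ vertices gives exactly two of them, hence $V = 11$ and ${\cal R}$ is the 6-11 hypergraph. The Calculation's moment formulas confirm this and rule out the alternative $V = 12$.

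Next I would record what the two target patterns mean combinatorially. Five hyperedges no three of which share a common vertex automatically form a 5-10 subgraph: their $\binom{5}{2}=10$ pairwise meets are distinct, and each hyperedge's four vertices are exactly its four meets. A convenient sufficient condition for a 6-11 subgraph is the existence of two vertices $v \ne w$, both of degree at least $3$, lying in no common hyperedge. Choosing three hyperedges through $v$ and three through $w$ yields six distinct hyperedges in which $v$ and $w$ are the two degree-$3$ vertices, while every other vertex can meet at most one hyperedge from each group and hence has degree $2$; the induced incidence pattern is precisely 6-11.

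The proof then splits on whether such a disjoint pair of heavy vertices exists. If it does, the 6-11 branch finishes. Otherwise every two vertices of degree $\ge 3$ lie in a common hyperedge, and I would show this forces a 5-arc (five hyperedges, no three concurrent). It is cleanest to pass to the dual linear space whose ``points'' are the hyperedges and whose ``lines'' are the vertices: points have degree $4$, lines have size $m_v \le 4$, and our assumption reads: every two lines of size $\ge 3$ meet. In the pencil case, where all such big lines pass through one common point $p$, every collinear triple either uses $p$ or lies on a single big line through $p$; choosing five points other than $p$ with at most two from each line through $p$ then gives a $5$-arc, and this is always possible because $R \ge 7$ guarantees $R-1 \ge 5$ non-$p$ points spread over the (at most four) lines through $p$. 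This produces the 5-10 pattern.

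The main obstacle is the remaining case: big lines that pairwise intersect but share no common point — a ``triangle'' of big lines — still with every point of degree $4$ and no $5$-arc. This is exactly the regime of rigid configurations such as the affine plane $\mathrm{AG}(2,3)$ (nine hyperedges, all vertices of degree $3$), which has no $5$-arc and yet contains two parallel triple-lines, so it is captured by the 6-11 branch rather than the 5-10 one. To finish I would combine the bound $R \le 13$ with the moment identities of the Calculation to show that a pairwise-intersecting but non-concurrent family of big lines cannot coexist with the degree-$4$ condition unless either two big lines are in fact disjoint (returning to the 6-11 branch) or enough degree-$2$ points survive off the triangle to assemble a $5$-arc. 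Nailing down this finite case analysis — ruling out a genuinely triangular, non-pencil arrangement of big lines — is where the real difficulty of the lemma lies.
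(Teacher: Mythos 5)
Your per-hyperedge identity $R-1=\sum_{v\in\Vert(\Pi)}(m_v-1)$ is correct and gives a cleaner bound ($5\le R\le 13$) than the paper's global moment formulas; your treatment of $R=5$ and $R=6$ is sound, and your two sufficient criteria (five pairwise non-concurrent hyperedges give a 5-10 subgraph; two vertices of degree $\ge 3$ with no common hyperedge give a 6-11 subgraph) are exactly the paper's observations (C) and (B). But the proof is not complete, and the plan you sketch for the remaining ``triangle'' case cannot work in the form you set it up. Your only route to a 6-11 subgraph is a pair of heavy vertices lying in \emph{no} common hyperedge, so in your second branch you are forced to produce a 5-arc. Consider, however, the closed hypergraph dual to the projective plane $PG(2,3)$: hyperedges are its $13$ points, vertices are its $13$ lines, so each hyperedge has $4$ vertices, every vertex has degree $4$, and every two hyperedges share exactly one vertex. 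Here every two vertices share a hyperedge (any two lines of a projective plane meet), so your 6-11 branch never fires; the big lines are pairwise intersecting but not concurrent, so you are in the triangle case; there are no degree-$2$ vertices at all; and there is no 5-arc, since a 5-arc in $PG(2,3)$ would have every line meeting it in $0$ or $2$ points (each arc point already lies on four secants), whence lines through an external point would pair up its points and force it to have even size. So both escape hatches you propose --- ``two big lines are disjoint'' or ``enough degree-$2$ points survive to assemble a 5-arc'' --- fail simultaneously: the deferred case analysis is not merely difficult, it is impossible as stated.

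The missing idea is the paper's observation (A): \emph{two vertices of degree $4$ always yield a 6-11 subgraph, even when they lie on a common hyperedge}. Indeed, at most one hyperedge contains both (closedness), so each vertex still has three hyperedges avoiding the other vertex --- a hyperedge through $v_1$ containing $v_2$ would share two vertices with the common hyperedge --- and these $3+3$ hyperedges, with their $9$ distinct crossing vertices, form the 6-11 pattern. This criterion disposes of the dual of $PG(2,3)$ immediately (take two intersecting lines as the heavy vertices) and, more generally, lets one assume in the hard case that at most one vertex has degree $4$; that assumption is what the paper exploits, combined with a minimal-counterexample hypothesis, its observation (D) for $R\ge 9$, and a finite table of cases for $5\le R\le 8$. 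To repair your argument you would need to add (A) as a third criterion and then redo your triangle case under the restriction of at most one degree-$4$ vertex --- at which point you would essentially be reconstructing the paper's proof.
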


The hypergraphs are called so because
the first has 5 hyperedges and 10 vertices,
the second 6 hyperedges and 11 vertices.

\begin{figure}
\begin{center}
\resizebox{150pt}{!}{\includegraphics[clip=true,keepaspectratio=true]
{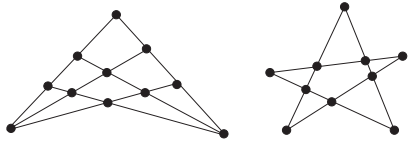}}
\caption{The 6-11 and 5-10 hypergraphs}
\label{hypergraph-diagrams-figure}
\end{center}
\end{figure} 
\index{5-10 hypergraph}
\index{6-11 hypergraph}

\begin{proof}
We assume that closed hypergraph ${\cal R}$ 
is minimal (that is, it does not contain a proper
closed subgraph).

We continue to use the notations 
$R = |{\cal R}|$, $V=|{\cal V}|$. Note that $R\ge 5$: 
since there is at least one hyperedge $Q$,
there should be $4$ more, each containing one vertex of $Q$. 
The following observations will be used in the argument below.

(A) If there are at least two vertices $v_1$, $v_2$ 
of degree $4$ in ${\cal R}$, then ${\cal R}$ 
contains a 6-11 subgraph. Indeed, we can pick up $3$ 
hyperedges containing $v_1$ and $3$ hyperedges 
containing $v_2$, which can be done since there can be 
at most one hyperedge containing both vertices.

(B) If there are two vertices $v_1$, $v_2$ of degrees 
at least $3$ which do not belong to the same hyperedge, 
then the system ${\cal R}$ contains a 6-11 subgraph. 
Again one picks up $3$ hyperedges containing 
$v_1$ and $3$ hyperedges containing $v_2$.

(C) If all vertices have degree 2, then ${\cal R}$ 
is a 5-10 hypergraph. This follows from the formulas 
on page \pageref{1_moment_equation}.

(D) If none of (A),(B),(C) hold and $R \ge 9$, then 
the ${\cal R}$ contains a 5-10 subgraph. Let $v$ 
be the vertex of degree $4$, if such exists, or a vertex 
of degree $3$ otherwise. Let $\Pi_1,\dots,\Pi_4$
(or $\Pi_1,\dots,\Pi_3$) be all hyperedges which contain 
$v$. Then, since (B) does not hold, all vertices of degree 
greater than $2$ are contained in the union of hyperedges 
$\Pi_i$. Since (A) does not hold, all vertices in the union 
of hyperedges $\Pi_i$ are of degree at most $3$
(except, maybe, $v$). Let $Q_1,\dots,Q_5 \in {\cal  R}$ be 
some hyperedges different from $\Pi_i$. They form 
a 5-10 hypergraph (for no three of them can intersect in one point).

We are left with cases where $5 \le R \le 8$. We will use 
inequalities $R \le V \le 2R$, proved in corollary
\ref{two-inequalities-corollary}. The table 
below shows the number of vertices of degree $4$ for each 
pair of values $R$ and $V$, based on formula
(\ref{num_vertices_degree_4_equation}).
Dash ``-'' means that formula 
\ref{num_vertices_degree_4_equation} returns a negative 
number which is a contradiction, so corresponding pair 
of numbers $R$, $V$ is not possible. Star ``*'' means that 
condition $R \le V \le 2R$ is violated.

\begin{table}[htbp]
$
\begin{array}{|r|cccccccc|}
\hline
R \setminus V & 5..9 & 10 & 11 & 12 & 13 & 14 & 15 & 16 \\
\hline
5 & - & 0 & * & * & * & * & * & *  \\ 
6 & - & - & 0 & 3 & * & * & * & *  \\
7 & - & - & - & 1 & 4 & 7 & * & *  \\
8 & - & - & - & 0 & 3 & 6 & 9 & 12 \\
\hline 
\end{array}
$ 
\caption{Number of degree 4 vertices by values of $V$ and $R$}
\label{degree-4-number-table} 
\end{table}

Pairs of $R$ and $V$ where there are more than one degree $4$ 
vertex are covered by case (A). We have to consider
the rest.

If $R = 5$ and $V = 10$, ${\cal R}$ is a 5-10 hypergraph.

If $R = 6$ and $V = 11$, there are no degree $4$ vertices 
and $2$ degree $3$ vertices $v_1$, $v_2$ because 
$\sum_{v\in V} (m_v - 2) = 2$ and $\sum_{v\in V} {(m_v - 2)}^2 = 2$ 
(we use the calculation above). If $v_1$, $v_2$ do not belong 
to the same hyperedge, then we use case (B). If they belong 
to some hyperedge $Q$, then it is sufficient to pick $Q$ 
and hyperedges $Q_1,\dots,Q_4$ at each of the vertices 
of $Q$ to get a 5-10 hypergraph.

If $R = 7$ and $V = 12$, then there is $1$ vertex 
of degree $4$, but this implies that there are at least $13$
vertices - a contradiction.

If $R = 8$ and $V = 12$, then $\sum_{v\in V} (m_v - 2) = 8$ 
and $\sum_{v\in V} {(m_v - 2)}^2 = 8$, so there are $8$
vertices of degree $3$ and $4$ vertices of degree $2$. 
Take a vertex $v$ of degree $3$ and let
$Q_1$, $Q_2$ and $Q_3$ be all hyperedges containing it. 
If there is another vertex $u$ so that $m_u = 3$ and
$u\notin\cup_{i=1}^{3} Q_i$, then we use case (B). Otherwise 
the remaining $7$ vertices of degree $3$ are
found among vertices of $Q_i$, $i=1\dots 3$, which means 
that on one of $Q_i$ all vertices are of degree $3$.
Therefore ${\cal R} \setminus\lbrace Q_i \rbrace$ 
is closed so ${\cal R}$ is not minimal contrary to the assumption. 
The contradiction finalizes the proof.
\end{proof}

We consider the cases when ${\cal R}$ contains a 5-10 subgraph 
and when ${\cal R}$ contains a 6-11 subgraph separately.

\medskip

\noindent {\bf Hypergraph ${\cal R}$ contains a 5-10 subgraph ${\cal R}'$}

We calculate all possible systems ${\cal R}'$
and then use geometric 
properties of dual cells to obtain the desired
contradiction.

So far our analysis only considered parallelograms 
as hyperedges, unordered $4$-element subsets of ${\cal V}$.
Now we take into account the fact that 
parallelograms are centrally symmetric
$4$-vertex polygons. We match
diagonally opposite vertices of
each parallelogram.
As a result, the vertices of 
each hyperedge are split into
two pairs. We will call this 
construction ``vertex matching''.

We will now classify all possible vertex matchings 
of the 5-10 hypergraph, up to isomorphism.

It is convenient to use the dual graph
to the 5-10 hypergraph, the complete graph
on $5$ elements $K_5$. Matching of vertices
in each hyperedge of ${\cal R}'$ corresponds
to matching of edges of $K_5$ incident to 
the the same vertex. 

Matching of edges of $K_5$ at each vertex
is equivalent to building a snow ploughing scheme
on ``roads''-edges of $K_5$, where multiple 
ploughing machines travel on circuits so 
that each edge is cleaned, and no edge 
is cleaned twice
\footnote{A circuit that passes each 
edge exactly once 
is called {\em Eulerian cycle}.}.
Since the paper was written in Canada, this
analogy makes a lot of sense to us.
\index{Eulerian cycle}
\index{snow ploughing scheme}

Correspondence between 
snow ploughing schemes and edge matchings is 
straightforward: as each machine travels 
on the graph, we simply match the entering 
and exiting roads (edges)
at each crossing (vertex). Conversely,
matching of edges at each vertex tells 
a machine which way to go from a crossing (vertex)
after entering from a given road (edge).

We will write a snow ploughing scheme 
as a system of vertex cycles, indicating
vertices that each ploughing machine
visits.
\begin{lemma} 
\label{eulerian-roadsystems-lemma}
Vertices of $K_5$ can be labeled by $1,\dots,5$ 
so that a snow ploughing scheme of $K_5$ 
is one of the following:
\begin{spacing}{1}
\begin{enumerate}
\item $(1,2,3,1,4,2,5,4,3,5)$
\item $(1,3,5,1,2,5,4,2,3,4)$
\item $(1,2,3,1,4,2,5,3,4,5)$
\item $(1,2,3,1,4,5,2,4,3,5)$
\item $(1,4,2,5,3,4,5)$, $(1,2,3)$
\item $(1,2,3,4)$, $(1,5,2,4,5,3)$
\item $(1,2,3,4,5)$, $(1,3,5,2,4)$
\item $(1,5,2,4)$, $(1,2,3)$, $(3,4,5)$
\end{enumerate}
\end{spacing}
\end{lemma}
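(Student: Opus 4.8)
The plan is to reinterpret a snow ploughing scheme as a \emph{transition system} on $K_5$: at each vertex the four incident edges are partitioned into two pairs (the entering/exiting pairing prescribed by the scheme), and such a system is exactly an edge-disjoint decomposition of the $10$ edges of $K_5$ into closed circuits. Conversely every such decomposition comes from a unique scheme, so classifying schemes up to a relabeling of the vertices is the same as classifying these circuit decompositions up to the action of $S_5$. First I would record the only two elementary facts I need: since $K_5$ is a simple graph every circuit has length at least $3$ (a length-$2$ circuit would traverse one edge twice, and a length-$1$ circuit would be a loop), and the circuit lengths sum to $10$. Hence the multiset of circuit lengths is a partition of $10$ into parts $\ge 3$, and there are exactly five such partitions: $\{10\}$, $\{7,3\}$, $\{6,4\}$, $\{5,5\}$ and $\{4,3,3\}$.

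Next I would dispose of the four partitions other than $\{10\}$, where the decomposition is essentially rigid. For $\{4,3,3\}$, two edge-disjoint triangles on five vertices must meet in exactly one vertex (they cannot share an edge and $3+3>5$ forces a common vertex); the remaining six edges then form a unique $4$-circuit, and since $S_5$ is transitive on such triangle pairs there is a single class, realizing the eighth scheme. For $\{5,5\}$ the scheme is a decomposition of $K_5$ into two Hamiltonian $5$-cycles, which is unique up to relabeling (the pentagon and its complementary pentagram), giving the seventh scheme. For $\{7,3\}$ and $\{6,4\}$ the short circuit can be placed in an essentially unique way, because $S_5$ acts transitively on triangles (resp. on the relevant short circuits) and the forced pairings at the vertices not on the short circuit leave only one way to close up the long circuit; these give the fifth and sixth schemes. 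In each of these four cases it suffices to exhibit the listed representative and verify that its transitions are the only ones consistent with the prescribed lengths.

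The main obstacle is the Eulerian case $\{10\}$, i.e. the single-circuit schemes, which are precisely the Eulerian circuits of $K_5$ taken up to rotation, reflection and relabeling. Here I would use a numerical invariant. Writing the circuit as a cyclic word of length $10$ in which each vertex occurs twice, the cyclic distance between the two occurrences of any vertex lies in $\{3,4,5\}$, and a parity count on the five odd positions of the cyclic word shows that the number of distance-$4$ chords is even. Running through the admissible distance profiles and discarding every profile that forces a repeated edge, only $\{3,3,3,3,3\}$, $\{3,3,3,4,4\}$ and $\{3,3,4,4,5\}$ survive; each surviving profile is realized by a single isomorphism class, and these classes are represented by the first four listed circuits (the profile $\{3,3,4,4,5\}$ being covered by the schemes numbered $3$ and $4$). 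Combining the five partition cases then shows that every scheme is isomorphic to one of the eight listed representatives. The delicate point throughout the Eulerian analysis is the simultaneous bookkeeping of the dihedral symmetry of the cyclic word and the $S_5$-relabeling, so as neither to miss a class nor to count one twice; the distance profile together with a finer invariant such as the interlacement (chord-crossing) pattern of the cyclic word is what finally pins down the classes.
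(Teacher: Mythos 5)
Your proposal is correct, and it takes a genuinely different route from the paper's proof, which consists of a single sentence: the Eulerian schemes (items 1--4) were found by a computer program, and the multi-circuit schemes (items 5--8) by an ``easy manual classification''. Your treatment of the multi-circuit types $\{7,3\}$, $\{6,4\}$, $\{5,5\}$, $\{4,3,3\}$ is in substance that manual half (one slip: two edge-disjoint triangles leave four, not six, edges, and those four form the $4$-circuit), but your Eulerian analysis --- chord distances lie in $\{3,4,5\}$, the parity count forcing an even number of distance-$4$ chords, and forced placement of chords within each admissible profile --- is a complete hand replacement for the computer run, which is exactly what the paper does not supply. All of your claims are true and can be certified by finite checks: there are exactly three isomorphism classes of Eulerian schemes, one for each of the profiles $\{3,3,3,3,3\}$, $\{3,3,3,4,4\}$, $\{3,3,4,4,5\}$; indeed, of the $3^5=243$ transition systems on $K_5$, the multi-circuit ones number $60+30+15+6=111$, so $132$ are Eulerian, while the three classes just named have $S_5$-orbit sizes $12$, $60$, $60$ (stabilizers $D_5$, $\langle(2\,3)(4\,5)\rangle$, $\langle(1\,5)(2\,4)\rangle$ respectively), and $12+60+60=132$, so no class is missed. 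A byproduct your argument exposes, which the paper's computer output obscured, is that the lemma's list is redundant: items 3 and 4 are the same scheme, since reading item 3 cyclically from its seventh entry gives $(5,3,4,5,1,2,3,1,4,2)$, and the relabeling $5\to 1$, $3\to 2$, $4\to 3$, $1\to 4$, $2\to 5$ turns this into item 4 exactly; the lemma remains true, since its list is claimed to be exhaustive, not irredundant. One methodological caution for the write-up: an invariant such as the interlacement pattern can only separate classes, never prove that two circuits with equal invariants are isomorphic, so uniqueness within each surviving profile should not be attributed to it; what actually establishes uniqueness is the same forced-placement argument you use to discard profiles --- within a given profile the chord diagram on the ten positions is determined up to rotation and reflection, and the assignment of vertex labels to chords is then absorbed entirely by the $S_5$-relabeling.
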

\begin{proof} We used a computer program to find 
all Eulerian cycles on $K_5$ (lines 1-4). 
For a scheme of two or more
cycles (lines 5-8), an easy manual 
classification proves the result.
\end{proof}

From this lemma we know all vertex matchings on 
edges of a 5-10 hypergraph, up to
isomorphism.
We denote the hyperedges/parallelograms
in ${\cal R}'$ by $\Pi_1,\dots,\Pi_5$,
and label vertices of the hypergraph
${\cal R}'$ by $v_{ij}$, $i\ne j$, 
where $v_{ij}$ stands for 
the intersection of hyperedges $\Pi_i$ and $\Pi_j$.
Now, each vertex is a point in space,
and matching pairs of vertices of 
the same hyperedge represent diagonals
of a parallelogram. This allows
us to write a system of linear
equations.

For example, snow ploughing scheme 1
translates into the following system:
\begin{equation}
\begin{split}
v_{12}+v_{15}&=v_{13}+v_{14}, \\
v_{12}+v_{23}&=v_{24}+v_{25}, \\
v_{23}+v_{13}&=v_{34}+v_{35}, \\
v_{14}+v_{24}&=v_{34}+v_{45}, \\
v_{25}+v_{45}&=v_{15}+v_{35}
\end{split}
\end{equation} 
We choose a coordinate system $e_1,\dots,e_d$ 
in ${\mathbb R}^d$ so that parallelograms
$\Pi_1$ and $\Pi_2$
are $\conv\lbrace 0,e_1,e_2,e_1+e_2\rbrace$, 
$\conv\lbrace 0,e_3,e_4,e_3+e_4\rbrace$. 
This is possible by the assumption
that each two parallelograms are complementary,
that is, they share a vertex and span a 
$4$-space.

We then consider each of
the cases in lemma \ref{eulerian-roadsystems-lemma}, 
write down the corresponding linear equations
and solve them. The results, easily verifiable, 
are shown in table \ref{5-10-systems-table}. 
Each solution is shown in the format
$[v_{12},v_{13},v_{14},v_{15},v_{23},v_{24},v_{25},v_{34},v_{35},v_{45}]$.
For each solution, we indicate if it 
is immediately obvious that the parallelogram
system cannot occur in a dual $4$-cell,
in column ``reason for contradiction''.

\begin{table}
\begin{spacing}{1}
{\tiny
\begin{equation}
\label{5-10-systems-table}
\begin{array}{|l|l|l|}
\hline
\text{Case} & \text{Solution} & \text{Reason for contradiction} \\
\hline
1  & \text{None} & \text{n/a} \\

\smallskip \\
\hline

2 &
{
\left[ \begin {array}{cccccccccc} 
0&1&1&0&0&0&0& a^1-1& a^1   & a^1   \\
\\
0&0&1&1&0&0&0& a^2-1& a^2-1 & a^2   \\
\\
0&0&0&0&1&1&0& a^3+1& a^3   & a^3   \\
\\
0&0&0&0&0&1&1& a^4+1& a^4+1 & a^4   \\
\\
0&\dots&&&&&&  a^5  & a^5   & a^5   \\
\dots 
\end {array} \right] 
}
& \text{See below} \\

\smallskip \\
\hline

3 & \text{None} & \text{n/a} \\

\smallskip \\
\hline

4 &
{
\left[ \begin {array}{cccccccccc} 
0&1&0&1&0&0&0&1&0&1\\
0&0&1&1&0&0&0&1&-1&0\\
0&0&0&0&1&1&0&0&1&1\\
0&0&0&0&1&0&1&0&1&0\\
0\\
\dots
\end {array} \right] 
}
& v_{34} = v_{15} \\

\smallskip \\
\hline

5 & \text{None} & \text{n/a} \\

\smallskip \\
\hline

6 & 
{
\left[ \begin {array}{cccccccccc} 
0&1&1&0&0&0&0&0&-1&1 \\
0&0&1&1&0&0&0&0&0&1  \\
0&0&0&0&1&1&0&0&1&-1 \\
0&0&0&0&1&0&1&0&1&0  \\
0 \\
\dots
\end{array} \right] 
}
& v_{34} = v_{12} \\

\smallskip \\
\hline

7  & 
{
\left[ \begin {array}{cccccccccc} 
0&1&0&1&0&0&0&1&0&-1 \\
0&0&1&1&0&0&0&1&1&0  \\
0&0&0&0&1&1&0&0&1&1  \\
0&0&0&0&1&0&1&-1&0&1 \\
0\\
\dots
\end {array} \right]
}
& \text{ See below } \\

\smallskip \\
\hline

8 &
{
 \left[ \begin {array}{cccccccccc} 
0&1&1&0&0&0&0&1&0&0 \\ 
0&1&0&1&0&0&0&0&1&0 \\
0&0&0&0&1&1&0&1&0&0 \\
0&0&0&0&1&0&1&0&1&0 \\
0 \\
\dots
\end {array} \right] 
}
& v_{45} = v_{12} \\

\smallskip \\
\hline
\end{array}
\end{equation}
}
\caption{All parallelogram systems with 5-10 hypergraph, up to affine equivalence}
\end{spacing}
\end{table}

\begin{figure}
\begin{center}
\resizebox{160pt}{!}{
\includegraphics[clip=true,keepaspectratio=true]
{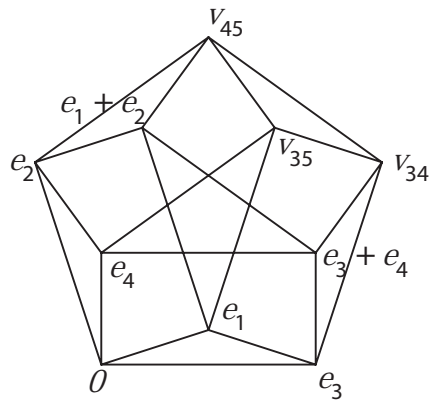}
}
\caption{System of parallelograms from line 2 of table 
\ref{5-10-systems-table}}
\label{5-10-system-figure}
\end{center}
\end{figure} 

We are left to prove that systems of parallelograms 
with vertices shown in lines 2 and 7 of table \ref{5-10-systems-table}
cannot occur in the dual $4$-cell.

First consider row 7. We have
\begin{equation}
\label{octahedron_equation}
[v_{12},v_{35},v_{15},v_{23},v_{34},v_{45}]=
{\tiny
\left[ \begin {array}{cccccc} 
0  & 0  & 0  & 0  &  1  & -1 \\
0  & 1  & 1  & 0  &  1  &  0 \\
0  & 1  & 0  & 1  &  0  &  1 \\
0  & 0  & 0  & 0  & -1  &  1 \\
0 \dots
\end {array} \right].
}
\end{equation}
The 6 points $v_{12},v_{35},v_{15},v_{23},v_{34},v_{45}$ form 
a centrally symmetric subset of $\Vert(D^4)$
(the center of symmetry is ${[0,1/2,1/2,0]}$). 
By lemma \ref{centrally_symmetric_subsets_lemma}, 
there is a centrally symmetric dual cell $D\subset D^4$
with 
$\lbrace v_{12},v_{35},v_{15},v_{23},v_{34},v_{45}\rbrace \subset \Vert(D)$. 
Since $D^4$ is asymmetric, $D$ is a proper subcell 
of $D^4$ so $\combdim(D)\le 3$. But $\combdim(D) \ge 3$
since the 6 points in equation \ref{octahedron_equation} span a $3$-dimensional 
space. Therefore $\combdim(D)=3$. By lemma \ref{dual-2-3-cells-lemma},
$D$ is a simplex, an octahedron 
or a pyramid. But $D$ is centrally symmetric so $D$ 
is an octahedron. Line segment $[v_{15},v_{45}]$ is an edge
of the octahedron, so it is a dual $1$-cell by lemma 
\ref{dual_3_cells_theorem}. However, $[v_{15},v_{45}]$ 
is a diagonal of parallelogram cell $\Pi_5$, which is
impossible.

Next consider the system of parallelograms in row 2 
of table \ref{5-10-systems-table} (shown schematically 
in figure \ref{5-10-system-figure}). This is the 
hardest case of all, because there is a vector 
parameter $a \in {\mathbb R}^d$ in the solution. 

\medskip

\noindent (A) {\em Setup.}
We introduce the following 
notations. Firstly, the new origin in the coordinate 
system will be $E = e_1 + e_4 = v_{25} + v_{13}$.
\begin{spacing}{1}
\begin{equation*}
\begin{aligned} 
y_1 &= v_{52} - E, \\
y_2 &= v_{13} - E, \\
y_3 &= v_{24} - E, \\
y_4 &= v_{35} - E, \\
y_5 &= v_{41} - E.
\end{aligned}
\end{equation*}
We then have:
\begin{equation*}
\begin{aligned}
E+y_1+y_2 &= v_{12}, \\
E+y_2+y_3 &= v_{23}, \\
E+y_3+y_4 &= v_{34}, \\
E+y_4+y_5 &= v_{45}, \\
E+u_5+u_1 &= v_{51}.
\end{aligned}
\end{equation*}
\end{spacing}
\bigskip
From this moment on, we shift the coordinate system, 
so that $E = 0$. We then have
\begin{spacing}{1}
\begin{equation*}
\begin{aligned}[c]
\Pi_1&=\conv\lbrace y_5,y_2,y_5+y_1,y_1+y_2 \rbrace,\\
\Pi_2&=\conv\lbrace y_1,y_3,y_1+y_2,y_2+y_3 \rbrace,\\
\Pi_3&=\conv\lbrace y_2,y_4,y_2+y_3,y_3+y_4 \rbrace,\\
\Pi_4&=\conv\lbrace y_3,y_5,y_3+y_4,y_4+y_5 \rbrace,\\
\Pi_5&=\conv\lbrace y_4,y_1,y_4+y_5,y_5+y_1 \rbrace\\
\end{aligned}
\end{equation*}
\end{spacing}
\bigskip
By theorem \ref{two_cubes_theorem}, statement 3 
(page \pageref{two_cubes_theorem}) the 
linear space 
$L^4 = <y_1, y_2, y_3, y_4> = \aff(\Pi_2\cup\Pi_3)$ 
is complementary to $\lin(F^{d-4} - F^{d-4})$, 
where $F^{d-4}$ is the face of the tiling 
corresponding to the dual cell 
$D^4$. Let $h$ be the projection along $F^{d-4}$
onto $L^4$. By the same theorem, each two parallelograms 
$h(\Pi_i)$ and $h(\Pi_j)$ have a common vertex 
and span a $4$-space.

Now we will lift the system of parallelograms 
$h(\Pi_1),\dots,h(\Pi_5)$ into a $5$-dimensional space. Take 
a vector $x\notin L^4$, and let $u_i=h(y_i)=y_i$, 
$i=1,\dots,4$, $u_5 = h(y_5) + x$. 

We have obtained a pleasantly symmetric representation 
of the lifted system of parallelograms, with vertex set
$\lbrace u_1,\dots, u_5, u_1 + u_2, \dots, u_5 + u_1 \rbrace$. 
This system of parallelograms is invariant under 
transformations of space induced by cyclical substitution 
of indices. The lifted parallelograms are:
\begin{spacing}{1}
\begin{equation*}
\begin{aligned}[c]
P_1&=\conv\lbrace u_5,u_2,u_5+u_1,u_1+y_2 \rbrace,\\
P_2&=\conv\lbrace u_1,u_3,u_1+u_2,u_2+y_3 \rbrace,\\
P_3&=\conv\lbrace u_2,u_4,u_2+u_3,u_3+y_4 \rbrace,\\
P_4&=\conv\lbrace u_3,u_5,u_3+u_4,u_4+y_5 \rbrace,\\
P_5&=\conv\lbrace u_4,u_1,u_4+u_5,u_5+y_1 \rbrace\\
\end{aligned}
\end{equation*}
\end{spacing}
\bigskip
Let $Q = \conv(\lbrace u_1,\dots, u_5, u_1 + u_2, 
\dots, u_5 + u_1 \rbrace)$. Note that parallelograms
$P_1$,\dots,$P_5$ are faces of $Q$ (a trivial 
computation proves it).
Let $p$ be the projection from 
$<u_1,\dots,u_5>$ to $<u_1,\dots,u_4> = <y_1,\dots,y_4>$ 
along vector $x$. 

We have $p(Q)\subset h(D^4)$. We will 
analyze vectors $x=(x^1,\dots,x^5)$ 
where $x^i$ are its coordinates in the basis
$u_1,\dots,u_5$. Our objective is to test
all vectors $x\ne 0$, to prove
that inclusion $p(Q)\subset h(D^4)$
is impossible.

\bigskip

\noindent (B) {\em Tests for vector $x$.}
We now present two conditions that vector
$x$ must satisfy. 

The first condition arises 
from theorem \ref{two_cubes_theorem},
statement 3 (page \pageref{two_cubes_theorem}).
It says that each two parallelograms
$h(\Pi_i) = p(P_i)$, $h(\Pi_j) = p(P_j)$
span a $4$-space. The null space
$<x>$ of projection $p$
must be therefore complementary 
to the space spanned by each pair
of parallelograms $P_i$, $P_j$.
It follows that
\begin{equation}
\begin{split}
x^i \ne 0, \\
i=1,\dots,5,
\end{split}
\end{equation}

The second condition follows from 
statements 2 and 3 of lemma
\ref{translate_complex_main_lemma}.
Let $v$ be an arbitrary vertex of $Q$,
and suppose that $v\notin P_i$.
Let $C_v$ be the
cone bounded by the hyperplanes which
define facets of $Q - v$ containing $0$.
In other words, $C_v = \cone(Q - v)$.
Polytope $Q - v$ coincides with 
$C_v$ in some neighborhood $U_\epsilon(0)$
of $0$. Then:
\begin{equation}
\label{vertex-against-parallelogram-equation}
x \notin \relint(C_v - \lin(P_i - P_i)) \cup -\relint(C_v - \lin(P_i - P_i))
\end{equation}
We call this condition {\em testing
parallelogram $P_i$ against vertex
$v$.}

To show that the condition holds,
suppose eg. $x\in \relint(C_v - \lin(P_i - P_i))$.
Then $x = z_1 - z_2$, where $z_1 \in \relint(C_v)$,
$z_2 \in \lin(P_i - P_i)$. 
We have $z_1 \ne 0$, since $C_v$
is a cone with vertex $0$ 
and $0\notin\relint(C_v)$.

Consider the case when $z_2 = 0$. Then
$x = z_1 \in \relint(C_v)$. But $<x>$ is
the null space of projection $p$,
therefore $p(v)$ is not a vertex of $p(Q)$,
which is a contradiction: both polytopes
$p(Q)$ and $Q$ have $10$ vertices.

Now suppose that $z_2 \ne 0$. We then have
\begin{equation}
z_2 = \alpha_1 t_1 + \alpha t_2
\end{equation}
where $t_1,t_2$ are edge vectors of 
parallelogram $P_i$ with appropriately
chosen directions, 
$\alpha_1,\alpha_2 \ge 0$.
Since $x$ is interesting to us only as the
direction vector for linear projection $p$,
we can multiply vector $x$ and vectors $z_1$ and $z_2$
by the same small positive number so that
$z_1 \in U_\epsilon(0)$, $\alpha_1,\alpha_2 < 1$.
We then have
\begin{equation}
\begin{split}
z_1 \in \relint(Q - v), \\
z_2 \in P_i - v'
\end{split}
\end{equation}
for some choice of vertex $v'$ of $P_i$.
From $x = z_1 - z_2$, we have $p(z_1) = p(z_2)$.
It follows that:
\begin{equation}
\begin{split}
\relint(p(Q - v)) \cap p(P_i - v') \ne \emptyset, \\
\relint(p(Q - v)) \cap p(Q - v') \ne \emptyset, \\
\relint(p(Q) - (p(v) - p(v'))) \cap \relint(p(Q)) \ne\emptyset, \\
\relint( h(D^4) - (p(v)-p(v')) ) \cap \relint(h(D^4)) \ne \emptyset.
\end{split}
\end{equation}
By the choice of $v$ and $P_i$, points $p(v)$
and $p(v')$ are different vertices of $h(D^4)$,
therefore polytope $h(D^4)$ is not skinny. This is 
a contradiction with lemma \ref{dual-cells-skinny-lemma}
on page \pageref{dual-cells-skinny-lemma}.
We have proved that condition 
\pageref{vertex-against-parallelogram-equation}
is a valid test for vector $x$.

Another simple yet useful idea is that the polytope $Q$ 
is invariant under a group consisting of 5 
cyclical coordinate substitutions. This means that 
whatever condition on the projection vector $x$ 
we have derived, up to 5 more conditions can be 
produced by cyclically substituting indices.

\medskip

\noindent (C) {\em Application of the tests.}
Now we test vector $x$ against the conditions
specified above. We used computer software PORTA 
by T. Christof (University of Heidelberg)
to perform polytope computations.

First, we test parallelogram $P_2$ against each 
of the vertices $u_2$, $u_4$, $u_5$. We conclude 
that the projection vector $x$ does not belong 
to any of the following open cones and their centrally 
symmetric images:
\begin{equation}
\begin{split}
\lbrace x: x^4 < 0, x^5 > 0, x^1 + x^3 > 0 \rbrace, \\
\lbrace x: x^4 > 0, x^5 < 0, x^1 + x^3 > 0 \rbrace, \\
\lbrace x: x^4 > 0, x^5 > 0, x^1 + x^3 > 0 \rbrace
\end{split}
\end{equation}
We also know that $x^1\ne 0$, \dots, $x^5 \ne 0$ 
because the projection vector $x$ must be 
complementary to the affine $4$-space spanned by any 
pair of parallelograms $P_i,P_j$. Therefore $x$ must 
satisfy one of the following conditions:
\begin{equation}
\begin{split}
x &\in K^1_{+} = \lbrace x: x^4 > 0, x^5 > 0, x^1 + x^3 < 0 \rbrace, \\
x &\in K^1_{-} = \lbrace x: x^4 < 0, x^5 < 0, x^1 + x^3 > 0 \rbrace, \\
x &\in K^1_{0} = \lbrace x: x^1 + x^3 = 0 \rbrace
\end{split}
\end{equation}
We then rotate the indices in the inequalities to obtain
sets $K^i_+$, $K^i_-$, $K^i_0$ for $i=1,\dots,5$. For example, 
$K^2_0 = \lbrace x: x^2 + x^4 = 0 \rbrace$. We get
\begin{equation}
\begin{split}
x \in \cap_{i=1}^5 (K^i_{+} \cup K^i_{-} \cup K^i_{0} ), \\ 
i=1,\dots,5. 
\end{split}
\end{equation}
Opening brackets in the expression, we have
\begin{equation}
x \in \bigcup_{[\sigma_1,\dots,\sigma_5],\sigma_i\in\lbrace +,-,0\rbrace}   
(K^1_{\sigma_1} \cap K^2_{\sigma_2} \cap K^3_{\sigma_3} 
\cap K^4_{\sigma_4} \cap K^5_{\sigma_5}  ).
\end{equation}
Now we test parallelogram $P_2$ against the vertex $u_4+u_5$. 
The result is:
\begin{equation}
\begin{split}
x \notin J_+ = \lbrace x^5 < 0, x^4 < 0, x^1 + x^3 + x^4 + x^5 < 0, 
x^1 + x^3 > 0 \rbrace, \\
\text{and} \\
x \notin J_- = \lbrace x^5 > 0, x^4 > 0, x^1 + x^3 + x^4 + x^5 > 0, 
x^1 + x^3 < 0 \rbrace. \\
\end{split}
\end{equation}
Using cyclical substitutions, we obtain $10$ conditions
from this test.

After a trivial calculation performed with the PORTA software,
the only vector which passes the tests is $x = [-1,-1,-1,1,1]$ (and its multiples). 
It is treated in a different way.

\bigskip

\noindent (D) {\em Case of $x=[-1,-1,-1,1,1]$.}
The polytope $p(Q)$ is the projection of $Q$
along $x$ onto the space spanned by $u_1,\dots,u_4$. 
Polytope $p(Q)$ has the following vertices 
(coordinates are shown in the basis $u_1,\dots,u_4$): 
\begin{spacing}{1}
\begin{equation}
\begin{split}
p(u_1) = h(y_1) &= [1, 0, 0, 0], \\
p(u_2) = h(y_2) &= [0, 1, 0, 0], \\
p(u_3) = h(y_3) &= [0, 0, 1, 0], \\
p(u_4) = h(y_4) &= [0, 0, 0, 1], \\
p(u_5) = h(y_5) &= [1, 1, 1, -1],\\
p(u_1 + u_2) = h(y_1 + y_2) &= [1, 1, 0, 0], \\
p(u_2 + u_3) = h(y_2 + y_3) &= [0, 1, 1, 0], \\
p(u_3 + u_4) = h(y_3 + y_4) &= [0, 0, 1, 1], \\
p(u_4 + u_5) = h(y_4 + y_5) &= [1, 1, 1, 0], \\
p(u_5 + u_1) = h(y_5 + y_1) &= [2, 1, 1, -1] \\
\end{split}
\end{equation}
\end{spacing}
\medskip
Consider the line segment $[[1,1,1,0],[1,\frac{1}{2},1,0]]$.
The following inclusions hold:
\begin{equation}
\begin{split}
[[1,1,1,0],[1,\frac{1}{2},1,0]] \subset p(Q), \\
[[1,1,1,0],[1,\frac{1}{2},1,0]] \subset p(Q) + p(u_4 + u_5 - u_1 - u_2)
\end{split}
\end{equation}
Let $t = y_4 + y_5 - y_1 - y_2$. Since $p(Q)\subset h(D^4)$,
\begin{equation}
p(Q) + p(u_4 + u_5 - u_1 - u_2) = 
p(Q) + h(t) \subset h(D^4) + h(t) = h(D^4 + t),
\end{equation}
we have
\begin{equation}
[[1,1,1,0],[1,\frac{1}{2},1,0]] \subset h(D^4) \cap h(D^4 + t).
\end{equation}
By lemma \ref{translate_complex_main_lemma}
on page \pageref{translate_complex_main_lemma}, polytopes
$D^4$ and $D^4 + t$ can be separated
by a hyperplane $N$ so that the following
conditions hold:
\begin{equation}
\label{application-separation-equation}
\begin{split}
D = D^4 \cap (D^4 + t)
= N \cap D^4 = N \cap (D^4 + t), \\
\lin(F^{d-4} - F^{d-4}) \subset N - N, \\
D\text{ is a dual cell}
\end{split}
\end{equation}
We therefore have
\begin{equation}
[[1,1,1,0],[1,\frac{1}{2},1,0]] \subset N
\end{equation} 
It follows that 
\begin{equation}
[1,0,1,0] = h(y_1 + (y_4 + y_5 - y_1 - y_2)) \in N,
\end{equation}
hence $y_1 + (y_4 + y_5 - y_1 - y_2) \in N$.
Point $y_1 + (y_4 + y_5 - y_1 - y_2) = y_4 + y_5 - y_2$ 
is a vertex 
of $D^4 + (y_4 + y_5 - y_1 - y_2)$. By 
equation (\ref{application-separation-equation}),
it is also a vertex of $D^4$. 

Note that $\Vert(\Pi_2) \cup \lbrace y_4 + y_5, y_4 + y_5 - y_2 \rbrace$
is a vertex set of a triangular prism. 

This, by lemma \ref{4-cell-classification-lemma-prism}, implies
that $|\Vert(D^4)| = 8$, however we know that $D^4$ contains 
$10$ vertices of the $5$ parallelograms $\Pi_1$, \dots, 
$\Pi_5$. The contradiction finishes the 5-10 parallelogram 
system analysis.
\bigskip

\noindent{\bf 6-11 system}
The 6-11 hypergraph is shown in figure 
\ref{6-11-enumeration-choice-figure}.

Since each hyperedge is the quartet
of vertices of a parallelogram in ${\cal R}'$,
we can match vertices of each hyperedge
into pairs corresponding to
diagonals of the parallelogram.
Let $S$, $S'$ be the two collections 
of parallelograms, each sharing a common vertex $s$, $s'$. 
The vertex $s$ on each of parallelograms $\Pi \in S$ is matched 
with a vertex $\Pi \cap \Pi'$ for some $\Pi'\in S'$. This 
establishes a mapping $\sigma: S \to S'$ by $\sigma(\Pi) = \Pi'$. 
Similarly we define a mapping $\sigma' :S' \to S$. 
The vertex matching is completely 
defined by the two mappings $\sigma$ 
and $\sigma'$. 
\begin{figure}
\begin{center}
\resizebox{250pt}{!}{\includegraphics[clip=true,keepaspectratio=true]
{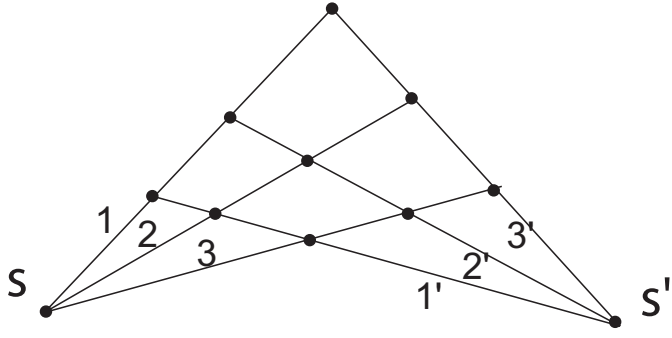}}
\caption{The 6-11 hypergraph}
\label{6-11-enumeration-choice-figure}
\end{center}
\end{figure} 

\begin{lemma}
\label{6-11-matching-lemma}
Parallelograms in collections $S$, $S'$ can be labeled
by symbols $1$-$3$ and $1'$-$3'$ so that the mappings
$\sigma_1$, $\sigma_2$ are: 

{\tiny
\begin{spacing}{1}
\begin{enumerate}
\item 
$
 \left( 
\begin{array}{ccc} 
1  &2  &3  \\ 
1' &1' &1' \\
\end{array} 
\right)
$,
$
 \left( 
\begin{array}{ccc} 
1'  &2'  &3' \\ 
1   &1   &1  \\
\end{array} 
\right)
$,
\item 
$
 \left( 
\begin{array}{ccc} 
1  &2  &3  \\ 
1' &1' &1' \\
\end{array} 
\right)
$,
$
 \left( 
\begin{array}{ccc} 
1'  &2'  &3' \\ 
1   &1   &2  \\
\end{array} 
\right)
$,
\item
$
 \left( 
\begin{array}{ccc} 
1  &2  &3  \\ 
1' &1' &1' \\
\end{array} 
\right)
$,
$
 \left( 
\begin{array}{ccc} 
1'  &2'  &3' \\ 
1   &2   &2  \\
\end{array} 
\right)
$,
\item
$
 \left( 
\begin{array}{ccc} 
1  &2  &3  \\ 
1' &1' &1' \\
\end{array} 
\right)
$,
$
 \left( 
\begin{array}{ccc} 
1'  &2'  &3' \\ 
1   &2   &3  \\
\end{array} 
\right)
$, 
\item
$
 \left( 
\begin{array}{ccc} 
1  &2  &3  \\ 
1' &1' &2' \\
\end{array} 
\right)
$,
$
 \left( 
\begin{array}{ccc} 
1'  &2'  &3' \\ 
1   &1   &2  \\
\end{array} 
\right)
$,
\item
$
 \left( 
\begin{array}{ccc} 
1  &2  &3  \\ 
1' &2' &1' \\
\end{array} 
\right)
$,
$
 \left( 
\begin{array}{ccc} 
1'  &2'  &3' \\ 
1   &1   &2  \\
\end{array} 
\right)
$,
\item
$
 \left( 
\begin{array}{ccc} 
1  &2  &3  \\ 
2' &1' &1' \\
\end{array} 
\right)
$,
$
 \left( 
\begin{array}{ccc} 
1'  &2'  &3' \\ 
1   &1   &2  \\
\end{array} 
\right)
$,
\item
$
 \left( 
\begin{array}{ccc} 
1  &2  &3  \\ 
1' &1' &2' \\
\end{array} 
\right)
$,
$
 \left( 
\begin{array}{ccc} 
1'  &2'  &3' \\ 
1   &2   &1  \\
\end{array} 
\right)
$, reduces to 6  ($S \leftrightarrow S'$),
\item
$
 \left( 
\begin{array}{ccc} 
1  &2  &3  \\ 
1' &2' &1' \\
\end{array} 
\right)
$,
$
 \left( 
\begin{array}{ccc} 
1'  &2'  &3' \\ 
1   &2   &1  \\
\end{array} 
\right)
$,
\item
$
 \left( 
\begin{array}{ccc} 
1  &2  &3  \\ 
2' &1' &1' \\
\end{array} 
\right)
$,
$
 \left( 
\begin{array}{ccc} 
1'  &2'  &3' \\ 
1   &2   &1  \\
\end{array} 
\right)
$,
\item
$
 \left( 
\begin{array}{ccc} 
1  &2  &3  \\ 
1' &1' &2' \\
\end{array} 
\right)
$,
$
 \left( 
\begin{array}{ccc} 
1'  &2'  &3' \\ 
2   &1   &1  \\
\end{array} 
\right)
$, reduces to 7  ($S \leftrightarrow S'$),
\item
$
 \left( 
\begin{array}{ccc} 
1  &2  &3  \\ 
1' &2' &1' \\
\end{array} 
\right)
$,
$
 \left( 
\begin{array}{ccc} 
1'  &2'  &3' \\ 
2   &1   &1  \\
\end{array} 
\right)
$, reduces to 10  ($S \leftrightarrow S'$),
\item
$
 \left( 
\begin{array}{ccc} 
1  &2  &3  \\ 
2' &1' &1' \\
\end{array} 
\right)
$,
$
 \left( 
\begin{array}{ccc} 
1'  &2'  &3' \\ 
2   &1   &1  \\
\end{array} 
\right)
$,
\item 
$
 \left( 
\begin{array}{ccc} 
1  &2  &3  \\ 
1' &1' &2' \\
\end{array} 
\right)
$,
$
 \left( 
\begin{array}{ccc} 
1'  &2'  &3' \\ 
1   &2   &3  \\
\end{array} 
\right)
$, 
\item 
$
 \left( 
\begin{array}{ccc} 
1  &2  &3  \\ 
1' &2' &1' \\
\end{array} 
\right)
$,
$
 \left( 
\begin{array}{ccc} 
1'  &2'  &3' \\ 
1   &2   &3  \\
\end{array} 
\right)
$,
\item 
$
 \left( 
\begin{array}{ccc} 
1  &2  &3  \\ 
2' &1' &1' \\
\end{array} 
\right)
$,
$
 \left( 
\begin{array}{ccc} 
1'  &2'  &3' \\ 
1   &2   &3  \\
\end{array} 
\right)
$,
\item 
$
 \left( 
\begin{array}{ccc} 
1  &2  &3  \\ 
1' &2' &3' \\
\end{array} 
\right)
$,
$
 \left( 
\begin{array}{ccc} 
1'  &2'  &3' \\ 
1   &2   &3  \\
\end{array} 
\right)
$, 
\item 
$
 \left( 
\begin{array}{ccc} 
1  &2  &3  \\ 
1' &2' &3' \\
\end{array} 
\right)
$,
$
 \left( 
\begin{array}{ccc} 
1'  &2'  &3' \\ 
2   &1   &3  \\
\end{array} 
\right)
$.
\end{enumerate}
\end{spacing}}

\end{lemma}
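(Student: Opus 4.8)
The plan is to prove that the vertex matching of a $6$-$11$ hypergraph is completely encoded by the pair of maps $\sigma\colon S\to S'$ and $\sigma'\colon S'\to S$ introduced just above the statement, and then to classify such pairs under the natural symmetry group of the hypergraph. First I would fix the combinatorial skeleton: writing $S=\{\Pi_1,\Pi_2,\Pi_3\}$ and $S'=\{\Pi_{1'},\Pi_{2'},\Pi_{3'}\}$, the eleven vertices are the two hubs $s,s'$ together with the nine intersection points $v_{ij'}=\Pi_i\cap\Pi_{j'}$, so that $\Vert(\Pi_i)=\{s,v_{i1'},v_{i2'},v_{i3'}\}$ and $\Vert(\Pi_{j'})=\{s',v_{1j'},v_{2j'},v_{3j'}\}$. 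Since a parallelogram has exactly two diagonals, fixing the diagonal through $s$ on $\Pi_i$ (which meets $\Pi_{\sigma(i)'}$) forces the complementary diagonal to join the two remaining vertices; hence the entire matching is recovered from $\sigma$ and $\sigma'$ alone, and conversely every choice of $\sigma,\sigma'$ is admissible.

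Next I would identify the symmetry group. An automorphism of the hypergraph preserves vertex degrees, so it either fixes or interchanges the two degree-$3$ hubs $s,s'$; this yields the group generated by the relabelings of $S$, the relabelings of $S'$, and the swap $S\leftrightarrow S'$, of order $6\cdot 6\cdot 2=72$. I would write out its action on the pair: a relabeling $\alpha$ of $S$ sends $\sigma\mapsto\sigma\alpha^{-1}$ and $\sigma'\mapsto\alpha\sigma'$, a relabeling $\beta$ of $S'$ sends $\sigma\mapsto\beta\sigma$ and $\sigma'\mapsto\sigma'\beta^{-1}$, and the swap interchanges $\sigma$ with $\sigma'$. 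The key structural point is that the labels of $S$ serve simultaneously as the domain of $\sigma$ and the codomain of $\sigma'$ (and dually for $S'$), so $\sigma$ and $\sigma'$ cannot be normalized independently; this coupling is exactly what forces several table entries of the same abstract type.

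The classification I would organize by the function type of $\sigma$ (constant, two-to-one, or bijective), using the swap to arrange that $\sigma$ is of type no more complicated than $\sigma'$. After normalizing $\sigma$ by a relabeling $\beta$ of $S'$ to a standard value pattern — the constant pattern $(1',1',1')$, one of the three two-to-one patterns $(1',1',2')$, $(1',2',1')$, $(2',1',1')$ distinguished by which element of $S$ is alone in its fibre, or the bijective pattern $(1',2',3')$ — I would compute the residual stabilizer and let it act on $\sigma'$. For constant $\sigma$ the stabilizer is large and $\sigma'$ is pinned down only by its type and by whether the distinguished index lies in the doubled fibre, giving the four forms $1$–$4$; for bijective $\sigma$ the stabilizer is the diagonal symmetric group and $\sigma'$ is sorted by the conjugacy class of $\sigma'\sigma$, producing the remaining forms. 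The two-to-one case is where the real work lies: there the stabilizer collapses to order $2$, so each of the three patterns of $\sigma$ must be paired separately with each pattern of $\sigma'$, which is what generates the bulk of the table, and only the swap identifies the redundant rows (recorded as ``reduces to $6$'', ``reduces to $7$'', ``reduces to $10$'').

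The main obstacle I expect is precisely this bookkeeping in the two-to-one case: because the residual symmetry is so small, the raw list is long, and one must apply the swap carefully — tracking how it transposes the roles of $S$ and $S'$ and hence conjugates $\sigma'\sigma$ — to collapse the list to the stated normal forms without omitting or double-counting an orbit. As in the $5$-$10$ analysis preceding lemma \ref{5-10-6-11-lemma}, I would treat the final sifting as a finite, mechanical verification well suited to a short computer check, the mathematical content being the reduction of the matching to the two maps $\sigma,\sigma'$ together with the explicit order-$72$ action just described.
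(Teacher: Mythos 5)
Your framework is essentially a rigorous rewriting of the paper's own proof: the paper classifies pairs $(\sigma,\sigma')$ by the invariants $|\Im(\sigma)|$, $|\Im(\sigma')|$ (your ``function types'') and then proceeds by direct inspection, while you make the order-$72$ group action, the normalization, and the stabilizer computations explicit. Your reduction of the matching to the pair $(\sigma,\sigma')$, your formulas for the action ($\sigma\mapsto\sigma\alpha^{-1}$, $\sigma'\mapsto\alpha\sigma'$ for a relabeling $\alpha$ of $S$, dually for relabelings of $S'$, plus the swap), and your stabilizer counts in the constant and two-to-one cases (orders $12$ and $2$) are all correct and reproduce cases 1--16 of the list, including the swap-reductions recorded for cases 8, 11 and 12.

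The gap is in the bijective--bijective case, and it is exactly the point where your method, carried out honestly, contradicts the statement you are trying to prove. Your invariant there is the conjugacy class of the composite $\sigma'\sigma$ in $\mathrm{Sym}(S)$, which is preserved by relabelings (they conjugate it) and by the swap (since $\sigma\sigma'=\sigma(\sigma'\sigma)\sigma^{-1}$). In $S_3$ this invariant takes \emph{three} values --- identity, transposition, $3$-cycle --- so your classification yields three normal forms, while the list contains only two (cases 17 and 18, whose composites are the identity and a transposition). The pair $\sigma=(1',2',3')$, $\sigma'=(2,3,1)$ is a perfectly admissible vertex matching, its composite is a $3$-cycle, and it cannot be brought to any of the 18 listed forms by relabeling or swapping. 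Hence your assertion that sorting by conjugacy class ``produces the remaining forms'' is false as written: it produces a nineteenth form absent from the statement. Your proof therefore cannot close, because the lemma as printed is incomplete (the paper's own ``direct inspection'' missed this case too, and the subsequent tables never treat it). A correct write-up must add this case to the classification; for the larger argument of the paper it is then disposed of like most of the others --- normalizing $s=0$ with parallelograms $1$ and $2$ spanning complementary coordinate $2$-planes and solving the six diagonal equations forces $s'\equiv s\pmod{2\Lambda}$, contradicting corollary \ref{parity-class-representation-corollary} --- but that computation belongs to the case tables, not to the combinatorial lemma itself.
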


\begin{proof}
The sizes of images of the mappings are independent 
of the labeling of parallelograms. We use them 
to classify the cases we need to consider:
\begin{equation}
\begin{array}{|c|c|c|}
\hline
|\Im(\sigma)| & |\Im(\sigma')| \\
\hline
1 & 1 \\
1 & 2 \\
1 & 3 \\
2 & 2 \\
2 & 3 \\
3 & 3 \\
\hline
\end{array}
\end{equation}
We then use direct inspection.
\end{proof}

The common vertex of parallelograms 
$k$ and $l'$ will be denoted $v_{kl'}$.
Since parallelograms 1 and 2 share exactly one
common vertex, they are complementary and span
a $4$-space.
We choose a basis $e_1,\dots,e_d$ in ${\mathbb R}^d$ so that 
parallelograms 1 and 2 are 
$\conv\lbrace 0,e_1,e_2,e_1+e_2\rbrace$, 
$\conv\lbrace 0,e_3,e_4,e_3+e_4\rbrace$.

We then solve the systems of linear equations for points
$v_{kl'}$ which arise from the combinatorial
information that tells us which vertices
of each hyperedge form a diagonal.
For example, the vertex matching represented
by line 1 in lemma \ref{6-11-matching-lemma}
results in the following system
of linear equations:
\begin{equation}
\begin{split}
s +v_{11'}&=v_{12'}+v_{13'}, \\
s +v_{21'}&=v_{22'}+v_{23'}, \\
s +v_{31'}&=v_{32'}+v_{33'}, \\
s'+v_{11'}&=v_{21'}+v_{31'}, \\
s'+v_{12'}&=v_{22'}+v_{32'}, \\
s'+v_{13'}&=v_{23'}+v_{33'}
\end{split}
\end{equation}

The solutions are presented in the tables \ref{6-11-systems-table},
\ref{6-11-systems-table-cont}, and \ref{6-11-systems-table-cont-cont} 
below, in the format 
$[s,v_{11'},v_{12'},v_{13'},v_{21'},v_{22'},v_{23'},v_{31'},v_{32'},v_{33'}, s']$.
The coordinates are in the basis $e_1,\dots,e_d$.
Column ``reason for contradiction'' shows why
each outcome is impossible.

\begin{spacing}{1}
\begin{table}
\label{6-11-systems-table}
{\tiny
\begin{equation}
\begin{array}{|l|l|l|}
\hline
\text{Case} & \text{Solution} & \text{Reason for contradiction} \\
\hline
1  & 
 \left[ \begin {array}{ccccccccccc} 0&1&1&0&0&0&0&1&1&0&0
\\\noalign{\medskip}0&1&0&1&0&0&0&1&0&1&0\\\noalign{\medskip}0&0&0&0&1
&1&0&-1&-1&0&0\\\noalign{\medskip}0&0&0&0&1&0&1&-1&0&-1&0\\\noalign{\medskip}0\dots\end {array}
 \right] &
s = s' 
\smallskip \\
\hline \\

2  &
\left[ \begin {array}{ccccccccccc} 0&1&1&0&0&0&0&1&1&0&0
\\\noalign{\medskip}0&1&0&1&0&0&0&3&2&1&2\\\noalign{\medskip}0&0&0&0&1
&1&0&-1&-1&0&0\\\noalign{\medskip}0&0&0&0&1&0&1&-3&-2&-1&-2\\\noalign{\medskip}0\dots
\end {array} \right] &
s' \equiv s \pmod{2\Lambda} 
\smallskip \\
\hline \\

3 &
 \left[ \begin {array}{ccccccccccc} 0&1&1&0&0&0&0&3&1&2&2
\\\noalign{\medskip}0&1&0&1&0&0&0&3&2&1&2\\\noalign{\medskip}0&0&0&0&1
&1&0&-3&-1&-2&-2\\\noalign{\medskip}0&0&0&0&1&0&1&-3&-2&-1&-2\\\noalign{\medskip}0\dots
\end {array} \right] &
s' \equiv s \pmod{2\Lambda} \\
\smallskip \\
\hline \\

4 &
 \left[ \begin {array}{ccccccccccc} 0&1&1&0&0&0&0&-1&-3&2&-2
\\\noalign{\medskip}0&1&0&1&0&0&0&1&0&1&0\\\noalign{\medskip}0&0&0&0&1
&1&0&1&3&-2&2\\\noalign{\medskip}0&0&0&0&1&0&1&1&2&-1&2\\\noalign{\medskip}0\dots
\end {array}
 \right] &
s' \equiv s \pmod{2\Lambda} \\
\smallskip \\
\hline \\

5 &
\left[ \begin {array}{ccccccccccc} 0&1&1&0&0&0&0&1&1&0&0
\\\noalign{\medskip}0&1&0&1&0&0&0&1&0&-1&0\\\noalign{\medskip}0&0&0&0&
1&1&0&-1&-1&0&0\\\noalign{\medskip}0&0&0&0&1&0&1&-1&0&1&0\\\noalign{\medskip}0\dots
\end {array}
 \right] &
s' \equiv s \pmod{2\Lambda} \\
\smallskip \\
\hline \\

6 &
 \left[ \begin {array}{ccccccccccc} 0&1&1&0&0&0&0&1&1&0&0
\\\noalign{\medskip}0&1&0&1&0&0&0&3&2&1&2\\\noalign{\medskip}0&0&0&0&1
&1&0&-1&-1&0&0\\\noalign{\medskip}0&0&0&0&0&1&1&0&-1&1&0\\\noalign{\medskip}0\dots
\end {array}
 \right] &
s' \equiv s \pmod{2\Lambda} \\
\smallskip \\
\hline \\

7 &
 \left[ \begin {array}{ccccccccccc} 0&1&1&0&0&0&0&1&1&0&0
\\\noalign{\medskip}0&0&1&1&0&0&0&0&1&-1&0\\\noalign{\medskip}0&0&0&0&
1&1&0&-1&-1&0&0\\\noalign{\medskip}0&0&0&0&1&0&1&-3&-2&-1&-2\\\noalign{\medskip}0\dots
\end {array} \right] &
s' \equiv s \pmod{2\Lambda} \\
\smallskip \\
\hline \\ 
8 & \text{Reduces to 6} & \\
\hline
\end{array}
\end{equation}
}
\caption{All parallelogram systems with 6-11 hypergraph, up to affine equivalence}
\end{table}

\begin{table}
\label{6-11-systems-table-cont}
{\tiny
\begin{equation}
\begin{array}{|l|l|l|}
\hline
\text{Case} & \text{Solution} & \text{Reason for contradiction} \\
\hline

9 &
 \left[ \begin {array}{ccccccccccc} 0&1&1&0&0&0&0&3&1&2&2
\\\noalign{\medskip}0&1&0&1&0&0&0&1&0&1&0\\\noalign{\medskip}0&0&0&0&1
&1&0&-3&-1&-2&-2\\\noalign{\medskip}0&0&0&0&0&1&1&0&1&-1&0\\\noalign{\medskip}0\dots
\end {array}
 \right] &
s' \equiv s \pmod{2\Lambda} \\
\smallskip \\
\hline \\

10 &
 \left[ \begin {array}{ccccccccccc} 0&1&1&0&0&0&0&3&1&2&2
\\\noalign{\medskip}0&0&1&1&0&0&0&0&-1&1&0\\\noalign{\medskip}0&0&0&0&
1&1&0&-3&-1&-2&-2\\\noalign{\medskip}0&0&0&0&1&0&1&-1&0&-1&0\\\noalign{\medskip}0\dots
\end {array} \right] &
s' \equiv s \pmod{2\Lambda} \\
\smallskip \\
\hline \\

11 & \text{Reduces to 7} & \\

\hline \\

12 & \text{Reduces to 10} & \\

\hline \\

13 &
 \left[ \begin {array}{ccccccccccc} 0&1&1&0&0&0&0&-3&-1&-2&-2
\\\noalign{\medskip}0&0&1&1&0&0&0&-2&-1&-1&-2\\\noalign{\medskip}0&0&0
&0&1&1&0&3&1&2&2\\\noalign{\medskip}0&0&0&0&1&0&1&3&2&1&2\\\noalign{\medskip}0\dots
\end{array}
 \right] &
s' \equiv s \pmod{2\Lambda} \\
\smallskip \\
\hline \\

14 &
 \left[ \begin {array}{ccccccccccc} 0&1&1&0&0&0&0&3&1&-2&2
\\\noalign{\medskip}0&1&0&1&0&0&0&3&2&-1&2\\\noalign{\medskip}0&0&0&0&
1&1&0&-3&-1&2&-2\\\noalign{\medskip}0&0&0&0&1&0&1&-1&0&1&0\\\noalign{\medskip}0\dots
\end{array}
 \right] &
 s' \equiv s \pmod{2\Lambda} \\
\smallskip \\
\hline \\

15 &
 \left[ \begin {array}{ccccccccccc} 0&1&1&0&0&0&0&-1&-3&2&-2
\\\noalign{\medskip}0&1&0&1&0&0&0&1&0&1&0\\\noalign{\medskip}0&0&0&0&1
&1&0&1&3&-2&2\\\noalign{\medskip}0&0&0&0&0&1&1&2&3&-1&2\\\noalign{\medskip}0\dots
\end{array}
 \right] &
s' \equiv s \pmod{2\Lambda} \\
\smallskip \\
\hline \\

16 & 
 \left[ \begin {array}{ccccccccccc} 0&1&1&0&0&0&0&-1&-3&2&-2
\\\noalign{\medskip}0&0&1&1&0&0&0&0&-1&1&0\\\noalign{\medskip}0&0&0&0&
1&1&0&1&3&-2&2\\\noalign{\medskip}0&0&0&0&1&0&1&1&2&-1&2\\\noalign{\medskip}0\dots
\end{array}
 \right] &
s' \equiv s \pmod{2\Lambda} \\
\smallskip \\
\hline \\
\end{array}
\end{equation}
}
\caption{All parallelogram systems with 6-11 hypergraph, up to affine equivalence (continued)}
\end{table}

\begin{table}
\label{6-11-systems-table-cont-cont}
{\tiny
\begin{equation}
\begin{array}{|l|l|l|}
\hline
\text{Case} & \text{Solution} & \text{Reason for contradiction} \\
\hline
17 &
 \left[ \begin {array}{ccccccccccc} 0&1&1&0&0&0&0&1&-1&0&0
\\\noalign{\medskip}0&1&0&1&0&0&0&1&0&1&0\\\noalign{\medskip}0&0&0&0&1
&1&0&-1&1&0&0\\\noalign{\medskip}0&0&0&0&0&1&1&0&1&1&0\\\noalign{\medskip}0\dots
\end{array}
 \right] &
s' \equiv s \pmod{2\Lambda} \\
\smallskip \\
\hline \\

18 &
 \left[ \begin {array}{ccccccccccc} 0&1&1&0&0&0&0&-1&1&0&0
\\\noalign{\medskip}0&1&0&1&0&0&0&-1/3&2/3&1/3&2/3\\\noalign{\medskip}0
&0&0&0&1&1&0&1&-1&0&0\\\noalign{\medskip}0&0&0&0&0&1&1&2/3&-1/3&1/3&2/\\\noalign{\medskip}0\dots
\end {array} \right] &
\text{not in a convex position} \\

\hline
\end{array}
\end{equation}
}
\caption{All parallelogram systems with 6-11 hypergraph, up to affine equivalence (continued)}
\end{table}
\end{spacing}

\bigskip
In each case we see that either some of the 11 vertices of the
parallelogram system coincide, or are equivalent to each other
modulo $2 \Lambda$ where $\Lambda$ is the lattice of the tiling.
This completes the analysis of the 6-11 system of parallelograms,
and finishes the proof of the theorem.
\end{subsection}
\end{section}

\begin{section}{Conclusion}

The value of many hard problems is often not in the problem
itself, but in the theories and side results that come 
out of it. From Voronoi's conjecture on parallelotopes, 
originated the idea of quality translation, a number
of results on zonotopes, and generally a better understanding of 
the geometry of polytopes. 

We now look back at the results we have obtained,
discuss ways to resolve the Voronoi
conjecture, and pose some questions which we couldn't answer.

The idea of ``coherent'' parallelogram dual cells was
very effective in proving the Voronoi conjecture for
$3$-irreducible tilings. The existence of
incoherent parallelograms would disprove the Voronoi
conjecture on parallelotopes. The most important
result was that incoherent parallelograms in a dual
$4$-cell can only come in groups: a vertex of an 
incoherent parallelogram must belong to another
incoherent parallelogram.

Our definition of an incoherent parallelogram was relative
to a dual $4$-cell. Robert Erdahl suggested another definition.
Consider a parallelogram dual cell $\Pi$. Given
any two facets $F_1^{d-1}$, $F_2^{d-1}$ which correspond 
to the edges of the parallelogram, it follows from 
our theorem \ref{irreducibility_criterion_theorem} that
there are combinatorial paths on the $(d-1)$-skeleton
of the tiling which connect $F_1^{d-1}$, $F_2^{d-1}$ and
where the $(d-2)$-joints are all hexagonal. This path 
can be used to transfer the scale factor 
from $F_1^{d-1}$ to $F_2^{d-1}$. In the new definition,
the parallelogram $\Pi$ is called {\em coherent} if 
all such combinatorial paths lead to the same scale factor
on $F_2^{d-1}$, and if the two facets are
translates of each other, then the scale factor
on $F_2^{d-1}$ is the same as on $F_1^{d-1}$.

Existence of incoherent parallelograms in the new sense
would contradict the Voronoi conjecture. Their nonexistence,
however, will prove the Voronoi conjecture. 

\medskip

\noindent C0. {\em Can we transfer our results 
on incoherent parallelograms for the case 
of the new definition?}

\medskip

Below is a list of other statements that we are very interested in,
but couldn't prove.

\medskip

\noindent C1. {\em The dimension of a dual cell and the dimension 
of the corresponding face of the tiling sum to $d$.} We have
neither lower nor upper bound. 

\medskip

\noindent C2. {\em Dual cells form a polyhedral complex.}
So far we could only prove that dual cells cover the space.
The proof can be found in the paper proposal document,
which can be downloaded from the author's web site
{\em http://www.mast.queensu.ca/\ $\tilde\ $ordine}.

\medskip

\noindent C3. {\em A 'local' version of the Venkov graph criterion.}
Let $F^{d-k}$ be a face of the tiling. Facets in the star
of $F^{d-k}$ define a subgraph $V_{F^{d-k}}(P)$ of the Venkov graph
to the parallelotope. The following statements are
equivalent:
\begin{enumerate}
\item $V_{F^{d-k}}(P)$ is connected by red edges
\item The dual cell corresponding to $F^{d-k}$ is irreducible
(ie. it cannot be represented as a direct Minkowski sum
of polytopes of smaller positive dimensions)
\item The tiling is locally reducible at $F^{d-k}$.
\end{enumerate}

\medskip

\noindent C4. {\em Equivalence of two versions of tiling 
irreducibility.} The parallelotope of a $d$-reducible tiling 
can be represented as a direct Minkowski sum of parallelotopes
of smaller positive dimensions.

\medskip

\noindent C5. {\em A regularity condition for dual cells.}
Consider a dual $4$-cell. We do not know if the dimension
of a dual $4$-cell (as a polytope) is actually $4$. However, 
we know the full classification of dual $3$-cells. Therefore
the boundary of a dual $4$-cell can be considered as an
immersion of a $3$-dimensional polyhedral complex in the
$d$-dimensional space. What are properties of the immersion?
Is it an embedding?

\medskip

We think that a proof (or a counterexample) 
to the Voronoi conjecture can be obtained by considering
increasingly large classes of tilings: $2$-irreducible, $3$-irreducible,
$4$-irreducible and so on, and trying to devise an induction
scheme. We conjecture that a $d$-irreducible tiling
of the $d$-dimensional space has a reducible parallelotope
$P = P_1 \oplus P_2$. Then it would be sufficient to
solve the conjecture for parallelotopes $P_1$, $P_2$
of smaller dimensions. 
\end{section}

\bibliography{affeq}

\printindex

%\end{doublespace}
\end{document}